\numberwithin{equation}{section}
\newtheorem{Theorem}{Theorem}[section]
\newtheorem{Proposition}[Theorem]{Proposition}
\newtheorem{Corollary}[Theorem]{Corollary}
\newtheorem{Lemma}[Theorem]{Lemma}
  \def\endproof{\hfill $\Box$}
\def\rank{\mathrm{rank}\,}
\def\bbR{\mathbb{R}}
\def\diag{\mathrm{diag}\ }
  \def\texorpdfstring#1#2{#1}
\begin{document}

\title{Extension of Multilinear Fractional Integral Operators to Linear Operators on Mixed-Norm Lebesgue Spaces
\thanks{This work was partially supported by the
National Natural Science Foundation of China (11525104, 11531013, 11761131002 and 11801282).}}

\author{ Ting Chen\qquad  and \qquad Wenchang Sun\thanks{Corresponding author.} \\
\small School of Mathematical Sciences and LPMC,  Nankai University,
      Tianjin~300071, China\\
 \small  Emails:  t.chen@nankai.edu.cn,\quad  sunwch@nankai.edu.cn}
\date{}
\maketitle

\begin{abstract}
In [C. E. Kenig and E. M. Stein, Multilinear estimates and fractional integration, Math. Res. Lett., 6(1):1-15, 1999], the following type of multilinear fractional integral
\[
  \int_{\bbR^{mn}} \frac{f_1(l_1(x_1,\ldots,x_m,x))\cdots f_{m+1}(l_{m+1}(x_1,\ldots,x_m,x))}{(|x_1|+\ldots+|x_m|)^{\lambda}} dx_1\ldots dx_m
\]
was studied, where $l_i$ are linear maps from $\bbR^{(m+1)n}$ to $\bbR^n$ satisfying certain conditions.
They proved the boundedness of such multilinear fractional integral from $L^{p_1}\times \ldots \times L^{p_{m+1}}$ to $L^q$ when the indices satisfy
the homogeneity condition.
In this paper, we show that the above multilinear fractional integral extends to a linear operator
for functions in the mixed-norm Lebesgue space
$L^{\vec p}$
which contains
$L^{p_1}\times \ldots \times L^{p_{m+1}}$ as a subset.
Under less restrictions on the linear maps $l_i$,
we give a complete characterization of the indices
$\vec p$, $q$ and $\lambda$ for which such an operator is bounded from $L^{\vec p}$ to $L^q$.
And for $m=1$ or $n=1$, we give necessary and sufficient conditions  on $(l_1, \ldots, l_{m+1})$, $\vec p=(p_1,\ldots, p_{m+1})$, $q$ and $\lambda$ such that the operator
is bounded.
\end{abstract}

\textbf{Keywords}.\,\,
Fractional integrals\and Riesz potentials\and
mixed norms.

\section{Introduction and the Main Results}

The fractional integral operator is useful in the study of differentiability and smoothness of functions.
In \cite{KenigStein1999}, Kenig and  Stein studied the multilinear  fractional integral of the
following type,
\[
  \int_{\bbR^{mn}} \frac{f_1(l_1(x_1,\ldots,x_m,x))\cdots f_{m+1}(l_{m+1}(x_1,\ldots,x_m,x))}{(|x_1|+\ldots+|x_m|)^{\lambda}} dx_1\ldots dx_m,
\]
where $l_i(x_1,\ldots,x_m,x) = \sum_{i=1}^m A_{i,j}x_j + A_{i,m+1}x$ and  $A_{i,j}$ are $n\times n$ matrices.

They proved that
 the above  fractional integral is bounded
 from $L^{p_1}\times \ldots \times L^{p_{m+1}}$
 to $L^q$ if
  $1<p_i\le \infty$, $1\le i\le m+1$,
  $0<q<\infty$, $0<\lambda<mn$,
\begin{equation}\label{eq:ss:e0}
\frac{1}{p_1} + \ldots + \frac{1}{p_{m+1}} = \frac{1}{q} + \frac{mn-\lambda}{n},
\end{equation}
and the coefficient matrices $A_{i,j}$ satisfy the followings,
\begin{enumerate}
\item $A = (A_{i,j})_{1\le i,j\le m+1}$ is an $(m+1)n\times (m+1)n$  invertible matrix,
\item each $A_{i,m+1}$ is an $n\times n$ invertible matrix for $1\le i\le m+1$, and
\item $(A_{i,j})_{\substack{ 1\le i\le m+1, i\ne i_0 \\ 1\le j\le m}}$ is an $mn\times mn$ invertible matrix for every $1\le i_0\le m+1$.
\end{enumerate}

In this paper, we show that the miltilinear operator can be extended to a linear operator
defined on the mixed-norm Lebesgue space $L^{\vec p}$.  Recall that for $\vec p = (p_1,\ldots,p_k)$, where $0<p_1,\ldots,p_k\le \infty$ and $k\ge 1$,   $L^{\vec p}$ consists of all measurable functions $f$ for which
\[
  \|f\|_{L^{\vec p}} :=   \Big\|  \| f\|_{L_{x_1}^{p_1}} \cdots \Big\|_{L_{x_k}^{p_k}}<\infty.
\]
For convenience,  we also write the $L^{\vec p}$
norm as
$\|\cdot\|_{L_{x_k}^{p_k}(\ldots (L_{x_1}^{p_1}))}$ or $\|\cdot\|_{L_{(x_1,\ldots,x_k)
}^{(p_1,\ldots,p_k)}}$.

Benedek and Panzone   \cite{Benedek1961} introduced the
mixed-norm Lebesgue spaces   and proved   that such spaces have similar  properties as ordinary Lebesgue spaces. Further developments
  which include the boundedness of classical operators
and other generalizations can be found in
\cite{Benedek1962,%
Fernandez1987,%
Hart2018,%
Hormander1960,%
Kurtz2007,%
Rubio1986,%
Stefanov2004,%
Torres2015}.
Recently, mixed-norm spaces have been studied
in various aspects
\cite
{BandaliyevSerbetci2018,%
Boggarapu2017,%
CarneiroOliveiraSousa2019,%
ChenSun2017,%
Ciaurri2017,%
Cleanthous2017,%
Cleanthous2017c,%
Cordoba2017,%
Georgiadis2017,%
Ho2018,%
HuangLiuYangYuan2018,%
HuangLiuYangYuan2019,%
Johnsen2015,%
KarapetyantsSamko2018,%
Lechner2018,%
LiStinga2017,%
Sandik2018,%
WeiYan2018}.
And we refer to  \cite{HuangYang2019} for a  survey on mixed-norm spaces.

We focus on the factional integral on mixed-norm Lebesgue spaces.
Before stating our results, we introduce some notations. For $x=(x_1^{(1)}$, $\ldots$, $x_1^{(n)}$, $\ldots$, $x_{m+1}^{(1)}$, $\ldots$, $x_{m+1}^{(n)})^* \in \bbR^{(m+1)n}$,
where $*$ denotes the transpose of a vector or a matrix,
we also write
\begin{equation}\label{eq:x}
 x = \begin{pmatrix}
 x_1 \\ \vdots \\ x_{m+1}
 \end{pmatrix}, \qquad \mathrm{where}\,\,
 x_i =\begin{pmatrix}
 x_i^{(1)} \\ \vdots \\ x_i^{(n)}
 \end{pmatrix}.
\end{equation}


Let $A = (A_{i,j})_{1\le i,j\le m+1}$ be an $(m+1)n\times (m+1)n$ matrix. Define
\begin{equation}\label{eq:Tf}
  T_{\lambda}f (x_{m+1}) = \int_{\bbR^{mn}} \frac{f(Ax)}{(|x_1|+\ldots+|x_m|)^{\lambda}} dx_1\ldots dx_m.
\end{equation}
Denote
\begin{equation}\label{eq:rk 1}
 r_{m+2}=0\quad \mathrm{and}\quad
 r_k=    \rank
     \begin{pmatrix}
     A_{k,m+1} \\
     \vdots \\
     A_{m+1,m+1}
     \end{pmatrix},\quad 1\le k\le m+1.
\end{equation}
For certain matrix $A$, we give necessary and sufficient conditions
on the indices $\vec p$, $q$ and $\lambda$ such that
  $T_{\lambda}$ is bounded from $L^{\vec p}(\bbR^{(m+1)n})$ to
$L^q(\bbR^n)$.
Specifically, we prove the following.

\begin{Theorem}\label{thm:main}
Let $1\le p_i\le \infty$ for  $1\le i\le m+1$, $q>0$ and $0< \lambda<mn$ be  constants which satisfy (\ref{eq:ss:e0}).
Set  $\vec p = (p_1, \ldots, p_{m+1})$.
Suppose that both $A$ and its $(1,m+1)$-minor $(A_{i,j})_{2\le i\le m+1, 1\le j\le m}$ are invertible matrices.
Then  $T_{\lambda}$ is bounded from $L^{\vec p}$ to $L^q$
if and only if the following items are satisfied,
\begin{enumerate}
\item The rank of the $mn\times n$ matrix $(A_{i,m+1})_{2\le i\le m+1}$ is $n$.

\item There is some $2\le k\le m+1$ such that $1<p_k<\infty$.
       Let $k_0 =\max\{k:\, p_k>1, 2\le k\le m+1\}$ and $2\le k_1<\ldots<k_{\nu}\le m+1 $  be such that
\begin{equation}\label{eq:rk m a}
r_2=\ldots = r_{k_1}>r_{k_1+1}=\ldots > \ldots =r_{k_{\nu}}>r_{k_{\nu}+1}=\ldots=r_{m+2}.
\end{equation}
Then   the indices $\vec p$ and $q$  satisfy
    \begin{equation}\label{eq:pq:necessary}
      \min\{p_{k_l}:\, 1\le l\le \nu\}<q
      \quad
      \mathrm{and}
      \quad
      \max\{p_{k_l}:\, 0\le l\le \nu\}\le q<p_1.
    \end{equation}
\end{enumerate}
\end{Theorem}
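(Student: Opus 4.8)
The plan is to establish the two implications separately, both resting on a preliminary normalization of $T_\lambda$. Since $|x_1|+\dots+|x_m|$ is comparable to the Euclidean norm of $x'=(x_1,\dots,x_m)$ on $\bbR^{mn}$, one may freely apply any element of $GL(mn)$ to $x'$ at the cost of a harmless constant. Using invertibility of the $(1,m+1)$-minor $B=(A_{i,j})_{2\le i\le m+1,\,1\le j\le m}$ to normalize that block to the identity, and then shifting each integration block by the appropriate multiple of $x_{m+1}$, I would bring $T_\lambda$ to an operator of the form
\[
T_\lambda f(x_{m+1})=\int_{\bbR^{mn}}\frac{f\bigl(\tilde P w+D x_{m+1},\, w_1,\dots,w_m\bigr)}{|w-\tilde C x_{m+1}|^{\lambda}}\,dw_1\cdots dw_m ,
\]
where $\tilde C=(A_{i,m+1})_{2\le i\le m+1}$ (the change of variables in $x'$ does not alter it, so its stacked ranks are still the $r_k$ of (\ref{eq:rk 1})), $D$ is the Schur complement of $B$ in $A$, hence invertible, and $\tilde P$ is $n\times mn$. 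In this form $y_{i+1}=w_i$ is a genuine integration variable, $y_1=\tilde P w+Dx_{m+1}$ is ``slaved'', and the $i$th integration block is shifted by $A_{i+1,m+1}x_{m+1}$; thus the rank jumps (\ref{eq:rk m a}) mark exactly the blocks at which the $x_{m+1}$-dependence of the singular kernel is genuinely $n$-dimensional.

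For sufficiency I would proceed by induction on $m$, peeling off the integrations $w_m,w_{m-1},\dots,w_1$ one block at a time. The reduction above should be arranged so that at a step where the stacked rank drops---equivalently, for $k\in\{k_1,\dots,k_\nu\}$---the block integration is, after a further change of variables, a fractional integration of order $<n$ in an $\bbR^n$-block, estimated by the one-block Hardy--Littlewood--Sobolev inequality; this turns a local exponent $p_{k_l}$ into a strictly larger one, and $\lambda<mn$ forces $\min_{1\le l\le\nu}p_{k_l}<q$. At a step where the rank does not drop, the block integration is, after the reduction, a plain Hölder pairing in that block, which leaves integrability unchanged. At each stage one must pass an $L^q$ (or intermediate mixed) norm through an integral sign, for which Minkowski's integral inequality requires the outer exponent to dominate; running this through the rank chain $r_2=\dots=r_{k_1}>\dots$ yields exactly $\max\{p_{k_l}:0\le l\le\nu\}\le q$, while the slaved block $y_1$, never integrated and estimated last against the accumulated kernel, forces $q<p_1$. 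The hypothesis that some $p_k\in(1,\infty)$ with $2\le k\le m+1$ guarantees at least one honest, non-endpoint fractional step; combining exponents and using (\ref{eq:ss:e0}) gives $\|T_\lambda f\|_{L^q}\lesssim\|f\|_{L^{\vec p}}$.

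For necessity I would test $T_\lambda$ against tilted indicator functions. By translation invariance of $\|\cdot\|_{L^{p_1}}$ in the first block, the choice $f(y_1,y_2,\dots,y_{m+1})=g\bigl(y_1-\tilde P(y_2,\dots,y_{m+1})\bigr)h(y_2,\dots,y_{m+1})$ satisfies $\|f\|_{L^{\vec p}}=\|g\|_{L^{p_1}}\,\|h\|_{L^{(p_2,\dots,p_{m+1})}}$ and collapses, in the normalized form, to
\[
T_\lambda f(x_{m+1})=g(Dx_{m+1})\int_{\bbR^{mn}}\frac{h(w)}{|w-\tilde C x_{m+1}|^{\lambda}}\,dw .
\]
If $\rank\tilde C=r_2<n$, the last integral is constant along the $(n-r_2)$-dimensional subspace $\ker\tilde C$; taking $g$ to be the indicator of a box of size $O(1)$ in the $r_2$ transverse directions and of length $\sim N$ in the $n-r_2$ kernel directions gives $\|T_\lambda f\|_{L^q}\gtrsim N^{(n-r_2)/q}$ against $\|f\|_{L^{\vec p}}\sim N^{(n-r_2)/p_1}$, forcing $p_1\le q$; since $q<p_1$ is itself necessary (from a test function concentrated near the kernel singularity), this gives $r_2=n$, item~1. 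For item~2, testing against products of balls whose radii in the several $\bbR^n$-blocks are tuned to the rank pattern, and then letting one radius tend to $0$ or to $\infty$, yields the inequalities in (\ref{eq:pq:necessary}): shrinking a ``genuine fractional'' block forces $\min\{p_{k_l}\}<q$, dilating it (or dilating the $y_1$-block) forces $\max\{p_{k_l}:0\le l\le\nu\}\le q$ and $q<p_1$, and the same analysis shows that if every $p_k$ with $k\ge2$ lay in $\{1,\infty\}$ no $q$ could satisfy all constraints---whence some $p_k\in(1,\infty)$.

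I expect the sufficiency direction to be the main obstacle, on two counts: (i) carrying out the block-by-block reduction so that each of the $m$ integration steps is cleanly either a subcritical fractional integral or a Hölder pairing, with the ``fractional'' and ``trivial'' steps interleaved in the order prescribed by (\ref{eq:rk m a}); and (ii) checking that every interchange of a norm with an integral is licensed by precisely the inequalities in (\ref{eq:pq:necessary}) and by (\ref{eq:ss:e0}) with no slack, so that sufficiency and necessity meet. On the necessity side the delicate point is separating the role of $k_0$ (which enters the inequality $\max\{p_{k_l}:0\le l\le\nu\}\le q$ but not the strict minimum) from the genuinely fractional stages $k_1,\dots,k_\nu$, which requires the test functions to resolve the block structure exactly.
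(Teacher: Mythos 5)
Your outline has the right general shape (normalize using the invertible minor, then block-by-block estimates for sufficiency and scaling/translation tests for necessity), but it is missing the central device of the paper's proof and, without it, the sufficiency argument as you describe it does not go through. In your normalized form the first slot of $f$ is the \emph{slaved} variable $y_1=\tilde Pw+Dx_{m+1}$, which is never integrated; yet in the mixed norm $\|f\|_{L^{\vec p}}$ the $L^{p_1}_{y_1}$ norm is the \emph{innermost} one and must be taken first. Your plan to peel off $w_m,\dots,w_1$ by H\"older/HLS and then ``estimate the slaved block last'' would require bounding $f$ along tilted slices by $\sup_{y_1}\|f(y_1,\cdot)\|$-type quantities, which are not controlled by $\|f\|_{L^{\vec p}}$, and Minkowski only permits reordering when the outer exponent dominates the inner one, which need not hold here. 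The paper resolves this by dualizing: step (P3) shows $T_\lambda$ is bounded iff $\bigl\|h(x_1)\,(\sum_{i\ge2}|A_{i,m+1}x_1-x_i|)^{-\lambda}\bigr\|_{L^{\vec p'}}\lesssim\|h\|_{L^{q'}}$, where $h$ depends only on $x_1$, the innermost variable of $L^{\vec p'}$, so the norm can be computed from the inside out (after the structural reduction of Lemma~\ref{Lm:ADx-y} and a careful rearrangement $\tau$ of the middle indices justified by Lemma~\ref{Lm:minkowski}). Your proposal never performs this duality, and I do not see how to repair the direct approach.

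A second genuine gap is the endpoint case $p_{k_\nu}=q$, which is allowed by $\max\{p_{k_l}:0\le l\le\nu\}\le q$. There the ``subcritical fractional integration at each rank-drop step'' degenerates: Young's inequality at the exponent endpoint gives no smoothing, and the paper has to replace it by weak-type bounds combined with the Hardy--Littlewood maximal function and real interpolation of mixed-norm spaces (Lemma~\ref{Lm:interpolation}), plus Marcinkiewicz interpolation elsewhere; none of this machinery appears in your outline. On the necessity side your power-scaling/indicator tests can only produce the non-strict inequalities. The strict condition $\min\{p_{k_l}:1\le l\le\nu\}<q$ and the bound $q\ge p_{k_0}$ when no rank drop occurs at $k_0$ require the logarithmically modified test functions $h(x)=|x|^{-n/q'}(\log(1/|x|))^{-(1+\varepsilon)/q'}$ and the box construction of (P8); also, your attribution of $\min\{p_{k_l}\}<q$ to ``$\lambda<mn$'' is not correct --- it comes from the failure of the estimate when all the Young steps are simultaneously at their endpoints. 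These are not routine details but the substance of why the characterization is exactly~(\ref{eq:pq:necessary}).
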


When applying the above theorem to the multilinear case, we get the norm estimate
even if  some $p_i$ is equal to $1$. Moreover, we relax the restrictions on the matrix
$A$.
In particular, we have no hypothesis on the matrices $A_{i,m+1}$ for $1\le i\le m+1$.

There are two main points in the proof of Theorem~\ref{thm:main}.
First, we have to simply the integral in (\ref{eq:Tf})
 since all the variables
are tangled through the matrix $A$.
Second, to prove the sufficiency, we have to weave the integration order of the variables $x_1^{(1)}$, $\ldots$, $x_1^{(n)}$ and
$x_j$ for $2\le j\le m+1$  very carefully since the integration order
 of $x_1$, $\ldots$, $x_{m+1}$ is not switchable in general.
Moreover,
both the maximal function and the theory
of interpolation spaces   for mixed-norm Lebesgue spaces are involved.

We show that the hypothesis that
the $(1,m+1)$-minor $(A_{i,j})_{ 2\le i\le m+1, 1\le j\le m} $ of $A$ is invertible
is also necessary if $m=1$.
Moreover, for the case  $m=1$ or $n=1$, we give complete characterizations of
$A$, $\vec p$, $q$ and $\lambda$ such that
$T_{\lambda}$ is bounded from
$L^{\vec p}$ to $L^q$, see Theorems~ \ref{thm:m=1} and \ref{thm:A:n=1} for details.
As a consequence, we show that
the bi-linear fractional integral
\[
 (f_1, f_2) \rightarrow \int_{\bbR^n} \frac{f_1(x-t) f_2(x+t)}{|t|^{\lambda}} dt
\]
studied by Grafakos \cite{Grafakos1992},
Kenig and Stein \cite{KenigStein1999}, Grafakos and  Kalton
\cite{GrafakosKalton2001}, Moen \cite{Moen2014},  Grafakos and  Lynch
\cite{GrafakosLynch2015},
extends to a linear operator on mixed-norm Lebesgue spaces.

The multilinear Riesz potentials  of
the following type
\[
   J_{\lambda} (f_1\otimes \ldots \otimes f_m)(x)=\int_{\bbR^{mn}}  \frac{f_1(y_1)\ldots
     f_m(y_m)}{(|x-y_1|+\ldots+|x-y_m|)^{\lambda}} dy_1 \ldots dy_m
\]
was studied extensively.
It was shown in \cite{Grafakos1992,GrafakosKalton2001,KenigStein1999}  that whenever $1<p_i\le \infty$, $0<q<\infty$, $0<\lambda<mn$ and
\begin{equation}\label{eq:homo:p}
  \frac{1}{p_1} + \ldots + \frac{1}{p_m} = \frac{1}{q} + \frac{mn-\lambda}{n},
\end{equation}
$J_{\lambda}$ maps $L^{p_1}\times \ldots \times L^{p_m}$ continuously to $L^q$.
And we refer to
\cite{Cruz-UribeMoen2013,%
Cruz-UribePerez1999,%
Cruz-UribePerez2000,%
FontanaMorpurgo,%
HanLuZhu2012,%
Hytonen2018,%
LaceyThiele1997,%
LaceyLi2006,%
LaceyMoenPerezTorres2010,%
LiMoenSun2015,%
LiSun2016}
for various aspects of Riesz potentials.

In this paper, we show that $J_{\lambda}$ can be extended to a linear operator defined on $L^{\vec p}$ with $\vec p = (p_1,\ldots,p_m)$.
Specifically, for $f\in L^{\vec p}(\bbR^{mn})$, define
\[
  J_{\lambda}f (x)  = \int_{\bbR^{mn}}  \frac{f(y_1,\ldots, y_m)}{(|x-y_1|+\ldots+|x-y_m|)^{\lambda}} dy_1 \ldots dy_m.
\]
We show that $J_{\lambda}$ is bounded from $L^{\vec p}$
to $L^q$ for certain indices.
In fact, we prove the boundedness for more general
operators.

Let $y\in \bbR^{mn}$ be defined similarly to $x$  in (\ref{eq:x}) and set
\begin{equation}\label{eq:w:A}
D=    \begin{pmatrix}
  D_1\\
  \vdots \\
  D_m
  \end{pmatrix},
\end{equation}
where
$D_1, \ldots, D_m$ are $n\times n$ matrices.
For $0<\lambda< mn$, define
\begin{align*}
  J_{\lambda,D}f(x) &=
    \int_{\bbR^{mn}} \frac{f(y)}
    { |D x -y|^{\lambda}}
    dy\\
    &=    \int_{\bbR^{mn}} \frac{f(y_1,\ldots, y_m)}
    {(|D_1 x -y_1| +\ldots + |D_m x - y_m|)^{\lambda}}
    dy_1\ldots dy_m.
\end{align*}
We give a complete characterization of $D$, $\vec p$,
$q$ and $\lambda$ such that
$J_{\lambda, D}$ is bounded from
$L^{\vec p}$ to $L^q$.

Let
\begin{equation}\label{eq:rk def}
  \gamma_{m+1}=0, \quad   \gamma_i=  \rank
  \begin{pmatrix}
  D_i\\
  \vdots \\
  D_m
  \end{pmatrix},\qquad 1\le i\le m.
\end{equation}
Suppose that for $1\le i_1<\ldots<i_{\nu}\le m$,
\begin{equation}\label{eq:rk}
  \gamma_1=\ldots = \gamma_{i_1}
  >\gamma_{i_1+1}=\ldots >\ldots = \gamma_{i_{\nu}}> \gamma_{i_{\nu}+1}=\ldots = \gamma_{m+1}.
\end{equation}

\begin{Theorem}\label{thm:factional:D}
Suppose that  $\vec p = (p_1, \ldots, p_m)$ with
$1\le p_i\le \infty$, $1\le i\le m$, $0<q\le \infty$ and $0<\lambda<mn$.
Then $J_{\lambda,D}$ is bounded from $L^{\vec p}$ to
$L^q$ if and only if the following items are satisfied,

\begin{enumerate}
\item The rank of $D$ is equal to $n$.

\item The indices $\vec p$, $q$ and $\lambda$ meet (\ref{eq:homo:p}).

\item
      There is some $1\le i\le m$ such that
$1<p_i<\infty$. Set $i_0 = \max\{i:\, p_i>1\}$.

\item  Let $\gamma_i$ and $i_l$  be defined by (\ref{eq:rk def})
and (\ref{eq:rk}), respectively.
Then
\begin{equation}\label{eq:s:e5:D}
\min\{p_{i_l}:\,  1\le l\le \nu \}<q
\quad \mathrm{and}\quad
\max\{p_{i_l}:\, 0\le l\le \nu\}\le q<\infty.
\end{equation}
\end{enumerate}
\end{Theorem}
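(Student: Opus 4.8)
The plan is to deduce Theorem~\ref{thm:factional:D} from Theorem~\ref{thm:main} by realising $J_{\lambda,D}$ as the restriction of a suitable operator $T_\lambda$ to functions that do not depend on one coordinate block. Given $D$ as in (\ref{eq:w:A}), I would attach to it the $(m+1)n\times(m+1)n$ matrix
\[
A=\begin{pmatrix}
0 & 0 & \cdots & 0 & I_n\\
-I_n & 0 & \cdots & 0 & D_1\\
0 & -I_n & \cdots & 0 & D_2\\
\vdots & & \ddots & & \vdots\\
0 & 0 & \cdots & -I_n & D_m
\end{pmatrix},
\]
that is, $A_{1,m+1}=I_n$, and $A_{i,i-1}=-I_n$, $A_{i,m+1}=D_{i-1}$ for $2\le i\le m+1$, all other blocks being zero. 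Expanding the determinant along the first block-row shows $A$ is invertible, and its $(1,m+1)$-minor $(A_{i,j})_{2\le i\le m+1,\,1\le j\le m}$ equals $-I_{mn}$, so $A$ meets the two standing hypotheses of Theorem~\ref{thm:main} whatever $D$ is. For the associated operator $T_\lambda$ and the exponent vector $\tilde p=(\infty,p_1,\ldots,p_m)$, the substitution $z_j=D_jx_{m+1}-x_j$, a measure-preserving change of variables in each $x_j$, turns $T_\lambda\tilde g$ into
\[
T_\lambda\tilde g(x_{m+1})=\int_{\bbR^{mn}}\frac{\tilde g(x_{m+1},z_1,\ldots,z_m)}{(|D_1x_{m+1}-z_1|+\cdots+|D_mx_{m+1}-z_m|)^\lambda}\,dz_1\cdots dz_m;
\]
in particular $T_\lambda\tilde f=J_{\lambda,D}f$ whenever $\tilde f(u_1,u_2,\ldots,u_{m+1})=f(u_2,\ldots,u_{m+1})$ is independent of $u_1$.

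From this I would record the equivalence: $J_{\lambda,D}$ is bounded from $L^{\vec p}$ to $L^q$ if and only if $T_\lambda$ is bounded from $L^{\tilde p}$ to $L^q$. The ``if'' direction follows from $T_\lambda\tilde f=J_{\lambda,D}f$ together with $\|\tilde f\|_{L^{\tilde p}}=\|f\|_{L^{\vec p}}$, the innermost $L^\infty_{u_1}$-norm being that of a function constant in $u_1$. For the converse, given $\tilde g$ set $G(z)=\sup_{w\in\bbR^n}|\tilde g(w,z)|$ (essential supremum); then $|T_\lambda\tilde g(x_{m+1})|\le J_{\lambda,D}G(x_{m+1})$ pointwise while $\|G\|_{L^{\vec p}}=\|\tilde g\|_{L^{\tilde p}}$, hence $\|T_\lambda\tilde g\|_{L^q}\le\|J_{\lambda,D}\|_{L^{\vec p}\to L^q}\,\|\tilde g\|_{L^{\tilde p}}$.

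It then remains to match the hypotheses of Theorem~\ref{thm:main} with items (i)--(iv) of Theorem~\ref{thm:factional:D}. For this $A$ one computes $r_k=\gamma_{k-1}$ for $2\le k\le m+2$ (and $r_1=n$), with $r_k$ and $\gamma_i$ as in (\ref{eq:rk 1}) and (\ref{eq:rk def}); hence the jump indices of $(r_k)_{k\ge 2}$ and of $(\gamma_i)$ satisfy $k_l=i_l+1$ and $k_0=i_0+1$, while $\tilde p_1=\infty$ and $\tilde p_k=p_{k-1}$ for $k\ge2$. Under this dictionary the homogeneity (\ref{eq:ss:e0}) written for $\tilde p$ is exactly (\ref{eq:homo:p}); the requirement in condition (i) of Theorem~\ref{thm:main} that the matrix $(A_{i,m+1})_{2\le i\le m+1}$ have rank $n$ is exactly $\rank D=n$; the requirement in condition (ii) of Theorem~\ref{thm:main} that some $\tilde p_k$ with $2\le k\le m+1$ lie strictly between $1$ and $\infty$ is exactly item (iii); and the inequalities (\ref{eq:pq:necessary}) are exactly (\ref{eq:s:e5:D}). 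In the sufficiency direction this finishes the proof through the equivalence above.

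For the necessity direction, suppose $J_{\lambda,D}$ is bounded. First $\rank D=n$: otherwise $J_{\lambda,D}f$ is invariant under translations of $\bbR^n$ by the nontrivial subspace $\ker D$, so for $f$ the indicator of a ball, whose image is a nonzero function, it cannot lie in $L^q(\bbR^n)$, contradicting boundedness; thus item (i) holds. Next, dilating $f$ together with the output variable and comparing norms forces the homogeneity (\ref{eq:homo:p}), which is item (ii). With (i) and (ii) in hand, Theorem~\ref{thm:main} applies to $T_\lambda$, and the equivalence above together with the dictionary yields items (iii) and (iv). I do not expect a serious analytic difficulty here, since the maximal-function and interpolation arguments for mixed-norm spaces all sit inside Theorem~\ref{thm:main}; the only delicate point is organisational, namely choosing $A$ so that it simultaneously satisfies both standing hypotheses of Theorem~\ref{thm:main} and realises $J_{\lambda,D}$, and then carrying the index shift $k\leftrightarrow i+1$ and the identity $r_k=\gamma_{k-1}$ consistently through all four items.
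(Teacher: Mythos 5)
Your reduction is formally correct as a piece of bookkeeping: the matrix $A$ you build is invertible with $(1,m+1)$-minor $-I_{mn}$, the change of variables $z_j=D_jx_{m+1}-x_j$ does identify $T_\lambda$ on functions independent of the first block with $J_{\lambda,D}$, the equivalence of operator norms via $G(z)=\mathrm{ess\,sup}_w|\tilde g(w,z)|$ is sound, and the dictionary $r_k=\gamma_{k-1}$, $k_l=i_l+1$, $\tilde p_1=\infty$ translates the conditions of Theorem~\ref{thm:main} exactly into items (i)--(iv) of Theorem~\ref{thm:factional:D}. This correctly explains the paper's remark that Theorem~\ref{thm:factional:D} ``is related to the case $p_1=\infty$ in Theorem~\ref{thm:main}.''

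The genuine gap is that the argument is circular relative to the only available proof of Theorem~\ref{thm:main}. In this paper Theorem~\ref{thm:factional:D} is proved first (Section 3) and is then invoked in the proof of the sufficiency of Theorem~\ref{thm:main} (Section 4) precisely in the case $p_1\ge p_i$ for all $i$: there the dual inequality (\ref{eq:w:e43a}) is rewritten and the paper concludes ``Now the conclusion follows from Theorem~\ref{thm:factional:D}.'' Your reduction uses $\tilde p_1=\infty$, so it lands squarely in that case; invoking Theorem~\ref{thm:main} there is invoking Theorem~\ref{thm:factional:D} itself. Consequently your proposal contains none of the actual analytic content of the theorem --- the normal form for $|Dx-y|$ (Lemma~\ref{Lm:Dx-y}), the reduction of trailing exponents equal to $1$ (Lemma~\ref{Lm:end points D}), the iterated Young/H\"older scheme respecting the non-commuting integration order, the Hardy--Littlewood maximal function bound, and the Marcinkiewicz and real-interpolation arguments (Lemma~\ref{Lm:interpolation}) needed for the endpoint $p_{i_\nu}=q$ --- all of which must be supplied directly. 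To make your strategy non-circular you would need an independent proof of Theorem~\ref{thm:main} in the regime $p_1=\infty$, which is exactly the statement you are trying to prove.
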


We point out that Theorem~\ref{thm:factional:D}
is used in the proof of Theorem~\ref{thm:main}.
In fact, it is related to the case
$p_1 = \infty$ in Theorem~\ref{thm:main}.

By setting $D_1=\ldots = D_m=I$ in  Theorem~\ref{thm:factional:D}, we get
  necessary and sufficient conditions
  for $J_{\lambda}$
to be  bounded from $L^{\vec p}$ to $L^q$.

\begin{Corollary}\label{Co:fractional integral a}
Suppose that $0<\lambda<mn$,  $\vec p    = (p_1, \ldots, p_m)$ with $1\le p_i\le   \infty$, $1\le i\le m$, and $0<q \le \infty$.
Then the norm estimate
\begin{equation}\label{eq:J lambda aa}
  \|J_{\lambda }f\|_{L^q} \lesssim \|f\|_{L^{\vec p}},\qquad \forall f\in L^{\vec p}(\bbR^{mn})
\end{equation}
is true if and only if

\begin{enumerate}

\item  The indices $\vec p$, $q$ and $\lambda$  meet  (\ref{eq:homo:p}),

\item  There is some $1\le i\le m$ such that
 $1<p_i<\infty$. Set $i_0 = \max\{i:\, p_i>1, 1\le i\le m\}$. Then
     $p_{i_0}\le q<\infty$ (for $i_0<m$) or
    $p_m<q<\infty$ (for $i_0=m$).
\end{enumerate}
\end{Corollary}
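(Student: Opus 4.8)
The plan is to deduce Corollary~\ref{Co:fractional integral a} directly from Theorem~\ref{thm:factional:D} by specializing to $D_1 = \cdots = D_m = I$, and then to simplify the four conditions listed there to the two displayed in the corollary. With $D_i = I$ the operator $J_{\lambda,D}$ coincides with $J_\lambda$, and the $mn\times n$ matrix $D$ of (\ref{eq:w:A}) is the vertical stack of $m$ copies of the $n\times n$ identity, so $\rank D = n$. Hence item~(1) of Theorem~\ref{thm:factional:D} is automatically satisfied and drops out. Item~(2) of Theorem~\ref{thm:factional:D} is verbatim item~(1) of the corollary, and the first sentence of item~(3) of Theorem~\ref{thm:factional:D} (including the definition of $i_0$) is the first sentence of item~(2) of the corollary. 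So only item~(4) of Theorem~\ref{thm:factional:D} needs translating.

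To do this I would compute the ranks in (\ref{eq:rk def}): for $1\le i\le m$ the matrix $(D_i,\ldots,D_m)^*$ is again a stack of identities, hence has rank $n$, so $\gamma_i = n$, while $\gamma_{m+1}=0$. Thus the chain (\ref{eq:rk}) exhibits a single strict descent, occurring between the indices $m$ and $m+1$; consequently $\nu = 1$ and $i_1 = m$. Substituting $\nu=1$ and $i_1 = m$ into (\ref{eq:s:e5:D}) gives
\[
  p_m < q
  \qquad\text{and}\qquad
  \max\{p_{i_0},\,p_m\}\le q<\infty .
\]
Finally I would split according to the location of $i_0 = \max\{i:\ p_i>1\}$. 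If $i_0 = m$, then $p_m = p_{i_0}>1$ and the two inequalities collapse to $p_m<q<\infty$. If $i_0<m$, then $p_i\le 1$ for every $i>i_0$, and since each $p_i\ge 1$ this forces $p_m = 1$; the inequalities become $1<q$ together with $\max\{p_{i_0},1\} = p_{i_0}\le q<\infty$, and as $p_{i_0}>1$ the first of these is implied by the second, leaving $p_{i_0}\le q<\infty$. This is exactly the dichotomy in item~(2) of the corollary, completing the deduction.

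Since this is a specialization of an already-established characterization, there is no genuine obstacle; the only point that calls for a little care is that the index $i_0$ (governed by the sizes of the $p_i$) need not equal $i_1 = m$ (governed by the rank pattern of $D$), which is precisely why the short case distinction at the end is needed.
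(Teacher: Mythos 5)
Your deduction is correct and is precisely how the paper obtains the corollary: the text states it follows "by setting $D_1=\ldots=D_m=I$ in Theorem~\ref{thm:factional:D}," and your computation of $\gamma_i=n$ for $i\le m$, hence $\nu=1$ and $i_1=m$, together with the final case split on whether $i_0=m$ or $i_0<m$ (forcing $p_m=1$ in the latter case), correctly reduces condition (\ref{eq:s:e5:D}) to the dichotomy in item (2).
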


The paper is organized as follows. In Section 2,
we give some preliminary results.
For operators that commute with translations on mixed-norm Lebesgue spaces,
we give the relationship between the corresponding indices,   which generalizes
the result for Lebesgue spaces~\cite[Theorem 2.5.6]{Grafakos2008}.
In Sections 3 and 4,  we give  proofs
of Theorems~\ref{thm:factional:D} and \ref{thm:main}, respectively.
In Section 5, we consider the case  $m=1$.
We give necessary and sufficient
conditions on $A$, $\vec p$, $q$ and $\lambda$
such that
$T_{\lambda}$ is bounded from $L^{\vec p}$ to $L^q$.
In particular,
we show that the hypothesis
that the $(1,m+1)$-minor $(A_{i,j})_{2\le i\le m+1, 1\le j\le m}$ of $A$ is invertible
is also necessary when $m=1$.
In Section 6, for the case  $n=1$, we  also give the characterization of
$A$, $\vec p$, $q$ and $\lambda$ for which $T_{\lambda}$ is bounded from $L^{\vec p}$ to $L^q$.

\section{Preliminary Results}

\subsection{The Riesz Potential}
The boundedness of the fractional integral  can be found in many textbooks, e.g.,
see
\cite[Theorem 6.1.3]{Grafakos2008m},
\cite[Proposition 7.8]{Muscalu2013i}
or
\cite[Chapter 5.1]{Stein1970}.

\begin{Proposition}\label{prop:fractional}
Let $\lambda$ be a real number with $0<\lambda<n$
and let
$1\le p,q\le \infty$.
Then
\[
  f\mapsto \int_{\bbR^n} \frac{f(y)}{|x-y|^{\lambda}} dy
\]
maps $L^p(\bbR^n)$ continuously to $L^q(\bbR^n)$ if and only if $1<p<q<\infty$ and
\[
  \frac{1}{p} = \frac{1}{q} + \frac{n-\lambda}{n}.
\]
\end{Proposition}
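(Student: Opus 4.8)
The plan is to prove the two implications separately. For the \emph{if} part -- which is the Hardy--Littlewood--Sobolev inequality -- I would use the standard kernel-splitting argument that reduces the estimate to the $L^p$-boundedness of the Hardy--Littlewood maximal operator $M$; for the \emph{only if} part I would exploit the scaling symmetry of the operator to pin down the homogeneity relation and then rule out the two endpoints by explicit examples. Write $I_\lambda f(x)=\int_{\bbR^n}|x-y|^{-\lambda}f(y)\,dy$ throughout.

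\emph{Sufficiency.} Assume $1<p<q<\infty$ and $\frac1p=\frac1q+\frac{n-\lambda}{n}$; this is equivalent to $\frac1{p'}=\frac\lambda n-\frac1q$, so in particular $\lambda p'>n$. For $f\ge 0$ and $R>0$, split $I_\lambda f(x)=\int_{|x-y|<R}+\int_{|x-y|\ge R}$. Decomposing the first integral over the dyadic annuli $2^{-k-1}R\le |x-y|<2^{-k}R$ and bounding each annular average of $f$ by $Mf(x)$ gives $\int_{|x-y|<R}|x-y|^{-\lambda}f(y)\,dy\lesssim R^{n-\lambda}Mf(x)$, the geometric series converging because $\lambda<n$; Hölder's inequality together with $\lambda p'>n$ gives $\int_{|x-y|\ge R}|x-y|^{-\lambda}f(y)\,dy\le\big(\int_{|z|\ge R}|z|^{-\lambda p'}dz\big)^{1/p'}\|f\|_p\lesssim R^{n/p'-\lambda}\|f\|_p$. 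Choosing $R=(\|f\|_p/Mf(x))^{p/n}$ so that the two bounds balance yields the pointwise estimate
\[
  I_\lambda f(x)\lesssim (Mf(x))^{p/q}\,\|f\|_p^{1-p/q},
\]
where the exponents come out correctly precisely because of the homogeneity relation. Raising to the $q$-th power, integrating, and invoking $\|Mf\|_p\lesssim\|f\|_p$ (this is where $p>1$ is used) gives $\|I_\lambda f\|_q\lesssim\|f\|_p$.

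\emph{Necessity.} Suppose $I_\lambda$ is bounded from $L^p$ to $L^q$. Apply it to the dilates $f_\delta(x)=f(\delta x)$: since $\|f_\delta\|_p=\delta^{-n/p}\|f\|_p$ and $I_\lambda f_\delta(x)=\delta^{\lambda-n}(I_\lambda f)(\delta x)$, the inequality $\|I_\lambda f_\delta\|_q\lesssim\|f_\delta\|_p$, i.e.\ $\delta^{\lambda-n-n/q}\lesssim\delta^{-n/p}$, must hold for all $\delta>0$, forcing $\lambda-n-n/q=-n/p$; this is the homogeneity relation $\frac1p=\frac1q+\frac{n-\lambda}{n}$. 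Since $0<\lambda<n$ it already yields $\frac1p>\frac1q$ (hence $p<q$) and $\frac1p>\frac{n-\lambda}{n}$ (hence $p<\frac{n}{n-\lambda}<\infty$), so only the endpoints $p=1$ and $q=\infty$ remain to be excluded. If $p=1$, the homogeneity relation forces $q=n/\lambda$; taking $f_\varepsilon=|B(0,\varepsilon)|^{-1}\mathbf{1}_{B(0,\varepsilon)}$ we have $\|f_\varepsilon\|_1=1$ while $I_\lambda f_\varepsilon(x)\to|x|^{-\lambda}$ pointwise as $\varepsilon\to0$, so Fatou's lemma gives $\big\||x|^{-\lambda}\big\|_{L^{n/\lambda}}\le\liminf_\varepsilon\|I_\lambda f_\varepsilon\|_{n/\lambda}<\infty$, contradicting $\int_{\bbR^n}|x|^{-n}dx=\infty$. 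If $q=\infty$, then $p=\frac{n}{n-\lambda}$; pick $\beta\in(\frac{n-\lambda}{n},1]$ (nonempty since $\lambda>0$) and set $f(y)=|y|^{-(n-\lambda)}(\log\frac{e}{|y|})^{-\beta}\mathbf{1}_{\{|y|<1/2\}}$. A computation in polar coordinates shows $f\in L^{n/(n-\lambda)}$ because $\beta p>1$, whereas $I_\lambda f(0)=\int_{\{|y|<1/2\}}|y|^{-n}(\log\frac{e}{|y|})^{-\beta}dy=\infty$ because $\beta\le1$; applying boundedness to the truncations $f\,\mathbf{1}_{\{|y|>1/k\}}$, whose $L^p$ norms stay bounded while $I_\lambda(f\,\mathbf{1}_{\{|y|>1/k\}})(0)\to\infty$, gives a contradiction.

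The step I expect to demand the most care is the sharpness at $q=\infty$: the bare power example is not enough, and one must insert a logarithmic factor with an exponent in the precise range $(\frac{n-\lambda}{n},1]$; after that the argument is routine. Everything else is either a clean scaling computation or, once the Hardy--Littlewood maximal theorem is granted, the standard proof of the Hardy--Littlewood--Sobolev inequality, so this Proposition is quoted rather than reproved in full detail.
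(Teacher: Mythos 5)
The paper does not prove this Proposition at all: it is quoted as a known result with references to Grafakos, Muscalu--Schlag, and Stein. Your argument is correct and is precisely the standard proof found in those references --- Hedberg's kernel-splitting/maximal-function trick for sufficiency (where the balancing of $R$ and the identity $1/p'=\lambda/n-1/q$, hence $\lambda p'>n$ exactly when $q<\infty$, all check out), and dilation invariance plus the two endpoint counterexamples for necessity, with the logarithmic correction at $q=\infty$ handled in the correct exponent range $(\tfrac{n-\lambda}{n},1]$. The only cosmetic point is that when choosing $R=(\|f\|_p/Mf(x))^{p/n}$ one should remark that $Mf(x)$ is finite a.e.\ and nonzero for $f\not\equiv 0$, which is routine.
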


\subsection{Operators That Commute with Translations}
It is known that if a  linear operator is bounded from $L^p(\bbR^n)$ to $L^q(\bbR^n)$ and commutes with translations, then $q\ge p$ (See \cite[Theorem 2.5.6]{Grafakos2008}).
We show that a similar result is true for operators on mixed-norm
Lebesgue spaces.

\begin{Theorem}\label{thm:commute}
Let $T$ be a linear operator which is bounded from
$L^{\vec p}$ to $L^{\vec q }$ and commutes with translations, where $1\le  p_i, q_i<\infty$. Then we have $q_i\ge p_i$, $1\le i\le m$.
\end{Theorem}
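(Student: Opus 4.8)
The plan is to adapt the classical dilation argument (as in \cite[Theorem 2.5.6]{Grafakos2008}) to the mixed-norm setting, exploiting the fact that dilation in each coordinate block interacts multiplicatively with the mixed norm. First I would recall that the classical proof proceeds by taking a fixed nonzero $f$, translating it by a large amount, and using that translates run off to infinity so that $\|f + \tau_h f\|_{L^p} \approx 2^{1/p}\|f\|_{L^p}$ for $|h|$ large; iterating with $2^k$ well-separated translates gives a factor $2^{k/p}$ on the input and, since $T$ commutes with translations, a factor $\gtrsim 2^{k/q}$ on the output (because $\|Tf\|_{L^q}\ne 0$ for a suitable $f$), forcing $1/q \le 1/p$, i.e. $q \ge p$. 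In the mixed-norm case, the key observation is that this separation argument can be run in a single coordinate block $x_i \in \bbR^{n_i}$ while leaving the other blocks untouched: if I translate only in the $x_i$ variable by vectors $h^{(1)},\ldots,h^{(N)}\in\bbR^{n_i}$ that are mutually far apart, then by the monotone/dominated convergence theorem applied iteratively to the nested norms,
\[
  \Big\| \sum_{j=1}^N \tau_{h^{(j)}e_i} f \Big\|_{L^{\vec p}} \longrightarrow N^{1/p_i}\, \|f\|_{L^{\vec p}}
\]
as the separation tends to infinity, since for fixed values of the other variables the functions $\tau_{h^{(j)}e_i}f(\cdot)$ have essentially disjoint supports in $x_i$ and the inner $L^{p_i}_{x_i}$ norm picks up exactly the factor $N^{1/p_i}$, after which the outer norms pass to the limit.

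Next I would use that $T$ commutes with translations, hence with $\tau_{h e_i}$ for every $h\in\bbR^{n_i}$, so that $T\big(\sum_j \tau_{h^{(j)}e_i}f\big) = \sum_j \tau_{h^{(j)}e_i}(Tf)$. Applying the same asymptotic computation to $Tf$ in the target space $L^{\vec q}$ — which requires $Tf \not\equiv 0$, guaranteed by choosing $f$ with $\|f\|_{L^{\vec p}}>0$ and $\|Tf\|_{L^{\vec q}}>0$; if no such $f$ exists then $T=0$ and the conclusion is vacuous — gives
\[
  \Big\| \sum_{j=1}^N \tau_{h^{(j)}e_i}(Tf)\Big\|_{L^{\vec q}} \longrightarrow N^{1/q_i}\,\|Tf\|_{L^{\vec q}}.
\]
Combining with the boundedness $\|T g\|_{L^{\vec q}} \le C\|g\|_{L^{\vec p}}$ applied to $g = \sum_j \tau_{h^{(j)}e_i}f$ and letting the separation go to infinity yields $N^{1/q_i}\|Tf\|_{L^{\vec q}} \le C N^{1/p_i}\|f\|_{L^{\vec p}}$ for every $N$. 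Letting $N\to\infty$ forces $1/q_i \le 1/p_i$, i.e. $q_i \ge p_i$. Running this separately for each $i = 1,\ldots,m$ gives the full claim.

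The main technical obstacle is justifying the limit $\big\| \sum_j \tau_{h^{(j)}e_i} f\big\|_{L^{\vec p}} \to N^{1/p_i}\|f\|_{L^{\vec p}}$ rigorously, because the mixed norm is a nested iteration and one cannot simply invoke disjoint-support additivity of $L^{p_i}$ globally. I would handle this by first reducing to $f$ a simple function supported in a bounded box (these are dense in $L^{\vec p}$ for $p_i<\infty$, and a routine $\varepsilon$-approximation transfers the estimate), for which exact disjointness of supports in the $x_i$-slice holds once the translations are separated by more than the box diameter; then the inner norm identity $\|\sum_j \tau_{h^{(j)}e_i}f\|_{L^{p_i}_{x_i}}^{p_i} = N \|f\|_{L^{p_i}_{x_i}}^{p_i}$ is exact and independent of the other variables, so the outer norms give precisely $N^{1/p_i}\|f\|_{L^{\vec p}}$ with no limit needed at all. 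The same remark applies verbatim to $Tf$ after noting $Tf \in L^{\vec q}$ and approximating it by simple functions. The hypothesis $p_i, q_i < \infty$ is used exactly here, both for density of simple functions and for the strict inequality $N^{1/p_i}\to\infty$ that drives the contradiction; the condition $q_i \ge 1$ plays no role beyond making $L^{\vec q}$ a genuine normed space. This makes the argument clean and avoids any delicate interchange of limits.
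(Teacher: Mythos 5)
Your proposal is correct and follows essentially the same route as the paper: translate $f$ only in the $i$-th coordinate block so that, after reducing to compactly supported functions by density, the supports become disjoint at the level of the $L^{p_i}_{x_i}$ integration and the mixed norm picks up exactly the factor $N^{1/p_i}$ (this is the content of the paper's Lemma~\ref{Lm: translation}, specialized to $y_j=0$ for $j\ne k$), and then compare input and output via translation invariance of $T$. The only difference is cosmetic: you use $N$ well-separated translates and let $N\to\infty$, whereas the paper uses two translates and closes the argument by taking the supremum over $f$ defining $\|T\|_{L^{\vec p}\to L^{\vec q}}$.
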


Before proving this theorem, we present some preliminary results.

It is well known that for $f\in L^p$ with $1\le p<\infty$,
$\lim_{|y|\rightarrow\infty} \|f+f(\cdot-y)\|_p=2^{1/p} \|f\|_p$.
For mixed-norm Lebesgue spaces, we show that the limit is path dependent.

\begin{Lemma}\label{Lm: translation}
Let $y = (y_1,\ldots,y_m)\in \bbR^{mn}$. Suppose that
$y_k\ne 0$
for some $1\le k\le m$  and $y_i=0$ for all $k+1\le i\le m$.     Then for any $f\in L^{\vec p}(\bbR^{mn})$, where $\vec p=(p_1, \ldots, p_m)$ with $1\le  p_i<\infty$, $1\le i\le m$, we have
\[
  \lim_{a\rightarrow \infty} \| f(\cdot - a y) + f\|_{L^{\vec p}}
  =
  2^{1/p_k}\|f\|_{L^{\vec p}}.
\]
\end{Lemma}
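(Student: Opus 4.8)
The plan is to reduce to the well-known one-variable limit by peeling off the integrations in the reverse order $x_m, x_{m-1}, \ldots, x_{k+1}$ first (which are unaffected by the shift since $y_i = 0$ there), then handle the $x_k$-integration using the classical $L^{p_k}$ identity, and finally observe that the inner integrations in $x_{k-1},\ldots,x_1$ pass through because the relevant functions are pointwise equal. More precisely, write $g(x_1,\ldots,x_k) = \|f(x_1,\ldots,x_k,\cdot)\|_{L^{(p_{k+1},\ldots,p_m)}_{(x_{k+1},\ldots,x_m)}}$, so that $\|f\|_{L^{\vec p}} = \|g\|_{L^{(p_1,\ldots,p_k)}}$. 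Since $y_{k+1}=\ldots=y_m=0$, the shift $f(\cdot - ay)$ only translates the first $k$ blocks of variables, and by the triangle inequality in the inner mixed norm (Minkowski, valid since each $p_i \ge 1$) together with its reverse we get that $\|f(\cdot - ay) + f\|_{L^{\vec p}}$ is comparable to $\|g(\cdot - a(y_1,\ldots,y_k)) + g\|_{L^{(p_1,\ldots,p_k)}}$; in fact, because the two summands have disjoint supports in the $x_k$-variable once $a$ is large (as $y_k \ne 0$ and $g \in L^{(p_1,\ldots,p_k)}$ can be approximated by compactly supported functions), the inner norm of the sum equals the pointwise-a.e. expression built from the separate pieces, so no loss occurs and we get an exact identity in the limit rather than mere comparability.

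The key computation is then the following: for fixed $(x_1,\ldots,x_{k-1})$, consider $h_{x_1,\ldots,x_{k-1}}(x_k) = g(x_1,\ldots,x_{k-1},x_k)$ as an element of $L^{p_k}_{x_k}(\mathbb{R}^n)$. The classical fact (stated in the excerpt) gives
\[
  \lim_{a\to\infty} \| h(\cdot - a y_k) + h \|_{L^{p_k}} = 2^{1/p_k} \|h\|_{L^{p_k}}
\]
for each such $h \in L^{p_k}$, uniformly enough to integrate. Raising to the $p_{k-1}$ power and integrating in $x_{k-1}$, then iterating outward through $x_{k-2},\ldots,x_1$, and invoking the dominated convergence theorem at each stage (the dominating function being, e.g., $2\,(\|h(\cdot-ay_k)\|_{L^{p_k}} + \|h\|_{L^{p_k}}) \le 4\|h\|_{L^{p_k}}$ in the relevant norm, which lies in $L^{(p_1,\ldots,p_{k-1})}$ since $g \in L^{(p_1,\ldots,p_k)}$), yields
\[
  \lim_{a\to\infty} \|g(\cdot - a(y_1,\ldots,y_k)) + g\|_{L^{(p_1,\ldots,p_k)}} = 2^{1/p_k}\|g\|_{L^{(p_1,\ldots,p_k)}} = 2^{1/p_k}\|f\|_{L^{\vec p}}.
\]

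The main obstacle I expect is justifying the interchange of limit and the outer integrations cleanly: for $a$ in a subsequence one has pointwise a.e.\ convergence of the integrand $\| h(\cdot - a y_k) + h \|_{L^{p_k}}^{p_{k-1}}$, but one must produce an honest $a$-uniform dominating function to invoke dominated convergence, and then upgrade from subsequential limits to the full limit. This is handled by the crude bound above, which is independent of $a$ and integrable; since the limit candidate $2^{1/p_k}\|f\|_{L^{\vec p}}$ does not depend on which subsequence we pass to, the full limit exists and equals it. A secondary technical point is the disjoint-support reduction in the first paragraph: strictly, the supports of $g(\cdot - a(y_1,\ldots,y_k))$ and $g$ need not be disjoint for a general $g \in L^{(p_1,\ldots,p_k)}$, so rather than an exact identity one approximates $f$ in $L^{\vec p}$ by functions with compact $x_k$-support, uses the exact identity there, and passes to the limit in the approximation using continuity of all the norms involved — this is routine given the density of such functions in $L^{\vec p}$.
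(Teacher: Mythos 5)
Your reduction inverts the order of the iterated norms, and this is fatal in the mixed-norm setting. With the paper's convention $\|f\|_{L^{\vec p}}=\big\|\,\|f\|_{L^{p_1}_{x_1}}\cdots\big\|_{L^{p_m}_{x_m}}$, the variables $x_{k+1},\ldots,x_m$ are the \emph{outermost} ones, so the identity $\|f\|_{L^{\vec p}}=\|g\|_{L^{(p_1,\ldots,p_k)}}$ with $g(x_1,\ldots,x_k)=\|f(x_1,\ldots,x_k,\cdot)\|_{L^{(p_{k+1},\ldots,p_m)}_{(x_{k+1},\ldots,x_m)}}$ amounts to computing the norm over $(x_{k+1},\ldots,x_m)$ \emph{first}; that interchanges the order of integration, which for mixed norms only gives a one-sided Minkowski inequality (cf.\ Lemma~4.6 of the paper), not an equality. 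The legitimate peeling goes the other way: $\|f\|_{L^{\vec p}}=\|G\|_{L^{(p_{k+1},\ldots,p_m)}}$ with $G(x_{k+1},\ldots,x_m)=\|f(\cdot,x_{k+1},\ldots,x_m)\|_{L^{(p_1,\ldots,p_k)}}$, which reduces matters to the case $k=m$ but leaves the inner problem intact. Likewise your final iteration ``integrate $x_k$, then $x_{k-1},\ldots,x_1$'' runs in the reverse of the defined order.

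The ``key computation'' also fails on its own terms: the hypothesis only forces $y_{k+1}=\cdots=y_m=0$, so the translation by $a(y_1,\ldots,y_k)$ moves \emph{all} of $x_1,\ldots,x_k$, not just $x_k$. For fixed $(x_1,\ldots,x_{k-1})$ the two summands are $h_{x_1-ay_1,\ldots,x_{k-1}-ay_{k-1}}(\cdot-ay_k)$ and $h_{x_1,\ldots,x_{k-1}}(\cdot)$ --- a translate of a \emph{different} slice plus the original slice --- so the classical one-variable limit $\|h(\cdot-ay_k)+h\|_{p_k}\to 2^{1/p_k}\|h\|_{p_k}$ simply does not apply, and there is no pointwise limit to dominate. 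The working argument (the paper's) starts from the \emph{innermost} variable: for $f\in C_c$ and $a$ large the supports of $f(\cdot-ay)$ and $f$ are disjoint, so $|f(x-ay)+f(x)|^{p_1}=|f(x-ay)|^{p_1}+|f(x)|^{p_1}$; taking the $L^{p_1}_{x_1}$ norm and using its translation invariance in $x_1$ removes the shift in $x_1$ and leaves a sum of a translate of one and the same function of $(x_2,\ldots,x_m)$ plus that function. Iterating up to level $k$, where disjointness in the $x_k$-coordinate makes exactly one term vanish at each point, produces the factor $2^{1/p_k}$; the general case then follows by density, as in your last paragraph. Your density step at the end is fine, but it cannot repair the two structural errors above.
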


\begin{proof}
First, we prove the conclusion for $f\in C_c(\bbR^{mn})$.
For $a$ large enough, we have
\[
   |f(x-a y) + f(x) |^{p_1} = |f(x-a y)|^{p_1} + | f(x)|^{p_1}.
\]
Hence for $k>1$,
\begin{align*}
 \|f(x-a y)  &+ f(x)\|_{L_{x_1}^{p_1}} \\
  & = \|f(\cdot, x_2-a y_2,\ldots,x_m-ay_m )\|_{L_{x_1}^{p_1}}
\! +
  \|f(\cdot, x_2 ,\ldots,x_m )\|_{L_{x_1}^{p_1}}.
\end{align*}
Note that the above equations are true for all
$1\le p_1< \infty$.
Moreover, since $f$ is compactly supported,
one of $\|f(\cdot, x_2-a y_2,\ldots,x_m-ay_m )\|_{L_{x_1}^{p_1}}$
and $  \|f(\cdot, x_2 ,\ldots,x_m )\|_{L_{x_1}^{p_1}}$
must be zero when $k\ge 2$ and $a$ is large enough.
Taking the $L^{p_i}$  norm  with respect to $x_i$ on both sides of the above equation successively, $2\le i\le k$,
and keeping in mind that $ y_k\ne 0$ while $ y_j= 0$ for $k+1\le j\le m$,
we get
\[
  \|f(\cdot-a y) + f \|_{L_{x_k}^{p_k}(\ldots (L_{x_1}^{p_1}))}
  = 2^{1/p_k}\|f(\cdot, \ldots, \cdot, x_{k+1}, \ldots,x_m)\|_{L_{x_k}^{p_k}(\ldots (L_{x_1}^{p_1}))}.
\]
Hence for $a$ large enough,
\[
  \|f(\cdot-a y) + f\|_{L^{\vec p}}
  = 2^{1/p_k} \|f\|_{L^{\vec p}}.
\]

Next we consider the general case.
Fix some $f$ in $L^{\vec p}$.
Since $C_c(\bbR^n)$ is dense in $L^{\vec p}$,
for any $\varepsilon>0$, there is some $g\in C_c(\bbR^n)$ such that
$\|f-g\|_{L^{\vec p}}<\varepsilon$.

We see from the previous arguments that for $a$ large enough,
\[
  \|g(\cdot-a y) + g\|_{L^{\vec p}}
  = 2^{1/p_k} \|g\|_{L^{\vec p}}.
\]
Hence
\begin{align*}
 & \Big|\|f(\cdot-a y) + f\|_{L^{\vec p}} - 2^{1/p_k}\| f\|_{L^{\vec p}} \Big| \\
 &\le \Big|\|f(\cdot-a y) + f\|_{L^{\vec p}} - 2^{1/p_k}\| g\|_{L^{\vec p}} \Big| + 2^{1/p_k}\Big|\|g\|_{L^{\vec p}}- \|f\|_{L^{\vec p}}\Big| \\
 &=  \Big|\|f(\cdot-a y) + f\|_{L^{\vec p}} - \|g(\cdot-a y) + g\|_{L^{\vec p}} \Big|
 +2^{1/p_k}\Big|\|g\|_{L^{\vec p}}- \|f\|_{L^{\vec p}}\Big| \\
&\le  \|f(\cdot-a y) - g(\cdot-a y)\|_{L^{\vec p}}
   + \|f-g\|_{L^{\vec p}} +2^{1/p_k}\varepsilon\\
&\le (2+ 2^{1/p_k}) \varepsilon .
\end{align*}
Therefore,
\[
\lim_{a\rightarrow \infty}  \|f(\cdot-a y) + f\|_{L^{\vec p}}
  = 2^{1/p_k} \|f\|_{L^{\vec p}},\qquad \forall f\in L^{\vec p}.
\]
\end{proof}

We are now ready to give a proof of Theorem~\ref{thm:commute}.

\begin{proof}[Proof of Theorem~\ref{thm:commute}]
Fix some $1\le k\le m$.
Let $z\in\bbR^{mn}$ be such that
$z_i=0$ for $i\ne k$ and $z_k = (1,\ldots, 1)^*\in\bbR^n$. For any $f\in L^{\vec p}$
and $a>0$, denote $f_{az} = f(\cdot-az)$. We see from Lemma~\ref{Lm: translation} that
as $a\rightarrow\infty$,
\begin{align*}
\|T f + T f_{az}\|_{L^{\vec q }}
&\le \|T \|_{L^{\vec p}\rightarrow L^{\vec q }}
   \|f+f_{az}\|_{L^{\vec p}}
 \rightarrow  2^{1/p_k}  \|T\|_{L^{\vec p}\rightarrow L^{\vec q }} \|f\|_{L^{\vec p}}.
\end{align*}
On the other hand, since $T$ commutes with translations, we have
\begin{align*}
\|T f + T  f_{az}\|_{L^{\vec q }}
=\|T f + (T f)(\cdot+az)\|_{L^{\vec q }}.
\end{align*}
Using Lemma~\ref{Lm: translation} again, we get
\[
  \liminf_{a\rightarrow\infty}\|T f + T f_{az}\|_{L^{\vec q}}
  \ge 2^{1/q_k} \|T f\|_{L^{\vec q}}.
\]
Hence $p_k\le q_k$, $1\le k\le m$.
\end{proof}

\subsection{The Interpolation Theorem}

To prove the main results, we need the interpolation theorem in several aspects.
The following result is useful in dealing with the case
 $p_{k_{\nu}}=q$ for Theorem~\ref{thm:main} or  $   p_{i_{\nu}} = q$ for Theorem~\ref{thm:factional:D},
which is a consequence of the theory of interpolation spaces \cite{Bergh1976,Cwiel1974,Hytonen2016,Janson1988,Lions1964,Milman1981}.

\begin{Lemma}\label{Lm:interpolation}
Suppose that $T\!$ is a bounded linear operator
from $L^s(L^{s_i})$ to $L^{{t_i},\infty}(\!L^{\vec u})$, where $i=1,2$, $1\le s_1\ne s_2 <\infty$,
$1\le t_1\ne t_2 < \infty$,
$1\le  s< \infty$
and
$\vec u = (u_1, \ldots, u_m)$ with $1\le u_i\le \infty$, $1\le i\le m$.
Suppose also that for some $0<\theta<1$, we have
\[
  \frac{1}{s } = \frac{1-\theta}{s_1}+ \frac{ \theta}{s_2},\qquad
  \frac{1}{t } = \frac{1-\theta}{t_1}+ \frac{ \theta}{t_2}
  \quad \mathrm{and}\quad {  s\le t}.
\]
Then
$T$ extends to a  bounded operator from
$L^s(L^s)$ to $L^t(L^{\vec u})$.
\end{Lemma}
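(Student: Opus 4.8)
The plan is to obtain the statement by applying the real interpolation functor $(\,\cdot\,)_{\theta,s}$ to the two hypothesized estimates, after identifying the interpolation spaces of the source couple $\bigl(L^s(L^{s_1}),L^s(L^{s_2})\bigr)$ and of the target couple $\bigl(L^{t_1,\infty}(L^{\vec u}),L^{t_2,\infty}(L^{\vec u})\bigr)$. Note at the outset that $T$ is to be understood as a single operator defined on $L^s(L^{s_1})+L^s(L^{s_2})$, consistent on the two endpoints, so that the functor applies; and that $L^s(L^s)$ is nothing but $L^s$ over the product space, by Fubini.

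For the source couple the key point is that the \emph{outer} exponent is the fixed number $s$, which is also the fine index of the functor, while only the \emph{inner} exponent varies; since $L^s(\,\cdot\,)$ commutes with $(\,\cdot\,)_{\theta,s}$ (a direct $K$-functional computation: $K(\tau,f;L^s(X_0),L^s(X_1))=\big\|\,x\mapsto K(\tau,f(x);X_0,X_1)\,\big\|_{L^s}$, followed by Fubini to exchange the $L^s(d\tau/\tau)$ and $L^s(dx)$ norms), one gets
\[
  \bigl(L^s(L^{s_1}),\,L^s(L^{s_2})\bigr)_{\theta,s}
  = L^s\bigl((L^{s_1},L^{s_2})_{\theta,s}\bigr) = L^s(L^{s,s}) = L^s(L^s),
\]
using the classical identity $(L^{s_1},L^{s_2})_{\theta,s}=L^{s,s}=L^s$, valid since $s_1\ne s_2$ and $1/s=(1-\theta)/s_1+\theta/s_2$. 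For the target couple, $L^{t_i,\infty}(L^{\vec u})$ is the weak‑Lebesgue space in the outermost variable with values in the fixed Banach space $L^{\vec u}$ (which is Banach because each $u_i\ge1$), and the vector‑valued version of the interpolation of weak‑type spaces gives
\[
  \bigl(L^{t_1,\infty}(L^{\vec u}),\,L^{t_2,\infty}(L^{\vec u})\bigr)_{\theta,s}
  = L^{t,s}(L^{\vec u}),\qquad \frac1t=\frac{1-\theta}{t_1}+\frac{\theta}{t_2},
\]
again because $t_1\ne t_2$; both facts are standard in the theory of interpolation spaces, see \cite{Bergh1976}.

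With these identifications the proof finishes quickly: the interpolation property of $(\,\cdot\,)_{\theta,s}$ yields that
\[
  T:\ L^s(L^s)=\bigl(L^s(L^{s_1}),L^s(L^{s_2})\bigr)_{\theta,s}
  \ \longrightarrow\ \bigl(L^{t_1,\infty}(L^{\vec u}),L^{t_2,\infty}(L^{\vec u})\bigr)_{\theta,s}=L^{t,s}(L^{\vec u})
\]
is bounded, and then the hypothesis $s\le t$ enters exactly once, through the Lorentz embedding $L^{t,s}\hookrightarrow L^{t,t}=L^t$ applied fiberwise in the outer variable, so that $L^{t,s}(L^{\vec u})\hookrightarrow L^t(L^{\vec u})$; composing gives the claimed boundedness of $T$ from $L^s(L^s)$ to $L^t(L^{\vec u})$.

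I expect the main obstacle to be the bookkeeping in the two interpolation identities, in particular the vector‑valued weak‑type identification $(L^{t_1,\infty}(B),L^{t_2,\infty}(B))_{\theta,s}=L^{t,s}(B)$; this is no harder than the scalar case — the relevant $K$‑functional of $L^{p,\infty}(B)$ is obtained from the scalar formula by replacing $|f(x)|$ with $\|f(x)\|_B$ — but it must be invoked with care. Everything else is routine, and the numerical constraints ($1\le s_i,t_i,s<\infty$, $s_1\ne s_2$, $t_1\ne t_2$, $0<\theta<1$) are precisely what the classical and vector‑valued Lorentz identifications require.
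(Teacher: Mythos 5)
Your proposal is correct and follows essentially the same route as the paper: apply the real interpolation functor $(\,\cdot\,)_{\theta,s}$, identify the source couple as $L^s(L^s)$ via the Lions--Peetre result, identify the target couple as $L^{t,s}(L^{\vec u})$ via the vector-valued version of the interpolation of weak Lebesgue spaces, and conclude with the Lorentz embedding $L^{t,s}\subset L^{t}$ using $s\le t$. The only difference is cosmetic: you sketch the $K$-functional computation behind the first identification, whereas the paper simply cites it.
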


\begin{proof}
We use   common notations for interpolation spaces: given two quasi-normed spaces $X_0$ and $X_1$ and constants $0<\theta<1$, $q>0$,
$[X_0, X_1]_{\theta, q}$ stands for the real interpolation space (see  \cite{Bergh1976,Hytonen2016} for details). Since $T$ is bounded
from $L^s(L^{s_i})$ to $L^{{t_i},\infty}(L^{\vec u})$, 
by interpolation, $T$ is bounded from
$(L^s(L^{s_1}), L^s(L^{s_2}))_{\theta,s}$
to $(L^{{t_1},\infty}(L^{\vec u}),L^{{t_2},\infty}(L^{\vec u}))_{\theta,s}$.
By a well-known
result of Lions and Peetre \cite{Lions1964} (see also \cite[Theorem 2.2.10]{Hytonen2016}),
\[
(L^s(L^{s_1}), L^s(L^{s_2}))_{\theta,s} = L^s(L^s).
\]
On the other hand, we see from \cite[Theorem 5.3.1]{Bergh1976} (which was stated for ordinary functions, but also valid for functions with values in a Banach space)
that
\[
(L^{{t_1},\infty}(L^{\vec u}),L^{{t_2},\infty}(L^{\vec u}))_{\theta,s}
= L^{t,s}(L^{\vec u}).
\]
Recall that $L^{t,s}$ stands for the Lorentz space.
Hence
$T$ is bounded from
$L^s(L^s)$
to $L^{t,s}(L^{\vec u})$.
Since $s\le t$, $ L^{t,s}(L^{\vec u})\subset L^t(L^{\vec u})$.
This completes the proof.
\end{proof}

\section{Extension of The Multilinear Riesz Potentials to Linear Operators}

In this  section, we give
a proof of Theorem~\ref{thm:factional:D}.
We begin with some preliminary results.

The following lemma gives a method to compute
the $L^{\vec p}$ norm for certain functions whenever the last components of $\vec p$
are equal to
infinities.

\begin{Lemma}\label{Lm:end points D}
Suppose that $1\le p_i\le \infty$, $1\le q\le \infty$
and $0<\lambda<mn$ which meet (\ref{eq:homo:p}).
Suppose that  $p_{i_0+1}=\ldots=p_m=1$ for some $1\le i_0\le m-1$.
Let $D_i$ be $n\times n$ matrices, $1\le i\le m$. Denote $\vec{\tilde p}=(p_1,\ldots, p_{i_0})$. Then the following two items are equivalent:

\begin{enumerate}
  \item there is a constant $C_{\lambda,\vec p, q, n}$ such that for any $h\in \!L^{q'}$ and almost all $(y_{i_0+1}$, $\ldots$, $y_m) \in\bbR^{(m-i_0)n}$,
   \[
     \left\| \int_{\bbR^n}\frac{h(x)dx}{(\sum_{i=1}^m|D_ix-y_i| )^{\lambda}} \right\|_{
     L_{(y_1,\ldots,y_{i_0})}^{(p'_1,\ldots,p'_{i_0}) }}
     \le  C_{\lambda,\vec p, q, n}\|h\|_{L^{q'}}.
   \]

  \item  for any $h\in L^{q'}$,
   \[
     \left\| \int_{\bbR^n}\frac{h(x)dx}{(\sum_{i=1}^{i_0}|D_ix-y_i|
     +\sum_{i=i_0+1}^m|D_i x|)^{\lambda}} \right\|_{L^{\vec {\tilde p}'}}
     \lesssim \|h\|_{L^{q'}}.
   \]
\end{enumerate}
\end{Lemma}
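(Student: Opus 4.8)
The plan is to show the equivalence of items (i) and (ii) by exploiting the translation structure hidden in the extra variables $y_{i_0+1},\ldots,y_m$. The key observation is that the inner integral in (i) depends on $(y_{i_0+1},\ldots,y_m)$ only through the way these vectors shift the ``center'' of the kernel. More precisely, since $p_{i_0+1}=\cdots=p_m=1$, the $L^{\vec p'}$ norm of the integral expression involves $L^\infty$ norms in precisely the variables $y_{i_0+1},\ldots,y_m$, so that the estimate in (i) asks for a uniform (in those variables) bound on a quantity that is an $L_{(y_1,\ldots,y_{i_0})}^{(p_1',\ldots,p_{i_0}')}$ norm. The whole point is therefore to identify, for fixed $(y_{i_0+1},\ldots,y_m)$, the supremal configuration, and to recognize that letting these free variables run over $\bbR^n$ is essentially the same as a translation of $h$.

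First I would prove $(\mathrm{ii})\Rightarrow(\mathrm{i})$, which should be the easier direction. Given $h\in L^{q'}$ and $(y_{i_0+1},\ldots,y_m)$ fixed, I would like to absorb the terms $|D_ix-y_i|$ for $i>i_0$ into the remaining structure. The idea is that each matrix $D_i$ for $i > i_0$ contributes $|D_i x - y_i|$; one cannot in general solve $D_i x = y_i$ (the $D_i$ need not be invertible), but one can still bound the kernel from above and below. Specifically, using the homogeneity/scaling of the problem together with a covering argument, the quantity in (i) for arbitrary $(y_{i_0+1},\ldots,y_m)$ is dominated by the quantity in (ii) after a suitable affine change of variables in $x$; since (ii) gives a bound independent of such shifts, one obtains the uniform constant $C_{\lambda,\vec p,q,n}$. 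I would be careful to check that the change of variables respects the homogeneity condition (\ref{eq:homo:p}), which is exactly what makes the Jacobian factors cancel against the change in the $\lambda$-power.

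For the converse $(\mathrm{i})\Rightarrow(\mathrm{ii})$, the natural approach is to realize the expression in (ii) as a limit, or an infimum over shifts, of the expression in (i). Setting $y_{i_0+1}=\cdots=y_m=0$ in (i) gives exactly the left-hand side of (ii) with the constant $C_{\lambda,\vec p,q,n}$; since (i) asserts the bound for \emph{almost all} $(y_{i_0+1},\ldots,y_m)$ rather than all of them, I would use a density/continuity argument (the map $(y_{i_0+1},\ldots,y_m)\mapsto$ the inner integral is continuous in an appropriate sense for $h$ in a dense class, e.g.\ $h\in C_c$, and then pass to general $h\in L^{q'}$ by the uniform bound) to conclude that the estimate holds at the specific point $0$ as well. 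Here one replaces the ``$\le C_{\lambda,\vec p,q,n}$'' in (i) by the weaker ``$\lesssim$'' in (ii), so no sharp tracking of constants is needed.

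The main obstacle I anticipate is the handling of the non-invertible matrices $D_i$ for $i>i_0$: one cannot simply change variables to eliminate $y_i$, so the reduction in the $(\mathrm{ii})\Rightarrow(\mathrm{i})$ direction must be done via two-sided kernel estimates plus a decomposition of $\bbR^n$ adapted to the ranges and kernels of the $D_i$, rather than a clean substitution. A secondary technical point is the ``almost all'' versus ``all'' issue in (i), which forces the density/continuity argument in the converse direction; this is routine but must be stated carefully because $L^{\vec p'}$ norms are only lower semicontinuous under pointwise limits, so I would verify the relevant continuity on $C_c$ and then invoke the uniform bound from (i) to extend. Once these two points are dealt with, the equivalence follows by combining the two implications.
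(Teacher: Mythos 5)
Your proposal is correct and follows essentially the same route as the paper: the direction (ii)$\Rightarrow$(i) is handled by an affine change of variables (a translation of $x$ determined by the free variables $y_{i_0+1},\ldots,y_m$, built from a rank decomposition $U\tilde DV=\binom{I_r}{0}$ of the stacked matrix $\tilde D=(D_{i_0+1},\ldots,D_m)$), after which the components of $U\tilde y$ off the range are simply dropped from the denominator and the induced shifts in $y_1,\ldots,y_{i_0}$ are absorbed by translation invariance of the mixed norm. The only cosmetic difference is in (i)$\Rightarrow$(ii), where the paper dualizes and concentrates the test function on $\{|y_i|\le\delta\}$ for $i>i_0$ and applies Fatou, rather than specializing $y_{i_0+1}=\cdots=y_m=0$ along a sequence of admissible points and using lower semicontinuity as you do; both are the same limiting idea, and no scaling or covering argument is actually needed in either direction.
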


\begin{proof}
First, we assume that (i) is true.
Note that $p'_i=\infty$ for $i_0+1\le i\le m$.
Clearly,
(i) is equivalent to
   \[
     \left\| \int_{\bbR^n}\frac{h(x)dx}{(\sum_{i=1}^m|D_ix-y_i| )^{\lambda}} \right\|_{
     L_y^{\vec p'}}
     \le  C_{\lambda,\vec p, q, n}\|h\|_{L^{q'}}.
   \]
Or equivalently,
\[
     \left\| \int_{\bbR^{mn}}\frac{f(y_1,\ldots,y_m)dy_1\ldots y_m}{(\sum_{i=1}^m|D_ix-y_i| )^{\lambda}} \right\|_{L_x^q}
     \le  C_{\lambda,\vec p, q, n}\|f\|_{L^{\vec p}},\quad f\in L^{\vec p}.
\]
By setting
\[
  f(y_1,\ldots,y_m)
  =\tilde f(y_1,\ldots,y_{i_0}) \prod_{i=i_0+1}^m
    \frac{1}{\delta^n} \chi^{}_{\{|y_i|\le \delta\}}
\]
and letting $\delta\rightarrow 0$, we see from
Fatou's lemma that
\[
     \left\| \int_{\bbR^{i_0n}}\frac{\tilde f(y_1,\ldots,y_{i_0})dy_1\ldots y_{i_0}}{(\sum_{i=1}^{i_0}|D_ix-y_i|
     +\sum_{i=i_0+1}^m|D_i x| )^{\lambda}} \right\|_{L_x^q}
     \le  C_{\lambda,\vec p, q, n}\|\tilde f\|_{L^{\vec {\tilde p}}},\quad \tilde f\in L^{\vec {\tilde p}},
\]
Hence for any $\tilde f\in L^{\vec {\tilde p}}$
and $h\in L^{q'}$, we have
\[
     \left | \int_{\bbR^{(i_0+1)n}}\frac{\tilde f(y_1,\ldots,y_{i_0})h(x)dxdy_1\ldots y_{i_0}}{(\sum_{i=1}^{i_0}|D_ix-y_i|
     +\sum_{i=i_0+1}^m|D_i x| )^{\lambda}} \right|
     \le  C_{\lambda,\vec p, q, n}\|\tilde f\|_{L^{\vec {\tilde p}}} \|h\|_{L^{q'}}.
\]
Therefore, (ii) is true.

Next we assume that (ii) is true.
If $D_{i_0+1}=\ldots=D_m=0$, then (i) is obvious.
Next we assume that not all of $D_{i_0+1}$, $\ldots$,
$D_m$ are zeros.

For
any $f\in L^{\vec {\tilde p}}$ and $h\in L^{q'}$, we have
\begin{align*}
        \left| \int_{\bbR^{(i_0+1)n}} \frac{f(y_1,\ldots,y_{i_0})
 h(x)dx dy_1\ldots dy_{i_0}}{(\sum_{i=1}^{i_0}|D_ix-y_i|
     +\sum_{i=i_0+1}^m|D_i x|)^{\lambda}} \right |
     &\le C_{\lambda,\vec p, q, n}  \|f\|_{\vec {\tilde p}}\|h\|_{L^{q'}}.
\end{align*}
Hence,
\begin{equation}\label{eq:w:e28}
       \left\| \int_{\bbR^{i_0n}}\frac{f(y_1,\ldots,y_{i_0})
        dy_1\ldots dy_{i_0}}{(\sum_{i=1}^{i_0}|D_ix-y_i|
     +\sum_{i=i_0+1}^m|D_i x|)^{\lambda}} \right\|_{L_x^q}
     \le C_{\lambda,\vec p, q, n} \|f\|_{L^{\vec{\tilde  p }}}.
\end{equation}
Let
\[
  \tilde D = \begin{pmatrix}
  D_{i_0+1} \\
  \vdots \\
  D_m
  \end{pmatrix}
  \mathrm{\,\, and\,\,}
  \tilde y = \begin{pmatrix}
  y_{i_0+1}
  \\
  \vdots\\
  y_m
  \end{pmatrix}.
\]
Suppose that $\rank(\tilde D)=r$. Then  there are $(m-i_0)n\times (m-i_0)n$ invertible matrix $U$ and
$n\times n$ invertible matrix $V$ such that
\[
  U \tilde D V   =  \binom{I_r \quad }{\quad 0} .
\]
Observe that
$\sum_{i=i_0+1}^m|D_i x|\approx |U\tilde Dx|$.
By a change of variable of the form $x\rightarrow Vx$,
(\ref{eq:w:e28}) turns out to be
\begin{equation}\label{eq:w:e29}
       \left\| \int_{\bbR^{i_0n}}\frac{f(y_1,\ldots,y_{i_0})
        dy_1\ldots dy_{i_0}}{(\sum_{i=1}^{i_0}|D_iVx-y_i|
     +\sum_{l=1}^r|x^{(l)}|)^{\lambda}} \right\|_{L_x^q}
     \le C_{\lambda,\vec p, q, n} \|f\|_{L^{\vec{\tilde  p }}}.
\end{equation}

Note that
\[
  \sum_{i=i_0+1}^m |D_ix-y_i|
  \approx  |\tilde Dx - \tilde y|
  \approx | U\tilde Dx - U\tilde y|.
\]
For  any $f\in L^{\vec {\tilde p}}$
 and $\tilde y \in\bbR^{ (m -i_0)n  }$,
by a change a variable of the form $x\rightarrow V(x+z)$,
where $z=(z^{(1)},\ldots,z^{(r)},0,\ldots,0)^*\in\bbR^n$
with $z^{(1)}, \ldots, z^{(r)}$ being the first $r$ components
of $U\tilde y$, we have
\begin{align}
&\left\|
  \int_{\bbR^{i_0n}}\frac{|f(y_1,\ldots,y_{i_0})|
        dy_1\ldots dy_{i_0}}{(\sum_{i=1}^{m}|D_ix-y_i|)^{\lambda}} \right\|_{L_x^q}
     \nonumber\\
&\approx \left\|
  \int_{\bbR^{i_0n}}\frac{|f(y_1,\ldots,y_{i_0})|
        dy_1\ldots dy_{i_0}}{(\sum_{i=1}^{i_0}|D_iVx-y_i+D_iVz|
        +|U\tilde DV(x+z) - U\tilde y|
        )^{\lambda}} \right\|_{L_x^q}
     \nonumber \\
&\approx \left\|
  \int_{\bbR^{i_0n}}\frac{|f(y_1,\ldots,y_{i_0})|
        dy_1\ldots dy_{i_0}}{(\sum_{i=1}^{i_0}|D_iVx-y_i+D_iVz|
        +\sum_{l=1}^r|x^{(l)}|
        +|z - U\tilde y|
        )^{\lambda}} \right\|_{L_x^q}
     \nonumber \\
&\lesssim
\left\|
  \int_{\bbR^{i_0n}}\frac{|f(y_1,\ldots,y_{i_0})|
        dy_1\ldots dy_{i_0}}{(\sum_{i=1}^{i_0}|D_iVx-y_i+D_iVz|
        +\sum_{l=1}^r|x^{(l)}|
        )^{\lambda}} \right\|_{L_x^q}. \label{eq:a:e2}
\end{align}
By a change of variable of the form
$(y_1,\ldots,y_{i_0})
\rightarrow (y_1 + D_1Vz,\ldots, y_{i_0}+D_{i_0}Vz)$ in (\ref{eq:a:e2}),
we get
\begin{align*}
&\left\|
  \int_{\bbR^{i_0n}}\frac{|f(y_1,\ldots,y_{i_0})|
        dy_1\ldots dy_{i_0}}{(\sum_{i=1}^{m}|D_ix-y_i|)^{\lambda}} \right\|_{L_x^q}
     \\
&\lesssim
\left\|
  \int_{\bbR^{i_0n}}\frac{|f(y_1+D_1Vz,\ldots,y_{i_0}
  +D_{i_0}Vz  )|
        dy_1\ldots dy_{i_0}}{(\sum_{i=1}^{i_0}|D_iVx-y_i|
        +\sum_{l=1}^r|x^{(l)}|
        )^{\lambda}} \right\|_{L_x^q}.
\end{align*}
By (\ref{eq:w:e29}), we have
\begin{align*}
&\left\|
  \int_{\bbR^{i_0n}}\frac{|f(y_1,\ldots,y_{i_0})|
        dy_1\ldots dy_{i_0}}{(\sum_{i=1}^{m}|D_ix-y_i|)^{\lambda}} \right\|_{L_x^q}
     \\
&\lesssim C_{\lambda,\vec p, q, n} \|f(\cdot+D_1Vz,\ldots, \cdot+D_{i_0}Vz)\|_{L^{\vec{\tilde  p }}} \\
&=  C_{\lambda,\vec p, q, n} \|f\|_{L^{\vec{\tilde  p }}}.
\end{align*}
Hence for any
   $f\in L^{\vec {\tilde p}}$,
    $h\in L^{q'}$
   and $(y_{i_0+1},\ldots, y_m)\in\bbR^{ (m -i_0)n  }$,
\[
  \left|\int_{\bbR^{(i_0+1)n}}\frac{f(y_1,\ldots,y_{i_0 })h(x)
        dy_1\ldots dy_{i_0} dx}{(\sum_{i=1}^{m}|D_ix-y_i|)^{\lambda}}\right|
     \le C_{\lambda,\vec p, q, n} \|f \|_{L^{\vec{\tilde  p }}} \|h\|_{L^{q'}}.
\]
Therefore, for any $(y_{i_0+1},\ldots, y_m)\in\bbR^{ (m -i_0)n  }$,
\[
  \left\|\int_{\bbR^n}\frac{ h(x)
         dx}{(\sum_{i=1}^{m}|D_ix-y_i|)^{\lambda}}
         \right\|_{L^{\vec{\tilde  p }'}}
     \le C_{\lambda,\vec p, q, n}  \|h\|_{L^{q'}}.
\]
This completes the proof.
\end{proof}

To estimate $J_{\lambda,D}f(x)$, we need to estimate $|Dx-y|$.
The following lemma gives the structure of $|Dx-y|$.

\begin{Lemma}\label{Lm:Dx-y}
Suppose that  $\vec p = (p_1, \ldots, p_m)$ with
$1\le p_i\le \infty$, $1\le i\le m$, $1\le q\le \infty$ and $0<\lambda<mn$.
Suppose also that $\rank(D)=n$. Let $\gamma_i$ and $i_l$  be defined by (\ref{eq:rk def})
and (\ref{eq:rk}), respectively.
Then  $J_{\lambda,D}$ if bounded from $L^{\vec p}$ to
$L^q$
if and only if
for any $f\in L^{\vec p}$ and $h\in L^{q'}$,
\[
   \left| \int_{\bbR^{(m+1)n}}\frac{f(y_1,\ldots,y_m)h(x)
        dy_1\ldots dy_m dx}{ W(x,y)^{\lambda}}\right|
     \lesssim  \|f \|_{L^{\vec p }} \|h\|_{L^{q'}},
\]
where
\[
  W(x,y) = \sum_{l=1}^n |x^{(l)} - y_{j_l}^{(k_l)}|
  + \sum_{\substack{1\le j\le m, 1\le k\le n\\
  (j,k)\ne (j_l,k_l), 1\le l\le n}} |y_j^{(k)}|
\]
and
\begin{equation}\label{eq:jl kl}
 \{(j_l,k_l):\, 1\le l\le n\}
  =\bigcup_{1\le s\le \nu}\{(i_s,t):\, n+1-(\gamma_{i_s}-\gamma_{i_{s+1}})\le t\le n\}
\end{equation}
satisfying $j_l\le j_{l+1}$ and if $j_l= j_{l+1}$, then $k_l<k_{l+1}$.
\end{Lemma}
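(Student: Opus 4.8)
The plan is to reduce the asserted equivalence to a normal form for the weight $|Dx-y|=\sum_{j=1}^m|D_jx-y_j|$, reached by changes of variables that are harmless for the mixed norm, and then to invoke duality. Since $1\le q\le\infty$, $J_{\lambda,D}$ is bounded from $L^{\vec p}$ to $L^q$ if and only if
\[
  \Big|\int_{\bbR^{(m+1)n}}\frac{f(y)h(x)}{|Dx-y|^{\lambda}}\,dy\,dx\Big|
  \lesssim \|f\|_{L^{\vec p}}\,\|h\|_{L^{q'}},\qquad f\in L^{\vec p},\ h\in L^{q'}
\]
(for $q=\infty$ one tests against $h\in L^1$). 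In this estimate one may take $|f|$, $|h|$ in place of $f$, $h$, since the kernel is nonnegative; consequently, as $\lambda>0$, replacing $|Dx-y|$ by any pointwise comparable weight, or by $W(x,y)$, leaves the estimate unchanged up to a constant. It therefore suffices to show that, after admissible operations, $\sum_j|D_jx-y_j|$ becomes pointwise comparable to $W(x,y)$; the displayed duality identity, applied to both weights, then yields the lemma.

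Two kinds of operations are admissible. \emph{Comparison}: inside $\sum_j|D_jx-y_j|$ one may replace a summand $|v|$ by $|v'|$ whenever $v-v'$ is a fixed linear combination of vectors whose norms are among the other summands --- for instance $|a-b|+|a-c|\approx|b-c|+|a-c|$, and $|u|\approx\sum_k|u^{(k)}|$ for $u\in\bbR^n$ --- which distorts the weight, hence $W^\lambda$, by at most a bounded factor. \emph{Change of variables}: $x\mapsto Vx$ with $V$ an invertible $n\times n$ matrix, and, for each $j$, $y_j\mapsto M_jy_j+\sum_{j'>j}P_{j,j'}y_{j'}$ with $M_j$ an invertible $n\times n$ matrix and the $P_{j,j'}$ arbitrary. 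These substitutions have constant Jacobians and map $L^{\vec p}$, $L^{q'}$ bijectively onto themselves, multiplying norms by fixed constants; the essential point for the $y$-substitution is that the term added to $y_j$ depends only on the variables $y_{j+1},\dots,y_m$ that are integrated \emph{after} $y_j$ in the iterated norm $\|\cdot\|_{L^{p_m}_{y_m}(\cdots(L^{p_1}_{y_1}))}$, so that, when the $L^{p_j}_{y_j}$ norm is computed, it is a harmless translation (cf. the translation invariance used for Lemma~\ref{Lm: translation}). Hence both kinds of operations preserve the bilinear estimate.

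Now for the normal form. Set $K_j=\bigcap_{i=j}^m\ker D_i$, so $\dim K_j=n-\gamma_j$ and $\{0\}=K_1\subseteq K_2\subseteq\cdots\subseteq K_m\subseteq K_{m+1}=\bbR^n$, the strict inclusions occurring exactly at $j=i_1,\dots,i_\nu$ (here $K_1=\{0\}$ because $\rank D=n$). After an $x$-change of variables we may assume $K_j=\mathrm{span}\{e_1,\dots,e_{n-\gamma_j}\}$, so that $D_jx$ depends only on $x^{(n-\gamma_j+1)},\dots,x^{(n)}$. I would then process the blocks $j=m,m-1,\dots,1$ in decreasing order, keeping the invariant that $\sum_{j'\ge j}|D_{j'}x-y_{j'}|$ has been reduced, up to $\approx$, to the part of $W$ carried by the $y$-blocks $j,\dots,m$, with the coordinates $x^{(l)}$, $l>n-\gamma_j$, already paired with coordinates $y_{j'}^{(\cdot)}$, $j'\ge j$. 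Since $\gamma_{i_\nu+1}=0$ forces $D_j=0$ for $j>i_\nu$, such a block contributes $\sum_k|y_j^{(k)}|$ directly. At a drop block $j=i_s$ the map $x\mapsto D_{i_s}x$ involves only the already-paired coordinates $x^{(l)}$ ($l>n-\gamma_{i_{s+1}}$) and the $d_s:=\gamma_{i_s}-\gamma_{i_{s+1}}$ fresh ones $x^{(n-\gamma_{i_s}+1)},\dots,x^{(n-\gamma_{i_{s+1}})}$; using Comparison I replace each already-paired $x^{(l)}$ by its partner $y_{j_l}^{(k_l)}$ inside the coordinates of $D_{i_s}x-y_{i_s}$, and then, using Change of variables, I shift $y_{i_s}$ by the resulting linear combination of those $y_{j_l}^{(k_l)}$, which is admissible precisely because every such $j_l$ exceeds $i_s$. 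What is left is $|Ex''-y_{i_s}|$ with $x''$ the $d_s$ fresh coordinates and $E$ an $n\times d_s$ matrix of rank $d_s$; one more change of variables --- a basis change of $x''$ (which does not affect the already-reduced blocks, whose summands do not involve $x''$) together with a basis change of $y_{i_s}$ --- brings $E$ to $\binom{0}{I_{d_s}}$, so this block becomes $\sum_{k\le n-d_s}|y_{i_s}^{(k)}|$ together with $d_s$ terms $|x^{(l)}-y_{i_s}^{(k)}|$ pairing the fresh coordinates, in increasing order, with the top $d_s$ coordinates of $y_{i_s}$ --- exactly the pairs $(i_s,t)$, $n+1-d_s\le t\le n$, of (\ref{eq:jl kl}). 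Finally a non-drop block $j$ (with $\gamma_j=\gamma_{j+1}$, including zero blocks $D_j=0$ with $j\le i_\nu$) has $D_jx$ supported on already-paired coordinates only, so the same replace-then-shift reduces $|D_jx-y_j|$ to $\sum_k|y_j^{(k)}|$. Running this over all blocks yields $\sum_j|D_jx-y_j|\approx W(x,y)$ with the pair set (\ref{eq:jl kl}) in its stated lexicographic order.

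The main difficulty is the bookkeeping in the previous paragraph. One must verify, at each step, that the shift applied to $y_{i_s}$ (or $y_j$) involves only coordinates $y_{j'}^{(\cdot)}$ with $j'>i_s$ (or $j'>j$), so that the Change of variables operation applies and $\|f\|_{L^{\vec p}}$ is genuinely preserved; and that every new $x$-basis change is confined to the freshly introduced coordinates, so that the blocks already in normal form are not disturbed. Showing that the coordinates produced in this way form precisely the set (\ref{eq:jl kl}) --- in particular that block $D_{i_s}$ contributes the fresh coordinates $x^{(n-\gamma_{i_s}+1)},\dots,x^{(n-\gamma_{i_{s+1}})}$ and pairs them with the top $d_s$ coordinates of $y_{i_s}$ --- is exactly the combinatorics of the kernel flag $\{K_j\}$ together with the definitions (\ref{eq:rk def})--(\ref{eq:rk}), and this is where the care is required.
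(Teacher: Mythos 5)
Your proof is correct and follows essentially the same route as the paper's: duality reduces boundedness to the bilinear estimate, positivity of the kernel lets you replace the weight by any pointwise-comparable one, and a block-triangular change of variables in $y$ (each $y_j$ shifted only by the later-integrated variables $y_{j+1},\dots,y_m$) together with $x$-basis changes brings $\sum_j|D_jx-y_j|$ to the normal form $W$. The only difference is organizational: you induct on the kernel flag $K_j=\bigcap_{i\ge j}\ker D_i$, whereas the paper performs a single row reduction $P(D,\,-I_{mn})=(PD,\,U)$ with $U$ unipotent upper triangular and writes out the computation only for a representative rank configuration; your block-by-block bookkeeping is a sound and in fact more systematic rendering of the same reduction.
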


\begin{proof}
Since $q\ge 1$,
  $J_{\lambda,D}$ if bounded from $L^{\vec p}$ to
$L^q$
if and only if
for any $f\in L^{\vec p}$ and $h\in L^{q'}$,
\begin{equation}\label{eq:s:e10}
   \left| \int_{\bbR^{(m+1)n}}
   \frac{ f(y_1,\ldots,y_m) h(x)dx dy_1\ldots dy_m}
   {(|D_1x-y_1|+ \ldots+|D_mx-y_m|)^{\lambda}}
 \right|  \lesssim \|f\|_{L^{\vec p}} \|h\|_{L^{q'}}.
\end{equation}

Denote the $k$-th row of $D_j$ by $D_{j,k}$.
By the hypothesis of $D$, there exists a
sequence of row vectors
$\{D_{j_l,k_l}:\, 1\le l \le n\}$, which is
linearly independent, such that for $1\le i\le m$,
$\{D_{j_l,k_l}:\,   j_l\ge i, 1\le l \le n\}$
is the maximal linearly independent set of
$\{D_{s,k}:\, i\le s\le m, 1\le k\le n\}$.

To avoid too complicated notations, we consider only the case
\[
  \gamma_1 > \gamma_2 = \ldots = \gamma_m>0.
\]
Other cases
  can be proved similarly.
In this case,   the maximal linearly independent set
comes from rows of $D_1$ and $D_m$.

Observe that for each $1\le i\le m$, the integration order of $y_i^{(1)}$, $\ldots$, $y_{i}^{(n)}$ is switchable in the computation of the $L^{\vec p}$ norm of $f$. We may assume that  the maximal  linearly independent set is
$ \{D_{1,j}:\, n-r+1\le j\le n\}
\cup \{D_{m,j}:\, r+1\le j\le n\}$, where $r=\gamma_1-\gamma_2$.

Denote $y_i=(y_i^{(1)}, \ldots, y_i^{(n)})^*$
and $y= (y_1^{(1)}, \ldots, y_1^{(n)},\ldots,
 y_m^{(1)}, \ldots, y_m^{(n)} )^*$.
We have
\[
  |Dx-y| \approx  \sum_{i=1}^m | D_ix-y_i|.
\]
By the choice of the maximal  linearly independent set, there is some  $mn\times mn$ invertible matrix $P$, which is the product of elementary matrices,  such that
\[
  P(D, \ -I_{mn}) = (PD, \ U),
\]
where only the $(n-r+1)$-th, $\ldots$, the  $n$-th
and the last $n-r$ rows of $PD$ are non-zero vectors
and $U$ is an upper
triangular matrix    whose diagonal entries are
$-1$'s. Denote the submatrix consisting of the
$n$ non-zero rows of $PD$ by $G$.
We have
\[
  PD= \begin{pmatrix}
   \tilde D_1 \\
   \vdots \\
   \tilde D_m  \\
  \end{pmatrix},\quad
    U=\begin{pmatrix}
  -1 & * & * & \ldots  & *  \\
     & -1  & * & \ldots  & *  \\
       &  & -1  &  \ldots  & *  \\
   & & &  \vdots \\
   & &        & & -1
  \end{pmatrix}.
\]
where $\tilde D_i=0$, $2\le i\le m-1$,
\[
  \tilde D_1 = \begin{pmatrix}
  0 \\
  \vdots \\
  0 \\
  D_{1,n-r+1}\\
  \vdots\\
  D_{1,n}
  \end{pmatrix},\quad
  \tilde D_m = \begin{pmatrix}
  0 \\
  \vdots \\
  0 \\
  D_{m,r+1}\\
  \vdots\\
  D_{m,n}
  \end{pmatrix},
\]
and
\[
  \tilde D_1 G^{-1} = \begin{pmatrix}
  0 & 0\\
  I_{r} & 0
  \end{pmatrix},
  \qquad
  \tilde D_m G^{-1} = \begin{pmatrix}
  0 & 0\\
  0 & I_{n-r}
  \end{pmatrix}.
\]
 Since
$|Dx-y| \approx |P(Dx-y)|$, by
substituting $P(Dx-y) $ for $Dx-y$
and a change of variable of the form $x\rightarrow G^{-1}x$ in (\ref{eq:s:e10}),
we may assume  that (\ref{eq:s:e10})
is of the form
\[
   \left| \int_{\bbR^{(m+1)n}}
   \frac{ f(y_1,\ldots,y_m) h(x)dxdy_1\ldots dy_m }
   {W(x,y)^{\lambda}}
 \right|  \lesssim \|f\|_{L^{\vec p}} \|h\|_{L^{q'}},
\]
where
\begin{align*}
W(x,y) &=
   \sum_{l=1}^{n-r}| y_1^{(l)} - w_{1,l}(y)|
   +\sum_{l=1}^r| x^{(l)}-y_1^{(n-r+l)} +
   w_{1,n-r+l}(y) | \\
 & \quad
  + \sum_{i=2}^{m-1}\sum_{l=1}^n |y_i^{(l)} - w_{i,l}(y)|
  +\sum_{l=1}^r| y_m^{(l)}-w_{m,l}(y)|\\
  &\quad +
   \sum_{l=r+1}^n| x^{(l)}-y_m^{(l)} + w_{m,l}(y)|
\end{align*}
and $w_{i,l}(y)$ is a linear combination of
$y_i^{(l+1)}$, $\ldots$, $y_{i}^{(n)}$,
$y_{i+1}^{(1)}$, $\ldots$, $y_{i+1}^{(n)}$,
$\ldots$,
$y_m^{(1)}$, $\ldots$, $y_m^{(n)}$ ($w_{m,n}=0$).
Observe that
\begin{align*}
  \|f\|_{L^{\vec p}}
   &=  \| f(y_1^{(1)} +w_{1,1}(y),\ldots, y_1^{(n)}
     +w_{1,n}(y),\ldots, \\
   &\qquad   \qquad
  y_m^{(1)}+w_{m,1}(y), \ldots, y_m^{(n)}+w_{m,n}(y))\|_{L^{\vec p}}.
\end{align*}
By a change of variable  of the form
$(y_1^{(1)},\ldots, y_1^{(n)},\ldots, y_m^{(1)}, \ldots, y_m^{(n)})\rightarrow (y_1^{(1)}+w_{1,1}(y)$, $\ldots$, $y_1^{(n)}+w_{1,n}(y),\ldots,
  y_m^{(1)}+w_{m,1}(y), \ldots, y_m^{(n)}+w_{m,n}(y))$,
we may further assume that $W(x,y)$ is of the form
\begin{align*}
W(x,y) &=
   \sum_{l=1}^{n-r}| y_1^{(l)} |
   +\sum_{l=1}^r| x^{(l)}-y_1^{(n-r+l)}  | \\
 & \quad
  + \sum_{i=2}^{m-1}\sum_{l=1}^n |y_i^{(l)} |
  +\sum_{l=1}^r| y_m^{(l)} |  +
   \sum_{l=r+1}^n| x^{(l)}-y_m^{(l)}  |.
\end{align*}
This completes the proof.
\end{proof}

We
split the proof of Theorem~\ref{thm:factional:D} into two  parts:
one is for the necessity, and the other one is for the sufficiency.

\subsection{Proof of Theorem~\ref{thm:factional:D}: The Necessity}

In this subsection, we give a proof for the necessity part in Theorem~\ref{thm:factional:D}.

First, we show that $q<\infty$.
Assume on the contrary that $q=\infty$.
  Then for   any $f\in L^{\vec p}$ and $h\in L^1(\bbR^n)$, we have
\begin{align*}
\int_{\bbR^{(m+1)n}}  \frac{f(y_1,\ldots, y_m) h(x)dy_1 \ldots dy_m dx}
{(|D_1 x-y_1|+\ldots+|D_m x-y_m|)^{\lambda}}
\lesssim \|f\|_{L^{\vec p}} \|h\|_{L^1}.
\end{align*}
Hence
\begin{equation} \label{eq:ss:ee1}
  \bigg\|\int_{\bbR^n}  \frac{ h(x)}{(|D_1 x-y_1|+\ldots+|D_m x-y_m|)^{\lambda}}  dx\bigg\|_{L^{\vec p'}}
 \lesssim   \|h\|_{L^1}.
\end{equation}
Set $h=\chi^{}_{\{|x|<1\}}$. We have
\begin{align*}
g(y_1,\ldots,y_m)
&:= \int_{\bbR^n}  \frac{ h(x)}{(|D_1 x-y_1|+\ldots+|D_m x-y_m|)^{\lambda}}  dx \\
&\gtrsim \frac{1}{(|y_1|+\ldots + |y_m|)^{\lambda}},\quad \mathrm{for}\,\, |y_i|>2, 1\le i\le m.
\end{align*}
On the other hand, we see from the homogeneity condition
(\ref{eq:homo:p}) that
\[
  \lambda - \frac{n}{p'_1} - \ldots -
    \frac{n}{p'_m}=0.
\]
Hence $\|g\|_{L^{\vec p'}} =\infty$,
which contradicts with (\ref{eq:ss:ee1}).

Next we show that $q\ge 1$.
If $0<q<1$, then for non-negative
functions $f\in L^{\vec p}$, we see from Minkowski's inequality
that
\begin{align*}
&
\int_{\bbR^{mn}}
 \left\| \frac{f(y_1,\ldots, y_m) }
{(|D_1 x-y_1|+\ldots+|D_m x-y_m|)^{\lambda}}
\right\|_{L_x^q}
dy_1 \ldots dy_m\\
&\le
\left\|\int_{\bbR^{mn}}
  \frac{f(y_1,\ldots, y_m)dy_1 \ldots dy_m }
{(|D_1 x-y_1|+\ldots+|D_m x-y_m|)^{\lambda}}
\right\|_{L_x^q} \\
&\lesssim \|f\|_{L^{\vec p}}.
\end{align*}
Since $|D_1 x-y_1|+\ldots+|D_m x-y_m| \lesssim |x|+\sum_{i=1}^m|y_i|$,  we have
\[
  \int_{\bbR^{mn}}
 \left\| \frac{f(y_1,\ldots, y_m) }
{(|x|+\sum_{i=1}^m|y_i| )^{\lambda}}
\right\|_{L_x^q}
dy_1 \ldots dy_m
\lesssim  \|f\|_{L^{\vec p}}.
\]
That is,
\[
\left|
  \int_{\bbR^{mn}}
 \frac{f(y_1,\ldots, y_m) dy_1 \ldots dy_m }
 {(\sum_{i=1}^m|y_i|)^{\lambda-n/q}}
\right|
\lesssim  \|f\|_{L^{\vec p}},
\qquad \forall f\in L^{\vec p},
\]
which is impossible since $1/(\sum_{i=1}^m|y_i|)^{\lambda-n/q}\not\in
L^{\vec p '}$. Hence $q\ge 1$.

(i)\,\,
If $\rank(D)<n$, then there is some $n\times n$ invertible matrix $U$ such that one column of $DU$
is zero. Without loss of generality, assume that
the last column of $DU$ is zero.
Then $J_{\lambda,DU}f(x)$ is independent of $x^{(n)}$.
By a change of variable of the form
$x\rightarrow Ux$, we have for positive $f$,
\begin{align*}
\int_{\bbR^n} |J_{\lambda,D}f(x)|^q dx
&\approx
\int_{\bbR^n} |J_{\lambda,DU}f(x)|^q dx \\
&=\int_{\bbR} dx^{(n)}
  \int_{\bbR^{n-1}} |J_{\lambda,DU}f(x)|^q dx^{(1)}\ldots dx^{(n-1)} \\
&=\infty,
\end{align*}
which contradicts with the boundedness of $J_{\lambda,D}$.
Hence $\rank(D)=n$.

(ii)\,\,
For any  $f\in L^{\vec p}$,
considering the function $f_a  = a^{n/p_1+\ldots + n/p_m}
f(a\cdot)$, where $a>0$, we get the homogeneity
condition (\ref{eq:homo:p}).

(iii)\,\,
Now we show that there is some $1\le i\le m$ such that $1<p_i<\infty$.
Assume on the contrary that $p_i=1$ or $\infty$ for every $1\le i\le m$. Let $k=\#\{i:\, p_i=\infty, 1\le i\le m\}$, where $\#$ stands for the cardinality of a set.
We see from the homogeneity condition (\ref{eq:homo:p}) that
$\lambda - kn = n/q$.

Set $f = \chi^{}_E$, where
$E=\{(y_1,\ldots,y_m):\, |y_i|<1 \mbox{\, if\, } p_i=1\}$.
Then we have
\[
  J_{\lambda,D}f(x)
  \gtrsim \frac{1}{(1+|x|)^{\lambda-kn}}
  =\frac{1}{(1+|x|)^{n/q}}\not\in L^q.
\]
Consequently, there is some $i$ such that $1<p_i<\infty$.

(iv)\,\,
It remains to prove (\ref{eq:s:e5:D}).
First, we show that   $q\ge p_{i_l}$ for all  $1\le l\le \nu$.

By the definition of $\gamma_i$,
there is some $n\times n$ invertible submatrix of $D$  which consists of $\gamma_i-\gamma_{i+1}$ rows of $D_i$, $1\le i\le m$.
Consequently, there is some $mn\times mn$ upper triangular   matrix $P$ such that
\[
  P\begin{pmatrix}
  D_1 & -I &  &  \\
  D_2 &    &-I & \\
      &\ldots &  & \\
  D_{m}&   &   & -I
  \end{pmatrix}
=
\begin{pmatrix}
  \tilde D_1 & -Q_1 & * & * \\
  \tilde D_2 &    &-Q_2 & * \\
      &\ldots &  & *\\
  \tilde D_{m}&   &   & -Q_m
  \end{pmatrix},
\]
where $Q_i$ are $n\times n$ invertible matrices, $\tilde D_i$ are $n\times n$ matrices
and there are exactly $\gamma_i-\gamma_{i+1}$ rows of
$\tilde D_i$ which are none zero vectors.
Hence $\|J_{\lambda,D}f\|_{L^q}\lesssim \|f\|_{L^{\vec p}}$ is equivalent to
\[
  \left\| \int_{\bbR^{mn}} \frac{f(y_1,\ldots, y_m)}{(\sum_{i=1}^m|\tilde D_ix-Q_iy_i+w_i(y)|)^{\lambda}} dy_1\ldots dy_m\right\|_{L^q}
  \lesssim \|f\|_{L^{\vec p}},
\]
where $w_i(y)$ is a linear combination of
$y_{i+1}, \ldots, y_m$ for $1\le i\le m-1$
and $w_m(y)=0$.

By a change of variable of the form
$y_i\rightarrow Q_i^{-1}y_i + Q_i^{-1}w_i(y)$, we get
\begin{align*}
  &\left\| \int_{\bbR^{mn}} \frac{f(Q_1^{-1}y_1+Q_1^{-1}w_1(y),\ldots, Q_m^{-1}y_m+Q_m^{-1}w_m(y))}{(\sum_{i=1}^m|\tilde D_ix-y_i |)^{\lambda}} dy_1\ldots dy_m\right\|_{L^q}
  \\
  &\lesssim \|f\|_{L^{\vec p}}.
\end{align*}
By substituting   $f(Q_1\cdot, \ldots, Q_m\cdot)$ for $f$ and
$f(y_1+w_1(y),\ldots, y_m+w_m(y))$ for $f$
successively, we get
\begin{align*}
  &\left\| \int_{\bbR^{mn}} \frac{f( y_1 ,\ldots, y_m)}{(\sum_{i=1}^m|\tilde D_ix-y_i |)^{\lambda}} dy_1\ldots dy_m\right\|_{L^q}
   \lesssim \|f\|_{L^{\vec p}}.
\end{align*}
If $p_{i_l}=\infty$ for some $l$, by setting
$\vec{\tilde p}
=(p_1,\ldots,p_{i_l-1},p_{i_l+1},\ldots,p_m)$
and
$f(y) = \tilde f(\ldots, y_{i_l-1}$, $y_{i_l+1}$, $\ldots)$,
we get
\begin{align}
   \left\| \int_{\bbR^{(m-1)n}} \frac{\tilde f(\ldots, y_{i_l-1}, y_{i_l+1},\ldots)dy_1\ldots
   dy_{i_l-1} dy_{i_l+1}\ldots dy_m }{(\sum_{\substack{1\le i\le m\\
  i\ne i_l}}|\tilde D_ix-y_i |)^{\lambda-n}} \right\|_{L^q}
  &\lesssim \|\tilde f\|_{L^{\vec {\tilde p}}}. \label{eq:w:e63}
\end{align}
But the rank of the $(m-1)n\times n$ matrix
$(\tilde D_i)_{i\ne i_{l}}$ is less than $n$. Hence
(\ref{eq:w:e63}) can not be true.
Therefore, $p_{i_l}<\infty$ for all $1\le l\le \nu$.

Take some $1\le l\le \nu$.
Then there is some $z\in\bbR^n$ such that
\[
  D_{i_l}z\ne 0\quad \mathrm{and}\quad
  D_kz=0,\,\, i_l+1\le k\le m.
\]
Denote
\[
    f_a = f(\cdot - aDz).
\]
It is easy to see that $J_{\lambda,D} f_a =
(J_{\lambda,D}  f)(\cdot-az)$.
Hence
\begin{align}
\|J_{\lambda,D} f_a + J_{\lambda,D}   f\|_{L^q}
&=\|(J_{\lambda,D}   f) (\cdot-az)+ J_{\lambda,D}   f\|_{L^q}
 \nonumber  \\
&\rightarrow 2^{1/q} \|   J_{\lambda,D}   f\|_{L^q},
\quad a\rightarrow\infty. \label{eq:a:e1}
\end{align}
On the other hand, we see from Lemma~\ref{Lm: translation} that
\[
  \|f(\cdot - aDz) + f\|_{L^{\vec p}}
  \rightarrow 2^{1/p_{i_l}}\|f\|_{L^{\vec p}},\qquad \mathrm{as\ } \, a\rightarrow \infty.
\]
Hence
\[
  \|J_{\lambda,D} f_a+ J_{\lambda,D}   f\|_{L^q}
  \le  \|J_{\lambda,D}\|\cdot \|f(\cdot-aDz) + f\|_{L^{\vec p}} \rightarrow 2^{1/p_{i_l}}\|J_{\lambda,D}\| \cdot \|f\|_{L^{\vec p}}.
\]
By (\ref{eq:a:e1}), we get
\[
  2^{1/q}\|J_{\lambda,D}f\|_{L^q}
  \le 2^{1/p_{i_l}}\|J_{\lambda,D}\|\cdot  \|f\|_{L^{\vec p}}.
\]
Hence $q\ge p_{i_l}$, $1\le l\le \nu$.

Next we prove that $q\ge p_{i_0}$.
We begin with the case  $i_0=m$.
There are two subcases.

(A1)\,\,  $D_m\ne 0$.

  In this case, $\gamma_m>\gamma_{m+1}=0$. We see from previous arguments that  $q\ge p_m$.

(A2)\,\,   $D_m=0$.
Since $q\ge \max\{p_{i_l}:\, 1\le l\le \nu\}\ge 1$, $J_{\lambda,D}$ is bounded if
and only if
\[
    \bigg\|\int_{\bbR^n}  \frac{ h(x)}{(|D_1 x-y_1|+\ldots+|D_m x-y_m|)^{\lambda}}  dx\bigg\|_{L^{\vec p'}}
 \lesssim   \|h\|_{L^{q'}}.
\]
By Lemma~\ref{Lm:Dx-y}, we have
\[
    \bigg\|\int_{\bbR^n}  \frac{ h(x)}{W(x,y)^{\lambda}}  dx\bigg\|_{L^{\vec p'}}
 \lesssim   \|h\|_{L^{q'}},
\]
where
\[
  W(x,y) = \sum_{l=1}^n |x^{(l)} - y_{j_l}^{(k_l)}|
  + \sum_{\substack{1\le j\le m, 1\le k\le n\\
  (j,k)\ne (j_l,k_l), 1\le l\le n}} |y_j^{(k)}|
\]
and $\{(j_l,k_l):\, 1\le l\le n\}$ is defined by (\ref{eq:jl kl}).

If $q=1$, by setting $h\equiv 1$, we get
\[
 \left\| \int_{\bbR^n} \frac{dx}{W(x,y)^{\lambda}}
 \right\|_{L^{\vec p'}}
\approx
   \left\|   \frac{1}{(\sum_{\substack{1\le j\le m, 1\le k\le n\\
  (j,k)\ne (j_l,k_l), 1\le l\le n}} |y_j^{(k)}|)^{\lambda-n}}
 \right\|_{L^{\vec p'}}
 =\infty,
\]
which is a contradiction. Hence $q>1$.

Set $h(x) = 1/ |x|^{n/q'} (\log 1/|x|)^{(1+\varepsilon)/q'}$
for $|x|<1/2$ and $0$ for others,
where $\varepsilon>0$ is a constant,
we have $h\in L^{q'}$.

For $|x|<\sum_{i=1}^m |y_i|<1/2$, we have $\sum_{i=1}^m|D_i x-y_i|
\lesssim \sum_{i=1}^m |y_i|$. Hence
\begin{align}
&\int_{\bbR^n}  \frac{h(x)dx}{(\sum_{i=1}^m|D_i x-y_i|)^{\lambda}}  dx
  \nonumber \\
&\gtrsim
   \int_{(\sum_{i=1}^m |y_i|)^2<|x|<\sum_{i=1}^m |y_i|}
    \frac{ dx}{(\sum_{i=1}^m|D_i x-y_i|)^{\lambda}|x|^{n/q'} (\log 1/|x|)^{(1+\varepsilon)/q'}}
   \nonumber  \\
&\gtrsim
    \frac{ 1}{(\sum_{i=1}^m |y_i|)^{\lambda+n/q'-n} (\log 1/(\sum_{i=1}^m |y_i|))^{(1+\varepsilon)/q'}}. \label{eq:s:e42}
\end{align}
Therefore,
\begin{align*}
& \left\|
    \frac{ \chi^{}_{\{\sum_{i=1}^m |y_i|<1/2\}}(y_1,\ldots,y_m)}
    {(\sum_{i=1}^m |y_i|)^{\lambda+n/q'-n} (\log 1/(\sum_{i=1}^m |y_i|))^{(1+\varepsilon)/q'}}
    \right\|_{L_{y_1}^{p'_1}}\\
&\gtrsim
\left\|
    \frac{ \chi^{}_{\{\sum_{i=2}^m |y_i|<1/4\}}(y_2,\ldots,y_m)
     \chi^{}_{\{  (\sum_{i=2}^m |y_i|)^2<|y_1|<\sum_{i=2}^m |y_i|\}}(y_1)
    }
    {(\sum_{i=1}^m |y_i|)^{\lambda+n/q'-n} (\log 1/(\sum_{i=1}^m |y_i|))^{(1+\varepsilon)/q'}}
    \right\|_{L_{y_1}^{p'_1}}\\
&\gtrsim
    \frac{ \chi^{}_{\{\sum_{i=2}^m |y_i|<1/4\}}(y_2,\ldots,y_m)
          }
    {(\sum_{i=2}^m |y_i|)^{\lambda-n/q-n/p'_1} (\log 1/(\sum_{i=2}^m |y_i|))^{(1+\varepsilon)/q'}}.
\end{align*}
Computing the $L_{y_i}^{p'_i}$ norm successively, $2\le i\le m-1$, we get
\begin{align*}
& \left\|
    \frac{ \chi^{}_{\{\sum_{i=1}^m |y_i|<1/2\}}(y_1,\ldots,y_m)}
    {(\sum_{i=1}^m |y_i|)^{\lambda+n/q'-n} (\log 1/(\sum_{i=1}^m |y_i|))^{(1+\varepsilon)/q'}}
    \right\|_{L_{(y_1,\ldots,y_{m-1})}^{(p'_1,\ldots,p'_{m-1})}}\\
&\gtrsim
    \frac{ \chi^{}_{\{|y_m|<1/2^m\}}( y_m) }
    {  |y_m|^{\lambda-n/q-n/p'_1-\ldots-n/p'_{m-1}} (\log 1/  |y_m|)^{(1+\varepsilon)/q'}}.
\end{align*}
Note that
$\lambda-n/q-n/p'_1-\ldots-n/p'_{m-1} = n/p'_m$.
If $q< p_m$, then $p'_m<q'$. Hence we can choose $\varepsilon>0$
small enough such that
 $p'_m(1+\varepsilon)/q'<1$.
Therefore,
\[
\Big\|
  \frac{ \chi^{}_{\{|y_m|<1/2^m\}}( y_m) }
    {  |y_m|^{\lambda-n/q-n/p'_1-\ldots-n/p'_{m-1}} (\log 1/  |y_m|)^{(1+\varepsilon)/q'}}\Big\|_{L_{y_m}^{p'_m}} = \infty.
\]
Consequently,
\[
  \bigg\|\int_{\bbR^n}  \frac{ h(x)}{(|D_1 x-y_1|+\ldots+|D_m x-y_m|)^{\lambda}}  dx\bigg\|_{L^{\vec p'}}
=\infty,
\]
which is a contradiction.

Now we consider the case
$i_0<m$.
In this case,
\[
  \frac{1}{p_1}+\ldots+ \frac{1}{p_{i_0}}
  = \frac{1}{q} + \frac{i_0n-\lambda}{n}.
\]
There are two subcases.

(B1)\,\, $\gamma_{i_0}>\gamma_{i_0+1}$.

In this case, there is some $1\le l\le \nu$ such that $i_0=i_l$.
Hence $q\ge p_{i_0}$.

(B2)\,\, $\gamma_{i_0}= \gamma_{i_0+1}$.

By Lemma~\ref{Lm:end points D},
for any $h\in L^{q'}$,
\begin{equation}\label{eq:s:e12}
     \left\| \int_{\bbR^n}\frac{h(x)dx}{(\sum_{i=1}^{i_0}|D_ix-y_i|
     +\sum_{i=i_0+1}^m|D_i x|)^{\lambda}} \right\|_{L^{\vec {\tilde p}'}}
     \lesssim \|h\|_{L^{q'}},
\end{equation}
where $\vec{\tilde p} = (p_1,\ldots, p_{i_0})$.

By Lemma~\ref{Lm:Dx-y}, (\ref{eq:s:e12}) is equivalent to
\begin{equation}\label{eq:s:e12a}
     \left\| \int_{\bbR^n}\frac{h(x)dx}
      {W(x,y)^{\lambda}} \right\|_{L^{\vec {\tilde p}'}}
     \lesssim \|h\|_{L^{q'}},
\end{equation}
where
\[
  W(x,y) = \sum_{\substack{1\le l\le n \\
     j_l\le i_0}} |x^{(l)} - y_{j_l}^{(k_l)}|
  +\sum_{l=n-\gamma_{i_0}+1 }^n |x^{(l)} |
  + \sum_{\substack{1\le j\le i_0, 1\le k\le n\\
  (j,k)\ne (j_l,k_l), 1\le l\le n}} |y_j^{(k)}|
\]
and $\{(j_l,k_l):\, 1\le l\le n\}$ is defined by (\ref{eq:jl kl}).

If $q=1$, by setting $h\equiv 1$, we get
\begin{align*}
     \left\| \int_{\bbR^n}\frac{h(x)dx}
      {W(x,y)^{\lambda}} \right\|_{L^{\vec {\tilde p}'}}
\approx
  \bigg\|  \frac{1}
      {(\sum_{\substack{1\le j\le i_0, 1\le k\le n\\
  (j,k)\ne (j_l,k_l), 1\le l\le n}} |y_j^{(k)}|)^{\lambda-n}} \bigg\|_{L^{\vec {\tilde p}'}}  =\infty,
 \end{align*}
which is a contradiction. Hence $q>1$.

Assume that $ q< p_{i_0}$.
Let  $h$ be defined as in Case (A2).
For $|x|<\sum_{i=1}^{i_0} |y_i|$ $<1/2$, we have $\sum_{i=1}^{i_0}|D_ix-y_i|
     +\sum_{i=i_0+1}^{i_0}|D_i x|
\lesssim \sum_{i=1}^{i_0} |y_i|$. Hence
\begin{align*}
&\int_{\bbR^n}  \frac{h(x)dx}{(\sum_{i=1}^{i_0}|D_ix-y_i|
     +\sum_{i=i_0+1}^{i_0}|D_i x| )^{\lambda}}  dx
  \\
&\gtrsim
   \int_{\substack{|x|>(\sum_{i=1}^{i_0} |y_i|)^2\\
    |x|<\sum_{i=1}^{i_0} |y_i|}}
    \frac{ dx}{(\sum_{i=1}^{i_0}\!|D_ix\!-\!y_i|
    \! +\!\sum_{i=i_0+1}^{i_0}\!|D_i x| )^{\lambda}|x|^{n/q'} (\log 1/|x|)^{(1+\varepsilon)/q'}}
    \\
&\gtrsim
    \frac{ 1}{(\sum_{i=1}^{i_0} |y_i|)^{\lambda+n/q'-n} (\log 1/(\sum_{i=1}^{i_0} |y_i|))^{(1+\varepsilon)/q'}}.
\end{align*}
That is, the inequality (\ref{eq:s:e42}) is true with $m$ being replaced by $i_0$.
Therefore, with almost the same arguments (replacing $m$ by $i_0$) we get a contradiction. Hence
$q\ge p_{i_0}$.

Finally, we show that $\min\{p_{i_l}:\,   1\le l\le \nu\}<q$.

Since $q\ge p_{i_0}>1$, it suffices to consider the case     $\min\{p_{i_l}:\,   1\le l\le \nu\}>1$. In this case, $i_l\le i_0$
for all $1\le l\le \nu$.

Assume on the contrary   that $\min\{p_{i_l}:\, 1\le l\le \nu\}=q$.
Since $\max\{p_{i_l}:\, 1\le l\le \nu\}\le q$, we have
$q = p_{i_1} = \ldots = p_{i_{\nu}}$.

By Lemmas~\ref{Lm:end points D} and \ref{Lm:Dx-y}, $J_{\lambda,D}$ is bounded from $L^{\vec p}$
to $L^q$ if and only if
\begin{equation}\label{eq:s:e9}
   \left\| \int_{\bbR^{ n}}
   \frac{   h(x)dx  }
   {W(x,y)^{\lambda}}
 \right\|_{L^{\vec p'}}  \lesssim   \|h\|_{L^{q'}},
\end{equation}
where
\[
  W(x,y) = \sum_{l=1}^{n} |x^{(l)} - y_{j_l}^{(k_l)}|
       + w(y),  \quad w(y) =  \sum_{\substack{1\le j\le i_0 , 1\le k\le n\\
  (j,k)\ne (j_l,k_l), 1\le l\le n }} |y_j^{(k)}| ,
\]
and $\{(j_l,k_l):\, 1\le l\le n\}$ is defined by (\ref{eq:jl kl}).

 Let
 $
h(x)  =
\chi_{\{ |x^{(l)}|\le 1, 1\le l\le n\}}(x)$  and $\delta  = 1/(n+2)$.
We have
\begin{align*}
&\Big\{x^{(l)}:\,  |x^{(l)} - y_{j_l}^{(k_l)}|  <
     \sum_{s=l+1}^n  |x^{(s)} - y_{j_s}^{(k_s)}|
          +w(y),  \\
&\qquad \qquad       |x^{(s)} - y_{j_s}^{(k_s)}| < \delta  , l+1\le s\le n,
      \\
  & \qquad \qquad w(y)<\delta  , |y_{j_{\bar l}}^{(k_{\bar l})}|\le \delta ,  1\le {\bar l}\le n
 \Big\}\\
&
 \subset\{x^{(l)}:\, |x^{(l)}| < 1\},\qquad 1\le l\le n .
\end{align*}
Let $E = \{(y_1,\ldots,y_m):\,  w(y)<\delta  , |y_{j_l}^{(k_l)}|\le \delta  ,  1\le l\le n  \}$.
When $y=(y_1,\ldots,y_m)\in E$ and
 $|x^{(s)} - y_{j_s}^{(k_s)}| < \delta$, $2\le s\le n$,
 we have
\begin{align*}
&\int_{R} \frac{h(x)}{W(x,y)^{\lambda}} dx^{(1)}
\ge
\int_{|x^{(1)} - y_{j_1}^{(k_1)}|  <
      \sum_{s=2}^{n} |x^{(s)} - y_{j_s}^{(k_s)}|
             +w(y)}
     \frac{h(x)}{W(x,y)^{\lambda}} dx^{(1)}
     \\
&\gtrsim \frac{1}{( \sum_{s=2}^{n } |x^{(s)} - y_{j_s}^{(k_s)}|
     +w(y))^{\lambda-1}} .
\end{align*}
Integrating with respect to $x^{(2)}$, $\ldots$, $x^{(n )}$ successively,
we get
\begin{align}
\int_{R^n} \frac{h(x)dx }{W(x,y)^{\lambda}}
\gtrsim \frac{\chi^{}_{E}(y)}{  w(y)^{\lambda-n }}. \label{eq:s:h}
\end{align}
Set $n_i = n- (\gamma_i-\gamma_{i+1})$, $1\le i\le i_0$.
We have $w(y) = \sum_{i=1}^{i_0} \sum_{l=1}^{n_i} |x_i^{(l)}|$.
Since $q=p_{i_1} = \ldots = p_{i_{\nu}}$ and $\lambda = \sum_{i=1}^{i_0} n/p'_i + n/q$, we have
\begin{align*}
  \lambda -  n
  & = \sum_{i=1}^{i_0} \frac{n}{p'_i} + \frac{n}{q} - n\\
  & = \sum_{i=1}^{i_0} \frac{n-(\gamma_i-\gamma_{i+1})}{p'_i}
     + \sum_{i=1}^{i_0} \frac{ \gamma_i-\gamma_{i+1} }{p'_i}  + \frac{n}{q} - n\\
  & = \sum_{i=1}^{i_0} \frac{n-(\gamma_i-\gamma_{i+1})}{p'_i}
     + \sum_{l=1}^{\nu} \frac{ \gamma_{i_l}-\gamma_{i_l+1} }{p'_{i_l}}  + \frac{n}{q} - n\\
  &= \sum_{i=1}^{i_0} \frac{n_i}{p'_i}.
\end{align*}
Hence
\[
  \left\| \frac{\chi^{}_{E}(y)}{w(y)^{\lambda-n}}  \right\|_{L^{\vec p'}}
  =\infty,
\]
which contradicts with (\ref{eq:s:e9}).
This completes the proof for the necessity.
\endproof

\subsection{ Proof of  Theorem~\ref{thm:factional:D}: The Sufficiency}

In this subsection, we prove the sufficiency in Theorem~\ref{thm:factional:D}.
The proof is split into two parts. One  is  for the case    $p_{i_{\nu}}<q$,
and the other one is for the case
 $p_{i_{\nu}}=q$.

\subsubsection{The Case  \texorpdfstring{$p_{i_{\nu}}<q$}{p i nu<q} }

We begin with the case   $i_0=m$.
That is, $1<p_m<\infty$. There are three subcases.

(A1)\,\, $\rank(D_m)=n$.

In this case, $i_{\nu}=m$.
We see from (\ref{eq:homo:p}) that
\[
  \frac{1}{p_m} = \frac{1}{q} +
    \frac{n- (\lambda - n/p'_1-\ldots - n/p'_{m-1})}{n}.
\]
Since $1<p_m<q<\infty$, we have
\[
  0<\lambda - \Big(\frac{n}{p'_1} + \ldots + \frac{n}{p'_{m-1}}\Big)<n.
\]

Fix some $f\in L^{\vec p}$.
For $1< p_1<\infty$, we see from H\"older's inequality that
\begin{align*}
&\int_{\bbR^n} \frac{|f(y_1, \ldots,y_{m})|}{(|D_1x-y_1|+ \ldots+|D_m x-y_{m}|)^{\lambda}} dy_1  \\
&\le \|f(\cdot,y_2,\ldots,y_{m})\|_{L^{p_1}_{y_1}}
   \Big( \int_{\bbR^n} \frac{dy_1}{(|D_1 x-y_1| +\ldots+|D_m x-y_{m}|)^{\lambda p'_1}}    \Big)^{1/p'_1}
\\
&\approx \|f(\cdot,y_2,\ldots,y_{m})\|_{L^{p_1}_{y_1}}
   \Big( \int_0^{\infty} \frac{  r^{n-1}dr}{(r+|D_2 x-y_2|+\ldots+|D_m
   x-y_{m}|)^{\lambda p'_1}}    \Big)^{1/p'_1}
\\
&\lesssim \frac{\|f(\cdot,y_2,\ldots,y_{m})\|_{L^{p_1}_{y_1}}}
    {( |D_2 x-y_2|+\ldots+|D_m x-y_{m}|)^{\lambda - n/p'_1}}.
\end{align*}
Observe that the above
inequality is also true for $p_1=1$ or $p_1=\infty$.
By induction, it is easy to see that
\begin{align*}
&
\int_{\bbR^{(m-1)n}} \frac{f(y_1,\ldots, y_m)}
     {(|D_1x - y_1| + \ldots + |D_m x - y_m|)^{\lambda}}
    dy_1 \ldots dy_{m-1}\\
&\le \frac{\|f(\cdot, \ldots, \cdot,y_m)\|_{L_{(y_1,\ldots, y_{m-1})}^{(p_1,\ldots,p_{m-1})}}}
{|D_mx -y_m|^{\lambda - n/p'_1-\ldots - n/p'_{m-1}}}.
\end{align*}
Since $D_m$ is invertible, we get the conclusion as desired from Proposition~\ref{prop:fractional}.

(A2)\,\, $0<\rank(D_m)<n$.

In this case, we also have $i_{\nu}=m$.
We consider only the case
\[
  \gamma_1>\gamma_2   =\ldots = \gamma_m>0.
\]
Other cases can be proved similarly.

Set $r=\gamma_1 - \gamma_2$.
By Lemma~\ref{Lm:Dx-y}, it suffices to show that
for any $f\in L^{\vec p}$ and $h\in L^{q'}$,
\begin{equation}\label{eq:w:e11}
   \left| \int_{\bbR^{(m+1)n}}
   \frac{ f(y_1,\ldots,y_m) h(x)dxdy_1\ldots dy_m }
   {W(x,y)^{\lambda}}
 \right|  \lesssim \|f\|_{L^{\vec p}} \|h\|_{L^{q'}},
\end{equation}
where
\begin{align*}
W(x,y) &=
   \sum_{l=1}^{n-r}| y_1^{(l)} |
   +\sum_{l=1}^r| x^{(l)}-y_1^{(n-r+l)}  | \\
 & \quad
  + \sum_{i=2}^{m-1}\sum_{l=1}^n |y_i^{(l)} |
  +\sum_{l=1}^r| y_m^{(l)} |  +
   \sum_{l=r+1}^n| x^{(l)}-y_m^{(l)}  |.
\end{align*}

Let us estimate the integral in (\ref{eq:w:e11}).
First, we estimate
\[
    \left| \int_{\bbR^{n-r}}
   \frac{ f(y_1,\ldots,y_m) h(x) dy_1^{(1)}\ldots
   dy_1^{(n-r)}  }
   {W(x,y)^{\lambda}}\right| .
\]
We see from H\"older's inequality that
\begin{align*}
&\hskip -10mm \left| \int_{\bbR^{n-r}}
   \frac{ f(y_1,\ldots,y_m) h(x) dy_1^{(1)}\ldots
   dy_1^{(n-r)}  }
   {W(x,y)^{\lambda}}\right|\\
 &\lesssim
\frac{\| f(y_1,\ldots,y_m)\|_{L_{(y_1^{(1)},\ldots, y_1^{(n-r)})}^{p_1}} |h(x)| }
   {W_{1,n-r}(x,y)^{\lambda-(n-r)/p'_1}},
\end{align*}
where
\begin{align*}
W_{1,n-r}(x,y) &=
      \sum_{l= 1}^r| x^{(l)}-y_1^{(n-r+l)} |
      + \sum_{i=2}^{m-1}\sum_{l=1}^n |y_i^{(l)} | \\
 & \quad
    +\sum_{l=1}^r| y_m^{(l)} |
    +
   \sum_{l=r+1}^n| x^{(l)}-y_m^{(l)} |.
\end{align*}

Next we estimate the integration with respect to
$x^{(1)}$ and $y_1^{(n-r+1)} $.
Choose some real number $s$ such that
\[
  \frac{1}{p_1}+ \frac{1}{s} = \frac{1}{q} + 1.
\]
Since $\gamma_1>\gamma_2$, we see from the hypothesis
$q\ge
\max\{p_{i_l}:\, 1\le l\le \nu\}$
 that $q\ge p_1$.
Hence $s\ge 1$.
By Young's inequality, we get
\begin{align*}
&
    \left| \int_{\bbR^{2}}
   \frac{\| f(y_1,\ldots,y_m)\|_{L_{(y_1^{(1)},\ldots, y_1^{(n-r)})}^{p_1}} |h(x)| }
   {W_{1,n-r}(x,y)^{\lambda-(n-r)/p'_1}} dx^{(1)} dy_1^{(n-r+1)}\right| \\
 &  \lesssim
   \frac{ \| f(y_1,\ldots,y_m)\|_{L_{(y_1^{(1)},\ldots, y_1^{(n-r+1)})}^{p_1}} \|h(x)\|_{L_{x^{(1)}}^{q'}}   }
   {W_{1,n-r+1}(x,y)^{\lambda-(n-r+1)/p'_1-1/q}},
\end{align*}
where
\begin{align*}
  W_{1,n-r+1}(x,y)  &=
      \sum_{l= 2}^r| x^{(l)}-y_1^{(n-r+l)} |
      + \sum_{i=2}^{m-1}\sum_{l=1}^n |y_i^{(l)} | \\
 & \quad
    +\sum_{l=1}^r| y_m^{(l)} |
    +
   \sum_{l=r+1}^n| x^{(l)}-y_m^{(l)} |.
\end{align*}
Similar arguments show that
\begin{align*}
  &  \left| \int_{\bbR^{n+r}}
   \frac{ f(y_1,\ldots,y_m) h(x)dx^{(1)}\ldots  dx^{(r)} dy_1  }
   {W(x,y)^{\lambda}}\right|\\
 &  \lesssim
   \frac{ \|f(y_1,\ldots,y_m)\|_{L_{y_1}^{p_1}} \|h(x)\|_{L_{( x^{(1)},\ldots,x^{(r)})}^{q'}}   }
   {W_{2,1}(x,y)^{\lambda-n/p'_1-r/q}},
\end{align*}
where
\begin{align*}
  W_{2,1}(x,y)  =
        \sum_{i=2}^{m-1}\sum_{l=1}^n |y_i^{(l)} |
    +\sum_{l=1}^r| y_m^{(l)} |
    +
   \sum_{l=r+1}^n| x^{(l)}-y_m^{(l)} |.
\end{align*}

Next we compute the integral with respect to
 $y_2^{(1)}$, $\ldots$, $y_2^{(n)}$, $\ldots$,
 $y_m^{(1)}$, $\ldots$, $y_{m}^{(r)}$, successively.
We see from H\"older's inequality that
\begin{align*}
  &  \left| \int_{\bbR^{(m-1)n+2r}}
   \frac{ f(y_1,\ldots,y_m) h(x)dx^{(1)}\ldots  dx^{(r)} dy_1\ldots dy_{m-1} dy_m^{(1)} \ldots dy_m^{(r)} }
   {W(x,y)^{\lambda}}\right|\\
 &  \lesssim
   \frac{ \|f(y_1,\ldots,y_m)\|_{L_{( y_m^{(1)},\ldots,y_m^{(r)})}^{p_m}
   (L_{(y_1,\ldots,y_{m-1})}^{(p_1,\ldots,p_{m-1})})} \|h(x)\|_{L_{( x^{(1)},\ldots,x^{(r)})}^{q'}}   }
   {(\sum_{l=r+1}^n| x^{(l)}- y_m^{(l)} |)^{(n-r)/q+(n-r)/p'_m}}.
\end{align*}
Replacing $n$ by $n-r$ in Proposition~\ref{prop:fractional}, we
see from the above inequality that
\begin{align*}
     \left| \int_{\bbR^{(m+1)n}}
   \frac{ f(y_1,\ldots,y_m) h(x)dx  dy_1\ldots dy_m }
   {W(x,y)^{\lambda}}\right|
   \lesssim \|f\|_{L^{\vec p}} \|h\|_{L^{q'}}.
\end{align*}

(A3)\,\, $D_m=0$.

In this case, $i_{\nu}<m$.
Using Lemma~\ref{Lm:Dx-y} again, we only need to show that
\begin{equation}\label{eq:s:e19}
   \left| \int_{\bbR^{(m+1)n}}
   \frac{ f(y_1,\ldots,y_m) h(x)dxdy_1\ldots dy_m }
   {W(x,y)^{\lambda}}
 \right|  \lesssim \|f\|_{L^{\vec p}} \|h\|_{L^{q'}},
\end{equation}
where
\[
  W(x,y) = \sum_{l=1}^n |x^{(l)} - y_{j_l}^{(k_l)}|
  + \sum_{\substack{1\le j\le m-1, 1\le k\le n\\
  (j,k)\ne (j_l,k_l), 1\le l\le n}} |y_j^{(k)}|+|y_m|
\]
and $\{(j_l,k_l):\, 1\le l\le n\}$ is defined by (\ref{eq:jl kl}).

Using Young's inequality for $dx^{(l)} dy_{j_l}^{(k_l)}$ with $j_l<i_{\nu}$,
where the condition $q\ge p_{i_l}$ is used,
and using
H\"older's inequality for $dy_j^{(k)}$ with $(j,k)\not\in \{(j_l,k_l), 1\le l\le n\}$
and $j\le i_{\nu}-1$, respectively, we get
\begin{align*}
&   \left| \int_{\bbR^{(m+1)n}}
   \frac{ f(y_1,\ldots,y_m) h(x)dxdy_1\ldots dy_m }
   {W(x,y)^{\lambda}}
 \right| \\
&\lesssim
 \int_{\bbR^{(m-i_{\nu}+1)n+\gamma_{i_{\nu}}}}
   \frac{ \|f \|_{L_{(y_1,\ldots,y_{i_{\nu}-1})}^{
    (p_1,\ldots,p_{i_{\nu}-1})}} \cdot
    \| h (\tilde x, \bar x)\|_{L_{\tilde x}^{q'}}
    d \bar x
    dy_{i_{\nu}}\ldots dy_m }
   { W_1(x,y)^{\alpha}},
\end{align*}
where
\begin{align*}
  W_1(x,y)
  &= |\bar x-\bar y_{i_{\nu}
  }| + |\tilde y_{i_{\nu}}|
      + \sum_{i=i_{\nu}+1}^m |y_i|,
    \\
\tilde x &= (x^{(1)},\ldots, x^{(n-\gamma_{i_{\nu}})}),
   \\
\bar x &= (x^{(n-\gamma_{i_{\nu}}+1)},\ldots, x^{(n)}),
    \\
\tilde y_{i_{\nu}} &= (y_{i_{\nu}}^{(1)},\ldots, y_{i_{\nu}}^{(n-\gamma_{i_{\nu}})}),
   \\
\bar y_{i_{\nu}} &= (y_{i_{\nu}}^{(n-\gamma_{i_{\nu}}+1)},\ldots, y_{i_{\nu}}^{(n)}).
\end{align*}
and
\[
  \alpha =\lambda - \frac{n}{p'_1} - \ldots - \frac{n}{p'_{i_{\nu}-1}} - \frac{n-\gamma_{i_{\nu}}}{q}.
\]
Consequently, to prove (\ref{eq:s:e19}), it suffices to show that
\begin{equation}\label{eq:s:e18}
   \left\| \int_{\bbR^{  \gamma_{i_{\nu}}}}
   \frac{
    \| h\tilde x, \bar x \|_{L_{\tilde x}^{q'}}
    d\bar x}
   {W_1(x,y)^{\alpha}}
 \right\|_{L_{(y_{i_{\nu}},\ldots, y_m)}^{(p'_{i_{\nu}},\ldots,p'_m)}}
 \lesssim   \|h\|_{L^{q'}}.
\end{equation}

Choose two positive numbers $s_1$ and $s_2$
such that $1\le s_1 < q'<s_2\le p'_{i_{\nu}}$
and $t(s):=n/( {n}/{p'_m} - {\gamma_{i_{\nu}}}/{q'}
   + {\gamma_{i_{\nu}}}/s) > 1$ for $s=s_1,s_2$.

Consider the operator $S$ defined by
\[
  Sg (y_m) = \left\| \int_{\bbR^{\gamma_{i_{\nu}}}}\frac{  g(\bar x)d\bar x }
   {W_1(x,y)^{\alpha}}
   \right\|_{L_{(y_{i_{\nu}},\ldots,y_{m-1})}^{(p'_{i_{\nu}},\ldots,p'_{m-1})}},
   \quad g  \in L^s.
\]
Recall that $p_{i_{\nu}}\le q$. By Young's inequality, we have
\begin{align*}
\left\| \int_{\bbR^{\gamma_{i_{\nu}}}}\frac{  g(\bar x)d\bar x }{W_1(x,y)^{\alpha}}
   \right\|_{L_{\bar y_{i_{\nu}}}^{p'_{i_{\nu}}}}
 \lesssim
 \frac{\|g\|_{L^s}}{(|\tilde y_{i_{\nu}}|
      + \sum_{i=i_{\nu}+1}^m |y_i|)^{\alpha - \gamma_{i_{\nu}}/p'_{i_{\nu}}
      -\gamma_{i_{\nu}}/s'      }}
\end{align*}
Computing the norms with respect to $\tilde y_{i_{\nu}}$,
$y_{i_{\nu}+1}$,
$\ldots$,   $y_{m-1}$   directly,
we get
\[
     |Sg(y_m) |  \lesssim \frac{\|g\|_{L^s}}{|y_m|^{n/t(s)}},
\]
where we use the fact that
\[
  \alpha = \frac{n}{p'_{i_{\nu}}} + \ldots + \frac{n}{p'_m} + \frac{\gamma_{i_{\nu}}}{q}.
\]
Hence
$S$ is of weak type $(s_l, t(s_l))$, $l=1,2$.
Since $s_1<q'<s_2$ and $t(q') = p'_m\ge q'$,
by the   Marcinkiewicz interpolation theorem \cite[Corollary 1.4.21]{Grafakos2008},
  $S$ is of type $(q',p'_m)$.
By setting  $g(\bar x) = \| h(\tilde x, \bar x ) \|_{L_{\tilde x}^{q'}}$, we get  (\ref{eq:s:e18}).

Next we consider the case
$i_0<m$.
By Lemmas~\ref{Lm:end points D}
and \ref{Lm:Dx-y},
it suffices to show that
\begin{equation}\label{eq:w:e30}
   \int_{\bbR^{(i_0+1)n}} \frac{f(y_1,\ldots,y_{i_0})h(x)dxdy_1\ldots dy_{i_0}}{W(x,y)^{\lambda}}
  \lesssim \|f\|_{L^{\vec{\tilde p}'}}\|h\|_{L^{q'}},
  \quad h\in L^{q'},
\end{equation}
where $\vec{\tilde p} =(p_1,\ldots, p_{i_0})$,
\[
  W(x,y) =   \sum_{\substack{1\le l\le n\\
           j_l\le i_0 }} |x^{(l)} - y_{j_l}^{(k_l)}|
     +  \sum_{\substack{1\le l\le n\\
           j_l>i_0 }} |x^{(l)} |
   +  \sum_{\substack{1\le j\le i_0, 1\le k\le n\\
  (j,k)\ne (j_l,k_l), 1\le l\le n}} |y_j^{(k)}|.
\]
and $\{(j_l,k_l):\, 1\le l\le n\}$ is defined by (\ref{eq:jl kl}).

There are four subcases.

(B1)\,\, $\gamma_{i_0}=\gamma_{i_0+1}=0$.

In this case, $i_{\nu}<i_0$ and
$W(x,y)$ is of the following form,
\[
  W(x,y) = \sum_{l=1}^n  |x^{(l)} - y_{j_l}^{(k_l)}|
      + \sum_{\substack{1\le j\le i_0-1, 1\le k\le n\\
  (j,k)\ne (j_l,k_l), 1\le l\le n}} |y_j^{(k)}|
  + |y_{i_0}|.
\]
By setting $m=i_0$ in Case (A3) we get the conclusion as desired.

(B2)\,\, $\gamma_{i_0}=\gamma_{i_0+1}>0$.

In this case, $i_{\nu}\ge i_0+1$, $p_{i_{\nu}}=1$ and
$W(x,y)$ is of the following form,
\begin{align*}
  W(x,y) &= \sum_{\substack{1\le l\le n\\
           j_l< i_0 }} |x^{(l)} - y_{j_l}^{(k_l)}|
             +  \sum_{\substack{1\le l\le n\\
           j_l>i_0 }} |x^{(l)} |
      +\! \sum_{\substack{1\le j\le i_0-1, 1\le k\le n\\
  (j,k)\ne (j_l,k_l), 1\le l\le n}} |y_j^{(k)}|
  + |y_{i_0}|\\
 &= \sum_{l=1}^{n-\gamma_{i_0}} |x^{(l)} - y_{j_l}^{(k_l)}|
             +  \sum_{l=n-\gamma_{i_0}+1}^n |x^{(l)} |
      + \!\sum_{\substack{1\le j\le i_0-1, 1\le k\le n\\
  (j,k)\ne (j_l,k_l), 1\le l\le n}} |y_j^{(k)}|
  + |y_{i_0}| .
\end{align*}
Using Young's and H\"older's  inequalities alternately, we get
\begin{align}
 &  \int_{\bbR^{i_0 n}} \frac{f(y_1,\ldots,y_{i_0})h(x)dxdy_1\ldots dy_{i_0-1}}{W(x,y)^{\lambda}}\nonumber\\
&\lesssim
 \|f(\ldots, y_{i_0})\|_{L_{(y_1,\ldots,y_{i_0-1})}^{
   (p'_1,\ldots,p'_{i_0-1})}}
   \int_{\bbR^{\gamma_{i_0}}}
  \frac{\|h(\tilde x, \bar x)\|_{L_{\tilde x}^{q'}}
  d \bar x}
{( |\bar x| + |y_{i_0}|)^{n/p'_{i_0} + \gamma_{i_0}/q}},
\label{eq:s:e24}
\end{align}
where
\[
  \tilde x=(x^{(1)},\ldots, x^{(n-\gamma_{i_0})}),
  \qquad
  \bar x = (x^{(n-\gamma_{i_0}+1)}, \ldots, x^{(n)}).
\]

Take two positive numbers $s_1$ and $s_2$ such that
$1<s_1<q'<s_2<\infty$ and
$t(s):= n/({n}/{p'_{i_0}}
      - {\gamma_{i_0}}/{q'}
      +
      {\gamma_{i_0}}/{s})>1$ for $s=s_1, s_2$.

Define  the operator $S$   by
\[
  Sg (y_{i_0}) = \int_{\bbR^{\gamma_{i_0}}} \frac{g(\bar x)d \bar x}
{( |\bar x | + |y_{i_0}|)^{n/p'_{i_0} + \gamma_{i_0}/q}},
\quad g\in L^s(\bbR^{\gamma_{i_0}}).
\]
We see from H\"older's inequality that
\[
   |Sg(y_{i_0})| \le \frac{\|g\|_{L^s}}{|y_{i_0}|^{n/t(s)}},
   \qquad s_1\le s\le s_2.
\]
Hence $S$ is of weak type $(s, t(s))$.
Note that $t(q') = p'_{i_0} \ge q'$.
By the interpolation theorem,
$S$ is of type $(q', p'_{i_0})$. By setting
$g(\bar x) = \|h(\tilde x, \bar x)\|_{L_{\tilde x}^{q'}}$,
we get
\[
  \left\|
   \int_{\bbR^{\gamma_{i_0}}}
  \frac{\|h(\tilde x, \bar x)\|_{L_{\tilde x}^{q'}}
  d \bar x}
{( |\bar x| + |y_{i_0}|)^{n/p'_{i_0} + \gamma_{i_0}/q}}
 \right\|_{L_{y_{i_0}}^{p'_{i_0}}}
\lesssim \|h\|_{L^{q'}}.
\]
It follows from (\ref{eq:s:e24}) that (\ref{eq:w:e30}) is true.

(B3)\,\, $\gamma_{i_0} > \gamma_{i_0+1}=0$.

In this case, $i_{\nu} = i_0$ and
$W(x,y)$ is of the following form,
\[
  W(x,y) = \sum_{\substack{1\le l\le n\\
           j_l<i_0 }} |x^{(l)} - y_{j_l}^{(k_l)}|
     +  \sum_{\substack{1\le l\le n\\
           j_l=i_0 }}  |x^{(l)} - y_{j_l}^{(k_l)}|
   +  \sum_{\substack{1\le j\le i_0, 1\le k\le n\\
  (j,k)\ne (j_l,k_l), 1\le l\le n}} |y_j^{(k)}|.
\]
By setting $m=i_0$ in Case (A1) (for $\gamma_{i_0}=n$)
or in Case (A2) (for $0<\gamma_{i_0}<n$), we get the conclusion as desired.

(B4)\,\, $\gamma_{i_0} > \gamma_{i_0+1}>0$.

In this case, $i_{\nu} > i_0$, $p_{i_{\nu}}=1$ and
$W(x,y)$ is of the following form,
\begin{align*}
  W(x,y)
 & = \sum_{\substack{1\le l\le n\\
           j_l<i_0 }} |x^{(l)} - y_{j_l}^{(k_l)}|
     +   \sum_{\substack{1\le l\le n\\
           j_l=i_0 }}  |x^{(l)} - y_{j_l}^{(k_l)}|
     +   \sum_{\substack{1\le l\le n\\
           j_l>i_0 }} |x^{(l)} | \\
&\qquad
   +  \sum_{\substack{1\le j\le i_0, 1\le k\le n\\
  (j,k)\ne (j_l,k_l), 1\le l\le n}}\! |y_j^{(k)}|
  \\
   & = \sum_{\substack{1\le l\le n\\
           j_l<i_0 }} |x^{(l)} - y_{j_l}^{(k_l)}|
     +  \sum_{\substack{1\le l\le n\\
           j_l=i_0 }}  |x^{(l)} - y_{j_l}^{(k_l)}|
   +  \sum_{\substack{1\le j\le i_0-1, 1\le k\le n\\
  (j,k)\ne (j_l,k_l), 1\le l\le n}} |y_j^{(k)}| \\
 &\qquad
       +  \sum_{l=n-\gamma_{i_0+1}+1}^n |x^{(l)} |
   + \sum_{l=1}^{n-\gamma_{i_0}+\gamma_{i_0+1}} |y_{i_0}^{(l)} |.
\end{align*}
Denote
\begin{align*}
&\tilde x = (x^{(1)},\ldots, x^{(n-\gamma_{i_0+1})}),
&\bar x = ( x^{(n-\gamma_{i_0+1}+1)},
\ldots, x^{(n)}), \\
&\tilde y_{i_0} = (y_{i_0}^{(n-\gamma_{i_0}+\gamma_{i_0+1}+1)},
\ldots, y_{i_0}^{(n)}),
& \bar y_{i_0}  = (y_{i_0}^{(1)}, \ldots, y_{i_0}^{(n-\gamma_{i_0}+\gamma_{i_0+1})}).
\end{align*}
Using Young's and H\"older's  inequalities alternately, we get
\begin{align}
 &  \int_{\bbR^{(i_0+1) n}} \frac{f(y_1,\ldots,y_{i_0})h(x)dxdy_1\ldots dy_{i_0-1} d\tilde y_{i_0}}{W(x,y)^{\lambda}}\nonumber \\
&\lesssim
 \|f\|_{L_{(y_1,\ldots,y_{i_0-1}, \tilde y_{i_0})}^{
   (p'_1,\ldots,p'_{i_0-1},p'_{i_0})}}
   \int_{\bbR^{\gamma_{i_0+1}}}
  \frac{\|h(\tilde x, \bar x)\|_{L_{\tilde x}^{q'}}
  d\bar x}
{( |\bar x| + |\bar y_{i_0}|)^{(n-\gamma_{i_0}+\gamma_{i_0+1})/p'_{i_0} + \gamma_{i_0+1}/q}}.\label{eq:s:e25}
\end{align}

For $1<s<\infty$, let $t(s)$ be such that
\[
 \frac{n-\gamma_{i_0}+\gamma_{i_0+1}}{t(s)}  =\frac{n-\gamma_{i_0}+\gamma_{i_0+1}}{p'_{i_0}} -
  \frac{\gamma_{i_0+1}}{q'}  + \frac{\gamma_{i_0+1}}{s}.
\]
Take tow positive numbers $s_1$ and $s_2$ such that
$1<s_1<q'<s_2<\infty$ and $t(s_l)>1$, $l=1,2$.

For  any $s\in [s_1, s_2]$, define  the operator $S$   by
\[
  Sg (\bar y_{i_0}) = \int_{\bbR^{\gamma_{i_0+1}}} \frac{g( \bar x)d \bar x}
{(|\bar x| + |\bar y_{i_0}|)^{(n-\gamma_{i_0}+\gamma_{i_0+1})/p'_{i_0} + \gamma_{i_0+1}/q}},
\quad   g\in L^{s}(\bbR^{\gamma_{i_0+1}}).
\]
We see from H\"older's inequality that
\[
 | Sg(\bar y_{i_0})| \lesssim
  \frac{\|g\|_{L^s}}{|\bar y_{i_0}|^{ (n-\gamma_{i_0}+\gamma_{i_0+1})/t(s)}}.
\]
Hence $S$ is of weak type $(s, t(s))$ for $s=s_1$, $s_2$.
Since $t(q')= p'_{i_0}\ge q'$,
it follows from the interpolation theorem
that $S$ is of type  $(q', p'_{i_0})$.
By setting $g(\bar x)=\|h(\tilde x, \bar x)\|_{L_{\tilde x}^{q'}}$, we see from (\ref{eq:s:e25})
that (\ref{eq:w:e30}) is true.
This completes the proof.
\endproof

\subsubsection{ The Case   \texorpdfstring{$p_{i_{\nu}}=q$}{p i nu = q}}

The proof of the case $p_{i_{\nu}}=q$ is similar to the case
$p_{i_{\nu}} < q$, but with more technical details. Specifically, we have to show that
\begin{equation}\label{eq:s:e20}
  \left|
\int_{\bbR^{(m+1)n}}
 \frac {f(y_1,\ldots,y_m)h(x)dx dy }{W(x,y)^{\lambda}}
\right|
\lesssim \|f\|_{L^{\vec p}}\|h\|_{L^{q'}}.
\end{equation}

Since $\min\{p_{i_l}:\, 1\le l\le \nu\}<q$,
there is some $1\le \mu\le \nu-1$ such that
$p_{i_{\mu}}<q$ and $p_{i_{\mu+1}} = \ldots = p_{i_{\nu}}=q$.
As in the case  $p_{i_{\nu}}<q$,
we first consider the case  $i_0=m$. There are two subcases.

(C1)\,\, $0<\rank(D_m)<n$.

In this case, $i_{\nu}=m$, $q=p_m$ and the function $W(x,y)$ in (\ref{eq:s:e20})
is of the following form,
\begin{align*}
W(x,y)
&=\sum_{\substack{1\le l\le n\\
           j_l<i_{\mu} }} |x^{(l)} - y_{j_l}^{(k_l)}|
     +  \sum_{\substack{1\le l\le n\\
           j_l=i_{\mu} }}  |x^{(l)} - y_{j_l}^{(k_l)}|
     +  \sum_{\substack{1\le l\le n\\
           j_l>i_{\mu} }}  |x^{(l)} - y_{j_l}^{(k_l)}|\\
 &\qquad
   +  \sum_{\substack{1\le j\le m, 1\le k\le n\\
  (j,k)\ne (j_l,k_l), 1\le l\le n}} |y_j^{(k)}|,
\end{align*}
where $\{(j_l, k_l):\, 1\le l\le n\}$ is defined by (\ref{eq:jl kl}).

Using Young's inequality
when computing the integral with respect to  $x^{(l)}$ and $ y_{j_l}^{(k_l)}$
for
$1\le l\le n$ with $j_l < i_{\mu}$,
and using H\"older's inequality
for computing the integral with respect to $y_j^{(k)}$
for $j< i_{\mu}$ with $(j,k)\not\in \{(j_l,k_l):\, 1\le l\le n\}$, we get
  that for any $f\in L^{\vec p}$,
\begin{align}
&\left|
\int_{\bbR^{i_{\mu}n - \gamma_{i_{\mu}}}}
 \frac {f(y_1,\ldots,y_m)h(x)dx^{(1)}\ldots dx^{(n-\gamma_{i_{\mu}})}
  dy_1\ldots dy_{i_{\mu}-1}
 }{W(x,y)^{\lambda}}
\right|  \nonumber \\
&\lesssim
   \frac{
   \|f(\ldots, y_{i_{\mu}}, \ldots,y_m)
   \|_{L_{(y_1,\ldots,y_{i_{\mu}-1})}^{
      (p'_1,\ldots,p'_{i_{\mu}-1})}}
   \|h(\tilde x, \bar x, \bar {\bar x})
       \|_{L_{\tilde x}^{q'}}}
   { W_1(x,y)^{\lambda-n/p'_1-\ldots-n/p'_{i_{\mu}-1}
   -(n-\gamma_{i_{\mu}})/q}},\label{eq:s:e21}
\end{align}
where
\begin{align*}
 W_1(x,y)
&=   \sum_{\substack{1\le l\le n\\
           j_l=i_{\mu} }}  |x^{(l)} - y_{j_l}^{(k_l)}|
     +  \sum_{\substack{1\le l\le n\\
           j_l>i_{\mu} }}  |x^{(l)} - y_{j_l}^{(k_l)}|
    +  \sum_{\substack{i_{\mu}\le j\le m, 1\le k\le n\\
  (j,k)\ne (j_l,k_l), 1\le l\le n}} |y_j^{(k)}|, \\
\tilde x &=(x^{(1)}, \ldots, x^{(n-\gamma_{i_{\mu}})}), \\
\bar x &=
(x^{(n-\gamma_{i_{\mu}}+1)},\ldots,
x^{(n-\gamma_{i_{\mu+1}})}),\\
\bar{\bar x}&=
(x^{(n-\gamma_{i_{\mu+1}}+1)},\ldots,x^{(n)}).
\end{align*}
Denote
\[
  \bar y =(y_{i_{\mu+1}}^{(n+1-\gamma_{i_{\mu+1}}+\gamma_{i_{\mu+2}})},
\ldots,
y_{i_{\mu+1}}^{(n)},
\ldots,
y_{i_{\nu}}^{(n+1-\gamma_{i_{\nu}})},
\ldots,
y_{i_{\nu}}^{(n)}
).
\]
We have $\dim \bar y = \dim\bar{\bar x}$.
We rewrite $W_1$ as
\[
 W_1(x,y)
 =  |\bar{\bar x} - \bar y|
    +  \sum_{\substack{1\le l\le n\\
           j_l=i_{\mu} }}  |x^{(l)} - y_{j_l}^{(k_l)}|
     +   \sum_{\substack{i_{\mu}\le j\le m, 1\le k\le n\\
  (j,k)\ne (j_l,k_l), 1\le l\le n}} |y_j^{(k)}|.
\]
Set
$h_1(\bar x, \bar {\bar x})
=\|h(\tilde x, \bar x, \bar{\bar x})\|_{L_{\tilde x}^{q'}}$.
We see from (\ref{eq:s:e21}) that
\begin{align}
&\left|
\int_{\bbR^{(m+1)n}}
 \frac {f(y_1,\ldots,y_m)h(x)dx dy_1\ldots dy_m }{W(x,y)^{\lambda}}
\right|     \nonumber \\
&
\lesssim
  \int_{\bbR^{(m+1-i_{\mu})n+\gamma_{i_{\mu}}}}
   \frac{
   \|f(\ldots, y_{i_{\mu}}, \ldots,y_m)
   \|_{L_{(y_1,\ldots,y_{i_{\mu}-1})}^{
      (p'_1,\ldots,p'_{i_{\mu}-1})}}
    h_1(\bar x, \bar {\bar x})
    d\bar x d\bar {\bar x}
    dy_{i_{\mu}} \ldots dy_m}
   { W_1(x,y)^{\lambda-n/p'_1-\ldots-n/p'_{i_{\mu}-1}
   -(n-\gamma_{i_{\mu}})/q}}.
 \label{eq:s:e22}
\end{align}
Recall that for a radial decreasing function $\phi(x)\in L^1$, we have $ |\int_{\bbR^n}  g (x) \phi(y-x)dx|
\lesssim \|\phi\|_{L^1} Mg(y)$, where $M$ is the Hardy-Littlewood maximal function. Hence
\begin{align}
& \left| \int_{\bbR^{\gamma_{i_{\mu+1}}}}
  \frac{ h_1(\bar x, \bar {\bar x})
     d\bar {\bar x} }
  { W_1(x,y)^{\lambda-n/p'_1-\ldots-n/p'_{i_{\mu}-1}
   -(n-\gamma_{i_{\mu}})/q}} \right|
\lesssim
   \frac{M_2 h_1(\bar x,  \bar y)}
  {  W_2(x,y)^{\alpha}},     \label{eq:s:e23}
\end{align}
where
\begin{align*}
 W_2(x,y)
&=   \sum_{\substack{1\le l\le n\\
           j_l=i_{\mu} }}  |x^{(l)} - y_{j_l}^{(k_l)}|
    +  \sum_{\substack{i_{\mu}\le j\le m, 1\le k\le n\\
  (j,k)\ne (j_l,k_l), 1\le l\le n}} |y_j^{(k)}|,\\
 \alpha &=
\lambda-\frac{n}{p'_1} -\ldots-\frac{n}{p'_{i_{\mu}-1}}
   -\frac{n-\gamma_{i_{\mu}}}{q}- \gamma_{i_{\mu+1}},
\end{align*}
and $M_2 h_1$ stands for the maximal function $M$ acting on the second variable
of $h_1$, i.e.,
$M_2 h_1(\bar x,  \bar y) =(M_2 h_1(\bar x, \cdot))(\bar y)$.

For any $g(\bar x, \bar y)\in L_{\bar y}^{q'}(L_{\bar x}^s)$
with $1\le s \le p'_{i_{\mu}}$,
define the operator $S$ by
\begin{equation}\label{eq:Sg}
  Sg(y_{i_{\mu}},\ldots,y_m)
  = \int_{\bbR^{\gamma_{i_{\mu}} - \gamma_{i_{\mu+1}}}}
   \frac{ g(\bar x,  \bar y) d\bar x}
  { W_2(x,y)^{\alpha}}.
\end{equation}
Denote $\tilde y_{i_{\mu}}
= (y_{i_{\mu}}^{(n-r_{i_{\mu}} + r_{i_{\mu+1}}+1)},
\ldots, y_{i_{\mu}}^{(n)})$. We rewrite $W_2$ as
\[
 W_2(x,y)
 =   |\bar x - \tilde y_{i_{\mu}}|
    +  \sum_{\substack{i_{\mu}\le j\le m, 1\le k\le n\\
  (j,k)\ne (j_l,k_l), 1\le l\le n}} |y_j^{(k)}|.
\]
By Young's inequality, we have
\begin{align*}
&  \|Sg(y_{i_{\mu}},\ldots,y_m)
   \|_{L_{\tilde y_{i_{\mu}}}^{p'_{i_{\mu}}}} \\
&  \lesssim
  \frac{\|g(\bar x, \bar y)\|_{L_{\bar x}^s}}
  {(\sum_{\substack{i_{\mu}\le j\le m, 1\le k\le n\\
  (j,k)\ne (j_l,k_l), 1\le l\le n}} |y_j^{(k)}|)^{
  \alpha - (\gamma_{i_{\mu}} - \gamma_{i_{\mu+1}})/p'_{i_{\mu}}
  -
  (\gamma_{i_{\mu}} - \gamma_{i_{\mu+1}})/s'}
  }.
\end{align*}

Recall that $i_{\nu}=m$ and $p_{i_l} =q$ for $\mu+1\le l\le \nu$.
Computing the norm  with respect to $y_j^{(k)}$ directly,
we get
\[
 \|Sg(y_{i_{\mu}},\ldots,y_m)\|_{L_{(y_{i_{\mu}},\ldots,y_{m-1},y_m^1)
  }^{(p'_{i_{\mu}},\ldots,p'_{m-1},p'_m)}}
 \lesssim
 \frac{\|g\|_{L^{q'}(L^s)}}{|y_m^2|^{\beta}},
\]
where
\begin{align*}
  y_m^1 &= (y_m^{(1)},\ldots, y_m^{(n-\gamma_{i_{\mu}}+\gamma_{i_{\mu+1}}-\gamma_{i_{\nu}})},
  y_m^{(n+1-\gamma_{i_{\nu}})},
\ldots,
y_m^{(n)}
    ),
  \\
  y_m^2 &= (y_m^{(n-\gamma_{i_{\mu}}+\gamma_{i_{\mu+1}}-\gamma_{i_{\nu}}+1)},
  \ldots, y_m^{(n- \gamma_{i_{\nu }} )})
\end{align*}
and
\begin{align*}
\beta &= \alpha - \frac{\gamma_{i_{\mu}} - \gamma_{i_{\mu+1}}}{p'_{i_{\mu}}}
  -\frac{\gamma_{i_{\mu}} - \gamma_{i_{\mu+1}}}{s'}
  -\sum_{i=i_{\mu}}^{m-1} \frac{n-(\gamma_i - \gamma_{i+1})}{p'_i}
    \\
&\qquad      -\frac{n-\gamma_{i_{\mu}}+\gamma_{i_{\mu+1}}-\gamma_{i_{\nu}}}{p'_m}\\
&=\lambda-\frac{n}{p'_1} -\ldots-\frac{n}{p'_{i_{\mu}-1}}
   -\frac{n-\gamma_{i_{\mu}}}{q}- \gamma_{i_{\mu+1}}
      - \frac{\gamma_{i_{\mu}} - \gamma_{i_{\mu+1}}}{p'_{i_{\mu}}}
  -\frac{\gamma_{i_{\mu}} - \gamma_{i_{\mu+1}}}{s'}\\
&\qquad
  -\sum_{i=i_{\mu}}^{m-1} \frac{n }{p'_i}
  +\sum_{i=i_{\mu}}^{m-1} \frac{ \gamma_i - \gamma_{i+1} }{p'_i}
     -\frac{n-\gamma_{i_{\mu}}+\gamma_{i_{\mu+1}}-\gamma_{i_{\nu}}}{p'_m}\\
&= \frac{ \gamma_{i_{\mu}}}{q}- \gamma_{i_{\mu+1}}
      - \frac{\gamma_{i_{\mu}} - \gamma_{i_{\mu+1}}}{p'_{i_{\mu}}}
  -\frac{\gamma_{i_{\mu}} - \gamma_{i_{\mu+1}}}{s'}
  +\sum_{l=\mu}^{\nu-1} \frac{ \gamma_{i_l} - \gamma_{i_{l+1}} }{p'_{i_l}}\\
  &\qquad
     +\frac{ \gamma_{i_{\mu}}-\gamma_{i_{\mu+1}}+\gamma_{i_{\nu}}}{p'_m}\\
&=  \frac{\gamma_{i_{\mu}} - \gamma_{i_{\mu+1}}}{s}.
\end{align*}
Hence $S$ is bounded from $L^{q'}(L^s)$
to $L_{y_m^2}^{s,\infty}(
  L_{(y_{i_{\mu}},\ldots,y_{m-1},y_m^1)
  }^{(p'_{i_{\mu}},\ldots,p'_{m-1},p'_m)})$
  whenever $1\le s\le p'_{i_{\mu}}$.
  Since $1<q'=p'_m<p'_{i_{\mu}}$, by  the interpolation theorem (Lemma~\ref{Lm:interpolation}),
$S$ is bounded from $L^{q'}(L^{q'})$
to $L_{y_m^2}^{q'}(
  L_{(y_{i_{\mu}},\ldots,y_{m-1},y_m^1)
  }^{(p'_{i_{\mu}},\ldots,p'_{m-1},p'_m)})$. That is,
\[
 \|Sg(y_{i_{\mu}},\ldots,y_m)\|_{L_{(y_{i_{\mu}},\ldots,y_m)
  }^{(p'_{i_{\mu}},\ldots,p'_m)}}
  \lesssim
  \|g\|_{L^{q'}}.
\]
Set $g= M_2h_1$. We have
\begin{align*}
  \left\|\int_{\bbR^{\gamma_{i_{\mu}} - \gamma_{i_{\mu+1}}}}
   \frac{ M_2h_1(\bar x,  \bar y) d\bar x}
  { W_2(x,y)^{\alpha}}\right\|_{L_{(y_{i_{\mu}},\ldots,y_m)
  }^{(p'_{i_{\mu}},\ldots,p'_m)}}
  \lesssim  \|M_2 h_1\|_{L^{q'}}
 \approx  \| h_1\|_{L^{q'}}.
\end{align*}
It follows from (\ref{eq:s:e23})
that
\begin{align}
& \left\| \int_{\bbR^{\gamma_{i_{\mu}}}}
  \frac{h_1(\bar x, \bar{\bar x}) d\bar x d\bar{\bar x}}
  { W_1(x,y)^{\lambda-n/p'_1-\ldots-n/p'_{i_{\mu}-1}
   -(n-\gamma_{i_{\mu}})/q}} \right\|_{L_{(y_{i_{\mu}},\ldots,y_m)
  }^{(p'_{i_{\mu}},\ldots,p'_m)}}
 \lesssim   \| h_1\|_{L^{q'}}.
   \nonumber
\end{align}
By (\ref{eq:s:e22}), we get
\[
  \left|
\int_{\bbR^{(m+1)n}}
 \frac {f(y_1,\ldots,y_m)h(x)dx dy_1\ldots dy_m }{W(x,y)^{\lambda}}
\right|
\lesssim \|f\|_{L^{\vec p}}\|h\|_{L^{q'}}.
\]

(C2)\,\, $\rank(D_m)=0$.

In this case, $i_{\nu}\le m-1$.
(\ref{eq:s:e23}) is still true.
Define the operator $S$ as in (\ref{eq:Sg}).
We have
\[
 \|Sg(y_{i_{\mu}},\ldots,y_m)\|_{L_{(y_{i_{\mu}},\ldots,y_{m-1})
  }^{(p'_{i_{\mu}},\ldots,p'_{m-1})}}
 \lesssim
 \frac{\|g\|_{L^{q'}(L^s)}}{|y_m|^{
    n/t(s)}},
\]
where $t(s)$ satisfies
\[
\frac{n}{t(s)} =  \frac{n}{p'_m}+ \frac{\gamma_{i_{\mu}}-\gamma_{i_{\mu+1}}}{s}
 -  \frac{\gamma_{i_{\mu}}-\gamma_{i_{\mu+1}}}{q'}.
\]
Hence $S$ is bounded from $L^{q'}(L^s)$
to $L_{y_m}^{t(s),\infty}(
  L_{(y_{i_{\mu}},\ldots,y_{m-1})
  }^{(p'_{i_{\mu}},\ldots,p'_{m-1})})$
  whenever $1\le s\le p'_{i_{\mu}}$.
Observe that $t(q') = p'_m$.
We can choose positive numbers $s_1$ and $s_2$ such that
 $1<s_1<q'<s_2<p'_{i_{\mu}}$ and $t(s_l)>1$ for $l=1,2$.
 Since $p'_m\ge q'$,
using  the interpolation theorem (Lemma~\ref{Lm:interpolation}) again,
we get that
\[
 \|Sg(y_{i_{\mu}},\ldots,y_m)\|_{L_{(y_{i_{\mu}},\ldots,y_m)
  }^{(p'_{i_{\mu}},\ldots,p'_m)}}
  \lesssim
  \|g\|_{L^{q'}}.
\]
Now with the same arguments as that in Case (C1) we get
the conclusion as desired.

Next we consider the case   $i_0<m$.
Since $ p_{i_{\nu}}=q \ge p_{i_0}>1$,
we have $i_{\nu}\le  i_0$.
Consequently, $\gamma_{i_0+1}=\ldots = \gamma_m=0$.
Moreover, there is some $1\le \mu\le \nu-1$ such that
$p_{i_{\mu}}<q$ and $p_{i_{\mu+1}} = \ldots = p_{i_{\nu}}=q$.

By Lemmas~\ref{Lm:end points D}
and \ref{Lm:Dx-y},
it suffices to show that
\begin{equation}\label{eq:s:e26}
   \int_{\bbR^{(i_0+1)n}} \frac{f(y_1,\ldots,y_{i_0})h(x)dxdy_1\ldots dy_{i_0}}{W(x,y)^{\lambda}}
  \lesssim \|f\|_{L^{\vec{\tilde p}'}}\|h\|_{L^{q'}},
  \quad h\in L^{q'},
\end{equation}
where $\vec{\tilde p} =(p_1,\ldots, p_{i_0})$,
\begin{align*}
W(x,y)
&=\sum_{\substack{1\le l\le n\\
           j_l<i_{\mu} }} |x^{(l)} - y_{j_l}^{(k_l)}|
     +  \sum_{\substack{1\le l\le n\\
           j_l=i_{\mu} }}  |x^{(l)} - y_{j_l}^{(k_l)}|
     +  \sum_{\substack{1\le l\le n\\
           j_l>i_{\mu} }}  |x^{(l)} - y_{j_l}^{(k_l)}|\\
 &\qquad
   +  \sum_{\substack{1\le j\le i_0, 1\le k\le n\\
  (j,k)\ne (j_l,k_l), 1\le l\le n}} |y_j^{(k)}|,
\end{align*}
and
$j_l $ and $k_l$ are defined by (\ref{eq:jl kl}).

By setting $m=i_0$ in Case (C1) (for $\gamma_{i_0}>0$ )
or in Case (C2) ( for $\gamma_{i_0}=0$ ), we get the conclusion
as desired.

\section{Extension of The Multilinear Fractional Integrals to Linear Operators}

In this section, we give a proof of Theorem~\ref{thm:main}.
We begin with some preliminary results.

The following result is a generalization of Lemma~\ref{Lm:end points D}.

\begin{Lemma}\label{Lm:end points ii}
Suppose that $1\le p_i\le \infty$, $1\le i\le m+1$, $1\le q\le \infty$
and $0<\lambda<mn$ which meet (\ref{eq:ss:e0}).
Suppose that  $p_{i_0+1}=\ldots=p_{m+1}=1$ for some $1\le i_0\le m$.
Let
$D_i$ be $n\times n$ matrices, $2\le i\le m+1$. Denote $\tilde p=(p_1,\ldots, p_{i_0})$. Then the following two items are equivalent:

\begin{enumerate}
  \item there is a constant $C_{\lambda,\vec p, q, n}$ such that for any $h\in \!L^{q'}$ and almost all $(x_{i_0+1}$, $\ldots$, $x_{m+1})$ $\in\bbR^{(m+1-i_0)n}$,
   \[
     \left\| \frac{h(x_1)}{( \sum_{i=2}^{m+1}|D_ix_1-x_i| )^{\lambda}} \right\|_{
     L_{(x_1,\ldots,x_{i_0})}^{(p'_1,\ldots,p'_{i_0}) }}
     \le  C_{\lambda,\vec p, q, n}\|h\|_{L^{q'}}.
   \]

  \item  for any $h\in L^{q'}$,
   \[
     \left\| \frac{h(x_1)}{( \sum_{i=2}^{i_0}|D_ix_1-x_i|
     +\sum_{i=i_0+1}^{m+1}|D_i x_1|)^{\lambda}} \right\|_{L^{\vec {\tilde p}'}}
      \le  C_{\lambda,\vec p, q, n}\|h\|_{L^{q'}}.
   \]
\end{enumerate}
\end{Lemma}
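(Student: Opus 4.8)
The plan is to argue along the lines of the proof of Lemma~\ref{Lm:end points D}, the only new feature being that here $h$ is a function of the normed coordinate $x_1$ rather than of an auxiliary coordinate that gets integrated out; every substitution below will therefore act on $h$ as well, but since all of them are invertible affine maps they change $\|h\|_{L^{q'}}$ only by a constant. Since $p'_i=\infty$ for $i_0+1\le i\le m+1$, item~(i) is nothing but the single mixed-norm estimate
\[
  \Big\|\frac{h(x_1)}{(\sum_{i=2}^{m+1}|D_ix_1-x_i|)^{\lambda}}\Big\|_{L^{\vec p'}_{(x_1,\ldots,x_{m+1})}}\lesssim\|h\|_{L^{q'}},\qquad h\in L^{q'}.
\]

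To prove (i)$\Rightarrow$(ii) I would dualize this against $L^{\vec p}$, turning it into $\big|\int_{\bbR^{(m+1)n}}(\sum_{i=2}^{m+1}|D_ix_1-x_i|)^{-\lambda}f(x)h(x_1)\,dx\big|\lesssim\|f\|_{L^{\vec p}}\|h\|_{L^{q'}}$, where by positivity $f$ and $h$ may be taken nonnegative. Testing with $f(x)=\tilde f(x_1,\ldots,x_{i_0})\prod_{i=i_0+1}^{m+1}\delta^{-n}\chi^{}_{\{|x_i|\le\delta\}}$, which satisfies $\|f\|_{L^{\vec p}}=\|\tilde f\|_{L^{\vec{\tilde p}}}$ because $p_i=1$ for $i>i_0$, letting $\delta\to0$, and applying Fatou's lemma exactly as in Lemma~\ref{Lm:end points D}, the tail variables $x_{i_0+1},\ldots,x_{m+1}$ collapse to $0$ in the denominator, so that
\[
  \Big|\int_{\bbR^{i_0n}}\frac{\tilde f(x_1,\ldots,x_{i_0})\,h(x_1)}{(\sum_{i=2}^{i_0}|D_ix_1-x_i|+\sum_{i=i_0+1}^{m+1}|D_ix_1|)^{\lambda}}\,dx_1\cdots dx_{i_0}\Big|\lesssim\|\tilde f\|_{L^{\vec{\tilde p}}}\|h\|_{L^{q'}}
\]
for all nonnegative $\tilde f$ and $h$, hence for all $\tilde f$ and $h$. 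Taking the supremum over $\tilde f$ in the unit ball of $L^{\vec{\tilde p}}$ computes the $L^{\vec{\tilde p}'}$ norm and yields (ii).

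For (ii)$\Rightarrow$(i), let $\tilde D$ be the $(m+1-i_0)n\times n$ matrix obtained by stacking $D_{i_0+1},\ldots,D_{m+1}$, and put $r=\rank\tilde D$. If $r=0$ (in particular if all $D_i$ with $i>i_0$ vanish), the extra terms $|D_ix_1-x_i|$ in the denominator of (i) are nonnegative, so (i) is immediate from (ii). Otherwise pick invertible matrices $U,V$ with $U\tilde DV=\binom{I_r\quad }{\quad 0}$ and change variables $x_1\mapsto Vx_1$ in (i) and (ii) simultaneously (this preserves both sides up to constants and replaces $h$ by $h\circ V$). If $(z,w)$, with $z\in\bbR^r$, denotes $U$ applied to the stacked tail vector $(x_{i_0+1},\ldots,x_{m+1})$, then $\sum_{i=i_0+1}^{m+1}|D_iVx_1-x_i|\approx\sum_{l=1}^r|x_1^{(l)}-z^{(l)}|+|w|$, and, discarding the nonnegative term $|w|$, it suffices to prove, uniformly in $z\in\bbR^r$,
\[
  \Big\|\frac{h(x_1)}{(\sum_{i=2}^{i_0}|D_iVx_1-x_i|+\sum_{l=1}^r|x_1^{(l)}-z^{(l)}|)^{\lambda}}\Big\|_{L^{(p'_1,\ldots,p'_{i_0})}_{(x_1,\ldots,x_{i_0})}}\lesssim\|h\|_{L^{q'}}.
\]
For $z=0$ this is exactly (ii) written after the same substitution $x_1\mapsto Vx_1$. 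For general $z$, set $\zeta=(z^{(1)},\ldots,z^{(r)},0,\ldots,0)^*\in\bbR^n$ and substitute $x_1\mapsto x_1+\zeta$ together with $x_i\mapsto x_i+D_iV\zeta$ for $2\le i\le i_0$; these translations are measure preserving, they turn $x_1^{(l)}-z^{(l)}$ into $x_1^{(l)}$, leave every $D_iVx_1-x_i$ unchanged, and replace $h$ by $h(\cdot+\zeta)$, which has the same $L^{q'}$ norm. Thus the $z$-estimate reduces to the $z=0$ estimate with the same constant, and (i) follows.

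Since the analytic content here — the mollification/Fatou passage and the rank reduction of $\tilde D$ — is already contained in Lemma~\ref{Lm:end points D}, I expect the only real obstacle to be the bookkeeping imposed by the fact that $h=h(x_1)$ sits on a normed coordinate: one must check that each affine substitution used in (ii)$\Rightarrow$(i) simultaneously (a) carries the denominator of the running estimate to that of (ii), (b) leaves the mixed norm $L^{(p'_1,\ldots,p'_{i_0})}$ invariant up to an absolute constant, and (c) alters $\|h\|_{L^{q'}}$ only by such a constant, so that the final bound is genuinely uniform in the tail variables $x_{i_0+1},\ldots,x_{m+1}$ and reproduces the ``for almost all'' formulation of (i).
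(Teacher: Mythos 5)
Your proposal is correct and follows essentially the same route as the paper: the (i)$\Rightarrow$(ii) direction via duality, the mollifying test function $\prod_{i>i_0}\delta^{-n}\chi_{\{|x_i|\le\delta\}}$ and Fatou, and the (ii)$\Rightarrow$(i) direction via the rank normalization $U\tilde DV=\binom{I_r\ }{\ 0}$ followed by the translations $x_1\mapsto x_1+\zeta$, $x_i\mapsto x_i+D_iV\zeta$ that absorb the tail dependence uniformly. The bookkeeping points you flag at the end (invariance of the mixed norm and of $\|h\|_{L^{q'}}$ under the affine substitutions) are exactly the checks the paper carries out.
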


\begin{proof}
Note that $p'_i=\infty$ for $i_0+1\le i\le m+1$.
So (i) is equivalent to
   \[
     \left\| \frac{h(x_1)}{( \sum_{i=2}^{m+1}|D_ix_1-x_i| )^{\lambda}} \right\|_{
     L^{\vec p'}}
     \le  C_{\lambda,\vec p, q, n}\|h\|_{L^{q'}}.
   \]
Or equivalently, for any
$f\in L^{\vec p}$ and $h\in L^{q'}$,
\[
     \left | \int_{\bbR^{(m+1)n}}\frac{f(x_1,\ldots,x_{m+1})h(x_1)dx_1\ldots dx_{m+1}}{( \sum_{i=2}^{m+1}|D_ix_1-x_i| )^{\lambda}} \right |
     \le  C_{\lambda,\vec p, q, n}\|f\|_{L^{\vec p}} \|h\|_{L^{q'}}.
\]
By setting
\[
  f(x_1,\ldots,x_{m+1})
  =\tilde f(x_1,\ldots,x_{i_0}) \prod_{i=i_0+1}^{m+1}
    \frac{1}{\delta^n} \chi^{}_{\{|x_i|\le \delta\}}(x_i)
\]
and letting $\delta\rightarrow 0$, we see from
Fatou's lemma that
\[
     \left| \int_{\bbR^{i_0n}}\frac{\tilde f(x_1,\ldots,x_{i_0}) h(x_1)dx_1\ldots
     d x_{i_0}}{( \sum_{i=2}^{i_0}|D_ix_1-x_i|
     +\sum_{i=i_0+1}^{m+1}|D_i x_1| )^{\lambda}} \right|
     \le  C_{\lambda,\vec p, q, n}\|\tilde f\|_{L^{\vec {\tilde p}}} \|h\|_{L^{q'}},
\]
where $\tilde p=(p_1,\ldots,p_{i_0})$.
Hence for any $\tilde f\in L^{\vec {\tilde p}}$
and $h\in L^{q'}$, we have
\[
     \left | \int_{\bbR^{ i_0 n}}\frac{\tilde f(x_1,\ldots,x_{i_0})h(x_1)dx_1\ldots dx_{i_0}}{( \sum_{i=2}^{i_0}|D_ix_1-x_i|
     +\sum_{i=i_0+1}^{m+1}|D_i x_1| )^{\lambda}} \right|
     \le  C_{\lambda,\vec p, q, n}\|\tilde f\|_{L^{\vec {\tilde p}}} \|h\|_{L^{q'}}.
\]
Consequently, (ii) is true.

Next we assume that (ii) is true.
If $D_{i_0+1}=\ldots=D_{m+1}=0$, then (i) is obvious.
Next we assume that not all of $D_{i_0+1}$, $\ldots$,
$D_{m+1}$ are zero matrices.

For
any $\tilde f\in L^{\vec {\tilde p}}$ and $h\in L^{q'}$, we have
\begin{align}
        \left| \int_{\bbR^{ i_0 n}} \frac{\tilde f(x_1,\ldots,x_{i_0})
 h(x_1) dx_1\ldots dx_{i_0}}{( \sum_{i=2}^{i_0}|D_ix_1-x_i|
     +\sum_{i=i_0+1}^{m+1}|D_i x_1|)^{\lambda}} \right |
     &\le C_{\lambda,\vec p, q, n}  \|\tilde f\|_{\vec {\tilde p}}\|h\|_{L^{q'}}. \label{eq:w:e28a}
\end{align}
Let
\[
  \tilde D = \begin{pmatrix}
  D_{i_0+1} \\
  \vdots \\
  D_m
  \end{pmatrix}
  \mathrm{\,\, and\,\,}
  y = \begin{pmatrix}
  x_{i_0+1}
  \\
  \vdots\\
  x_{m+1}
  \end{pmatrix}.
\]
Suppose that $\rank(\tilde D)=r$. Then  there are $(m-i_0)n\times (m-i_0)n$ invertible matrix $U$ and
$n\times n$ invertible matrix $V$ such that
\[
  U \tilde D V   =  \binom{I_r \quad }{\quad 0} .
\]
Note that $\sum_{i=i_0+1}^{m+1}|D_i x_1| \approx |\tilde D x_1| \approx |U\tilde Dx_1|$.
By a change of variable of the form $x_1\rightarrow Vx_1$ and replacing $f$ and $h$ by $f(V^{-1}\cdot, \ldots)$ and  $h(V^{-1}\cdot)$, respectively,
(\ref{eq:w:e28a}) turns out to be
\begin{equation}\label{eq:w:e29a}
       \left | \int_{\bbR^{i_0n}}\frac{f(x_1,\ldots,x_{i_0})
        h(x_1)dx_1\ldots dx_{i_0}}{( \sum_{i=2}^{i_0}|D_iVx_1-x_i|
     +\sum_{l=1}^r|x_1^{(l)}|)^{\lambda}} \right |
     \le C_{\lambda,\vec p, q, n} \|f\|_{L^{\vec{\tilde  p }}}\|h\|_{L^{q'}}.
\end{equation}

For  any $f\in L^{\vec {\tilde p}}$
 and $(x_{i_0+1},\ldots, x_{m+1})\in\bbR^{ (m +1-i_0)n  }$,
denote
\[
  \Delta_{f,h}:=
  \int_{\bbR^{i_0n}}\frac{|f(x_1,\ldots,x_{i_0})h(x_1)|
        dx_1\ldots dx_{i_0}}{( \sum_{i=2}^{m+1}|D_ix_1-x_i|)^{\lambda}}.
\]
Note that
\begin{equation}\label{eq:w:e36}
  \sum_{i=i_0+1}^{m+1} |D_ix_1-x_i|
  \approx  |\tilde Dx_1 - y|
  \approx | U\tilde Dx_1 - Uy|.
\end{equation}
By a change a variable of the form $x_1\rightarrow V(x_1+z)$,
where $z=(z^{(1)},\ldots,z^{(r)},0$, $\ldots,0)^*\in\bbR^n$
with $z^{(1)}, \ldots, z^{(r)}$ being the first $r$ components
of $Uy$, we have
\begin{align}
\Delta_{f,h} &\approx
  \int_{\bbR^{i_0n}}\frac{|f(x_1,\ldots,x_{i_0})h(x_1)|
        dx_1\ldots dx_{i_0}}{( \sum_{i=2}^{i_0}|D_ix_1-x_i|
        +| U\tilde Dx_1 - Uy|
        )^{\lambda}}
     \nonumber\\
&\approx
  \int_{\bbR^{i_0n}}\frac{|f(V(x_1+z),x_2,\ldots,x_{i_0})h(V(x_1+z))|
        dx_1\ldots dx_{i_0}}{( \sum_{i=2}^{i_0}|D_iVx_1-x_i+D_iVz|
        +|W(y)|
        +\sum_{l=1}^r|x_1^{(l)}|
        )^{\lambda}} \nonumber  \\
&\le
  \int_{\bbR^{i_0n}}\frac{|f(V(x_1+z),x_2,\ldots,x_{i_0})h(V(x_1+z))|
        dx_1\ldots dx_{i_0}}{( \sum_{i=2}^{i_0}|D_iVx_1-x_i+D_iVz|
        +\sum_{l=1}^r|x_1^{(l)}|
        )^{\lambda}}, \label{eq:w:e37}
\end{align}
where   $W(y)$ is a vector in $\bbR^{(m+1-i_0)n-r}$
consisting of the last $(m+1-i_0)n-r$ components of $Uy$.

By a change of variable of the form
$(x_2,\ldots,x_{i_0})
\rightarrow (x_2+ D_2Vz,\ldots, x_{i_0}+D_{i_0}Vz)$, we get
\begin{align}
&\Delta_{f,h} \nonumber \\
&\lesssim\!
  \int_{\bbR^{i_0n}}\hskip -8pt\frac{|f(V(x_1 +z),x_2+D_2Vz, \ldots,x_{i_0}+D_{i_0}Vz)
  h(V(x_1+z))|
        dx_1\ldots dx_{i_0}}{( \sum_{i=2}^{i_0}|D_iVx_1-x_i|
        +\sum_{l=1}^r|x^{(l)}|
        )^{\lambda}}.\label{eq:w:e38}
\end{align}
Note that the constants invoked in
(\ref{eq:w:e36}) -- (\ref{eq:w:e38})
are independent of
$(x_{i_0+1}$, $\ldots$, $x_{m+1})$.
By (\ref{eq:w:e29a}), we have
\begin{align*}
    \Delta_{f,h} &\le
      C_{\lambda,\vec p, q, n} \|f(V(\cdot+z),\cdot+D_2Vz,\ldots, \cdot+D_{i_0}Vz)\|_{L^{\vec{\tilde  p }}}\|h\|_{L^{q'}} \\
&  =
    C_{\lambda,\vec p, q, n} \|f\|_{L^{\vec{\tilde  p  }}}
 \|h\|_{L^{q'}}.
\end{align*}
Hence for any $(x_{i_0+1},\ldots, x_{m+1})\in\bbR^{ (m+1 -i_0)n  }$,
\[
  \left\| \frac{ h(x_1)
         }{( \sum_{i=2}^{m+1}|D_ix_1-x_i|)^{\lambda}}
         \right\|_{L^{\vec{\tilde  p }'}}
     \le C_{\lambda,\vec p, q, n}  \|h\|_{L^{q'}}.
\]
This completes the proof.
\end{proof}

To study   conditions on the matrix $A$ for which there exist $\vec p$, $q$ and $\lambda$ such that
$T_{\lambda}$ is bounded from $L^{\vec p}$ to $L^q$, we need the following two lemmas, which gives the structure of
$A^{-1}$.

\begin{Lemma}\label{Lm:La}
Suppose that  $A$ in an invertible $(n_1+n_2)\times (n_1+n_2)$ matrix  such that
\[
A = \begin{pmatrix}
A_1 & A_2 \\
A_3 & A_4
\end{pmatrix},
\qquad
\tilde A :=A^{-1} = \begin{pmatrix}
\tilde A_1 & \tilde A_2 \\
\tilde A_3 & \tilde A_4
\end{pmatrix},
\]
where $A_1$ is an $n_1\times n_2$ matrix,
$A_2$ is an $n_1\times n_1$ matrix,
$A_3$ is an $n_2\times n_2$ matrix,
$A_4$ is an $n_2\times n_1$ matrix,
$\tilde A_1$ is an $n_2\times n_1$ matrix,
$\tilde A_2$ is an $n_2\times n_2$ matrix,
$\tilde A_3$ is an $n_1\times n_1$ matrix,
and
$\tilde A_4$ is an $n_1\times n_2$ matrix.
We have
\begin{enumerate}
\item $A_3$ is invertible if and only if $\tilde A_3$ is invertible.

\item if $\tilde A_3$ is invertible, then $\tilde A_2 - \tilde A_1 \tilde A_3^{-1}\tilde A_4$ is invertible and
\[(\tilde A_2 - \tilde A_1 \tilde A_3^{-1} \tilde A_4 )^{-1}\tilde A_1 \tilde A_3^{-1}=-A_4.
 \]

\end{enumerate}

As a consequence, for $n_1=n_2$, $A_3$ and $A_4$ are invertible if and only if $\tilde A_1$ and $\tilde A_3$
are invertible.
\end{Lemma}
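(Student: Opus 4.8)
The plan is to reduce everything to three block identities obtained by writing $A\tilde A=I_{n_1+n_2}$ and $\tilde A A=I_{n_1+n_2}$ in block form. Matching the blocks (keeping in mind that the column partition of $A$ is $(n_2,n_1)$ while its row partition is $(n_1,n_2)$, so that $\tilde A$ has row partition $(n_2,n_1)$ and column partition $(n_1,n_2)$), one reads off in particular
\[
 \tilde A_1 A_1+\tilde A_2 A_3=I_{n_2},\qquad
 \tilde A_3 A_1+\tilde A_4 A_3=0\qquad(\text{from }\tilde A A=I),
\]
\[
 A_3\tilde A_1+A_4\tilde A_3=0\qquad(\text{from }A\tilde A=I).
\]
All of the statement will be extracted from these three identities.

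For (i) I would argue by contraposition through a short kernel chase. If $A_3$ is singular, pick $u\in\bbR^{n_2}\setminus\{0\}$ with $A_3u=0$ and regard $\binom{u}{0}$ as a vector of $\bbR^{n_1+n_2}$. Then $A\binom{u}{0}=\binom{A_1u}{0}$, hence $\binom{u}{0}=\tilde A\binom{A_1u}{0}$, and reading the two block rows gives $u=\tilde A_1A_1u$ and $\tilde A_3(A_1u)=0$. Since $u\ne0$ we must have $A_1u\ne0$, so $A_1u$ is a nonzero vector of $\ker\tilde A_3$ and $\tilde A_3$ is singular. The reverse implication is the same argument applied to the pair $(\tilde A,\tilde A^{-1})=(\tilde A,A)$, which has exactly the block structure of the lemma with $n_1$ and $n_2$ interchanged; this yields (i).

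For (ii), assume $\tilde A_3$ is invertible; by (i) so is $A_3$, which is precisely what permits the use of $A_3^{-1}$. From $\tilde A_3 A_1+\tilde A_4 A_3=0$ we get $\tilde A_4=-\tilde A_3A_1A_3^{-1}$, and substituting,
\[
 \tilde A_2-\tilde A_1\tilde A_3^{-1}\tilde A_4
 =\tilde A_2+\tilde A_1A_1A_3^{-1}
 =(\tilde A_2A_3+\tilde A_1A_1)A_3^{-1}
 =A_3^{-1},
\]
the last step using $\tilde A_1A_1+\tilde A_2A_3=I_{n_2}$. Hence $\tilde A_2-\tilde A_1\tilde A_3^{-1}\tilde A_4$ is invertible with inverse $A_3$, and
\[
 (\tilde A_2-\tilde A_1\tilde A_3^{-1}\tilde A_4)^{-1}\tilde A_1\tilde A_3^{-1}
 =A_3\tilde A_1\tilde A_3^{-1}=-A_4,
\]
the final equality being $A_3\tilde A_1=-A_4\tilde A_3$ from the third identity. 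For the concluding equivalence ($n_1=n_2$, all blocks square): if $A_3$ — equivalently, by (i), $\tilde A_3$ — is singular, then neither of the two asserted conditions can hold and the equivalence is trivial; otherwise both $A_3$ and $\tilde A_3$ are invertible and $A_3\tilde A_1=-A_4\tilde A_3$ gives $A_4=-A_3\tilde A_1\tilde A_3^{-1}$, so $A_4$ is invertible if and only if $\tilde A_1$ is, which combined with (i) gives the claim.

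I do not expect a genuine obstacle: this is elementary linear algebra. The only point requiring care is extracting the correct block equations — the mismatched partitions ($(n_1,n_2)$ for rows, $(n_2,n_1)$ for columns) make it easy to pair up blocks of incompatible sizes — and remembering to invoke (i) inside the proof of (ii) before writing $A_3^{-1}$.
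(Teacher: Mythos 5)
Your proof is correct, and all the block identities you extract are dimensionally consistent with the paper's (admittedly awkward) partition conventions. You take a somewhat different route from the paper: the paper proves (i) by assuming $A_3$ invertible and performing an explicit block (Schur-complement) factorization
$\left(\begin{smallmatrix}0&I_{n_2}\\ I_{n_1}&0\end{smallmatrix}\right)\left(\begin{smallmatrix}I_{n_1}&-A_1A_3^{-1}\\ 0&I_{n_2}\end{smallmatrix}\right)A=\left(\begin{smallmatrix}A_3&A_4\\ 0&A_2-A_1A_3^{-1}A_4\end{smallmatrix}\right)$,
from which it reads off that $A_2-A_1A_3^{-1}A_4$ is invertible and that $\tilde A_3=(A_2-A_1A_3^{-1}A_4)^{-1}$, then obtains the converse and (ii) by the same symmetry $A\leftrightarrow\tilde A$ that you invoke. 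You instead prove (i) by a contrapositive kernel chase (a nonzero vector of $\ker A_3$ produces a nonzero vector of $\ker\tilde A_3$), which avoids any division by $A_3$ and makes the two directions manifestly symmetric; and you obtain (ii) by direct manipulation of the identities from $A\tilde A=\tilde AA=I$, arriving at the same Schur relation $\tilde A_2-\tilde A_1\tilde A_3^{-1}\tilde A_4=A_3^{-1}$ that the paper gets from its factorization. The paper's approach yields the explicit form of $A^{-1}$ as a by-product; yours is slightly more economical and sidesteps the need to justify invertibility before writing $A_3^{-1}$ in the proof of (i). Both handle the final equivalence for $n_1=n_2$ the same way, via $A_4=-A_3\tilde A_1\tilde A_3^{-1}$.
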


\begin{proof}
 First, we assume that $A_3$ is invertible.
Note that
\[
\begin{pmatrix}
0  & I_{n_2} \\
I_{n_1} & 0
\end{pmatrix}
\begin{pmatrix}
I_{n_1} & -A_1A_3^{-1} \\
0 & I_{n_2}
\end{pmatrix}
A
=\begin{pmatrix}
A_3 & A_4 \\
0  & A_2 -A_1 A_3^{-1}A_4
\end{pmatrix}.
\]
Since $A$ is invertible,
so is $A_2 -A_1 A_3^{-1}A_4$.
Moreover, we have
\[
  A^{-1} = \begin{pmatrix}
  *  & * \\
  (A_2 - A_1 A_3^{-1} A_4)^{-1} & - (A_2 - A_1 A_3^{-1} A_4)^{-1}A_1 A_3^{-1}
  \end{pmatrix}.
\]
Hence $\tilde A_3 = (A_2 - A_1 A_3^{-1} A_4)^{-1} $ is invertible.

On the other hand, if $\tilde A_3$ is invertible, similar arguments show that
$\tilde A_2 - \tilde A_1 \tilde A_3^{-1} \tilde A_4$ is invertible, $A_3 = (\tilde A_2 - \tilde A_1 \tilde A_3^{-1} \tilde A_4)^{-1} $
and
\[
  A_4 = - (\tilde A_2 - \tilde A_1 \tilde A_3^{-1} \tilde A_4)^{-1}\tilde A_1 \tilde A_3^{-1}.
\]
This proves (i) and (ii).

For the case  $n_1=n_2$, if $A_3$ is  invertible, then we see from the above arguments that
$\tilde A_3$ is invertible and $A_4 = - (\tilde A_2 - \tilde A_1 \tilde A_3^{-1} \tilde A_4)^{-1}\tilde A_1 \tilde A_3^{-1}$.
Hence $A_4$ is invertible if and only if $\tilde A_1$ is  invertible. This completes the proof.
\end{proof}

\begin{Lemma}
\label{Lm:A inverse}
Let $A $ be an
$(m+1)n\times (m+1)n$ invertible matrix,
$A_{m+1,m+1}$ be the submatrix consisting of
the last $n$ rows and the last $n$ columns of $A$,
and $B$ be the submatrix consisting of the first
$mn$ rows and the first $mn$ columns of $A^{-1}$.
Then we have
$\rank(B)=mn$ if and only if
$\rank(A_{m+1,m+1})=n$.
\end{Lemma}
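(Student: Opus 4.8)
The plan is to deduce this from Lemma~\ref{Lm:La}, which already carries out the Schur‑complement bookkeeping relating a corner block of a matrix to a corner block of its inverse. Since $B$ is $mn\times mn$ and $A_{m+1,m+1}$ is $n\times n$, both square, the assertion is equivalent to: $B$ is invertible if and only if $A_{m+1,m+1}$ is invertible. I would write $A$ in block form for the splitting $(m+1)n=mn+n$,
\[
  A=\begin{pmatrix} P & Q \\ R & S\end{pmatrix},\qquad
  A^{-1}=\begin{pmatrix} \tilde P & \tilde Q \\ \tilde R & \tilde S\end{pmatrix},
\]
where $P,\tilde P$ are $mn\times mn$ and $S,\tilde S$ are $n\times n$, so that $S=A_{m+1,m+1}$ and $\tilde P=B$ by definition.

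The one adjustment needed is to move the $n\times n$ corner $S$ into a position matched by the (deliberately nonstandard) block pattern of Lemma~\ref{Lm:La}, whose ``square'' corner blocks are the antidiagonal ones. I would interchange the two column blocks of $A$: with $\Pi=\begin{pmatrix}0 & I_{mn}\\ I_n & 0\end{pmatrix}$ one has
\[
  A\Pi=\begin{pmatrix} Q & P \\ S & R\end{pmatrix},\qquad
  (A\Pi)^{-1}=\Pi^{-1}A^{-1}=\begin{pmatrix} \tilde R & \tilde S \\ \tilde P & \tilde Q\end{pmatrix},
\]
and $A\Pi$ is again invertible. Then I would apply Lemma~\ref{Lm:La} to $A\Pi$ with $n_1=mn$ and $n_2=n$: in the notation of that lemma the bottom‑left block of $A\Pi$ is the $n\times n$ block $S$ (playing the role of ``$A_3$''), while the bottom‑left block of $(A\Pi)^{-1}$ is the $mn\times mn$ block $\tilde P=B$ (playing the role of ``$\tilde A_3$''). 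Part (i) of Lemma~\ref{Lm:La} states exactly that $S$ is invertible if and only if $B$ is invertible, i.e.\ $\rank(A_{m+1,m+1})=n$ if and only if $\rank(B)=mn$.

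I do not anticipate a genuine obstacle here; the statement is essentially a repackaging of Lemma~\ref{Lm:La}, and the only care required is to verify that the four block sizes of $A\Pi$ and of $(A\Pi)^{-1}$ match the sizes $n_1\times n_2$, $n_1\times n_1$, $n_2\times n_2$, $n_2\times n_1$ prescribed there, which they do once the column swap is in place. If one prefers to avoid the permutation altogether, the same conclusion follows directly from block elimination: when $S$ is invertible one gets $B=(P-QS^{-1}R)^{-1}$, invertible because $A$ is; conversely, running the identical computation on $A^{-1}$ (whose top‑left block is $B$) gives $S=(\tilde S-\tilde R\,B^{-1}\tilde Q)^{-1}$, again invertible.
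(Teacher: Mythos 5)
Your proof is correct and is essentially the paper's own argument: the paper likewise sets $P=A\bigl(\begin{smallmatrix} & I_{mn}\\ I_n & \end{smallmatrix}\bigr)$, observes that the bottom-left blocks of $P$ and $P^{-1}$ are $A_{m+1,m+1}$ and $B$ respectively, and invokes Lemma~\ref{Lm:La}(i). Your block-size check and the alternative Schur-complement remark are fine but add nothing beyond what Lemma~\ref{Lm:La} already provides.
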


\begin{proof}
Set $ P = A \begin{pmatrix}
 & I_{mn} \\
 I_n &
\end{pmatrix}$. Then we have
\[
  P = \begin{pmatrix}
  * & * \\
  A_{m+1,m+1} & *
  \end{pmatrix}
  \quad
  \mathrm{and}
  \quad
  P^{-1} =
   \begin{pmatrix}
 & I_n \\
 I_{mn} &
\end{pmatrix}
  A^{-1}
 =  \begin{pmatrix}
  * & * \\
  B & *
  \end{pmatrix}.
\]
Now the conclusion follows from Lemma~\ref{Lm:La}.
\end{proof}

\begin{Lemma}\label{Lm:q}
Suppose that $\vec p = (p_1,\ldots,p_{m+1})$ with $1\le p_i\le \infty$, $1\le i\le m+1$ and  $0<q\le \infty$.
Let $T_{\lambda}$ be defined by (\ref{eq:Tf}).
If $T_{\lambda}$ is bounded from $L^{\vec p}$ to $L^q$, we have
\begin{enumerate}
\item The matrix $A$ is invertible.
\item $q\ge 1$.
\end{enumerate}
\end{Lemma}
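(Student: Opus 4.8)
The plan is to establish the two assertions in the order stated, using (i) crucially in the proof of (ii); both are ``soft'' and need no harmonic analysis.

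For (i) I would argue by contradiction. If $A$ were singular, then $\operatorname{Ran}A$ is a proper subspace of $\bbR^{(m+1)n}$, hence Lebesgue null, while $\ker A\neq\{0\}$. Choose a linear complement $U$ of $\ker A$ and a bounded $W\subset U$ of positive $(\dim U)$-dimensional measure, and set $f=\chi_{A(W)}$. Since $A(W)\subset\operatorname{Ran}A$ is null, $f=0$ a.e., so $f$ is the zero element of $L^{\vec p}$ and $\|f\|_{L^{\vec p}}=0$. On the other hand $f(Ax)=\chi_{W+\ker A}(x)$, and $W+\ker A$ has positive $(m+1)n$-dimensional measure, being the image of the positive-measure set $W\times\ker A$ under the isomorphism $U\oplus\ker A\to\bbR^{(m+1)n}$; so by Fubini the set $\{(x_1,\dots,x_m):(x_1,\dots,x_m,x_{m+1})\in W+\ker A\}$ has positive $mn$-dimensional measure for all $x_{m+1}$ in a set of positive measure, and on that set $T_\lambda f(x_{m+1})=\int(|x_1|+\dots+|x_m|)^{-\lambda}\,dx_1\cdots dx_m>0$, the integral being over that set. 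Thus $T_\lambda f\not\equiv0$, contradicting $\|T_\lambda f\|_{L^q}\lesssim\|f\|_{L^{\vec p}}=0$; hence $A$ is invertible. (This is really the statement that $(\ref{eq:Tf})$ does not descend to equivalence classes unless $A$ is invertible, so ``bounded'' has to be read as ``well defined and bounded''.)

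For (ii) we may assume $q<\infty$, and by (i) that $A$ is invertible. Then $T_\lambda\neq0$: for a small cube $Q$ the set $A^{-1}Q$ is a parallelepiped of positive measure, so by Fubini $T_\lambda\chi_Q(x_{m+1})>0$ on a set of positive measure. Let $M$ be the $(m+1)n\times n$ matrix of the last $n$ columns of $A$; the identity $A(x_1,\dots,x_m,x_{m+1}+a)=Ax+Ma$ yields the twisted intertwining relation $(T_\lambda f)(\cdot+a)=T_\lambda\big(f(\cdot+Ma)\big)$ for every $a\in\bbR^n$. Fix $a\neq0$ and any $f\in L^{\vec p}$ with $T_\lambda f\neq0$. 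By asymptotic disjointness of the two bumps, $\|T_\lambda f+(T_\lambda f)(\cdot+ta)\|_{L^q}\to 2^{1/q}\|T_\lambda f\|_{L^q}$ as $t\to\infty$ (here $q<\infty$ and $a\neq0$ are used); on the other hand, by the intertwining relation, translation invariance of $\|\cdot\|_{L^{\vec p}}$, and its subadditivity (a norm, since each $p_i\ge1$),
\[
  \|T_\lambda f+(T_\lambda f)(\cdot+ta)\|_{L^q}=\bigl\|T_\lambda\bigl(f+f(\cdot+tMa)\bigr)\bigr\|_{L^q}\le\|T_\lambda\|\,\bigl\|f+f(\cdot+tMa)\bigr\|_{L^{\vec p}}\le 2\|T_\lambda\|\,\|f\|_{L^{\vec p}}.
\]
Letting $t\to\infty$ and taking the supremum over such $f$ gives $2^{1/q}\|T_\lambda\|\le 2\|T_\lambda\|$, and since $\|T_\lambda\|>0$ this forces $q\ge1$.

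I do not expect a real obstacle: the only points that need care are interpreting ``bounded'' as ``well defined and bounded'' in (i), and invoking (i) before (ii) so that $T_\lambda\neq0$ — without that the conclusion $q\ge1$ would fail for the zero operator. The argument in (ii) could be pushed, exactly as in the proof of Theorem~\ref{thm:commute}, to give $q\ge p_k$ with $k$ the index of the last nonvanishing block of $Ma$, but $q\ge1$ is all that is needed.
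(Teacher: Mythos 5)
Your proposal is correct. Part (i) is essentially the paper's own argument: the paper tests $T_\lambda$ against the indicator of $A\bbR^{(m+1)n}$, which is Lebesgue-null when $A$ is singular, so $\|f\|_{L^{\vec p}}=0$ while $T_\lambda f\neq 0$; your version with a bounded window $W$ and the identity $A^{-1}(A(W))=W+\ker A$ is the same idea, just packaged so that $T_\lambda f$ is manifestly positive on a positive-measure set rather than identically $+\infty$. Part (ii), however, is where you genuinely diverge. The paper's proof of $q\ge 1$ is a long computation: it invokes Laplace's theorem to select an invertible $mn\times mn$ minor from the first $mn$ columns of $A$, splits into two cases according to whether the lower-left block vanishes, performs several changes of variables, uses the reversed Minkowski inequality for $0<q<1$, and finally constructs the explicit test family (\ref{eq:f}) whose $L^{\vec p}$-norm it must compute by hand to reach a contradiction. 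You instead exploit the intertwining identity $(T_\lambda f)(\cdot+a)=T_\lambda(f(\cdot+Ma))$, with $M$ the last $n$ columns of $A$, together with the classical two-distant-bumps limit $\|g+g(\cdot+ta)\|_{L^q}\to 2^{1/q}\|g\|_{L^q}$ and the triangle inequality in $L^{\vec p}$ (valid since every $p_i\ge 1$), to get $2^{1/q}\|T_\lambda\|\le 2\|T_\lambda\|$ and hence $q\ge 1$ once $\|T_\lambda\|>0$, which you correctly reduce to part (i). This is exactly the mechanism the paper itself deploys in Theorem~\ref{thm:commute} and in step (P2) of the necessity proof of Theorem~\ref{thm:main} (where $f_a=f(\cdot+aB_1z)$ yields $q\ge p_{k_i}$), so your route is both consistent with the paper's toolkit and considerably shorter; what the paper's heavier computation buys is nothing extra for this particular conclusion. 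The two points you flag — reading ``bounded'' as ``well defined and bounded'' in (i), and establishing $T_\lambda\neq 0$ before (ii) — are precisely the right caveats, and your justification that $T_\lambda\chi_Q>0$ on a set of positive measure when $A$ is invertible closes the latter.
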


\begin{proof}
(i)\,\, Assume on the contrary that $\rank(A)<(m+1)n$.
Then the Lebesgue measure of
 $A \bbR^{(m+1)n}$ is zero. Set $f(x)=1
 $ for $x\in A \bbR^{(m+1)n}$ and $0$ for others. Then
we have $T_{\lambda}f(x_{m+1})=\infty$ while $\|f\|_{L^{\vec p}}=0$,
which contradicts with the boundedness of $T_{\lambda}$.

(ii)
Since $A$ is invertible, by Laplace's theorem,
there exist
$1\le i_1<\ldots < i_{mn}\le (m+1)n$
such that
the $mn\times mn$ submatrix consisting of
the first $mn$ columns and the $i_l$-th rows, $1\le l\le mn$,
is invertible.
Denote $\{1,\ldots, (m+1)n\}\setminus \{i_l:\, 1\le l\le mn\}$
by $\{i_{mn+1}, \ldots, i_{(m+1)n}\}$, where $i_{mn+1}<\ldots<i_{(m+1)n}$.

Let $A_1 = A_{(i_{mn+1},\ldots,i_{(m+1)n})}^{(1,\ldots,mn)}$
be the submatrix consisting of
the $i_{mn+1}$-th, $\ldots$, the $i_{(m+1)n}$-th rows
and the first $mn$ columns of $A$.
Let
$A_2 = A_{(i_{mn+1},\ldots,i_{(m+1)n})}^{(mn+1,\ldots,(m+1)n)}$,
$A_3 = A_{(i_1,\ldots,i_{mn})}^{(1,\ldots,mn)}$
and
$A_4 = A_{(i_1,\ldots,i_{mn})}^{(mn+1,\ldots,(m+1)n)}$
be defined similarly.

For $z=(z_1,\ldots, z_{(m+1)n})\in\bbR^{(m+1)n}$, Denote
\begin{equation}\label{eq:ea:a2}
f(z_1,\ldots, z_{(m+1)n}) = g(z_{i_{mn+1}}, \ldots, z_{i_{(m+1)n}},
z_{i_1}, \ldots, z_{i_{mn}}).
\end{equation}
Set $\tilde x = (x_1,\ldots,x_m)$.
Denote the $i$-th entry of $Ax$ by $(Ax)_i$.
We have
\begin{align*}
 f(Ax) &= f((Ax)_1, \ldots, (Ax)_{(m+1)n}) \\
    &= g((Ax)_{i_{mn+1}},\ldots,(Ax)_{i_{(m+1)n}},
      (Ax)_{i_1},\ldots, (Ax)_{i_{mn}}) \\
 &=
 g(A_1 \tilde x + A_2 x_{m+1}, A_3 \tilde x + A_4x_{m+1} ).
\end{align*}
Hence
\[
  T_{\lambda}f(x_{m+1}) = \int_{\bbR^{mn}}
     \frac{g(A_1 \tilde x + A_2 x_{m+1}, A_3 \tilde x + A_4x_{m+1} )
      d\tilde x}{|\tilde x|^{\lambda}}.
\]
Therefore,
\[
\left\|
\int_{\bbR^{mn}}
     \frac{g(A_1 \tilde x + A_2 x_{m+1}, A_3 \tilde x + A_4x_{m+1} )
      d\tilde x}{|\tilde x|^{\lambda}}
      \right\|_{L_{x_{m+1}}^q}
      \lesssim \|f\|_{L^{\vec p}}.
\]

By a change of variable of the form
$\tilde x \rightarrow A_3^{-1}(\tilde x -  A_4 x_{m+1})$, we get
\begin{align}
   \left\|\int_{\bbR^{mn}}
       \frac{g(A_1 A_3^{-1}\tilde x  + (A_2-A_1 A_3^{-1} A_4)x_{m+1}, \tilde x)   }{|\tilde x -  A_4 x_{m+1}|^{\lambda}}
     d\tilde x\right\|_{L_{x_{m+1}}^q}
  &  \lesssim \|f\|_{L^{\vec p}}  .
    \label{eq:ea:a8}
\end{align}

There are two cases.

(a)\,\,  The submatrix consisting of the last $n$ rows
and the first $mn$ columns of $A$ is a  zero matrix.

In this case, $i_l=l$ for all $1\le l\le (m+1)n$, $A_1=0$,
$A_2$ is invertible
 and $g(x_{m+1},\tilde x) = f(\tilde x, x_{m+1})$.
By (\ref{eq:ea:a8}),
\begin{align}
   \left\|\int_{\bbR^{mn}}
       \frac{f(\tilde x, A_2 x_{m+1})   }{|\tilde x -  A_4 x_{m+1}|^{\lambda}}
     d\tilde x\right\|_{L_{x_{m+1}}^q}
  &  \lesssim \|f\|_{L^{\vec p}}  ,
  \qquad \forall f\in L^{\vec p}.
\nonumber
\end{align}
By a change of variable of the form
$\tilde x \rightarrow \tilde x +A_4 x_{m+1}$, we get
\begin{align}
   \left\|\int_{\bbR^{mn}}
       \frac{f(\tilde x+    A_4 x_{m+1}, A_2 x_{m+1})   }{|\tilde x|^{\lambda}}
     d\tilde x\right\|_{L_{x_{m+1}}^q}
  &  \lesssim \|f\|_{L^{\vec p}}.
\nonumber
\end{align}
Since $A_2$ is invertible, we have
\begin{align}
   \left\|\int_{\bbR^{mn}}
       \frac{f(\tilde x+    A_4 A_2^{-1}y,  y)   }{|\tilde x|^{\lambda}}
     d\tilde x\right\|_{L_y^q}
  &  \lesssim \|f\|_{L^{\vec p}}.
\nonumber
\end{align}
Replacing $f(\tilde x, y)$ by $f(\tilde x - A_4 A_2^{-1}y, y)$, we get
\begin{align*}
   \left\|\int_{\bbR^{mn}}
       \frac{f( \tilde x ,y   )d\tilde x}{|\tilde x |^{\lambda}}
  \right\|_{L_y^q}
  &  \lesssim \|f\|_{L^{\vec p}}.
\end{align*}
If $q<1$, by setting $f(\tilde x, y)= f_1(\tilde x) f_2(y)$
with $f_2\in L^{p_{m+1}}\setminus L^q$, we get a contradiction.

(b)\,\, The submatrix consisting of the last $n$ rows
and the first $mn$ columns of $A$ is not zero.

In this case, we can choose the index $i_{mn}$ such that  $i_{mn}>mn$.
If $0<q<1$,   we see from  (\ref{eq:ea:a8}) and
Minkowski's inequality that for positive functions $f$,
\begin{align}
   \int_{\bbR^{mn}}\left\|
       \frac{g(A_1 A_3^{-1}\tilde x  + (A_2-A_1 A_3^{-1} A_4)x_{m+1}, \tilde x)   }{|\tilde x -  A_4 x_{m+1}|^{\lambda}}\right\|_{L_{x_{m+1}}^q}
     d\tilde x
  &  \lesssim \|f\|_{L^{\vec p}}  .
   \nonumber
\end{align}
By Lemma~\ref{Lm:La},
$A_2-A_1 A_3^{-1} A_4$ is invertible.
By a change of variable of the form
\[
  x_{m+1}\rightarrow (A_2-A_1 A_3^{-1} A_4)^{-1}(y
     -A_1 A_3^{-1}\tilde x),
\]
we get
\begin{align}
   \int_{\bbR^{mn}}\left\|
       \frac{g(y, \tilde x   )}{|P \tilde x -  Dy |^{\lambda}}
       \right\|_{L_y^q}
     d\tilde x
  &  \lesssim \|f\|_{L^{\vec p}} ,\nonumber
\end{align}
where
$P = I -A_4(A_2-A_1 A_3^{-1} A_4)^{-1} A_1 A_3^{-1}$
and
$D = A_4(A_2-A_1 A_3^{-1} A_4)^{-1}$.

Since $|P \tilde x -  Dy | \le |\tilde x| +|y|$, we have
\begin{align*}
  \int_{\bbR^{mn}}
       \left\| \frac{g(y, \tilde x   )}{(|\tilde x| +|y|)^{\lambda}}\right\|_{L_y^q}
     d\tilde x
  &  \lesssim \|f\|_{L^{\vec p}}.
\end{align*}
Recall that $g$ is defined by (\ref{eq:ea:a2}).
We have
\begin{equation}\label{eq:ea:a9}
  \int_{\bbR^{mn}}
       \left\| \frac{f(z)}{|z|^{\lambda}}\right\|_{L_{(z_{i_{mn+1}},
       \ldots, z_{i_{(m+1)n}})}^q}
     d z_{i_1}\ldots dz_{i_{mn}}
    \lesssim \|f\|_{L^{\vec p}}.
\end{equation}
Let 
\[
  \vec s=  (p_1,\ldots,p_1,\ldots,p_{m+1},\ldots,p_{m+1}),
\]
where each $p_i$ has $n$ copies, and 
\begin{align*}
 \vec u = (s_{i_1},\ldots,s_{i_{mn}}),
\qquad 
 \vec v = (s_{i_{mn+1}},\ldots,s_{i_{(m+1)n}}).
\end{align*} 
Set
\begin{equation}\label{eq:f}
  f(z_1,\ldots,z_{(m+1)n})
    =
    \frac{|z_{i_{mn}}|^{\alpha-\beta} h(z_{i_1},\ldots,z_{i_{mn}})}
    {(\sum_{l=1}^n |z_{i_{mn+l}}|+| z_{i_{mn}}|)^{\alpha}},
\end{equation}
where $\alpha>n/q$, $h\in L^{\vec u}$   and $\beta = 1/v_1+\ldots +1/v_n$.

We conclude that
 $\|f\|_{L^{\vec p}} \approx \|h\|_{L^{\vec u}}$.

Since $i_1<\ldots<i_{mn}$ and $i_{mn+1}<\ldots<i_{(m+1)n}$,
we have
$i_1=1$ or $i_{mn+1}=1$.
If $i_1=1$, we have
\[
  \|f(z_1,\ldots,z_{(m+1)n})\|_{L_{z_1}^{s_1}}
   \approx     \frac{|z_{i_{mn}}|^{\alpha-\beta} \|h(z_{i_1},\ldots,z_{i_{mn}})\|_{L_{z_{i_1}}^{u_1}}}
    {(\sum_{l=1}^n |z_{i_{mn+l}}|+| z_{i_{mn}}|)^{\alpha}}.
\]
And if $i_{mn+1}=1$, we have
\[
  \|f(z_1,\ldots,z_{(m+1)n})\|_{L_{z_1}^{s_1}}
  \approx \frac{|z_{i_{mn}}|^{\alpha-\beta} |h(z_{i_1},\ldots,z_{i_{mn}})|}
    {(\sum_{l=2}^n |z_{i_{mn+l}}|+| z_{i_{mn}}|)^{\alpha-1/v_1}}.
\]
And we compute the norm with respect to
$z_2$, $z_3$, $\ldots$ successively.
Denote $\kappa=i_{mn}$.
If $i_{(m+1)n}<\kappa$, then $i_{mn}=(m+1)n$ and 
we get
\begin{align*}
  \|f(z_1,\ldots,z_{(m+1)n})\|_{L_{(s_1,\ldots,s_{\kappa-1})
  }^{(s_1,\ldots,s_{\kappa-1})}}
&\approx \frac{|z_{i_{mn}}|^{\alpha-\beta} \|h(z_{i_1},\ldots,z_{i_{mn}})\|_{
    L_{(z_{i_1},\ldots,z_{i_{mn-1}})}^{
    (u_{i_1},\ldots,u_{i_{mn-1}})}}}
    {| z_{i_{mn}}|^{\alpha-1/v_1-\ldots-1/v_n}} \\
& =
 \|h(z_{i_1},\ldots,z_{i_{mn}})\|_{
    L_{(z_{i_1},\ldots,z_{i_{mn-1}})}^{
    (u_{i_1},\ldots,u_{i_{mn-1}})}}
    .
\end{align*}
It follows that $\|f\|_{L^{\vec p}} \approx \|h\|_{L^{\vec u}}$. 

If $i_{(m+1)n}>\kappa$, set $l_0=\min\{l:\, 1\le l\le n, i_{mn+l}>\kappa\}$. Since $i_{mn}>mn$, we have
$(z_{\kappa},\ldots,z_{(m+1)n})
 = (z_{\kappa}, z_{i_{mn+l_0}},\ldots,z_{i_{mn+n}})$
 and 
$u_{mn}=v_l = p_{m+1}$ when $l_0\le l\le n$. It follows that
\[
  \|f\|_{L^{\vec p}}
   =   \|f(z_1,\ldots,z_{(m+1)n})\|_{L_{(z_1,\ldots,z_{\kappa-1},
   z_{\kappa+1},\ldots,z_{(m+1)n},z_{\kappa})
  }^{(s_1,\ldots,s_{\kappa-1},
  s_{\kappa+1},\ldots,s_{(m+1)n},s_{\kappa})}}.
\]
Therefore, we also have $\|f\|_{L^{\vec p}} \approx \|h\|_{L^{\vec u}}$.
 
Note that for any $h\in L^{\vec u}$, the function $f$
defined in (\ref{eq:f}) satisfies
\begin{align*}
 & \int_{\bbR^{mn}}
       \left\| \frac{f(z)}{|z|^{\lambda}}\right\|_{L_{(z_{i_{mn+1}},
       \ldots, z_{(m+1)n})}^q}
     d z_{i_1}\ldots dz_{i_{mn}}\\
&\ge
  \int_{\bbR^{mn}}
       \left\|
        \frac{|z_{i_{mn}}|^{\alpha-\beta} h(z_{i_1},\ldots,z_{i_{mn}})}
    {|z|^{\lambda+\alpha}}
       \right\|_{L_{(z_{i_{mn+1}},
       \ldots, z_{(m+1)n})}^q}
     d z_{i_1}\ldots dz_{i_{mn}}\\
\\
&\approx
 \int_{\bbR^{mn}}
\frac{|z_{i_{mn}}|^{\alpha-\beta}| h(z_{i_1},\ldots,z_{i_{mn}})|}
{ (\sum_{l=1}^{mn} |z_{i_l}|)^{\lambda+\alpha-n/q}}
dz_{i_1}\ldots dz_{i_{mn}}.
\end{align*}
It follows from (\ref{eq:ea:a9}) that for any $h\in L^{\vec u}$,
\[
 \int_{\bbR^{mn}}
\frac{|z_{i_{mn}}|^{\alpha-\beta}
|h(z_{i_1},\ldots,z_{i_{mn}})|}
{ (\sum_{l=1}^{mn} |z_{i_l}|)^{\lambda+\alpha-n/q}}
dz_{i_1}\ldots dz_{i_{mn}}
     \lesssim \|h\|_{L^{\vec u}},
\]
which is impossible since $ |z_{i_{mn}}|^{\alpha-\beta} /
(\sum_{l=1}^{mn} |z_{i_l}|)^{\lambda+\alpha-n/q}
\not\in L^{\vec u\ \!\!'}$.
Hence $q\ge 1$.
\end{proof}

The following result is a variant of
Lemma~\ref{Lm:Dx-y}, which can be proved with almost the same arguments
and we omit the details.

\begin{Lemma}\label{Lm:ADx-y}
Suppose that  $\vec p = (p_1, \ldots, p_{m+1})$ with
$1\le p_i\le \infty$, $1\le i\le m$, $1\le q\le \infty$ and $0<\lambda<mn$.
Let $r_k$ and $k_l$  be defined as in
Theorem~\ref{thm:main} such that $r_2=n$.
Then
\[
\left\|\frac{ h( x_1 ) }
    {( \sum_{i=2}^{m+1} |A_{i,m+1}x_1 - x_i|)^{\lambda}}\right\|_{L^{\vec p'}}
 \lesssim \|h\|_{L^{q'}}
    \quad \forall h\in L^{q'}
\]
if and only if
\[
\left\|\frac{ h( x_1 ) }
    {W(x_1,\ldots,x_{m+1})^{\lambda}}\right\|_{L^{\vec p'}}
 \lesssim \|h\|_{L^{q'}}
    \quad \forall h\in L^{q'} ,
\]
where
\[
  W(x_1,\ldots,x_{m+1}) = \sum_{l=1}^n |x_1^{(l)} - x_{i_l}^{(j_l)}|
  + \sum_{\substack{2\le i\le m+1, 1\le j\le n\\
  (i,j)\ne (i_l,j_l), 1\le l\le n}} |x_i^{(j)}|
\]
and
\begin{equation}\label{eq:il jl}
 \{(i_l,j_l):\, 1\le l\le n\}
  =\bigcup_{1\le s\le \nu}\{(k_s, t):\, n+1-(r_{k_s}-r_{k_{s+1}})\le t\le n\}
\end{equation}
satisfying $i_l\le i_{l+1}$ and if
$i_l=i_{l+1}$, then $j_l<j_{l+1}$.
\end{Lemma}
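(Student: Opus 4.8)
The plan is to mimic the proof of Lemma~\ref{Lm:Dx-y} almost line by line, with the stacked matrix $D=(D_i)_{1\le i\le m}$ there replaced by $D=(A_{i,m+1})_{2\le i\le m+1}$, an $mn\times n$ matrix which has rank $n$ since $r_2=n$, and with the variables $(x,y_1,\ldots,y_m)$ replaced by $(x_1,x_2,\ldots,x_{m+1})$. The only structural change is that the ``output'' function $h$ now depends only on $x_1$, which is exactly the variable the matrices $A_{i,m+1}$ act on, so one never has to track the effect of a substitution on $h$ except through $x_1$. First I would record the comparison $\sum_{i=2}^{m+1}|A_{i,m+1}x_1-x_i|\approx|Dx_1-y|$ with $y=(x_2,\ldots,x_{m+1})\in\bbR^{mn}$, and then choose a maximal linearly independent set of rows of $D$, say the rows indexed by $(i_l,j_l)$ for $1\le l\le n$, selected greedily ``from the bottom'' so that for every $k$ the selected rows lying in the blocks $A_{k,m+1},\ldots,A_{m+1,m+1}$ form a maximal linearly independent subset of the rows of $(A_{i,m+1})_{k\le i\le m+1}$. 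By the rank pattern~(\ref{eq:rk m a}) this picks out exactly $r_{k_s}-r_{k_{s+1}}$ rows from block $k_s$ for each $s$, which is precisely the index set~(\ref{eq:il jl}).

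Next, as in Lemma~\ref{Lm:Dx-y}, I would produce an $mn\times mn$ invertible matrix $P$, a product of elementary matrices, with $P(D,-I_{mn})=(PD,U)$, where $PD$ has exactly $n$ nonzero rows, those rows forming the invertible $n\times n$ matrix $G$, and $U$ becomes upper triangular with $-1$'s on the diagonal after an admissible reordering of the coordinates of $y$, that is, one that only permutes coordinates inside each block $x_i$ and so does not affect the mixed norm. The ``from the bottom'' choice is what makes $U$ block upper triangular with respect to the block order $x_2, x_3,\ldots,x_{m+1}$, because every discarded row of $A_{i,m+1}$ is a linear combination of selected rows lying in $A_{i,m+1},\ldots,A_{m+1,m+1}$. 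Using $|Dx_1-y|\approx|P(Dx_1-y)|=|PDx_1-Py|$, I would then perform the change of variable $x_1\to G^{-1}x_1$: it turns the nonzero part of $PDx_1$ into $x_1^{(1)},\ldots,x_1^{(n)}$, is volume preserving in the innermost $L^{p_1'}$ integration, and, after renaming $h(G^{-1}\cdot)$ to $h$, leaves $\|h\|_{L^{q'}}$ unchanged up to a fixed constant; the denominator then reads $\sum_{l=1}^n|x_1^{(l)}-x_{i_l}^{(j_l)}+w_{i_l,j_l}|+\sum_{(i,j)\ne(i_l,j_l)}|x_i^{(j)}-w_{i,j}|$, where each $w_{i,j}$ is a fixed linear combination of coordinates of $x_2,\ldots,x_{m+1}$ that, by the triangular form of $U$, involves only coordinates occurring later than $x_i^{(j)}$ in the block order. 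A final change of variable $x_i^{(j)}\to x_i^{(j)}+w_{i,j}$ for $2\le i\le m+1$ is unipotent upper triangular, hence volume preserving, and at the stage of integrating the block $x_i$ it is merely a translation, since it depends only on the still-frozen outer blocks and on later coordinates of $x_i$; as $h$ is independent of $x_2,\ldots,x_{m+1}$, this leaves the mixed norm of the integrand unchanged, and afterwards the denominator is exactly $W(x_1,\ldots,x_{m+1})$.

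Since every comparison above is two-sided and every change of variable is a volume-preserving bijection (and $h\mapsto h(G\cdot)$ is a bijection of $L^{q'}$ with comparable norm), the chain of reductions runs in both directions, giving the asserted ``if and only if''. The only genuine obstacle is the combinatorics in the second paragraph: one must arrange the choice of the maximal independent set and the order of the elementary row operations so that $U$ is upper triangular with respect to an ordering of the coordinates of $x_2,\ldots,x_{m+1}$ that refines the block order, since this is precisely what lets the last change of variable respect the iterated structure of $\|\cdot\|_{L^{\vec p'}}$. This is the same difficulty already handled in the proof of Lemma~\ref{Lm:Dx-y}, where it is treated in the model case $\gamma_1>\gamma_2=\cdots=\gamma_m>0$ with the general case said to be analogous; here one argues the same way, in the model case $r_2>r_3=\cdots=r_{m+1}>0$.
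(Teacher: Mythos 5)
Your proposal is correct and follows exactly the route the paper intends: the paper explicitly omits the proof of Lemma~\ref{Lm:ADx-y}, stating it "can be proved with almost the same arguments" as Lemma~\ref{Lm:Dx-y}, and your adaptation — bottom-up greedy choice of independent rows of $(A_{i,m+1})_{2\le i\le m+1}$, row reduction to $(PD,U)$ with $U$ triangular relative to an ordering refining the block order, the change of variable $x_1\to G^{-1}x_1$ absorbed into $h$, and the final block-triangular translation that leaves the iterated norm invariant — is precisely that argument, with the correct observation that $h$ depends only on $x_1$ so the outer substitutions never touch it.
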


 \subsection{Proof of Theorem~\ref{thm:main}: The Necessity}

In this subsection, we give a proof for the necessity part of Theorem~\ref{thm:main}.
We split the proof into several parts.

(P1) We show that $ p_{k_i}<\infty$, $1\le i\le \nu$.

Denote $A=(B_0 \ B_1)$, where $B_0$ and $B_1$
are $(m+1)n\times mn$ and $(m+1)n\times n$ matrices, respectively.
Since $A$ is invertible,  $\rank(B_1)=n$.
Hence there is an $n\times n$ invertible submatrix of $B_1$,
for which $r_k-r_{k+1}$ rows come from $A_{k,m+1}$,
$1\le k\le m+1$.
Consequently, there is some invertible  $(m+1)n\times (m+1)n$ matrix $U$
of the form
\[
   U = \begin{pmatrix}
     * & ?  & ? & \ldots   &  ?  \\
     0 & *  & ? & \ldots   &  ?   \\
     0 & 0  & *  & \ldots   &  ? \\
    & \ldots & & \ldots\\
    0& 0 & 0&   \ldots  &*
   \end{pmatrix},
\]
where $*$ stands for an $n\times n$ invertible matrix,
 such that
$UB_1$ has exactly $n$ non-zero rows.
More precisely, there are exactly
$r_k-r_{k+1}$ non-zero rows among the $(kn-n+1)$-th, $\ldots$, the
$kn$-th rows of $UB_1$.
Since $\|f\|_{L^{\vec p}} \approx \|f(U\cdot)\|_{L^{\vec p}}$, we have $\|T_{\lambda}(f(U\cdot))\|_{L^q}
\lesssim \|f\|_{L^{\vec p}}$. That is,
\begin{equation}\label{eq:a:e3}
  \left\| \int_{\bbR^{mn}}
    \frac{f(UAx)}{(|x_1|+\ldots+|x_m|)^{\lambda}} dx_1\ldots dx_m\right\|_{L_{x_{m+1}}^q}
    \lesssim \|f\|_{L^{\vec p}}.
\end{equation}
Assume that $p_{k_i}=\infty$ for some $1\le i\le \nu$.
Set $\vec{\tilde p}=(p_1,\ldots,p_{k_i-1},p_{k_i+1}$, $\ldots,p_{m+1})$
and
$f(x)=\tilde  f(x_1,\ldots,x_{k_i-1},x_{k_i+1},\ldots,x_{m+1})$
with $\tilde f\in L^{\vec {\tilde p}}$. Then we have
$f\in L^{\vec p}$.
Let $\tilde A$ be the submatrix consisting of
the first $(k_i-1)n$ and the last $(m-k_i+1)n$ rows of $UA$.
We see from (\ref{eq:a:e3}) that
\[
  \left\| \int_{\bbR^{mn}}
    \frac{\tilde f(\tilde Ax)}{(|x_1|+\ldots+|x_m|)^{\lambda}} dx_1\ldots dx_m\right\|_{L_{x_{m+1}}^q}
    \lesssim \|\tilde f\|_{L^{\vec {\tilde p}}}.
\]
Note that the rank of the last $n$ columns of $\tilde A$
is less than $n$.
Hence there is some $n\times n$ invertible matrix $V$ such that
the last column of
\[
\bar A:= \tilde A \begin{pmatrix}
    I_{mn}& \\
       & V
 \end{pmatrix}
\]
is $\vec 0$.
By a change of variable of the form
$x_{m+1}\rightarrow Vx_{m+1}$, we get
\[
  \left\| \int_{\bbR^{mn}}
    \frac{\tilde f(\bar A x)}{(|x_1|+\ldots+|x_m|)^{\lambda}} dx_1\ldots dx_m\right\|_{L_{x_{m+1}}^q}
    \lesssim \|\tilde f\|_{L^{\vec {\tilde p}}}.
\]
But $ \tilde f(\bar Ax) $ is independent of $x_{m+1}^{(n)}$. Therefore,
\[
  \left\| \int_{\bbR^{mn}}
    \frac{\tilde f(\bar A x)}{(|x_1|+\ldots+|x_m|)^{\lambda}} dx_1\ldots dx_m\right\|_{L_{x_{m+1}}^q}
=\infty,
\]
which is a contradiction.

(P2)  We show that $q\ge p_{k_i}$ for all $1\le i\le \nu$.

By Lemma~\ref{Lm:q}, $q\ge 1$.
For $1\le i\le m+1$,
define the  matrix $B_i$ by
\[
  B_i = \begin{pmatrix}
  A_{i,m+1}\\ \vdots \\ A_{m+1,m+1}
\end{pmatrix}.
\]
Since $\rank(B_{k_i}) >\rank(B_{k_i+1})$ for $1\le i\le \nu$,
there is some $z\in \bbR^n$ such that    $B_{k_i}z \ne 0$ while $B_{k_i+1}z = 0$.

For $a>0$, set $f_a(x) = f(x + a B_1 z)$. We have
\begin{align*}
\|T_{\lambda}f + T_{\lambda}f_a\|_{L^q}
&\le \|T_{\lambda}\|_{L^{\vec p}\rightarrow L^q}
  \|f + f_a\|_{L^{\vec p}} \\
&=\|T_{\lambda}\|_{L^{\vec p}\rightarrow L^q}
  \|f + f(\cdot + aB_1z)\|_{L^{\vec p}} \\
&\rightarrow 2^{1/p_{k_i}} \|T_{\lambda}\|_{L^{\vec p}\rightarrow L^q}
  \|f\|_{L^{\vec p}},
 \quad a\rightarrow\infty,
\end{align*}
where we use Lemma~\ref{Lm: translation} in the last step.
On the other hand, denote the submatrix consisting of the first $mn$ columns of $A$
by $B_0$. We have
\begin{align*}
T_{\lambda}f(x_{m+1}) &+ T_{\lambda}f_a(x_{m+1})\\
&= \int_{\bbR^{mn}}\! \frac{f(B_0 y+B_1 x_{m+1}) + f(B_0y+B_1x_{m+1} + aB_1z)}{|y|^{\lambda}} dy \\
&= T_{\lambda}f(x_{m+1}) + T_{\lambda}f(x_{m+1}+az).
\end{align*}
Hence
\[
  \lim_{a\rightarrow\infty} \|T_{\lambda}f  + T_{\lambda}f_a\|_{L^q}
  = 2^{1/q} \|T_{\lambda}f\|_{L^q}.
\]
Therefore,
\[
  2^{1/q} \|T_{\lambda}f\|_{L^q}\le 2^{1/p_{k_i}} \|T_{\lambda}\|_{L^{\vec p}\rightarrow L^q}
  \|f\|_{L^{\vec p}}.
\]
It follows that $q\ge p_{k_i}$, $1\le i\le \nu$.

(P3) We show that
$T_{\lambda}$ is bounded if and only if
\begin{align}
\left\|\frac{ h( x_1 ) }
    {( \sum_{i=2}^{m+1} |A_{i,m+1}x_1 - x_i|)^{\lambda}}\right\|_{L^{\vec p'}}
 \lesssim \|h\|_{L^{q'}}. \label{eq:w:e43}
\end{align}

By the duality, $T_{\lambda}$ is bounded from $L^{\vec p}$
to $L^q$ if and only if
for any $f\in L^{\vec p}$ and $h\in L^{q'}$,
\[
  \int_{\bbR^{(m+1)n}} \frac{f(Ax) h(x_{m+1})}{(|x_1|+\ldots+|x_m|)^{\lambda}} dx_1\ldots dx_{m+1}
  \lesssim  \|f\|_{L^{\vec p}} \|h\|_{L^{q'}}.
\]
By a change of variable of the form $x \rightarrow A^{-1}x$, we get
\begin{equation}\label{eq:fh}
  \int_{\bbR^{(m+1)n}} \frac{f(x) h((A^{-1}x)_{m+1})}
    {(|(A^{-1}x)_1|+\ldots+|(A^{-1}x)_m|)^{\lambda}} dx_1\ldots dx_{m+1}
  \lesssim  \|f\|_{L^{\vec p}} \|h\|_{L^{q'}},
\end{equation}
where $(A^{-1}x)_i$ stands for the $n$-dimensional vector consisting of the
$(in-n+1)$-th, $\ldots$, the $in$-th entries of
$A^{-1}x$.
Note that (\ref{eq:fh})  is equivalent to
\begin{equation}\label{eq:h}
  \left\|\frac{ h((A^{-1}x)_{m+1})}
    {(|(A^{-1}x)_1|+\ldots+|(A^{-1}x)_m|)^{\lambda}} \right\|_{L^{\vec p'}} \lesssim \|h\|_{L^{q'}},
    \qquad \forall h\in L^{q'}.
\end{equation}

Let $y=(y_1^*,\ldots, y_m^*)^*$ be the $mn$-dimensional vector consisting of the last $mn$ coordinates of $x$,
i.e., $y_i = x_{i+1}$ for $1\le i\le m$.
Denote    $A^{-1} = \begin{pmatrix}
\tilde A_1 & \tilde A_2 \\
\tilde A_3 & \tilde A_4
\end{pmatrix}$,
where $\tilde A_3$ is an $n\times n$ matrix.
By Lemma~\ref{Lm:La}, $\tilde A_3$ is invertible. We have
\begin{align*}
& \left\|\frac{  h((A^{-1}x)_{m+1}) }
    {(|(A^{-1}x)_1|+\ldots+|(A^{-1}x)_m|)^{\lambda }}\right\|_{L_{x_1}^{p'_1}} \\
&=\left\| \frac{  h(\tilde A_3 x_1 + \tilde A_4 y) }
    { |\tilde A_1 x_1 + \tilde A_2 y|^{\lambda }}\right\|_{L_{x_1}^{p'_1}} \\
&\approx \left\|\frac{ h( x_1 ) }
    {|\tilde A_1 \tilde A_3^{-1}x_1 + ( \tilde A_2 - \tilde A_1\tilde A_3^{-1}\tilde A_4) y|^{\lambda }}\right\|_{L_{x_1}^{p'_1}}\\
&\approx \left\|\frac{  h( x_1 )  }
    { |( \tilde A_2 - \tilde A_1\tilde A_3^{-1}\tilde A_4)^{-1}\tilde A_1 \tilde A_3^{-1}x_1 +  y|^{\lambda }}\right\|_{L_{x_1}^{p'_1}} \\
&=  \left\|\frac{  h( x_1 )  }
    {( \sum_{i=2}^{m+1} |A_{i,m+1}x_1 -     y_{i-1}|)^{\lambda }} \right\|_{L_{x_1}^{p'_1}}
    \mbox{\hskip 4em (using Lemma~\ref{Lm:La})}\\
&=  \left\|\frac{  h( x_1 )  }
    {( \sum_{i=2}^{m+1} |A_{i,m+1}x_1 - x_i|)^{\lambda }}
     \right\|_{L_{x_1}^{p'_1}}.
\end{align*}
Hence $T_{\lambda}$ is bounded if and only if (\ref{eq:w:e43}) is true.

(P4) We show that
$q<p_1$.

We see from (P3) and Lemma~\ref{Lm:ADx-y} that
\begin{equation}\label{eq:h a}
\left\|\frac{ h( x_1 ) }
    {W(x_1,\ldots,x_{m+1})^{\lambda}}\right\|_{L^{\vec p'}}
 \lesssim \|h\|_{L^{q'}}
    \quad \forall h\in L^{q'} ,
\end{equation}
where
\[
  W(x_1,\ldots,x_{m+1}) = \sum_{l=1}^n |x_1^{(l)} - x_{i_l}^{(j_l)}|
  + \sum_{\substack{2\le i\le m+1, 1\le j\le n\\
  (i,j)\ne (i_l,j_l), 1\le l\le n}} |x_i^{(j)}|
\]
and
$\{(i_l,j_l):\, 1\le l\le n\}$ is defined by (\ref{eq:il jl}).

There are two cases.

(A1) $p_1>1$.

Set $h(x_1)  =  1/|x_1|^{\alpha}$ for $|x_1|<1$ and $0$ for others,
where $\alpha>0$ is a constant to be determined later.
We have
\begin{align*}
I_1
&:= \int_{\bbR^n} \frac{|h(x_1)|^{p'_1}dx_1}{W(x_1,\ldots,x_{m+1})^{\lambda p'_1}} \\
&\gtrsim
 \int_{|x_1|<1} \frac{dx_1}
    {|x_1|^{\alpha p'_1}( 1+|x_2| +\ldots + |x_{m+1}|   )^{\lambda p'_1} }.
\end{align*}
If $q>p_1$, then $q'<p'_1$. Hence there is some $\alpha>0$ such that $\alpha p'_1 >n$ while $\alpha q'<n$.
Consequently, $h\in L^{q'}$
and  $I_1 = \infty$, which contradicts with (\ref{eq:h a}).

If $q=p_1$, then we have $ \lambda = n/p'_2 + \ldots + n/p'_{m+1}$.
Since $\lambda>0$, there is some $2\le i\le m+1$
such that $p_i>1$.
There are two subcases.

(A1)(a)\,\,  $p_{m+1}>1$.

Suppose that
$\alpha q'<n$.  Then
\[
  I_1 \gtrsim  \frac{1}{( 1+ |x_2|+\ldots+|x_{m+1}| )^{\lambda p'_1}}.
\]
It follows that
\begin{align*}
 I_h &:= \left\| \ldots\left\|\int_{\bbR^n} \frac{|h(x_1)|^{p'_1}dx_1}{W(x_1,\ldots,x_{m+1})^{\lambda p'_1}}
   \right\|_{L_{x_2}^{p'_2/p'_1}}
    \ldots\right\|_{L_{x_{m+1}}^{p'_{m+1}/p'_1}}  \\
   &\gtrsim   \Big\|\frac{1}{( 1+ |x_2|+\ldots+|x_{m+1}| )^{\lambda}} \Big\|_{L^{p'_{m+1}}( \ldots (L^{p'_2}))}^{p'_1}.
\end{align*}
Denote $a = 1+   |x_3|+\ldots+|x_{m+1}|$.
A simple computation shows that
\begin{align*}
& \hskip -10mm\int_{\bbR^n}\frac{1}{( 1+ |x_2|+\ldots+|x_{m+1}| )^{\lambda p'_2}} dx_2   \\
 & \approx  \int_0^{\infty} \frac{t^{n-1}}{(   t+ a )^{\lambda p'_2}} dt  \\
&\ge
  \int_a^{2a} \frac{t^{n-1}}{(   t+ a )^{\lambda p'_2}} dt  \\
&\gtrsim
   \frac{1}{( 1+ |x_3|+\ldots+|x_{m+1}| )^{\lambda p'_2 -n}}.
\end{align*}
Hence
\begin{align*}
 \left\|\frac{1}{( 1+ |x_2|+\ldots+|x_{m+1}| )^{\lambda}}
\right\|_{L_{x_2}^{p'_2}}
&\gtrsim  \frac{1}{( 1+ |x_3|+\ldots+|x_{m+1}| )^{\lambda - n/ p'_2}}.
\end{align*}
Note that the above inequality is true for all $1\le p_2\le \infty$.
Computing the $L^{p'_l}$ norm with respect to $x_l$ successively, $3\le l\le m+1$, we get
\begin{align*}
 I_h & \gtrsim \bigg(\int_{\bbR^n} \frac{1}{(1+|x_{m+1}|)^{n}} dx_{m+1} \bigg)^{p'_1/p'_{m+1}} = \infty,
\end{align*}
which contradicts with (\ref{eq:h a}).

(A1)(b)\,\,  $p_{m+1}=1$

In this case, there is some $2\le k_0\le m$ such that $p_{k_0}>1$
while $p_{k_0+1}=\ldots = p_{m+1}=1$.

By Lemma~\ref{Lm:end points ii}, we have
\[
\left\|\frac{ h( x_1 ) }
    {W(x_1,\ldots,x_{k_0})^{\lambda}}\right\|_{L^{\vec {\tilde p}'}}
 \lesssim \|h\|_{L^{q'}}
    \quad \forall h\in L^{q'} ,
\]
where $\vec{\tilde p} = (p_1, \ldots, p_{k_0})$ and
\[
  W(x_1,\ldots,x_{k_0}) = \sum_{\substack{1\le l\le n \\
      i_l\le k_0}}^n |x_1^{(l)} - x_{i_l}^{(j_l)}|
+\sum_{\substack{1\le l\le n \\
      i_l> k_0}}^n |x_1^{(l)} |
  + \sum_{\substack{2\le i\le k_0, 1\le j\le n\\
  (i,j)\ne (i_l,j_l), 1\le l\le n}} |x_i^{(j)}|.
\]
When $|x_1|\le 1$, we have
\[
  W(x_1,\ldots,x_{k_0}) \lesssim 1+ \sum_{i=2}^{k_0}|x_i|.
\]
By replacing $m+1$ with $k_0$ in Case (A1)(a), we get a contradiction.

(A2)  $p_1=1$.

Use  notations introduced in Case (A1).
Set $h(x_1)  = \chi^{}_{\{|x_1|\le 1\}}(x_1)  /|x_1|^{\alpha}$.
If $q>1$, then we can find some $\alpha>0$ such that
$\alpha q'<n$. Consequently,
\begin{align*}
&\left\|  \frac{ |h(x_1)| }
    {W(x_1,\ldots,x_{m+1})^{\lambda }}\right\|_{L_{x_1}^{p'_1}}
 \gtrsim
 \left\| \frac{1}
    {( 1+|x_2| +\ldots + |x_{m+1}|   )^{\lambda }  |x_1|^{\alpha} }
    \right\|_{L_{x_1}^{p'_1}}
   =\infty,
\end{align*}
which contradicts with (\ref{eq:h a}).

If $q=1$, by setting $h\equiv 1$, we  get
\begin{align*}
  \left\|  \frac{h(x_1)}
    {W(x_1,\ldots,x_{m+1})^{\lambda }}\right\|_{L^{\vec p'}}
\gtrsim
\left\|  \frac{1}
    {(\sum_{i=1}^{m+1}|x_i|)^{\lambda }}\right\|_{L^{\vec p'}}
   =\infty.
\end{align*}
Again, we get a contradiction.

(P5) We show that the rank of the $mn\times n$ matrix $D:=(A_{i,m+1})_{2\le i\le m+1}$ is $n$, i.e., $r_2=n$.

Assume on the contrary that $\rank(D)<n$. Then there is some $n\times n$ invertible matrix $V$ such that the first column of $DV$ is zero. Consequently,
$\sum_{i=2}^{m+1}| A_{i,m+1} Vx_1$ $- x_i| $ is a function of
$x_1^{(2)}$, $\ldots$, $x_1^{(n)}$.

Replacing $h$ by $h(V^{-1}\cdot)$ in (\ref{eq:w:e43}), we get
\[
 \left\|\frac{ h(V^{-1} x_1 ) }
    {( \sum_{i=2}^{m+1} |A_{i,m+1}x_1 - x_i|)^{\lambda}}\right\|_{L^{\vec p'}}
  \lesssim \|h\|_{L^{q'}}.
\]
By a change of variable of the form $x_1\rightarrow V x_1$, we get
\begin{align}
&
\left\|\frac{ h(   x_1 ) }
    {( \sum_{i=2}^{m+1} |A_{i,m+1}V x_1 - x_i|)^{\lambda}}\right\|_{L^{\vec p'}}
  \lesssim \|h\|_{L^{q'}}.  \label{eq:s:e50}
\end{align}
Take some $g\in L^{q'}\setminus L^{p'_1}(\bbR)$, $h_1\in L^{q'}(\bbR^{n-1})$
and let $h = g\otimes h_1$. Then we have $h\in L^{q'}$.
Since
$\sum_{i=2}^{m+1}| A_{i,m+1} Vx_1 - x_i| $ is a function of
$x_1^{(2)}$, $\ldots$, $x_1^{(n)}$, we have
\[
  \left\|\frac{ h(   x_1 ) }
    {( \sum_{i=2}^{m+1} |A_{i,m+1}V x_1 - x_i|)^{\lambda}}\right\|_{L^{\vec p'}}
=  \left\|\frac{ g(x_1^{(1)}) h_1(x_1^{(2)},\ldots, x_1^{(n)}) }
    {( \sum_{i=2}^{m+1} |A_{i,m+1}V x_1 - x_i|)^{\lambda}}\right\|_{L^{\vec p'}}
=\infty,
\]
which  contradicts with  (\ref{eq:s:e50}).

 (P6) We show that there is some $2\le i
\le m+1$ such that $p_i>1$.

Assume on the contrary that   $p_i=1$ for $2\le i\le m+1$. We see from
Lemma~\ref{Lm:end points ii} that  (\ref{eq:w:e43})
is equivalent to
\[
  \Big\| \frac{h(x_1)}{(\sum_{i=2}^{m+1}
   |A_{i,m+1} x_1|)^{\lambda}} \Big\|_{L^{p'_1}}
  \lesssim \|h\|_{L^{q'}}.
\]
Since  $r_2=n$, we have
\[
  \sum_{i=2}^{m+1}
   |A_{i,m+1} x_1|\approx |x_1|.
\]
Therefore,
\begin{equation}\label{eq:w:e42}
  \Big\| \frac{h(x_1)}{|x_1|^{\lambda}} \Big\|_{L^{p'_1}}
  \lesssim \|h\|_{L^{q'}}.
\end{equation}
Moreover, we see from (P4) that
  $q<p_1 $.
Set
\[
  h(x_1) = \frac{1}{ |x_1|^{n/q'}
(\log 1/|x_1|)^{(1+\varepsilon)/q'} }
\cdot  \chi^{}_{\{|x_1|<1/2\}}(x_1),
\]
where $\varepsilon$ is a positive constant such that
$(1+\varepsilon)p'_1/q'<1$.
Then we have $h\in L^{q'}$ and
$   \|  {h(x_1)}/{|x_1|^{\lambda}} \|_{L^{p'_1}}=\infty$,
which contradicts with (\ref{eq:w:e42}).

(P7) We show that  $q\ge p_{k_0}$.

By Lemma~\ref{Lm:ADx-y},
\begin{equation}\label{eq:w:e41}
  \left\|\frac{ h(x_1)}
    {W(x_1,\ldots,x_{k_0})^{\lambda}} \right\|_{L^{\vec {\tilde p}'}} \lesssim \|h\|_{L^{q'}},
    \quad \forall h\in L^{q'},
\end{equation}
where $\vec{ \tilde p} =(p_1,\ldots, p_{k_0})$ satisfies that
\[
  \frac{1}{p_1}+\ldots + \frac{1}{p_{k_0}}
   = \frac{1}{q} + \frac{(k_0-1)n-\lambda}{n}
\]
and
\[
  W(x_1,\ldots,x_{k_0}) = \sum_{\substack{1\le l\le n \\
   i_l\le k_0}} |x_1^{(l)} - x_{i_l}^{(j_l)}|
   +
\sum_{\substack{1\le l\le n \\
   i_l> k_0}} |x_1^{(l)} |
  + \sum_{\substack{2\le i\le k_0, 1\le j\le n\\
  (i,j)\ne (i_l,j_l), 1\le l\le n}} |x_i^{(j)}|
\]
with $\{(i_l,j_l):\, 1\le l\le n\}$ being defined by (\ref{eq:il jl}).

If $q=1$, by setting $h\equiv 1$, we get
\[
   \left\|\frac{ h(x_1)}
    {W(x_1,\ldots,x_{k_0})^{\lambda}} \right\|_{L^{\vec {\tilde p}'}}=\infty,
\]
which contradicts with (\ref{eq:w:e41}).
Hence $q>1$.

Set $h(x_1) =  \frac{\chi^{}_{\{|x_1|<1/2\}}(x_1)}{|x_1|^{n/q'} (\log(1/|x_1|))^{(1+\varepsilon)/q'}}$, where $\varepsilon>0$. Then we have $h\in L^{q'}$.
Moreover, for $a:=\sum_{i=2}^{k_0}|x_i| <1/2^2$
and $|x_1|<a$,
we have
\[
  W(x_1,\ldots,x_{k_0}) \le 2a.
\]
Hence
\begin{align*}
 & \left\|\frac{ h(x_1)}
    {W(x_1,\ldots,x_{k_0})^{\lambda}} \right\|_{L_{x_1}^{p'_1}} \\
&\gtrsim \left(
  \int_{a^2 <|x_1|<a}
  \frac{dx_1}{(\sum_{i=2}^{k_0 }|x_i|)^{\lambda p'_1+np'_1/q'}
  (\log(1/|x_1|))^{(1+\varepsilon)p'_1/q'}
  }
\right)^{1/p'_1} \\
&\gtrsim \frac{1}{(\sum_{i=2}^{k_0 }|x_i|)^{\lambda+n/q'-n/ p'_1}
  (\log(1/\sum_{i=2}^{k_0 }|x_i|))^{(1+\varepsilon)/q'}}.
\end{align*}
Similarly arguments show that for $\sum_{i=3}^{k_0 }|x_i|<1/2^3$,
\begin{align*}
  &\left\|\frac{ h(x_1)}
    {W(x_1,\ldots,x_{k_0})^{\lambda}} \right\|_{L_{(x_1,x_2)}^{(p'_1,p'_2)}}\\
&\gtrsim \frac{1}{(\sum_{i=3}^{k_0 }|x_i|)^{\lambda+n/q'
-n/ p'_1-n/ p'_2}
  (\log(1/\sum_{i=3}^{k_0 }|x_i|))^{(1+\varepsilon)/q'}}.
\end{align*}
Note that the above inequality is also true for $p'_2=\infty$.
By induction, it is easy to see
that for $|x_{k_0 }| <1/2^{k_0 }$,
\begin{align*}
 & \hskip -2em \left\|\frac{ h(x_1)}
    {W(x_1,\ldots,x_{k_0})^{\lambda}} \right\|_{L_{(x_1,\ldots,x_{k_0-1})}^{(p'_1,
    \ldots,p'_{k_0-1})}}\\
 &\gtrsim
   \frac{1}{ |x_{k_0}|^{\lambda+n/q'-\sum_{i=1}^{k_0-1}n/p'_i}
  (\log(1/ |x_{k_0}|))^{(1+\varepsilon)/q'}}\\
 &=
   \frac{1}{ |x_{k_0}|^{  n/p'_{k_0}}
  (\log(1/ |x_{k_0}|))^{(1+\varepsilon)/q'}}.
\end{align*}
If $q<p_{k_0}$, then we can choose $\varepsilon>0$ small
enough such that
$ (1+\varepsilon)p'_{k_0}/q'<1$. Consequently,
\[
   \left\|\frac{ h(x_1)}
    {W(x_1,\ldots,x_{k_0})^{\lambda}} \right\|_{L^{\vec {\tilde p}'}}
=\infty,
\]
which contradicts with (\ref{eq:w:e41}).
Hence $ q\ge p_{k_0}  $.

(P8) We show that $\min\{p_{k_l}:\, 1\le l\le \nu\}<q$.

Assume on the contrary that  $\min\{p_{k_l}:\, 1\le l\le \nu\}\ge q$.
Since  $\max \{p_{k_l}:\, 1\le l\le \nu\}\le q$, we have
$q=p_{k_1}=\ldots = p_{k_{\nu}}$.

By Lemma~\ref{Lm:ADx-y}, we have
\begin{align}
\left\|\frac{ h( x_1 ) }
    {W(x_1,\ldots,x_{m+1})^{\lambda}}\right\|_{L^{\vec p'}}
 \lesssim \|h\|_{L^{q'}}
    \quad \forall h\in L^{q'} ,
  \label{eq:s:e41}
\end{align}
where
\begin{align*}
  W(x_1,\ldots,x_{m+1})
  &= \sum_{l=1}^n |x_1^{(l)} - x_{i_l}^{(j_l)}|
  + w(x_2,\ldots,x_{m+1}), \\
w(x_2,\ldots,x_{m+1})
&=\sum_{\substack{2\le i\le m+1, 1\le j\le n\\
  (i,j)\ne (i_l,j_l), 1\le l\le n}} |x_i^{(j)}|,
\end{align*}
and $\{(i_l,j_l):\, 1\le l\le n\}$ is defined by (\ref{eq:il jl}).

 Let
 $
h(x_1)  =
\chi_{\{ |x_1^{(l)}|\le 1, 1\le l\le n\}}(x_1)$, $0<\delta<1/(n+2)$,
 and
\[
  E = \{(x_2,\ldots,x_{m+1}):\,  w(x_2, \ldots, x_{m+1})<\delta , |x_{i_l}^{(j_l)}|\le \delta,
1\le l\le n  \}.
\]
Similarly to (\ref{eq:s:h}), we get
\begin{align*}
\left(
\int_{R^n} \frac{|h(x_1)|^{p'_1}dx_1 }{W(x_1,\ldots,x_{m+1})^{
   \lambda p'_1}}\right)^{1/p'_1}
\gtrsim \frac{\chi^{}_{E}(x_2,\ldots,x_{m+1})}{  w(x_2,\ldots,x_{m+1})^{\lambda-n/p'_1}}.
\end{align*}
Since $\lambda = \sum_{i=1}^{m+1}n/p'_1 - n/q'$
and $p_{k_1}=\ldots = p_{k_{\nu}}=q$, we have
\begin{align*}
\lambda -\frac{n}{p'_1}
&=  \sum_{i=2}^{m+1} \frac{n}{p'_i}  - \frac{n}{q'}\\
&=  \sum_{i=2}^{m+1} \frac{n}{p'_i}  - \sum_{i=2}^{m+1} \frac{r_i-r_{i+1}}{q'}\\
&=  \sum_{i=2}^{m+1} \frac{n}{p'_i}  - \sum_{l=1}^{\nu} \frac{r_{k_l}-r_{k_{l+1}}}{p'_{k_l}}\\
&= \sum_{i=2}^{m+1}\frac{n- ( r_i - r_{i+1})}{p'_i},
\end{align*}
where we use the fact that $r_i-r_{i+1}=0$ if $i\not\in\{k_l:\, 1\le l\le \nu\}$.
Note that
\[
  w(x_2,\ldots, x_{m+1}) = \sum_{i=2}^{m+1} \sum_{j=1}^{n- ( r_i - r_{i+1})} |x_i^{(j)}|.
\]
We have
\[
 \left\|\frac{ h( x_1 ) }
    {W(x_1,\ldots,x_{m+1})^{\lambda}}\right\|_{L^{\vec p'}}
    =\infty,
\]
which contradicts with (\ref{eq:s:e41}).
This completes the proof of the necessity.

\subsection{Proof of Theorem~\ref{thm:main}:  The Sufficiency}

We see from (P3) in the proof of the necessity of Theorem~\ref{thm:main}
that $T_{\lambda}$ is bounded if and only if
\begin{align}
\left\|\frac{ h( x_1 ) }
    {( \sum_{i=2}^{m+1} |A_{i,m+1}x_1 - x_i|)^{\lambda}}\right\|_{L^{\vec p'}}
 \lesssim \|h\|_{L^{q'}}. \label{eq:w:e43a}
\end{align}

If $p_1 \ge p_i$ for all $2\le i\le m+1$,
then (\ref{eq:w:e43a}) is equivalent to
\begin{align}
\left\|\int_{\bbR^n} \frac{|h( x_1 )|^{p'_1} dx_1 }
    {( \sum_{i=2}^{m+1} |A_{i,m+1}x_1 - x_i|)^{\lambda p'_1}}\
    \right\|_{L_{(x_2,\ldots,x_{m+1})}^{(s'_2,\ldots,s'_{m+1})}}
 \lesssim \||h|^{p'_1}\|_{L^{q'/p'_1}}, \label{eq:w:e43b}
\end{align}
where $s'_i = p'_i/p'_1$.
By duality, (\ref{eq:w:e43b}) is equivalent to
\[
\left\|\int_{\bbR^{mn}} \frac{ f(x_2, \ldots,x_{m+1})dx_2\ldots dx_{m+1} }
    {( \sum_{i=2}^{m+1} |A_{i,m+1}x_1 - x_i|)^{\lambda p'_1}}\
    \right\|_{L_{x_1}^{(q'/p'_1)'}}
 \lesssim \|f\|_{L^{\vec s}},
\]
where $\vec s = (s_2, \ldots, s_{m+1})$.
Note that
\[
  \frac{1}{s_2} + \ldots + \frac{1}{s_{m+1}} = \frac{1}{(q'/p'_1)'} + \frac{mn-\lambda p'_1}{n}.
\]
Now the conclusion follows from Theorem~\ref{thm:factional:D}.

Next we assume that there is some $2\le i\le m+2$ such that $p_i>p_1$.
We begin with a simple lemma.

\begin{Lemma}\label{Lm:norm a}
Let $R$ be a positive constant, $\vec p=(p_1,\ldots,p_m)$ with $1\le p_i\le \infty$,
and $\alpha> \sum_{i=1}^m n_i/p_i$, where $n_i$ are positive integers.
Then we have
\[
  \left\|\frac{\chi^{}_{\{\sum_{i=1}^m |x_i|\ge R\}}(x_1,\ldots,x_m)}{(\sum_{i=1}^m|x_i|)^{\alpha}}
  \right\|_{L^{\vec p}}
  \lesssim R^{  n_1/p_1+\ldots+n_m/p_m - \alpha},
\]
where $x_i\in\bbR^{n_i}$.
\end{Lemma}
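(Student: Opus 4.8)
The plan is to prove, by induction on $m$, the following slightly more general statement: if $R_0,C\ge 0$ with $R_0+C>0$ and $\alpha>\sum_{i=1}^m n_i/p_i$, then
\[
  \Big\|\frac{\chi^{}_{\{\sum_{i=1}^m|x_i|\ge R_0\}}(x_1,\ldots,x_m)}{(C+\sum_{i=1}^m|x_i|)^{\alpha}}\Big\|_{L^{\vec p}}
  \lesssim (R_0+C)^{\sum_{i=1}^m n_i/p_i-\alpha},
\]
with the implied constant depending only on $m$, $\vec p$, $(n_i)$ and $\alpha$ (in particular independent of $R_0$ and $C$). Taking $C=0$ and $R_0=R$ recovers the lemma. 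For $m=1$ the estimate is elementary: when $p_1<\infty$, passing to polar coordinates reduces it to the bound $\int_{R_0}^{\infty}r^{n_1-1}(C+r)^{-\alpha p_1}\,dr\lesssim (R_0+C)^{n_1-\alpha p_1}$, which follows by splitting the $r$-integral at $r=\max(R_0,C)$ and using $\alpha p_1>n_1$; raising to the power $1/p_1$ gives the claim. When $p_1=\infty$ the left side is $\sup_{|x_1|\ge R_0}(C+|x_1|)^{-\alpha}=(R_0+C)^{-\alpha}$, which is exactly the claim since $n_1/p_1=0$.

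For the inductive step I would peel off the innermost variable $x_1$. Fix $x_2,\ldots,x_m$ and set $b=\sum_{i=2}^m|x_i|$. As a function of $x_1$, the characteristic factor equals $\chi^{}_{\{|x_1|\ge R_0-b\}}$ when $b<R_0$ and is $\equiv 1$ when $b\ge R_0$; in either case the one-dimensional estimate just established (applied with threshold $\max(R_0-b,0)$ and constant $C+b$, whose sum $\max(R_0,b)+C$ is positive) gives
\[
  \Big\|\frac{\chi^{}_{\{|x_1|+b\ge R_0\}}}{(C+b+|x_1|)^{\alpha}}\Big\|_{L^{p_1}_{x_1}}
  \lesssim (R_0+C+b)^{n_1/p_1-\alpha}
  =\Big((R_0+C)+\sum_{i=2}^m|x_i|\Big)^{-(\alpha-n_1/p_1)},
\]
uniformly in $x_2,\ldots,x_m$, where we used $\max(R_0,b)+C\approx R_0+C+b$. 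Since $\alpha-n_1/p_1>\sum_{i=2}^m n_i/p_i$, applying the inductive hypothesis in the $m-1$ variables $x_2,\ldots,x_m$ with threshold $0$ and constant $R_0+C$ bounds the remaining $L^{(p_2,\ldots,p_m)}$-norm by $(R_0+C)^{\sum_{i=2}^m n_i/p_i-(\alpha-n_1/p_1)}=(R_0+C)^{\sum_{i=1}^m n_i/p_i-\alpha}$, completing the induction.

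The only point that requires care is the scalar estimate invoked at every stage, namely that $\int_{r\ge\rho}r^{n-1}(s+r)^{-\beta}\,dr$ is comparable to $(\rho+s)^{n-\beta}$ uniformly over $\rho,s\ge 0$ with $\rho+s>0$, whenever $\beta>n$. This is handled by treating the ranges $r\le\max(\rho,s)$ and $r\ge\max(\rho,s)$ separately and checking that the lower truncation at $r=\rho$ contributes a factor comparable to $(\rho+s)^{n-\beta}$ rather than a larger power. Everything else is a routine propagation of this estimate through the iterated mixed norm, with the $p_i=\infty$ coordinates handled by replacing the corresponding integral with a supremum.
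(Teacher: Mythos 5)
Your proof is correct. It follows the same basic route as the paper's — induct on $m$ by computing the $L^{p_1}_{x_1}$ norm of the innermost variable first — but you strengthen the induction hypothesis by carrying an additive constant $C$ in the denominator, which lets you record the inner norm as the single uniform bound $\big(R_0+C+\sum_{i\ge 2}|x_i|\big)^{n_1/p_1-\alpha}$ valid for all values of $\sum_{i\ge 2}|x_i|$. The paper instead splits into the two regimes $\sum_{i\ge 2}|x_i|<R$ and $\sum_{i\ge 2}|x_i|\ge R$, estimates the first piece directly and feeds only the second piece back into the induction on the original (unstrengthened) statement; your version merges the two pieces and is slightly cleaner to propagate through the iterated mixed norm. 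Both arguments ultimately rest on the same scalar estimate $\int_{\rho}^{\infty}r^{n-1}(s+r)^{-\beta}\,dr\approx(\rho+s)^{n-\beta}$ for $\beta>n$, which you verify correctly, including the degenerate cases $\rho=0$ or $s=0$ where the hypothesis $\rho+s>0$ is exactly what is needed, and the $p_i=\infty$ coordinates are handled consistently by the supremum convention.
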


\begin{proof}
If $\sum_{i=2}^m |x_i|<R$, we have
\begin{align*}
&\left\|\frac{\chi^{}_{\{\sum_{i=1}^m |x_i|\ge R\}}(x_1,\ldots,x_m)}{(\sum_{i=1}^m|x_i|)^{\alpha}}
  \right\|_{L_{x_1}^{p_1}}^{p_1}
   \\
& = \int_{|x_1|\ge R-\sum_{i=2}^m |x_i|}
    \frac{\chi^{}_{\{\sum_{i=2}^m |x_i|< R\}}(x_2,\ldots,x_m)dx_1}
     {(\sum_{i=1}^m|x_i|)^{\alpha p_1}}\\
&\lesssim
     \frac{\chi^{}_{\{\sum_{i=2}^m |x_i|< R\}}(x_2,\ldots,x_m) }
     {R^{\alpha p_1-n_1}}.
\end{align*}
And if $\sum_{i=2}^m|x_i|\ge R$, we have
\begin{align*}
&\left\|\frac{\chi^{}_{\{\sum_{i=1}^m |x_i|\ge R\}}(x_1,\ldots,x_m)}{(\sum_{i=1}^m|x_i|)^{\alpha}}
  \right\|_{L_{x_1}^{p_1}}^{p_1}
   \\
& = \int_{R^{n_1}}
    \frac{\chi^{}_{\{\sum_{i=2}^m |x_i|\ge  R\}}(x_2,\ldots,x_m)dx_1}
     {(\sum_{i=1}^m|x_i|)^{\alpha p_1}}\\
&\lesssim
     \frac{\chi^{}_{\{\sum_{i=2}^m |x_i|\ge  R\}}(x_2,\ldots,x_m) }
     {(\sum_{i=2}^m|x_i|)^{\alpha p_1-n_1}}.
\end{align*}
Hence
\begin{align*}
I_1 &:= \left\|\frac{\chi^{}_{\{\sum_{i=1}^m |x_i|\ge R\}}(x_1,\ldots,x_m)}{(\sum_{i=1}^m|x_i|)^{\alpha}}
  \right\|_{L_{x_1}^{p_1}}
      \\
  &\lesssim
      \frac{\chi^{}_{\{\sum_{i=2}^m |x_i|< R\}}(x_2,\ldots,x_m) }
     {R^{\alpha -n_1/p_1}}
   +
     \frac{\chi^{}_{\{\sum_{i=2}^m |x_i|\ge  R\}}(x_2,\ldots,x_m) }
     {(\sum_{i=2}^m|x_i|)^{\alpha -n_1/p_1}}.
\end{align*}
Note that the above inequality is true for all $1\le p_1\le \infty$.
Since
\[
  \left\|\frac{\chi^{}_{\{\sum_{i=2}^m |x_i|< R\}}(x_2,\ldots,x_m) }
     {R^{\alpha -n_1/p_1}}
     \right\|_{L_{(x_2,\ldots,x_m)}^{(p_2,\ldots,p_m)}}
 \lesssim R^{n_1/p_1+\ldots+n_m/p_m-\alpha},
\]
we have
\[
  \|I_1\|_{L_{(x_2,\ldots,x_m)}^{(p_2,\ldots,p_m)}}
 \lesssim
 R^{n_1/p_1+\ldots+n_m/p_m-\alpha}
 + \left\|\frac{\chi^{}_{\{\sum_{i=2}^m |x_i|\ge  R\}}(x_2,\ldots,x_m) }
     {(\sum_{i=2}^m|x_i|)^{\alpha -n_1/p_1}}
     \right\|_{L_{(x_2,\ldots,x_m)}^{(p_2,\ldots,p_m)}}\!\!\!.
\]
By induction, we get
$
\|I_1\|_{L_{(x_2,\ldots,x_m)}^{(p_2,\ldots,p_m)}}
 \lesssim
 R^{n_1/p_1+\ldots+n_m/p_m-\alpha}.
$
\end{proof}

The following estimate is used in the proof of Theorem~\ref{thm:main}.

\begin{Lemma}\label{Lm:h}
Suppose that $1<p_{m+1}<q<p_1\le \infty$
and $1\le p_i\le \infty$ for $2\le i\le m$.
Let $n_1$, $\ldots$, $n_m$ be positive integers
and
\begin{equation}\label{eq:w:e49}
  \alpha = \sum_{i=1}^m \frac{n_i}{p'_i} + \frac{n_1}{p'_{m+1}} - \frac{n_1}{q'}>0.
\end{equation}
Then for any $h\in L^{q'}$, we have
\begin{equation}\label{eq:w:e48}
  \left\|\frac{ h(x_1)}
    {(|x_1-x_{m+1}|+  \sum_{i=2}^m |x_i|)^{\alpha}} \right\|_{L^{\vec p'}} \lesssim \|h\|_{L^{q'}},
    \quad \forall h\in L^{q'},
\end{equation}
where $\|\cdot\|_{L^{\vec p'}}
= \Big\| \| \cdot\|_{L_{x_1}^{p'_1}}\!\ldots \Big\|_{L_{x_{m+1}}^{p'_{m+1}}}$,
$x_1,x_{m+1}\in R^{n_1}$
and  $x_i\in \bbR^{n_i}$ for $2\le i\le m$.
\end{Lemma}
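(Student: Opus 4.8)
The plan is to reduce (\ref{eq:w:e48}), after taking the innermost $L^{p_1'}$–norm in $x_1$, to the sharp $L^{p}\to L^{q}$ bound for the Riesz potential (Proposition~\ref{prop:fractional}) in the pair of variables $(x_1,x_{m+1})$, the auxiliary variables $x_2,\dots,x_m$ being absorbed by H\"older's inequality and the homogeneity identity (\ref{eq:w:e49}). Equivalently, one may dualize (pairing $L^{\vec p'}$ with $L^{\vec p}$ and $L^{q'}$ with $L^{q}$), which turns (\ref{eq:w:e48}) into $\|Tg\|_{L^q(\mathbb{R}^{n_1})}\lesssim\|g\|_{L^{\vec p}}$ for
\[
 Tg(x_1)=\int_{\mathbb{R}^{n_2+\cdots+n_m+n_1}}\frac{g(x_1,x_2,\dots,x_m,x_{m+1})\,dx_2\cdots dx_m\,dx_{m+1}}{\bigl(|x_1-x_{m+1}|+|x_2|+\cdots+|x_m|\bigr)^{\alpha}}.
\]
Here $T$ is a multilinear fractional integral whose coefficient matrix is the degenerate one $(0,\dots,0,I_{n_1})^{*}$ treated by Theorem~\ref{thm:factional:D}; the one new feature is that $g$ also depends on the output variable $x_1$, and it is in handling this dependence that the hypothesis $q<p_1$ is used.

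\textbf{Main steps.} Working from the first description, write $a=|x_2|+\cdots+|x_m|$. The $L^{p_1'}$–norm in $x_1$ produces the truncated Riesz potential $\bigl(\int_{\mathbb{R}^{n_1}}|h(x_1)|^{p_1'}(|x_1-x_{m+1}|+a)^{-\alpha p_1'}\,dx_1\bigr)^{1/p_1'}$. I would split the $x_1$–integral into the local part $|x_1-x_{m+1}|\le a$ and the dyadic annuli $|x_1-x_{m+1}|\sim 2^{k}a$, $k\ge0$, and estimate each piece by H\"older's inequality, \emph{keeping the local norms} $\|h\,\chi_{B(x_{m+1},2^{k}a)}\|_{L^{q'}}$ intact rather than majorizing them by $\|h\|_{L^{q'}}$ — their dependence on $x_{m+1}$ is exactly what will make the outer integration converge. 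The exponent identity $\alpha p_1'>n_1-n_1 p_1'/q'$ (which follows from $p_{m+1}<\infty$) lets one sum a geometric series in $k$, leaving an expression to which one applies, in the remaining variables, Young's inequality in $x_{m+1}$ — giving a gain $r^{n_1/p_{m+1}'}$ in the ball radius $r$, which is legitimate precisely because $p_{m+1}<q$ — together with Lemma~\ref{Lm:norm a} for the power of $a$; the homogeneity identity (\ref{eq:w:e49}) is what makes these exponents match up. The base case $m=1$ has no auxiliary variables and is just Proposition~\ref{prop:fractional} (after raising to the power $p_1'$ and invoking the Riesz potential with $1<q'/p_1'<p_2'/p_1'<\infty$).

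\textbf{Main obstacle.} The difficulty I expect to dominate the write-up is that the mixed norm $\|\cdot\|_{L^{\vec p'}}$ does not permit free reordering of the integrations, whereas the variables $x_2,\dots,x_m$ are coupled to $x_1$ through $|x_1-x_{m+1}|+a$ and must be integrated before $x_{m+1}$; in particular the factor $a^{-\bullet}$ and the $x_{m+1}$–localization cannot simply be separated. Overcoming this requires interleaving the estimates carefully: using Minkowski's inequality for mixed norms to commute the dyadic sum (and, where the exponents allow it, the $x_{m+1}$–integration) past the intermediate norms, a rescaling $x_i\mapsto 2^{-k}x_i$ for $2\le i\le m$ to render the truncation scale $k$–independent, and the boundedness of the Hardy–Littlewood maximal operator on mixed-norm Lebesgue spaces (from Section~2) to control the residual local averages. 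Once these ingredients are in place, summing in $k$ closes the estimate.
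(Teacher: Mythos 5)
Your reduction of the case $m=1$ to Proposition~\ref{prop:fractional} is correct, and you have located where each hypothesis must enter ($q<p_1$ through H\"older with exponent $q'/p'_1>1$ on the far part, $p_{m+1}<q$ through the gain $r^{n_1/p'_{m+1}}$ from Young's inequality). But for $m\ge 2$ the decomposition you propose --- dyadic annuli at the scale $a=\sum_{i=2}^m|x_i|$, keeping the localized norms $\|h\chi_{B(x_{m+1},2^ka)}\|_{L^{q'}}$ --- does not close, for two concrete reasons. First, to apply Young's inequality in $x_{m+1}$ at a fixed radius $2^ka$ you must move the outermost $L^{p'_{m+1}}_{x_{m+1}}$ norm inside the $L^{p'_i}_{x_i}$ norms, $2\le i\le m$; Minkowski (Lemma~\ref{Lm:minkowski}) permits this only when $p'_{m+1}\ge p'_i$, i.e. $p_i\ge p_{m+1}$, which is not among the hypotheses. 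Second, even granting the interchange, H\"older on the annulus gives the term $\|h\chi_{B(x_{m+1},2^ka)}\|_{q'}(2^ka)^{-\beta}$ with $\beta=\sum_{i=2}^m n_i/p'_i+n_1/p'_{m+1}$, and Young then yields $\bigl\|\|h\chi_{B(\cdot,r)}\|_{q'}\bigr\|_{L^{p'_{m+1}}}\lesssim\|h\|_{q'}r^{n_1/p'_{m+1}}$, leaving $(2^ka)^{-\sum_{i\ge2}n_i/p'_i}$: the sum over $k$ diverges when $p_i=\infty$ for all $2\le i\le m$, and the surviving power of $a$ is exactly critical, so its $L^{(p'_2,\ldots,p'_m)}$ norm is infinite. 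No decomposition at a scale determined by $a$ alone can work; the near and far regimes must be traded off at a crossover radius depending on $h$ and on $x_{m+1}$.

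That trade-off is the idea your outline is missing, and it is how the paper proceeds: a Hedberg-type argument. One splits the $x_1$-integral at a single radius $R$, dominates the near part pointwise by $M(|h|^{p'_1})(x_{m+1})\cdot\int_{|z|<R}(|z|+a)^{-\alpha p'_1}dz$ (convolution with a radial decreasing kernel), which decouples $x_{m+1}$ from $(x_2,\ldots,x_m)$ and makes the intermediate mixed norms explicitly computable ($\lesssim R^{n_1/q'-n_1/p'_{m+1}}$ after a further split according to $a\lessgtr R$); bounds the far part by H\"older and Lemma~\ref{Lm:norm a} ($\lesssim\|h\|_{q'}R^{-n_1/p'_{m+1}}$); optimizes $R$ as a function of $M(|h|^{p'_1})(x_{m+1})$; and concludes with the boundedness of $M$ on the ordinary space $L^{q'/p'_1}(\bbR^{n_1})$ (not a mixed-norm maximal theorem), which is precisely where $q<p_1$ is decisive. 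Note also that localizing the $p'_1$-th power of $h$, rather than the $q'$-th power as your $\|h\chi_B\|_{q'}$ does, is essential: the latter leads to $M(|h|^{q'})$ with $|h|^{q'}\in L^1$, and the scaling forced by (\ref{eq:w:e49}) puts the final integrand exactly at the non-integrable $L^{1,\infty}$ endpoint. So the gap is not bookkeeping; the $h$-dependent cutoff and the choice of exponent inside the maximal function are what make the estimate close.
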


\begin{proof}
Let $R>0$ be a constant to be determined later.
We have
\begin{align}
&  \int_{\bbR^{n_1}}\frac{ |h(x_1)|^{p'_1} dx_1}
    {(|x_1-x_{m+1}|+  \sum_{i=2}^m |x_i|)^{\alpha p'_1}}
     \nonumber \\
  &=
  \bigg(\int_{|x_1-x_{m+1}|<R}
  +
    \int_{|x_1-x_{m+1}|\ge R}\bigg)
    \frac{ |h(x_1)|^{p'_1} dx_1}
    {(|x_1-x_{m+1}|+  \sum_{i=2}^m |x_i|)^{\alpha p'_1}}\nonumber
\\
&= I_1 + I_2. \label{eq:w:e6}
\end{align}
For fixed $x_2$, $\ldots$, $x_m$,
$I_1$ can be considered as the convolution
of $|h(x_1)|^{p'_1}$ and $ \chi^{}_{\{|x_1|<R\}}\!(x_1)/(|x_1|$ $+\sum_{i=2}^m|x_i|)^{\alpha p'_1}$
and  .
Hence
\begin{align*}
I_1 &\lesssim M|h|^{p'_1}(x_{m+1}) \int_{|x_1|<R}
    \frac{ dx_1}
    {(|x_1|+  \sum_{i=2}^m |x_i|)^{\alpha p'_1}}\\
&= M|h|^{p'_1}(x_{m+1}) \bigg(\int_{|x_1|<R}
    \frac{\chi^{}_{\{\sum_{i=2}^m |x_i|<R\}}(x_2,\ldots,x_m) dx_1}
    {(|x_1|+  \sum_{i=2}^m |x_i|)^{\alpha p'_1}}\\
&\qquad
    +
    \int_{|x_1|< R}
    \frac{\chi^{}_{\{\sum_{i=2}^m |x_i|\ge R\}}(x_2,\ldots,x_m) dx_1}
    {(|x_1|+  \sum_{i=2}^m |x_i|)^{\alpha p'_1}}
    \bigg) \\
&:= M|h|^{p'_1}(x_{m+1})(I_{11} + I_{12})    ,
\end{align*}
where $M$ is the Hardy-Littlewood maximal function.
Observe that $(a+b)^{\alpha} \le C_{\alpha} (a^{\alpha}+ b^{\alpha})$ for any $a,b,\alpha>0$. We have
\begin{align*}
\|I_1^{1/p'_1}\|_{L_{(x_2,\ldots,x_m)}^{(p'_2,\ldots,p'_m)}}
& \lesssim
\left(M|h|^{p'_1}(x_{m+1}) \right)^{1/p'_1} \\
&\qquad \times
\left(\|I_{11}^{1/p'_1}\|_{L_{(x_2,\ldots,x_m)}^{(p'_2,\ldots,p'_m)}}
+
\|I_{12}^{1/p'_1}\|_{L_{(x_2,\ldots,x_m)}^{(p'_2,\ldots,p'_m)}}
\right).
\end{align*}

First, we estimate
$\|I_{11}^{1/p'_1}\|_{L_{(x_2,\ldots,x_m)}^{(p'_2,\ldots,p'_m)}}$.
Since $ q>p_{m+1}$, by (\ref{eq:w:e49}), we have
\begin{equation}\label{eq:s:e2}
  \alpha        <\frac{n_1}{p'_1} + \ldots + \frac{n_m}{p'_{m}}.
\end{equation}

There are two cases.

 (A1) For any $1\le k\le m-1$,
$\alpha \ne \sum_{i=1}^k n_i/p'_i$.

If  $\alpha p'_1>n_1$, we have
\begin{align}
I_{11} & =  \int_{|x_1|<R}
    \frac{\chi^{}_{\{\sum_{i=2}^m |x_i|<R\}}(x_2,\ldots,x_m) dx_1}
    {(|x_1|+  \sum_{i=2}^m |x_i|)^{\alpha p'_1}} \nonumber \\
&\lesssim
\frac{\chi^{}_{\{\sum_{i=2}^m |x_i|<R\}}(x_2,\ldots,x_m)}
    {(\sum_{i=2}^m |x_i|)^{\alpha p'_1-n_1}}. \label{eq:w:e39}
\end{align}
By (\ref{eq:s:e2}), there is some $1\le k\le m-1$ such that
\[
  \frac{n_1}{p'_1} + \ldots + \frac{n_k}{p'_{k}}
<
\alpha < \frac{n_1}{p'_1} + \ldots + \frac{n_{k+1}}{p'_{k+1}}.
\]
By (\ref{eq:w:e39}), we have for $k\ge 2$,
\begin{align*}
\|I_{11}^{1/p'_1}\|_{L_{x_2}^{p'_2}}
 & \lesssim
\frac{\chi^{}_{\{\sum_{i=3}^m |x_i|<R\}}(x_3,\ldots,x_m)}
    {(\sum_{i=3}^m |x_i|)^{\alpha  -n_1/p'_1-n_2/p'_2}}.
\end{align*}
Similar arguments show that
\begin{align*}
\|I_{11}^{1/p'_1}\|_{L_{(x_2,\ldots,x_{k})}^{(p'_2,\ldots,p'_{k})}}
 & \lesssim
\frac{\chi^{}_{\{\sum_{i=k+1}^m |x_i|<R\}}(x_{k+1},\ldots,x_m)}
    {(\sum_{i={k+1}}^m |x_i|)^{\alpha  -n_1/p'_1-\ldots -n_k/p'_k}}
\end{align*}
and
\begin{align*}
\|I_{11}^{1/p'_1}\|_{L_{(x_2,\ldots,x_{k+1})}^{(p'_2,\ldots,p'_{k+1})}}
 & \lesssim
 R^{   n_1/p'_1+\ldots +n_{k+1}/p'_{k+1}-\alpha }
 \chi^{}_{\{\sum_{i=k+2}^m |x_i|<R\}}(x_{k+2},\ldots,x_m).
\end{align*}
Hence
\begin{align}
\|I_{11}^{1/p'_1}\|_{L_{(x_2,\ldots,x_m)}^{(p'_2,\ldots,p'_m)}}
 & \lesssim
 R^{   n_1/p'_1+\ldots +n_m/p'_m-\alpha }
=R^{n_1/q' - n_1/p'_{m+1}}. \label{eq:w:e40}
\end{align}

 If  $\alpha p'_1<  n_1$, we have
\begin{align}
I_{11} & =  \int_{|x_1|<R}
    \frac{\chi^{}_{\{\sum_{i=2}^m |x_i|<R\}}(x_2,\ldots,x_m) dx_1}
    {(|x_1|+  \sum_{i=2}^m |x_i|)^{\alpha p'_1}} \nonumber \\
&\lesssim
 R^{n_1-\alpha p'_1}\chi^{}_{\{\sum_{i=2}^m |x_i|<R\}}(x_2,\ldots,x_m). \nonumber
\end{align}
Hence (\ref{eq:w:e40}) is also true.

(A2) There is some $1\le k\le m-1$ such that
$\sum_{i=1}^{k-1} n_i/p'_i< \alpha = \sum_{i=1}^k n_i/p'_i$.

In this case, $p'_k<\infty$.
As in Case (A1), we have
\begin{equation}\label{eq:s:e3}
\|I_{11}^{1/p'_1}\|_{L_{(x_2,\ldots,x_k)}^{(p'_2,\ldots,p'_k)}}
  \lesssim
  \left(\log  \frac{R}{\sum_{i=k+1}^m |x_i|}\right)^{1/p'_k}
  \chi^{}_{\{\sum_{i=k+1}^m |x_i|<R\}}(x_{k+1},\ldots,x_m).
\end{equation}
Denote $a=\sum_{i=k+2}^m |x_i|$. If $p'_{k+1}<\infty$, we have
\begin{align*}
&\|I_{11}^{1/p'_1}\|_{L_{(x_2,\ldots,x_{k+1})}^{(p'_2,\ldots,p'_{k+1})}}^{p'_{k+1}}
   \\
&\lesssim
  \int_{|x_{k+1}|\le R -a }
  \left(\log \frac{R}{|x_{k+1}|+a} \right)^{p'_{k+1}/p'_k} dx_{k+1}
  \cdot \chi^{}_{\{a<R\}}(x_{k+2},\ldots,x_m) \\
&=
  \int_0^{R -a }
  \left(\log \frac{R}{t+a}\right)^{p'_{k+1}/p'_k} t^{n_{k+1}-1}dt
  \cdot  \chi^{}_{\{a<R\}}(x_{k+2},\ldots,x_m) \\
&\le
  R^{n_{k+1}-1}\int_0^{R -a }
  \left(\log \frac{R}{t+a}\right)^{p'_{k+1}/p'_k} dt
  \cdot  \chi^{}_{\{a<R\}}(x_{k+2},\ldots,x_m) \\
&=  R^{n_{k+1}}\int_1^{R/a}
  \left(\log t \right)^{p'_{k+1}/p'_k} \frac{dt}{t^2}
  \cdot  \chi^{}_{\{a<R\}}(x_{k+2},\ldots,x_m).
\end{align*}
Hence
\[
 \|I_{11}^{1/p'_1}\|_{L_{(x_2,\ldots,x_{k+1})}^{(p'_2,\ldots,p'_{k+1})}}
 \lesssim R^{n_{k+1}/p'_{k+1}} \chi^{}_{\{\sum_{i=k+2}^m |x_i|<R\}}(x_{k+2},\ldots,x_m).
\]
Therefore,
(\ref{eq:w:e40}) is also true.

If $p'_{k+1}=\infty$, we have
\[
\|I_{11}^{1/p'_1}\|_{L_{(x_2,\ldots,x_{k+1})}^{(p'_2,\ldots,p'_{k+1})}}
  \lesssim
  \left(\log \frac{R}{\sum_{i=k+2}^m |x_i|}\right)^{1/p'_k}
  \chi^{}_{\{\sum_{i=k+2}^m |x_i|<R\}}(x_{k+2},\ldots,x_m).
\]
By (\ref{eq:s:e2}), there is some $k+1\le i\le m$ such that $p'_i<\infty$.
Suppose that $p'_{k+1} = \ldots = p'_{l-1}=\infty$ while $p'_l<\infty$.
Then we have
\[
\|I_{11}^{1/p'_1}\|_{L_{(x_2,\ldots,x_{l-1})}^{(p'_2,\ldots,p'_{l-1})}}
  \lesssim
  \left(\log \frac{R}{\sum_{i=l}^m |x_i|}\right)^{1/p'_k}
  \chi^{}_{\{\sum_{i=l}^m |x_i|<R\}}(x_l,\ldots,x_m).
\]
Hence
\[
 \|I_{11}^{1/p'_1}\|_{L_{(x_2,\ldots,x_l)}^{(p'_2,\ldots,p'_l)}}
 \lesssim R^{n_l/p'_l} \chi^{}_{\{\sum_{i=l+1}^m |x_i|<R\}}(x_{l+1},\ldots,x_m).
\]
Again, we get
(\ref{eq:w:e40}) as desired.

Next we estimate
$\|I_{12}^{1/p'_1}\|_{L_{(x_2,\ldots,x_m)}^{(p'_2,\ldots,p'_m)}}$.
Since $\sum_{i=2}^m |x_i|\ge R $ and $|x_1|<R$, we have
$|x_1|+  \sum_{i=2}^m |x_i| \approx
\sum_{i=2}^m |x_i|$. Hence
\begin{align*}
I_{12}
&=\int_{|x_1|< R}
    \frac{\chi^{}_{\{\sum_{i=2}^m |x_i|\ge R\}}(x_2,\ldots,x_m) dx_1}
    {(|x_1|+  \sum_{i=2}^m |x_i|)^{\alpha p'_1}}\\
&\approx \frac{R^{n_1}  }
    {(  \sum_{i=2}^m |x_i|)^{\alpha p'_1}}
    \chi^{}_{\{\sum_{i=2}^m |x_i|\ge R\}}(x_2,\ldots,x_m).
\end{align*}

By the hypothesis, $q<p_1$. Hence
\[
  \alpha - \frac{n_2}{p'_2}-\ldots -\frac{n_m}{p'_m}
   =  \frac{n_1}{p'_{m+1}}
   + \frac{n_1}{p'_1}
    - \frac{n_1}{q'} >0.
\]
By Lemma~\ref{Lm:norm a}, we get
\begin{align*}
\|I_{12}^{1/p'_1}\|_{L_{(x_2,\ldots,x_m)}^{(p'_2,\ldots,p'_m)}}
&\lesssim  R^{n_1/p'_1+\ldots+n_m/p'_m-\alpha}
=R^{n_1/q' - n_1/p'_{m+1}}.
\end{align*}
Hence
\begin{align}
\|I_1^{1/p'_1}\|_{L_{(x_2,\ldots,x_m)}^{(p'_2,\ldots,p'_m)}}
& \lesssim
\left(M|h|^{p'_1}(x_{m+1}) \right)^{1/p'_1}R^{n_1/q' - n_1/p'_{m+1}}. \label{eq:w:e7}
\end{align}

Next we consider $I_2$. By H\"older's inequality, we have
\begin{align*}
I_2&=
\int_{|x_1-x_{m+1}|\ge R}
    \frac{ |h(x_1)|^{p'_1} dx_1}
    {(|x_1-x_{m+1}|+  \sum_{i=2}^m |x_i|)^{\alpha p'_1}}\\
&\lesssim
\frac{\|h\|_{q'}^{p'_1}}{(R+\sum_{i=2}^m|x_i|)^{\alpha p'_1 - n_1 + n_1p'_1/q'}}.
\end{align*}
Hence
\begin{align}
\||I_2|^{1/p'_1}\|_{L_{(x_2,\ldots,x_m)}^{(p'_2,\ldots,p'_m)}}
&\lesssim \|h\|_{q'} \left\|\frac{1}
{(R+\sum_{i=2}^m|x_i|)^{n_2/p'_2 + \ldots + n_1/p'_{m+1}}}
\right\|_{L_{(x_2,\ldots,x_m)}^{(p'_2,\ldots,p'_m)}}
 \nonumber\\
&\approx \|h\|_{q'} R^{-n_1/p'_{m+1}}. \label{eq:w:e8}
\end{align}
Set
\[
  R = \left(\frac{\|h\|_{q'}}{\left(M|h|^{p'_1}(x_{m+1})\right)^{1/p'_1}}
     \right)^{q'/n_1}.
\]
We see from (\ref{eq:w:e6}),
(\ref{eq:w:e7}) and (\ref{eq:w:e8}) that
\begin{align*}
 &\hskip -5mm \left\|\frac{ h(x_1)}
    {(|x_1-x_{m+1}|+  \sum_{i=2}^m |x_i|)^{\alpha}} \right\|_{L_{(x_1,\ldots,x_m)}^{(p'_1,\ldots,p'_m)}}
    \\
& \lesssim
 \|h\|_{q'}^{1-q'/p'_{m+1}}
 \cdot \left(M|h|^{p'_1}(x_{m+1})\right)^{q'/(p'_1p'_{m+1})}.
\end{align*}
Hence
\begin{align*}
   \left\|\frac{ h(x_1)}
    {(|x_1-x_{m+1}|+  \sum_{i=2}^m |x_i|)^{\alpha}} \right\|_{L^{\vec p'}}
 & \lesssim
    \|h\|_{q'}^{1 -  q'/p'_{m+1}}
 \| M|h|^{p'_1}(x_{m+1})^{q'/p'_1}\|_{L^1}^{1/p'_{m+1}}\\
 & \lesssim\|h\|_{q'}.
\end{align*}
This completes the proof.
\end{proof}

To prove the main result, we also need Minkowski's inequality of the following form.

\begin{Lemma}\label{Lm:minkowski}
Suppose that $\vec p = (p_1,\ldots, p_m)$
and $p_i\ge p_{i+1}$ for some $i$. Then we have
\[
  \|f\|_{L_{x_m}^{p_m}(\ldots L_{x_i}^{p_i}(L_{x_{i+1}}^{p_{i+1}}(\ldots)))}
  \le \|f\|_{L^{\vec p}}.
\]
\end{Lemma}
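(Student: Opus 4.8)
The plan is to reduce the statement to the classical Minkowski integral inequality for a single adjacent pair of coordinates, and then to transport the resulting pointwise inequality through the remaining (outer) norms by monotonicity of $L^{p}$.

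First I would isolate the two coordinates that get interchanged. Recalling the convention in which the innermost norm is taken first, set, for $1\le i\le m$,
\[
  G(x_i,\ldots,x_m):=\big\|f(\cdot,x_i,\ldots,x_m)\big\|_{L_{x_{i-1}}^{p_{i-1}}(\ldots(L_{x_1}^{p_1}))}
\]
(with $G=f$ when $i=1$), a nonnegative measurable function. Then $\|f\|_{L^{\vec p}}$ is obtained by applying to $G$ the norms $L_{x_i}^{p_i}$, then $L_{x_{i+1}}^{p_{i+1}}$, then $L_{x_{i+2}}^{p_{i+2}},\ldots,L_{x_m}^{p_m}$, whereas the left-hand side of the asserted inequality is obtained by applying to $G$ the norms $L_{x_{i+1}}^{p_{i+1}}$, then $L_{x_i}^{p_i}$, then the same outer norms $L_{x_{i+2}}^{p_{i+2}},\ldots,L_{x_m}^{p_m}$. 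Hence it suffices to prove, for every fixed value of $(x_{i+2},\ldots,x_m)$, the two–variable inequality
\[
  \Big\|\,\|G\|_{L_{x_{i+1}}^{p_{i+1}}}\,\Big\|_{L_{x_i}^{p_i}}
  \;\le\;
  \Big\|\,\|G\|_{L_{x_i}^{p_i}}\,\Big\|_{L_{x_{i+1}}^{p_{i+1}}},
\]
and then to apply the norms $L_{x_{i+2}}^{p_{i+2}},\ldots,L_{x_m}^{p_m}$ to both sides one at a time; since each such norm preserves pointwise inequalities between nonnegative functions, iterating yields the claim. (When $i=m-1$ the reduction is direct and there is nothing further to iterate.)

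For the two–variable inequality, write $p=p_i\ge r=p_{i+1}$. If $p=\infty$ the right side is $\big\|\,(\sup_{x_i}|G|)\,\big\|_{L^r_{x_{i+1}}}$, which dominates the left side $\sup_{x_i}\|G\|_{L^r_{x_{i+1}}}$ because $|G(x_i,x_{i+1})|\le\sup_{x_i}|G(x_i,x_{i+1})|$ for each $x_{i+1}$; if $p=r$ both sides equal $\big(\int\!\!\int|G|^r\big)^{1/r}$ by Tonelli. Otherwise $1\le r<p<\infty$, so $p/r>1$, and raising both sides to the power $r$ the inequality becomes
\[
  \Big\|\textstyle\int |G(x_i,x_{i+1})|^{r}\,dx_{i+1}\Big\|_{L_{x_i}^{p/r}}
  \;\le\;
  \int \big\|\,|G(\cdot,x_{i+1})|^{r}\big\|_{L_{x_i}^{p/r}}\,dx_{i+1},
\]
which is precisely Minkowski's integral inequality for the exponent $p/r\ge 1$ applied to the nonnegative function $|G|^{r}$.

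I do not anticipate any genuine obstacle: the entire content is the classical Minkowski integral inequality, and the only care required is bookkeeping — tracking which coordinate is ``inner'' versus ``outer'', justifying the monotonicity step that inserts the unaffected outer norms $L_{x_{i+2}}^{p_{i+2}},\ldots,L_{x_m}^{p_m}$, and handling the degenerate cases $p_i=\infty$ and $p_i=p_{i+1}$ separately.
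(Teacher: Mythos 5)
Your proposal is correct and follows essentially the same route as the paper: the paper proves the $m=2$ case by applying the classical Minkowski integral inequality to $|f|^{p_2}$ with exponent $p_1/p_2$ and states that the general case is similar, which is exactly the reduction (isolate the adjacent pair, then transport through the unaffected outer norms by monotonicity) that you carry out in detail. Your explicit treatment of the degenerate cases $p_i=\infty$ and $p_i=p_{i+1}$ is a welcome addition but does not change the substance of the argument.
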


\begin{proof}
We prove the lemma only for $m=2$. Other cases can be proved similarly.

Suppose that $p_1\ge p_2$. By Minkowski's inequality, we have
\begin{align*}
\left\| \int_{\bbR^n} |f(x_1,x_2)|^{p_2} dx_2
  \right\|_{L_{x_1}^{p_1/p_2}}
&\le
 \int_{\bbR^n} \Big(\int_{\bbR^n} |f(x_1,x_2)|^{p_1}
 dx_1\Big)^{p_2/p_1}dx_2 .
\end{align*}
Hence
\[
  \|f\|_{L_{x_1}^{p_1}(L_{x_2}^{p_2})}
 \le
  \|f\|_{L_{x_2}^{p_2}(L_{x_1}^{p_1})}
  =\|f\|_{L^{\vec p}}.
\]
\end{proof}

We split the proof for the sufficiency into two parts:
one is for the case $p_{k_{\nu}}<q$,
and the other one is for the case $p_{k_{\nu}}=q$.
We point out that the proof for the  second case is more technical,
where
both the maximal function and the theory
of interpolation spaces and
interpolation theorems for mixed-norm Lebesgue spaces are involved.

\subsubsection{The Case   \texorpdfstring{$p_{k_{\nu}}<q$}{p k nu < q}}

First, we consider the case   $k_0=m+1$. That is,
$p_{m+1}>1$. There are three subcases.

(A1) \, $\rank(A_{m+1,m+1})=n$.

In this case, $\nu=1$, $i_{\nu}=m+1$ and    $p_{m+1}<q<p_1$.
By Lemma~\ref{Lm:ADx-y}, (\ref{eq:w:e43a}) is equivalent to
\[
  \left\|\frac{ h( x_1 ) }
    {W(x_1, \ldots,x_{m+1})^{\lambda}}\right\|_{L^{\vec p'}}
 \lesssim \|h\|_{L^{q'}},
\]
where
\[
  W(x_1, \ldots,x_{m+1}) = |x_1-x_{m+1}|+  \sum_{i=2}^m |x_i|.
\]
Now  we see from Lemma~\ref{Lm:h} that
the above inequality is true since $1<p_{m+1}< q< p_1\le \infty$.

(A2)\,\,  $0<\rank(A_{m+1,m+1})<n$.

In this case, we also have $k_{\nu}=m+1$   and $p_{m+1}<q<p_1$.

By Lemma~\ref{Lm:ADx-y},
(\ref{eq:w:e43a}) is equivalent to
\begin{align}
\left\|\frac{ h( x_1 ) }
    { W(x_1,\ldots,x_{m+1})^{\lambda}}\right\|_{L^{\vec p'}}
 \lesssim  \|h\|_{L^{q'}}, \label{eq:w:e52a}
\end{align}
where
\[
  W(x_1,\ldots,x_{m+1}) = \sum_{l=1}^n |x_1^{(l)} - x_{i_l}^{(j_l)}|
  + \sum_{\substack{2\le i\le m+1, 1\le j\le n\\
  (i,j)\ne (i_l,j_l), 1\le l\le n}} |x_i^{(j)}|
\]
and $\{(i_l,j_l):\, 1\le l\le n\}$ is defined by (\ref{eq:il jl}).

Let $\{\tau_2,\ldots,\tau_m\}$ be a rearrangement of
$\{ 2,\ldots,m\}$ such that
$\tau_2<\ldots < \tau_{\iota_0}$, $\tau_{\iota_0+1}<\ldots<\tau_m$,
$p_{\tau_i}\le p_1$ for $2\le i\le \iota_0$
and $p_{\tau_i}> p_1$ for $\iota_0+1\le i\le m$.
Then $p'_{\tau_i} < p'_1 \le  p'_{\tau_l}$ for $l\le   \iota_0< i$.
Since $q\ge p_{k_l}$ for $1\le l\le \nu$,
we have $\{k_1,\ldots, k_{\nu-1}\}\subset \{\tau_2, \ldots, \tau_{\iota_0}\}$.

By Lemma~\ref{Lm:minkowski}, we have
\begin{align*}
\left\|\cdot \right\|_{L^{\vec p'}}
 \le
 \left\|\cdot\right\|_{
    L_{(x_1, x_{\tau_2},\ldots,x_{\tau_m},x_{m+1})}^{
    (p'_1,  p'_{\tau_2},\ldots,p'_{\tau_m},  p'_{m+1})}
    }.
\end{align*}

Since $p'_1 \le p'_{\tau_i}$ for $2\le i\le \iota_0$, we  have
\begin{equation}\label{eq:si}
  s_i:= \Big(\frac{p'_{\tau_i}}{p'_1}\Big)' \in [1,\infty].
\end{equation}
Set $\vec s = (s_{\tau_2}, \ldots,s_{\tau_{\iota_0}})$.
We have
\begin{align}
& \left\|\frac{ h( x_1 ) }
    {W(x_1,\ldots,x_{m+1})^{\lambda}}\right\|_{
    L_{(x_1,x_{\tau_2}, \ldots,x_{\tau_{\iota_0}})}^{
    (p'_1,p'_{\tau_2}, \ldots,p'_{\tau_{\iota_0}})}
    }   \nonumber\\
&=\left\|
\int_{\bbR^n}\frac{ |h( x_1 )|^{p'_1} dx_1 }
    {W(x_1,\ldots,x_{m+1})^{\lambda p'_1}}
\right\|_{
    L_{(x_{\tau_2},\ldots,x_{\tau_{\iota_0}})}^{
    (s'_{\tau_2}, \ldots,s'_{\tau_{\iota_0}})}
    }^{1/p'_1} \nonumber   \\
&=\left(\sup_{\|f\|_{L^{\vec s}}=1}
\int_{\bbR^{\iota_0 n}}\frac{f(x_{\tau_2}, \ldots,x_{\tau_{\iota_0}}) |h( x_1 )|^{p'_1} dx_1 dx_{\tau_2}  \ldots dx_{\tau_{\iota_0}}}
    {W(x_1,\ldots,x_{m+1})^{\lambda p'_1}}\right)^{1/p'_1}.
  \label{eq:s:e60}
\end{align}

Note that $r_{\tau_2}=n$.
If $\tau_2 \in \{k_l:\, 1\le l\le \nu\}$,
then $ s'_{\tau_2} = p'_{\tau_2}/p'_1  \ge  q'/p'_1$.
We estimate the integration $dx_1^{(n-r_{\tau_2}+1)}dx_{\tau_2}^{(n-r_{\tau_2} + r_{\tau_2+1}+1)}
 \ldots dx_1^{(n-r_{\tau_2+1})}
  dx_{\tau_2}^{(n)}
$
with Young's inequality
and estimate the integration
$dx_{\tau_2}^{(1)} \ldots dx_{\tau_2}^{(n-r_{\tau_2} + r_{\tau_2+1})}$
with
H\"older's inequality.
We get
\begin{align*}
&
\int_{\bbR^{n+r_{\tau_2} - r_{\tau_2+1}}}
 \frac{f(x_{\tau_2}, \ldots,x_{\tau_{\iota_0}}) |h( x_1 )|^{p'_1}
 dx_1^{(n-r_{\tau_2}+1)}\ldots dx_1^{(n-r_{\tau_2+1})} dx_{\tau_2}}
    {W(x_1,\ldots,x_{m+1})^{\lambda p'_1}} \\
& \lesssim
  \frac{\|f(x_{\tau_2}, \ldots,x_{\tau_{\iota_0}})\|_{L_{x_{\tau_2}}^{s_{\tau_2}}}
  \|h( x_1 )\|_{L_{(x_1^{(n-r_{\tau_2}+1)},\ldots, x_1^{(n-r_{\tau_2+1})} )}^{q'}}^{p'_1}}
    {W_1(x_1,\ldots,x_{m+1})^{p'_1(\lambda -   n/p'_{\tau_2} -
     (r_{\tau_2} - r_{\tau_2+1})(1/p'_1
     - 1/q'))}},
\end{align*}
where
\[
  W_1(x_1,\ldots,x_{m+1}) = \sum_{l=n-r_{\tau_2+1}+1}^n |x_1^{(l)} - x_{i_l}^{(j_l)}|
  + \sum_{\substack{2\le i\le m+1, 1\le j\le n\\
    i\ne \tau_2\\
  (i,j)\ne (i_l,j_l), 1\le l\le n}} |x_i^{(j)}|.
\]

If $\tau_2\not\in  \{k_l:\, 1\le l\le \nu\}$,
then $r_{\tau_2} = r_{\tau_2+1}$. Using H\"older's inequality
when estimating the integration $dx_{\tau_2}$, we get
\begin{align*}
&
\int_{\bbR^{n}}
 \frac{f(x_{\tau_2}, \ldots,x_{\tau_{\iota_0}}) |h( x_1 )|^{p'_1}
 dx_{\tau_2}}
    {W(x_1,\ldots,x_{m+1})^{\lambda p'_1}} \\
& \lesssim
  \frac{\|f(x_{\tau_2}, \ldots,x_{\tau_{\iota_0}})\|_{L_{x_{\tau_2}}^{s_{\tau_2}}}
  |h( x_1 ) |^{p'_1}}
    {W_1(x_1,\ldots,x_{m+1})^{p'_1(\lambda -   n/p'_{\tau_2} -
     (r_{\tau_2} - r_{\tau_2+1})(1/p'_1
     - 1/q'))}}.
\end{align*}

For $3\le i\le \iota_0$,
 integrating with respect to
$ dx_1^{(n-r_{\tau_i}+1)}d x_{\tau_i}^{(n-r_{\tau_i} + r_{\tau_i+1}+1)}$,
$
 \ldots$, $  d x_1^{(n-r_{\tau_i+1})} dx_{\tau_i}^{(n)}$
and $d x_{\tau_i}^{(1)} \ldots dx_{\tau_i}^{(n-r_{\tau_i} + r_{\tau_i+1} )}$
using Young's  and H\"older's inequalities, respectively, we get

\begin{align*}
&\int_{\bbR^{\iota_0 n}}\frac{f(x_{\tau_2}, \ldots,x_{\tau_{\iota_0}}) |h( x_1 )|^{p'_1} dx_1 dx_{\tau_2}  \ldots dx_{\tau_{\iota_0}}}
    {W(x_1,\ldots,x_{m+1})^{\lambda p'_1}} \\
&\lesssim
\|f\|_{L^{\vec s}}\cdot    \int_{\bbR^{r_{k_{\nu}}}} \frac{\|h(x_1)\|_{L_{(x_1^{(1)},\ldots,x_1^{(n-r_{k_{\nu}})})
}^{q'}}^{p'_1} dx_1^{(n-r_{k_{\nu}}+1)}\ldots dx_1^{(n)}}
{W_2(x_1,\ldots,x_{m+1})^{\alpha p'_1}},
\end{align*}
where
\[
 W_2(x_1,\ldots,x_{m+1})  =
 \sum_{l=n-r_{k_{\nu}}+1}^n |x_1^{(l)} - x_{m+1}^{(l)}|
  + \sum_{i=\iota_0+1}^m |x_{\tau_i}| +
   \sum_{j=1}^{n-r_{k_{\nu}}} |x_{m+1}^{(j)}|
\]
and
\begin{align*}
\alpha &= \lambda - \sum_{i=2}^{\iota_0} \frac{n}{p'_{\tau_i}}
   - \frac{n-r_{k_{\nu}}}{p'_1}
   +  \frac{n-r_{k_{\nu}}}{q'} \\
 & =  \frac{ r_{k_{\nu}}}{p'_1} + \sum_{i= \iota_0+1 }^m \frac{n}{p'_{\tau_i}}
   + \frac{n}{p'_{m+1}}
  - \frac{ r_{k_{\nu}}}{q'}.
\end{align*}
Set $\tilde x = (x_1^{(1)},\ldots,x_1^{(n-r_{k_{\nu}})})$
and $\bar x_1 = (x_1^{(n-r_{k_{\nu}}+1)},\ldots, x_1^{(n)})$.
It follows from (\ref{eq:s:e60}) that
\begin{align}
&\left\|\frac{ h( x_1 ) }
    {W(x_1,\ldots,x_{m+1})^{\lambda}}\right\|_{
    L_{(x_1,x_{\tau_2}, \ldots,x_{\tau_{\iota_0}})}^{
    (p'_1,p'_{\tau_2}, \ldots,p'_{\tau_{\iota_0}})}
    }
 \lesssim   \Bigg\|
 \frac{\|h(\tilde x_1, \bar x_1)\|_{L_{\tilde x_1}^{q'}}}
{W_2(x_1,\ldots,x_{m+1})^{\alpha }}
 \Bigg\|_{L_{\bar x_1}^{p'_1}} .\label{eq:w:e47}
\end{align}
Since $p_{m+1}>1$, by setting
\begin{align*}
y_1 &= \bar x_1,  \\
y_i &= x_{\tau_{\iota_0+i-1}}, \qquad 2 \le i\le m-\iota_0+1 \\
y_{m-\iota_0} &=  (x_{m+1}^{( 1)},\ldots, x_{m+1}^{(n-r_{k_{\nu}})}),\\
y_{m-\iota_0+1} &= (x_{m+1}^{(n-r_{k_{\nu}}+1)},\ldots, x_{m+1}^{(n)}),
 \\
(n_1,\ldots,  n_{m-\iota_0+1})& =(r_{k_{\nu}},n,\ldots, n,
      n-r_{k_{\nu}}, r_{k_{\nu}}), \\
\vec u &= (p_1,p_{\tau_{\iota_0+1}},\ldots, \tau_m, p_{m+1},p_{m+1}),
\end{align*}
we see from  Lemma~\ref{Lm:h} that
for $g\in L^{q'}(\bbR^{r_{k_{\nu}}})$,
\begin{align}
\left\|\frac{g(y_1)}{(|y_1- y_{m-\iota_0+1}| + \sum_{i=2}^{m-\iota_0}|y_i| )^{\alpha}}
\right\|_{L^{\vec u'}}
\lesssim \|g\|_{L^{q'}}. \label{eq:s:c1}
\end{align}
Take $g(\bar x_1)=\|h(\tilde x_1, \bar x_1)\|_{L_{\tilde x_1}^{q'}}$.
We get
\begin{align*}
\Bigg\|
 \frac{\|h(\tilde x_1, \bar x_1)\|_{L_{\tilde x_1}^{q'}}}
{W_2(x_1,\ldots,x_{m+1})^{\alpha }}
 \Bigg\|_{L_{(\bar x_1, x_{\tau_{\iota_0+1}},\ldots, x_{\tau_m},x_{m+1})}^{
 (p'_1, p'_{\tau_{\iota_0+1}},\ldots, p'_{\tau_m},p'_{m+1})}}
\lesssim \|h\|_{L^{q'}}.
\end{align*}
It follows from (\ref{eq:w:e47}) that
\begin{align}
& \left\|\left\|\frac{ h( x_1 ) }
    { W(x_1,\ldots,x_{m+1})^{\lambda}}\right\|_{
    L_{(x_1,x_{\tau_2}, \ldots,x_{\tau_{\iota_0}})}^{
    (p'_1,p'_{\tau_2}, \ldots,p'_{\tau_{\iota_0}})}
    }\right\|_{L_{(  x_{\tau_{\iota_0+1}},\ldots, x_{\tau_m},x_{m+1})}^{
 ( p'_{\tau_{\iota_0+1}},\ldots, p'_{\tau_m},p'_{m+1})}}  \lesssim \|h\|_{L^{q'}}.\nonumber
\end{align}
Hence (\ref{eq:w:e52a}) is true.

(A3)\,\, $\rank(A_{m+1,m+1})=0$.

In this case, we have
$k_{\nu}\le m$. Therefore,
$p_{k_{\nu}}<q$ and  $p_{m+1}\le q<p_1$.

As in Case (A2),
there is a rearrangement
 $\{\tau_2,\ldots,\tau_{k_{\nu}}\}$    of $\{ 2,\ldots,k_{\nu}\}$
such that
$\tau_2<\ldots < \tau_{\iota_0}$, $\tau_{\iota_0+1}<\ldots<\tau_{k_{\nu}}$,
$p_{\tau_i}\le p_1$ for $2\le i\le \iota_0$
and $p_{\tau_i}> p_1$ for $\iota_0+1\le i\le m$.
Then $\tau_{\iota_0}=k_{\nu}$
and $p'_{\tau_i} < p'_1 \le  p'_{\tau_l}$ for $l\le   \iota_0< i\le k_{\nu}$.
Let $\tau_i = i$ for $k_{\nu}+1\le i\le m$.
By Lemma~\ref{Lm:minkowski},
\begin{equation}\label{eq:s:e28}
\left\|\cdot \right\|_{L^{\vec p'}}
 \le
 \left\|\cdot\right\|_{
    L_{(x_1, x_{\tau_2},\ldots,x_{\tau_m},x_{m+1})}^{
    (p'_1,  p'_{\tau_2},\ldots,p'_{\tau_m},  p'_{m+1})}
    }.
\end{equation}

By Lemma~\ref{Lm:ADx-y}, $T_{\lambda}$ is bounded if and only if
\begin{align}
\left\|\frac{ h( x_1 ) }
    { W(x_1,\ldots,x_{m+1})^{\lambda}}\right\|_{L^{\vec p'}}
 \lesssim  \|h\|_{L^{q'}}, \label{eq:w:e52}
\end{align}
where
\[
  W(x_1,\ldots,x_{m+1}) = \sum_{l=1}^n |x_1^{(l)} - x_{i_l}^{(j_l)}|
  + \sum_{\substack{2\le i\le m, 1\le j\le n\\
  (i,j)\ne (i_l,j_l), 1\le l\le n}} |x_i^{(j)}|
  +|x_{m+1}|
\]
and $\{(i_l,j_l):\, 1\le l\le n\}$ is defined by (\ref{eq:il jl}).

Let $s_i$ be defined by (\ref{eq:si}) and $\vec s = (s_{\tau_2},\ldots, s_{\tau_{\iota_0}})$. We have
\begin{align}
& \left\|\frac{ h( x_1 ) }
    { W(x_1,\ldots,x_{m+1})^{\lambda}}\right\|_{
    L_{(x_1,x_{\tau_2},\ldots, x_{\tau_{\iota_0}})}^{
    (p'_1,p'_{\tau_2},\ldots, p'_{\tau_{\iota_0}})}
    } \nonumber\\
&=\left(\sup_{\|f\|_{L^{\vec s}}=1}
\int_{\bbR^{\iota_0n}}\frac{f(x_{\tau_2},\ldots, x_{\tau_{\iota_0}}) |h( x_1 )|^{p'_1} dx_1 dx_{\tau_2} \ldots dx_{\tau_{\iota_0}}  }
    { W(x_1,\ldots,x_{m+1})^{\lambda p'_1}}\right)^{1/p'_1}.
     \label{eq:s:e30}
\end{align}

Using Young's inequality  when computing the integration
 $dx_1^{(l)} dx_{i_l}^{(j_l)}$ for $1\le l\le n$ with
$i_l \le  k_{\nu-1}$
and
H\"older's inequality for others terms, respectively,
we have for $h\in L^{q'}$,
\begin{align}
& \sup_{\|f\|_{L^{\vec s}}=1}
\int_{\bbR^{\iota_0n}}\frac{f(x_{\tau_2},\ldots, x_{\tau_{\iota_0}}) |h( x_1 )|^{p'_1} dx_1 dx_{\tau_2} \ldots dx_{\tau_{\iota_0}}  }
    { W(x_1,\ldots,x_{m+1})^{\lambda p'_1}}
     \nonumber
    \\
&\lesssim
\sup_{\|f\|_{L^{\vec s}}=1}
\int_{\bbR^{r_{k_{\nu}}+n}}\frac{\|f(\ldots, x_{\tau_{\iota_0}}) \|_{L_{(x_{\tau_2},\ldots, x_{\tau_{\iota_0-1}})}^{
(s_{\tau_2},\ldots, s_{\tau_{\iota_0-1}})}}
\|h( \tilde x_1, \bar x_1 )\|_{L_{\tilde x_1}^{q'}}^{p'_1}
d\bar x_1  d x_{\tau_{\iota_0}}  }
    { W_1(x_1,\ldots,x_{m+1})^{\alpha  p'_1}}, \label{eq:s:e29}
\end{align}
where
\begin{align*}
  W_1(x_1,\ldots,x_{m+1})
  &= |\bar x_1 - \bar x_{\tau_{\iota_0}}| + |\tilde x_{\tau_{\iota_0}}|
  + \sum_{  i=\iota_0+1}^{m } |x_{\tau_i} | + |x_{m+1}|, \\
  \tilde x_1 &= (x_1^{(1)}, \ldots, x_1^{(n-r_{k_{\nu}})}) ,\qquad
     \bar x_1 =  (x_1^{(n-r_{k_{\nu}}+1)}, \ldots, x_1^{(n)}) , \\
  \tilde x_{\tau_{\iota_0}} &= (x_{\tau_{\iota_0}}^{(1)}, \ldots, x_{\tau_{\iota_0}}^{(n-r_{k_{\nu}})}) ,\qquad
     \bar x_{\tau_{\iota_0}} =  (x_{\tau_{\iota_0}}^{(n-r_{k_{\nu}}+1)}, \ldots, x_{\tau_{\iota_0}}^{(n)}) , \\
 \alpha &= \lambda - \sum_{i= 2 }^{ \iota_0-1 } \frac{n}{p'_{\tau_i }}
    - \frac{n-r_{k_{\nu}}}{p'_1} +
 \frac{n-r_{k_{\nu}}}{q'}.
\end{align*}
Combining (\ref{eq:s:e30}) and (\ref{eq:s:e29}), we get
\begin{align}
 \left\|\frac{ h( x_1 ) }
    { W(x_1,\ldots,x_{m+1})^{\lambda}}\right\|_{
    L_{(x_1,x_{\tau_2},\ldots, x_{\tau_{\iota_0}})}^{
    (p'_1,p'_{\tau_2},\ldots, p'_{\tau_{\iota_0}})}
    }
&\lesssim
  \left\|\int_{\bbR^{r_{k_{\nu}}}}\frac{ \|h( \tilde x_1, \bar x_1 )\|_{L_{\tilde x_1}^{q'}}^{p'_1} d\bar x_1 }
    { W_1(x_1,\ldots,x_{m+1})^{\alpha p'_1}}\right\|_{
    L_{ x_{\tau_{\iota_0}}}^{s'_{\tau_{\iota_0}}}}^{1/p'_1} \nonumber \\
&=
  \left\| \frac{ \|h( \tilde x_1, \bar x_1 )\|_{L_{\tilde x_1}^{q'}}   }
    { W_1(x_1,\ldots,x_{m+1})^{\alpha }}\right\|_{
    L_{ (\bar x_1, x_{\tau_{\iota_0}})}^{(p'_1, p'_{\tau_{\iota_0}})}}. \nonumber
\end{align}
It follows from (\ref{eq:s:e28}) that
\begin{equation}\label{eq:s:e31}
 \left\|\frac{ h( x_1 ) }
    { W(x_1,\ldots,x_{m+1})^{\lambda}}\right\|_{L^{\vec p'}}
\lesssim
  \left\| \frac{ \|h( \tilde x_1, \bar x_1 )\|_{L_{\tilde x_1}^{q'}}   }
    { W_1(x_1,\ldots,x_{m+1})^{\alpha }}\right\|_{
    L_{ (\bar x_1, x_{\tau_{\iota_0}},\ldots,x_{\tau_m},x_{m+1})}^{(p'_1, p'_{\tau_{\iota_0}},\ldots,p'_{\tau_m},p'_{m+1})}}.
\end{equation}

Choose positive numbers $u_1$ and $u_2$ such that
$p'_1< u_1 <q'<u_2 \le p'_{k_{\nu}}$
and $v(u) := n/(n/p'_{m+1} -r_{k_{\nu}}/q' +r_{k_{\nu}}/u)>1$
for $u=u_1$, $u_2$.

For $g\in L^u$ with $u_1\le u\le u_2$, define the operator $S$ by
\[
  Sg(x_{m+1}) = \left\| \frac{ g(\bar x_1 ) }
    { W_1(x_1,\ldots,x_{m+1})^{\alpha }}\right\|_{
    L_{ (\bar x_1, x_{\tau_{\iota_0}},\ldots,x_{\tau_m})}^{(p'_1, p'_{\tau_{\iota_0}},\ldots,p'_{\tau_m})}} .
\]
Recall that $\tau_{\iota_0}=k_{\nu}$.
Using   Young's inequality when computing
the $L^{s'_{\tau_{\iota_0}}}$ norm with respect to $\bar x_{\tau_{\iota_0}}$
and then computing the  $L^{s'_{\tau_{\iota_0}}}$ norm with respect to $\tilde  x_{\tau_{\iota_0}}$
directly, we get
 \begin{align*}
&  \left\|\int_{\bbR^{r_{k_{\nu}}}}\frac{ |g( \bar x_1 )|^{p'_1} d\bar x_1 }
    { W_1(x_1,\ldots,x_{m+1})^{\alpha p'_1}}\right\|_{
    L_{ x_{\tau_{\iota_0}}}^{s'_{\tau_{\iota_0}}}}^{1/p'_1}
 \\
& \lesssim \frac{\|g\|_{L^u}}{ (
  \sum_{  i=\iota_0+1}^{m } |x_{\tau_i} | + |x_{m+1}|
      )^{\alpha -
    n/p'_{\tau_{\iota_0}} -r_{k_{\nu}}/p'_1 + r_{k_{\nu}}/u  }}.
 \end{align*}
That is,
\[
   \left\| \frac{ g(\bar x_1)}
    { W_1(x_1,\ldots,x_{m+1})^{\alpha }}\!\right\|_{
    L_{ (\bar x_1, x_{\tau_{\iota_0}})}^{(p'_1, p'_{\tau_{\iota_0}})}}
 \!\! \lesssim \!\!\frac{\|g\|_{L^u}}{ (
 \sum_{  i=\iota_0+1}^{m } \!|x_{\tau_i} | \!+\! |x_{m+1}| )^{\alpha -
    n/p'_{\tau_{\iota_0}}\!\! -r_{k_{\nu}}/p'_1 + r_{k_{\nu}}/u  }}.
\]
Now Compute the $L_{x_{\tau_i}}^{p'_{\tau_i}}$ norms successively  for $ \iota_0+1\le i\le m$. We have
\begin{align*}
Sg(x_{m+1})
&\lesssim
\frac{\|g\|_{L^{u}}}
{|x_{m+1}|^{\alpha - n/p'_{\tau_{\iota_0}} -r_{k_{\nu}}/p'_1 + r_{k_{\nu}}/u
-n/p'_{\tau_{\iota_0+1}} - \ldots -n/p'_{\tau_{\iota_m}}}}\\
&= \frac{\|g\|_{L^{u}}}
{|x_{m+1}|^{n/p'_{m+1} -r_{k_{\nu}}/q' +r_{k_{\nu}}/u }}
= \frac{\|g\|_{L^{u}}}
{|x_{m+1}|^{n/v(u)}}.
\end{align*}
Hence $S$  is of weak type $(u, v(u))$
whenever $u_1\le u\le u_2$.
Since  $q\ge p_{m+1}$ and $v(q') = p'_{m+1}$, we see from the interpolation theorem
that
$S$ is of type $(q', p'_{m+1})$.

Take $g (\bar x_1) = \|h( \tilde x_1, \bar x_1 )\|_{L_{\tilde x_1}^{q'}}$.
We have
\[
 \left\|
  \left\| \frac{ \|h( \tilde x_1, \bar x_1 )\|_{L_{\tilde x_1}^{q'}}   }
    { W_1(x_1,\ldots,x_{m+1})^{\alpha }}\right\|_{
    L_{ (\bar x_1, x_{\tau_{\iota_0}},\ldots,x_{\tau_m})}^{(p'_1, p'_{\tau_{\iota_0}},\ldots,p'_{\tau_m})}}
  \right\|_{L_{x_{m+1}}^{p'_{m+1}}}
    \lesssim
     \|h\|_{L^{q'}}.
\]
Now we see from (\ref{eq:s:e31}) that (\ref{eq:w:e52}) is true.

Next we consider the case   $k_0<m+1$, i.e.,
$p_{m+1}=1$.
By Lemmas~\ref{Lm:end points ii} and \ref{Lm:ADx-y}, $T_{\lambda}$ is bounded if and only if
\begin{align}
\left\|\frac{ h( x_1 ) }
    { W(x_1,\ldots,x_{k_0})^{\lambda}}\right\|_{L^{\vec {\tilde p}'}}
 \lesssim  \|h\|_{L^{q'}}, \label{eq:s:e32}
\end{align}
where $\vec {\tilde p} = (p_1, \ldots, p_{k_0})$,
\[
  W(x_1,\ldots,x_{k_0}) = \sum_{l=1}^{n-r_{k_0+1}} |x_1^{(l)} - x_{i_l}^{(j_l)}|
   +
 \sum_{l=n-r_{k_0+1}+1}^n |x_1^{(l)}  |
  + \sum_{\substack{2\le i\le k_0, 1\le j\le n\\
  (i,j)\ne (i_l,j_l), 1\le l\le n}}\!\! |x_i^{(j)}|
\]
and $\{(i_l,j_l):\, 1\le l\le n\}$ is defined by (\ref{eq:il jl}).

There are four subcases.

(B1)\,\, $r_{k_0} = r_{k_0+1}=0$.

In this case, $k_l<k_0$ for $1\le l\le \nu$
and $W(x,y)$
is of the following form,
\[
  W(x_1,\ldots,x_{m+1}) = \sum_{l=1}^n |x_1^{(l)} - x_{i_l}^{(j_l)}|
  + \sum_{\substack{2\le i\le k_0-1, 1\le j\le n\\
  (i,j)\ne (i_l,j_l), 1\le l\le n}} |x_i^{(j)}|
  +|x_{k_0}|
\]
By Setting $m=k_0-1$ in Case (A3), we get (\ref{eq:s:e32}).

(B2)\,\, $r_{k_0} = r_{k_0+1}>0$.

In this case, $k_{\nu}\ge k_0+1$, $p_{k_{\nu}}=1$ and $W(x,y)$ is of the following form,
\begin{align*}
  W(x_1,\ldots,x_{k_0})
  &= \sum_{l=1}^{n-r_{k_0}} |x_1^{(l)} - x_{i_l}^{(j_l)}|
   +
 \sum_{l=n-r_{k_0}+1}^n |x_1^{(l)}  |
   \\
  &\qquad  + \sum_{\substack{2\le i\le k_0-1, 1\le j\le n\\
  (i,j)\ne (i_l,j_l), 1\le l\le n}} |x_i^{(j)}|
  +|x_{k_0}|.
\end{align*}
There is a rearrangement
 $\{\tau_2,\ldots,\tau_{k_0-1}\}$    of
$\{ 2,\ldots,k_0-1\}$ such that
$\tau_2<\ldots < \tau_{\iota_0}$, $\tau_{\iota_0+1}<\ldots<\tau_{k_0-1}$,
$p_{\tau_i}\le p_1$ for $2\le i\le \iota_0$
and $p_{\tau_i}> p_1$ for $\iota_0+1\le i\le k_0-1$.
Then $p'_{\tau_i} < p'_1 \le  p'_{\tau_l}$ for $l\le \iota_0< i$.
By Lemma~\ref{Lm:minkowski}, we have
\begin{align}
\left\|\cdot \right\|_{L^{\vec {\tilde p}'}}
 \le
 \left\|\cdot\right\|_{
    L_{(x_1,x_{\tau_2},\ldots,x_{\tau_{k_0-1}}, x_{k_0})}^{
    (p'_1,p'_{\tau_2},\ldots,p'_{\tau_{k_0-1}}, p'_{k_0})}
    }.   \label{eq:s:e33}
\end{align}
Since $  p_{k_l}\le q<p_1$ for $1\le l\le \nu$,
we have $k_l\in  \{\tau_2,\ldots, \tau_{\iota_0} \}$ if $k_l<k_0$.

Let $s_i$ be defined by (\ref{eq:si}) and $\vec s = (s_{\tau_2},\ldots, s_{\tau_{\iota_0}})$. We have
\begin{align}
& \left\|\frac{ h( x_1 ) }
    { W(x_1,\ldots,x_{k_0})^{\lambda}}\right\|_{
    L_{(x_1,x_{\tau_2},\ldots, x_{\tau_{\iota_0}})}^{
    (p'_1,p'_{\tau_2},\ldots, p'_{\tau_{\iota_0}})}
    }  \nonumber \\
&=\left(\sup_{\|f\|_{L^{\vec s}}=1}
\int_{\bbR^{ \iota_0 n}}\frac{f(x_{\tau_2},\ldots, x_{\tau_{\iota_0}}) |h( x_1 )|^{p'_1} dx_1 dx_{\tau_2} \ldots dx_{\tau_{\iota_0}}  }
    { W(x_1,\ldots,x_{k_0})^{\lambda p'_1}}\right)^{1/p'_1}.
   \label{eq:s:e34}
\end{align}
 Using Young's inequality  when computing the integration
 $dx_1^{(l)} dx_{i_l}^{(j_l)}$ for $1\le l\le n-r_{k_0}$
and
H\"older's inequality for others terms,
we have for $h\in L^{q'}$,
\begin{align}
& \sup_{\|f\|_{L^{\vec s}}=1}
\int_{\bbR^{ \iota_0 n}}\frac{f(x_{\tau_2},\ldots, x_{\tau_{\iota_0}}) |h( x_1 )|^{p'_1} dx_1 dx_{\tau_2} \ldots dx_{\tau_{\iota_0}}  }
    { W(x_1,\ldots,x_{k_0})^{\lambda p'_1}} \nonumber
    \\
&\lesssim
 \int_{\bbR^{r_{k_0}}}\frac{
\|h( \tilde x_1, \bar x_1 )\|_{L_{\tilde x_1}^{q'}}^{p'_1}
d\bar x_1  }
    { W_1(x_1,\ldots,x_{k_0})^{\alpha  p'_1}},  \label{eq:s:e35}
\end{align}
where
\begin{align*}
  \tilde x_1 &= (x_1^{(1)}, \ldots, x_1^{(n-r_{k_0})}),\qquad
     \bar x_1 =  (x_1^{(n-r_{k_0}+1)}, \ldots, x_1^{(n)}), \\
 \alpha &= \lambda - \sum_{i=2}^{\iota_0} \frac{n}{p'_{\tau_i }}
    - \frac{n-r_{k_0}}{p'_1} +
 \frac{n-r_{k_0}}{q'},\\
  W_1(x_1,\ldots,x_{k_0})
  &=  |\bar x_1   |
  + \sum_{ i=\iota_0+1}^{k_0-1}  |x_{\tau_i} | + |x_{k_0}|.
\end{align*}
Combining (\ref{eq:s:e34}) and (\ref{eq:s:e35}), we get
\begin{align}
& \left\|\frac{ h( x_1 ) }
    { W(x_1,\ldots,x_{k_0})^{\lambda}}\right\|_{
    L_{(x_1,x_{\tau_2},\ldots, x_{\tau_{\iota_0}})}^{
    (p'_1,p'_{\tau_2},\ldots, p'_{\tau_{\iota_0}})}
    }  \lesssim
  \left\|\frac{ \|h( \tilde x_1, \bar x_1 )\|_{L_{\tilde x_1}^{q'}}  }
    { W_1(x_1,\ldots,x_{k_0})^{\alpha  }}\right\|_{L_{\bar x_1}^{p'_1}}. \label{eq:s:e36}
\end{align}

Take two positive numbers $u_1$ and $u_2$ such
that $p'_1<u_1 < q'<u_2 $
and
$ v(u) := n/(n/p'_{k_0} -r_{k_0}/q' +r_{k_0}/u) >1$
for $u=u_1$, $u_2$.

For $g\in L^u$ with $u_1\le u\le u_2$, define the operator $S$ by
\[
  Sg(x_{k_0}) =   \left\|\frac{ g( \bar x_1 )  }
    { W_1(x_1,\ldots,x_{k_0})^{\alpha  }}
    \right\|_{L_{(\bar x_1,x_{\tau_{\iota_0+1}},\ldots, x_{\tau_{k_0-1}})}^{
     (p'_1,p'_{\tau_{\iota_0+1}},\ldots, p'_{\tau_{k_0-1}})}}.
\]
By H\"older's inequality,  we have
\[
  \left(\int_{\bbR^{r_{k_0}}}\frac{ |g( \bar x_1 )|^{p'_1} d\bar x_1 }
    { W_1(x_1,\ldots,x_{k_0})^{\alpha p'_1}}\right)^{1/p'_1}
  \lesssim \frac{\|g\|_{L^u}}{ (
 \sum_{ i=\iota_0+1}^{k_0-1}  |x_{\tau_i} | + |x_{k_0}| )^{\alpha
     -r_{k_0}/p'_1 + r_{k_0}/u  }}.
\]
It follows that
\begin{align*}
Sg(x_{k_0})
&\lesssim
\frac{\|g\|_{L^{u}}}
{|x_{k_0}|^{\alpha -  r_{k_0}/p'_1 + r_{k_0}/u
-n/p'_{\tau_{\iota_0+1}} - \ldots -n/p'_{\tau_{k_0-1}}}}\\
&= \frac{\|g\|_{L^{u}}}
{|x_{k_0}|^{n/p'_{k_0} -r_{k_0}/q' +r_{k_0}/u }}
= \frac{\|g\|_{L^{u}}}
{|x_{k_0}|^{n/v(u) }}.
\end{align*}
Hence $S$  is of weak type $(u, v(u))$
whenever  $u_1\le u\le u_2$.
Since   $v(q') = p'_{k_0}$ and $ p_{k_0}\le q $,
 we see from the interpolation theorem
that
$S$ is of type $(q', p'_{k_0})$.

Take $g (\bar x_1)= \|h( \tilde x_1, \bar x_1 )\|_{L_{\tilde x_1}^{q'}}$.
By  (\ref{eq:s:e36}), we have
\[
 \left\| \left\|\frac{ h( x_1 ) }
    { W(x_1,\ldots,x_{k_0})^{\lambda}}\right\|_{
    L_{(x_1,x_{\tau_2},\ldots, x_{\tau_{\iota_0}})}^{
    (p'_1,p'_{\tau_2},\ldots, p'_{\tau_{\iota_0}})}} \right\|_{L_{(x_{\tau_{\iota_0+1}},\ldots, x_{\tau_{k_0-1}},  x_{k_0})}^{
     (p'_{\tau_{\iota_0+1}},\ldots, p'_{\tau_{k_0-1}},p'_{k_0})}}
     \lesssim
  \|h\|_{L^{q'}}.
\]
Now we see from (\ref{eq:s:e33}) that (\ref{eq:s:e32}) is true.

(B3)\,\, $r_{k_0} > r_{k_0+1}=0$.

In this case, $k_{\nu}=k_0$
and $W(x,y)$
is of the following form,
\[
  W(x_1,\ldots,x_{k_0}) = \sum_{\substack{1\le l\le n \\
  i_l < k_0}} |x_1^{(l)} - x_{i_l}^{(j_l)}|
  +\sum_{\substack{1\le l\le n \\
  i_l = k_0}}  |x_1^{(l)} - x_{k_0}^{(j_l)}|
  + \sum_{\substack{2\le i\le k_0, 1\le j\le n\\
  (i,j)\ne (i_l,j_l), 1\le l\le n}} |x_i^{(j)}|.
\]
By Setting $m=k_0-1$ in Case (A1) (for $r_{k_0}=n$) or in Case (A2) (for $0<r_{k_0}<n$), we get (\ref{eq:s:e32}).

(B4)\,\, $r_{k_0} > r_{k_0+1}>0$.

In this case, $k_{\nu}>k_0$,
$p_{k_{\nu}}=1$
and $W(x,y)$
is of the following form,
\begin{align*}
W(x_1,\ldots,x_{k_0})
&=  \sum_{\substack{1\le l\le n \\
  i_l \le  k_0}} |x_1^{(l)} - x_{i_l}^{(j_l)}|
   + \sum_{l=n-r_{k_0+1}+1}^n  |x_1^{(l)} |
  + \!\! \sum_{\substack{2\le i\le k_0, 1\le j\le n\\
  (i,j)\ne (i_l,j_l), 1\le l\le n}} \!\!\!|x_i^{(j)}|
    \\
&=   \sum_{\substack{1\le l\le n \\
  i_l < k_0}} |x_1^{(l)} - x_{i_l}^{(j_l)}|
  +\!\!\sum_{\substack{1\le l\le n \\
  i_l = k_0}} \! |x_1^{(l)} - x_{k_0}^{(j_l)}|
  +\!\! \sum_{\substack{2\le i\le k_0-1, 1\le j\le n\\
  (i,j)\ne (i_l,j_l), 1\le l\le n}}\!\! |x_i^{(j)}|
   \\
& \qquad       +\sum_{l=n-r_{k_0+1}+1}^n  |x_1^{(l)} |
    + \sum_{l=1}^{n-r_{k_0} + r_{k_0+1}} |x_{k_0}^{(l)}|.
\end{align*}

Let the rearrangement
 $\{\tau_2,\ldots,\tau_{k_0-1}\}$    of
$\{ 2,\ldots,k_0-1\}$
be defined as in Case (B2).
Set
\[
  \tilde x_{k_0} = (x_{k_0}^{(n-r_{k_0}+r_{k_0+1}+1)},\ldots,x_{k_0}^{(n)} ),
  \quad
  \bar x_{k_0} = (x_{k_0}^{(1)}, \ldots, x_{k_0}^{(n-r_{k_0}+r_{k_0+1})}  ).
\]
Since $p_{k_0}\le q$, we have $p'_{k_0}\ge q' > p'_1 > p'_{\tau_i}$ for $i\ge \iota_0$. By Lemma~\ref{Lm:minkowski}, we have
\begin{align}
\left\|\cdot \right\|_{L^{\vec {\tilde p}'}}
 \le
 \left\|\cdot\right\|_{
    L_{(x_1,x_{\tau_2},\ldots,x_{\tau_{\iota_0}}, \tilde x_{k_0},x_{\tau_{\iota_0+1}},
     \ldots,  x_{\tau_{k_0-1}}, \bar x_{k_0})}^{
    (p'_1,p'_{\tau_2},\ldots,p'_{\tau_{\iota_0}}, p'_{k_0},p'_{\tau_{\iota_0+1}},
     \ldots, p'_{\tau_{k_0-1}}, p'_{k_0})}
    }.     \label{eq:s:e37}
\end{align}
Note that $  p_{k_l}\le q<p_1$ for $1\le l\le \nu$.
We have $k_l\in  \{\tau_2, \ldots,\tau_{\iota_0} \}$ if $k_l<k_0$.

Let $s_i$ be defined by (\ref{eq:si}) and $\vec s = (s_{\tau_2},\ldots, s_{\tau_{\iota_0}},s_{k_0})$. We have
\begin{align}
& \left\|\frac{ h( x_1 ) }
    { W(x_1,\ldots,x_{k_0})^{\lambda}}\right\|_{
    L_{(x_1,x_{\tau_2},\ldots, x_{\tau_{\iota_0}}, \tilde x_{k_0})}^{
    (p'_1,p'_{\tau_2},\ldots, p'_{\tau_{\iota_0}},p'_{k_0})}
    }^{p'_1} \nonumber\\
&= \sup_{\|f\|_{L^{\vec s}}=1}
\int_{\bbR^{ \iota_0 n + r_{k_0} - r_{k_0+1}}}\frac{f(x_{\tau_2},\ldots, x_{\tau_{\iota_0}}, \tilde x_{k_0}) |h( x_1 )|^{p'_1} dx_1 dx_{\tau_2} \ldots  dx_{\tau_{\iota_0}} d\tilde x_{k_0} }
    { W(x_1,\ldots,x_{k_0})^{\lambda p'_1}}  \nonumber \\
& \lesssim
  \int_{\bbR^{ r_{k_0+1}}}\frac{ \|  h( \tilde x_1, \bar x_1 )\|_{L_{\tilde x_1}^{q'}}^{p'_1} d\bar x_1 }
    { W_1(x_1,\ldots,x_{k_0})^{\alpha p'_1}},
     \label{eq:s:e40}
\end{align}
where we use  Young's inequality  when computing the integration
 $dx_1^{(l)} dx_{i_l}^{(j_l)}$ for $1\le l\le n$ with $i_l\le k_0$
and
H\"older's inequality for others terms
in the last step and
\begin{align*}
  \tilde x_1 &= (x_1^{(1)}, \ldots, x_1^{(n-r_{k_0+1})}),\\
      \bar x_1 &=  ( x_1^{(n-r_{k_0+1}+1)},  \ldots, x_1^{(n)}), \\
   \alpha &= \lambda - \sum_{i= 2  }^{ \iota_0 } \frac{n}{p'_{\tau_i }}
    - \frac{r_{k_0}-r_{k_0+1}}{p'_{k_0}}
    - \frac{n- r_{k_0+1}}{p'_1} +
 \frac{n- r_{k_0+1}}{q'},\\
  W_1(x_1,\ldots,x_{k_0})
  &= |\bar x_1|
      + \sum_{i=\iota_0+1 }^{k_0-1} |x_{\tau_i} |+    |\bar x_{k_0} |.
\end{align*}
We rewrite (\ref{eq:s:e40}) as
\begin{align}
  \left\|\frac{ h( x_1 ) }
    { W(x_1,\ldots,x_{k_0})^{\lambda}}\right\|_{
    L_{(x_1,x_{\tau_2},\ldots, x_{\tau_{\iota_0}}, \tilde x_{k_0})}^{
    (p'_1,p'_{\tau_2},\ldots, p'_{\tau_{\iota_0}},p'_{k_0})}
    }
& \lesssim
  \left\|\frac{ \|  h( \tilde x_1, \bar x_1 )\|_{L_{\tilde x_1}^{q'}}  }
    { W_1(x_1,\ldots,x_{k_0})^{\alpha }}\right\|_{L_{\bar x_1}^{p'_1}}. \label{eq:s:e40a}
\end{align}

For $u\ge 1$, define the function $v(u)$ by
\[
\frac{n - (r_{k_0}- r_{k_0+1})}{v(u)}
=  \frac{n - (r_{k_0}- r_{k_0+1})}{p'_{k_0}}
   - \frac{r_{k_0+1}}{q'} + \frac{r_{k_0+1}}{u}.
\]
Take two positive numbers $u_1$ and $u_2$
such that $p'_1 < u_1<q'<u_2$ and  $v(u)>1$ for $u=u_1, u_2$.

For $g\in L^u(\bbR^{r_{k_0+1}})$ with $u_1\le u\le u_2$, define the operator $S$
by
\[
  Sg(\bar x_{k_0}) = \left\|\frac{ g(\bar x_1) }
    { W_1(x_1,\ldots,x_{k_0})^{\alpha }}\right\|_{L_{(\bar x_1, x_{\tau_{\iota_0+1}},
    \ldots,x_{\tau_{k_0-1}} )}^{(p'_1, p'_{\tau_{\iota_0+1}},
    \ldots,p'_{\tau_{k_0-1}})}}.
\]
Computing the norm with respect to $\bar x_1$ by H\"older's inequality
and computing the norms with respect to $ x_{\tau_{\iota_0+1}},
    \ldots,x_{\tau_{k_0-1}}$ directly, we get
\begin{align*}
Sg(\bar x_{k_0})
\lesssim \frac{\|g\|_{L^u}}{|\bar x_{k_0}|^{\beta}},
\end{align*}
where
\begin{align*}
\beta &= \alpha - \frac{r_{k_0+1}}{p'_1} + \frac{r_{k_0+1}}{u} - \sum_{i=\iota_0+1}^{k_0-1} \frac{n}{p'_{\tau_i}}  \\
&= \frac{n - (r_{k_0}- r_{k_0+1})}{p'_{k_0}}
   - \frac{r_{k_0+1}}{q'} + \frac{r_{k_0+1}}{u}
  = \frac{n - (r_{k_0}- r_{k_0+1})}{v(u)}.
\end{align*}
Hence $S$ is of weak type $(u, v(u))$ for $u=u_1, u_2$.
Since $u_1<q'<u_2$ and $v(q') = p'_{k_0} \ge q'$,
by the interpolation theorem,
  $S$ is of type $(q', p'_{k_0})$.
That is,
\[
\left\|\frac{ g(\bar x_1) }
    { W_1(x_1,\ldots,x_{k_0})^{\alpha }}\right\|_{L_{(\bar x_1, x_{\tau_{\iota_0+1}},
    \ldots,x_{\tau_{k_0-1}}, \bar x_{k_0})}^{(p'_1, p'_{\tau_{\iota_0+1}},
    \ldots,p'_{\tau_{k_0-1}}, p'_{k_0})}}
 = \|Sg\|_{L_{\bar x_{k_0}}^{p'_{k_0}}}
 \lesssim \|g\|_{L^{q'}}.
\]
By setting $g(\bar x_1) = \|  h( \tilde x_1, \bar x_1 )\|_{L_{\tilde x_1}^{q'}}  $,
we get
\[
   \left\| \frac{\| h( \tilde x_1, \bar x_1 )\|_{L_{\tilde x_1}^{q'}}} {  W(x_1, \ldots, x_{k_0})^{\alpha}}\right\|_{L_{(\bar x_1, x_{\tau_{\iota_0+1}},
    \ldots, x_{\tau_{k_0-1}},\bar x_{k_0})}^{(p'_1, p'_{\tau_{\iota_0+1}},
    \ldots, p'_{\tau_{k_0-1}},p'_{k_0})}}
  \lesssim \|h\|_{L^{q'}}.
\]
Now we see from (\ref{eq:s:e37}) and (\ref{eq:s:e40a}) that (\ref{eq:s:e32}) is true.

\subsubsection{The Case
 \texorpdfstring{$p_{k_{\nu}}=q$}{p k nu = q}}

Since $\min\{p_{k_l}:\, 1\le l\le \nu\}<q$, there is some $1\le \mu\le \nu-1$ such that
$p_{k_{\mu}} <p_{k_{\mu+1}} = \ldots = p_{k_{\nu}} = q$.

First, we consider the case   $k_0={m+1}$. There are two subcases.

(C1)\,\, $0<\rank(A_{m+1,m+1})<n$.

In this case, $k_{\nu}=m+1$.

Let the rearrangement
 $\{\tau_2,\ldots,\tau_m\}$    of
$\{ 2,\ldots,m\}$ be defined as in Case (A2)(b).
Then
we have $\{k_1,\ldots, k_{\nu-1}\}\subset \{\tau_2,\ldots, \tau_{\iota_0}\}$
and (\ref{eq:s:e28}) is true.

By Lemma~\ref{Lm:ADx-y}, $T_{\lambda}$ is bounded if and only if
\begin{align}
\left\|\frac{ h( x_1 ) }
    { W(x_1,\ldots,x_{m+1})^{\lambda}}\right\|_{L^{\vec p'}}
 \lesssim  \|h\|_{L^{q'}}, \label{eq:s:e43}
\end{align}
where
\[
  W(x_1,\ldots,x_{m+1}) = \sum_{l=1}^n |x_1^{(l)} - x_{i_l}^{(j_l)}|
  + \sum_{\substack{2\le i\le m+1, 1\le j\le n\\
  (i,j)\ne (i_l,j_l), 1\le l\le n}} |x_i^{(j)}|
\]
and $\{(i_l,j_l):\, 1\le l\le n\}$ is defined by (\ref{eq:il jl}).

Let $s_i$ be defined by (\ref{eq:si}) and $\vec s = (s_{\tau_2},\ldots, s_{\tau_{\iota_0}})$.
 We have
\begin{align}
& \left\|\frac{ h( x_1 ) }
    { W(x_1,\ldots,x_{m+1})^{\lambda}}\right\|_{
    L_{(x_1,x_{\tau_2},\ldots, x_{\tau_{\iota_0}})}^{
    (p'_1,p'_{\tau_2},\ldots, p'_{\tau_{\iota_0}})}
    } \nonumber\\
&=\left(\sup_{\|f\|_{L^{\vec s}}=1}
\int_{\bbR^{ \iota_0 n}}\frac{f(x_{\tau_2},\ldots, x_{\tau_{\iota_0}}) |h( x_1 )|^{p'_1} dx_1 dx_{\tau_2} \ldots  dx_{\tau_{\iota_0}}  }
    { W(x_1,\ldots,x_{m+1})^{\lambda p'_1}}\right)^{1/p'_1}.
    \label{eq:s:e45}
\end{align}

Assume that $\tau_d = k_{\mu}$ for some  $2\le d\le  \iota_0 $.
Using Young's inequality  when computing the integration
 $dx_1^{(l)} dx_{i_l}^{(j_l)}$ for $1\le l\le n$ with
$i_l \le  k_{\mu-1}$
and
H\"older's inequality for others terms, respectively,
we have for $h\in L^{q'}$,
\begin{align}
&
\int_{\bbR^{ \iota_0 n}}\frac{f(x_{\tau_2},\ldots, x_{\tau_{\iota_0}}) |h( x_1 )|^{p'_1} dx_1 dx_{\tau_2} \ldots  dx_{\tau_{\iota_0}}  }
    { W(x_1,\ldots,x_{m+1})^{\lambda p'_1}}
    \nonumber
    \\
&\lesssim
 \int_{\bbR^{r_{k_{\mu}}+(\iota_0-d+1)n}} \frac{\|f(x_{\tau_2},\ldots, x_{\tau_{\iota_0}}\!)     \|_{L_{(x_{\tau_2},\ldots, x_{\tau_{d-1}})}^{
(s_{\tau_2},\ldots, s_{\tau_{d-1}})}}\!
  }
    { W_1(x_1,\ldots,x_{m+1})^{\alpha  p'_1}} \nonumber \\
&\qquad \qquad \times \|h( \tilde x_1, \bar x_1, \bar{\bar x}_1 )\|_{L_{\tilde x_1}^{q'}}^{p'_1}
 d\bar x_1d\bar {\bar x}_1 dx_{\tau_d}\ldots d x_{\tau_{\iota_0}}
     , \label{eq:s:e46}
\end{align}
where
\begin{align*}
  \tilde x_1 &= (x_1^{(1)}, \ldots, x_1^{(n-r_{k_{\mu}})}),\\
     \bar x_1 &=  (x_1^{(n-r_{k_{\mu}}+1)}, \ldots, x_1^{(n-r_{k_{\mu+1}})}), \\
     \bar {\bar x}_1 &=  (x_1^{(n-r_{k_{\mu+1}}+1)}, \ldots, x_1^{(n)}), \\
 \alpha &= \lambda - \sum_{i= 2 }^{ d-1 } \frac{n}{p'_{\tau_i }}
    - \frac{n-r_{k_{\mu}}}{p'_1} +
 \frac{n-r_{k_{\mu}}}{q'},\\
  W_1(x_1,\ldots,x_{m+1})
  &= \sum_{\substack{1\le l\le n \\  i_l > k_{\mu}}} |x_1^{(l)} - x_{i_l}^{(j_l)}|
  + \sum_{\substack{1\le l\le n \\  i_l = k_{\mu}}} |x_1^{(l)} - x_{i_l}^{(j_l)}|\\
  &\qquad
  + \sum_{\substack{2\le i\le m+1, 1\le j\le n\\
   i\not\in\{\tau_2,\ldots,\tau_{d-1}\}\\
  (i,j)\ne (i_l,j_l), 1\le l\le n}} |x_i^{(j)}|.
\end{align*}
Denote
\begin{align*}
  y&=(x_{k_{\mu+1}}^{(n-r_{k_{\mu+1}} + r_{k_{\mu+2}}+1)},\ldots,
   x_{k_{\mu+1}}^{(n)}, \ldots, x_{k_{\nu}}^{(n-r_{k_{\nu}}+1)},\ldots,x_{k_{\nu}}^{(n)})\in \bbR^{r_{k_{\mu+1}}}.
\end{align*}
We rewrite $W_1$ as
\[
 W_1(x_1,\ldots,x_{m+1})
   = |\bar{\bar x}_1-y|
  + \sum_{\substack{1\le l\le n \\  i_l = k_{\mu}}} |x_1^{(l)} - x_{i_l}^{(j_l)}|
  + \sum_{\substack{2\le i\le m+1, 1\le j\le n\\
   i\not\in\{\tau_2,\ldots,\tau_{d-1}\}\\
  (i,j)\ne (i_l,j_l), 1\le l\le n}} |x_i^{(j)}|.
\]

Set $h_1(\bar x_1, \bar{\bar x}_1 )
=\|h( \tilde x_1, \bar x_1, \bar{\bar x}_1 )\|_{L_{\tilde x_1}^{q'}}^{p'_1}$.
It follows from (\ref{eq:s:e45}) and (\ref{eq:s:e46}) that
\begin{align}
& \left\|\frac{ h( x_1 ) }
    { W(x_1,\ldots,x_{m+1})^{\lambda}}\right\|_{
    L_{(x_1,x_{\tau_{2}},\ldots, x_{\tau_{\iota_0}})}^{
    (p'_1,p'_{\tau_{2}},\ldots, p'_{\tau_{\iota_0}})}
    } \nonumber\\
&
 \lesssim\left\|
\int_{\bbR^{r_{k_{\mu}}}}
\frac{h_1(\bar x_1, \bar{\bar x}_1 )
d\bar x_1d\bar {\bar x}_1 }
    { W_1(x_1,\ldots,x_{m+1})^{\alpha  p'_1}} \right\|_{L_{(x_{\tau_d},\ldots,x_{\tau_{\iota_0}})}^{(s'_{\tau_d},\ldots,
      s'_{\tau_{\iota_0}})}}^{1/p'_1}. \label{eq:s:e47}
\end{align}
Note that the integration with respect to $\bar{\bar x}$ in the above inequality
can be considered as the convolution of $h_1(\bar x_1, \cdot  )$ and
$|\cdot| + \sum_{\substack{1\le l\le n \\     i_l =k_{\mu}}} |x_1^{(l)} - x_{i_l}^{(j_l)}|
  + \sum_{\substack{2\le i\le m+1, 1\le j\le n\\
   i\not\in\{\tau_2,\ldots,\tau_{d-1}\}\\
  (i,j)\ne (i_l,j_l), 1\le l\le n}} |x_i^{(j)}|$.
Hence
\[
 \int_{\bbR^{r_{k_{\mu+1}}}}
\frac{
  h_1(\bar x_1, \bar{\bar x}_1)
 d\bar {\bar x}_1 }
    { W_1(x_1,\ldots,x_{m+1})^{\alpha  p'_1}}
    \lesssim \frac{M_2 h_1(\bar x_1, y)  }{ W_2(x_1,\ldots,x_{m+1})^{\alpha  p'_1 - r_{k_{\mu+1}}}},
\]
where $M_2$ means the maximal function with respect to the second variable,
i.e.,
\[
   M_2 h_1(\bar x_1, y)
    = (M_2 h_1(\bar x_1, \cdot))(y),
\]
and
\begin{align*}
 W_2(x_1,\ldots,x_{m+1})
  &=\sum_{\substack{1\le l\le n \\    i_l =k_{\mu} }}  |x_1^{(l)} - x_{i_l}^{(j_l)}|
  + \sum_{\substack{2\le i\le m+1, 1\le j\le n\\
   i\not\in\{\tau_2,\ldots,\tau_{d-1}\}\\
  (i,j)\ne (i_l,j_l), 1\le l\le n}} |x_i^{(j)}|.
\end{align*}
By (\ref{eq:s:e47}), we have
\begin{align}
& \left\|\frac{ h( x_1 ) }
    { W(x_1,\ldots,x_{m+1})^{\lambda}}\right\|_{
    L_{(x_1,x_{\tau_2},\ldots, x_{\tau_{\iota_0}})}^{
    (p'_1,p'_{\tau_2},\ldots, p'_{\tau_{\iota_0}})}
    } \nonumber \\
& \lesssim\left\|
\int_{\bbR^{r_{k_{\mu}} - r_{k_{\mu+1}}}}
\frac{M_2 h_1(\bar x_1, y)
d\bar x_1  }
    { W_2(x_1,\ldots,x_{m+1})^{\alpha  p'_1-r_{k_{\mu+1}}}} \right\|_{L_{(x_{\tau_d},\ldots,x_{\tau_{\iota_0}})}^{(s'_{\tau_d},\ldots,
      s'_{\tau_{\iota_0}})}}^{1/p'_1} \nonumber \\
& =\left\|
 \frac{(M_2 h_1(\bar x_1, y))^{1/p'_1}
  }
    { W_2(x_1,\ldots,x_{m+1})^{\alpha -r_{k_{\mu+1}}/ p'_1}} \right\|_{L_{(\bar x_1, x_{\tau_d},\ldots,x_{\tau_{\iota_0}})}^{(p'_1, p'_{\tau_d},\ldots,
      p'_{\tau_{\iota_0}})}}.  \label{eq:s:e48}
\end{align}
It follows from (\ref{eq:s:e28}) that
\begin{align}
& \left\|\frac{ h( x_1 ) }
    { W(x_1,\ldots,x_{m+1})^{\lambda}}\right\|_{L^{\vec p'}}
\!\! \lesssim
   \! \left\|
 \frac{(M_2 h_1(\bar x_1, y))^{1/p'_1}
  }
    { W_2(x_1,\ldots,x_{m+1})^{\alpha -r_{k_{\mu+1}}/ p'_1}} \right\|_{L_{(\bar x_1, x_{\tau_d},\ldots,x_{\tau_m}, x_{m+1})}^{(p'_1, p'_{\tau_d}, \ldots, p'_{\tau_m}, p'_{m+1})}}\!. \label{eq:s:e49}
\end{align}

Take two numbers $u_1, u_2$ such that
$p'_1 < u_1<q'<u_2\le p'_{k_{\mu}}$.
For $u_1\le u \le u_2$ and $g(\bar x_1, y)\in L_y^{q'}(L_{\bar x_1}^u)$,
define the operator $S$ by
\[
  Sg(\bar x_1, x_{\tau_d},\ldots,x_{\tau_m},
    x_{m+1} )
  = \frac{g(\bar x_1, y)}{ W_2(x_1,\ldots,x_{m+1})^{\alpha -r_{k_{\mu+1}}/ p'_1}}.
\]
Recall that $k_{\nu}=m+1$ and $\tau_d = k_{\mu}$. $W_2(x,y)$ is of the following form,
\[
W_2(x_1,\ldots,x_{m+1})
  = |\bar x_1 - \bar x_{\tau_d}|
   + W_3(x_2,\ldots,x_{m+1}),
\]
where
\begin{align*}
 W_3(x_2,\ldots,x_{m+1})&= \sum_{\substack{2\le i\le m, 1\le j\le n\\
   i\not\in\{\tau_2,\ldots,\tau_{d-1}\}\\
  (i,j)\ne (i_l,j_l), 1\le l\le n}} |x_i^{(j)}|
  + \sum_{j=1}^{n-r_{k_{\nu}}} |x_{m+1}^{(j)}|,\\
\bar x_{\tau_d} &= (x_{\tau_d}^{(n-r_{\tau_d} + r_{\tau_d+1}+1)},
\ldots, x_{\tau_d}^{(n)}), \\
\tilde x_{\tau_d} &= ( x_{\tau_d}^{(1)}, \ldots, x_{\tau_d}^{(n-r_{\tau_d} + r_{\tau_d+1} )}).
\end{align*}
Denote
\begin{align*}
 \tilde x_{m+1} &= (x_{m+1}^{(r_{k_{\mu}}-r_{k_{\mu+1}}+1)},\ldots,  x_{m+1}^{(n)}),\\
 \bar x_{m+1} &= (x_{m+1}^{(1)},\ldots, x_{m+1}^{(r_{k_{\mu}}-r_{k_{\mu+1}})}).
\end{align*}
Since $n-r_{k_{\nu}} \ge r_{k_{\mu}}-r_{k_{\mu+1}}$,
all   entries of $\bar x_{m+1}$ are contained in $\{ x_{m+1}^{(1)}$, $\ldots$, $x_{m+1}^{(n-r_{k_{\nu}})}\}$.
We see from Young's inequality that
\begin{align*}
\|Sg\|_{L_{(\bar x_1, \bar x_{\tau_d})}^{(p'_1, p'_{\tau_d})}}
   &  =
   \left\|
\int_{\bbR^{r_{k_{\mu}} - r_{k_{\mu+1}}}}
\frac{ g(\bar x_1, y)^{p'_1}
d\bar x_1  }
    { W_2(x_1,\ldots,x_{m+1})^{\alpha  p'_1-r_{k_{\mu+1}}}} \right\|_{L_{\bar x_{\tau_d}}^{s'_{\tau_d}}}^{1/p'_1} \\
&\lesssim
   \frac{\|g(\bar x_1, y)\|_{L_{\bar x_1}^{u}}}
   {W_3(x_2,\ldots,x_{m+1})^{\alpha - r_{k_{\mu}}/p'_1
   -(r_{k_{\mu}} - r_{k_{\mu+1}})/p'_{\tau_d}
   +(r_{k_{\mu}} - r_{k_{\mu+1}})/u
   }}.
\end{align*}
Recall that $p_{k_{\mu+1}} = \ldots = p_{k_{\nu}}=q$.
Computing the norm with respect to
$\tilde x_{\tau_d}$, $x_{\tau_{d+1}}$, $\ldots$, $x_{\tau_m}$,
  $ \tilde  x_{m+1}$ directly,
we get
\begin{align*}
\|Sg\|_{L_{(\bar x_1, \bar x_{\tau_d},\tilde x_{\tau_d},x_{\tau_{d+1}},\ldots, x_{\tau_m},
   \tilde  x_{m+1})}^{(p'_1, p'_{\tau_d},p'_{\tau_d},p'_{\tau_{d+1}},\ldots, p'_{\tau_m},
   p'_{m+1})}}
\lesssim
 \frac{\|g\|_{L^{q'}(L^u)}}
 {|\bar x_{m+1}|^{\beta}},
\end{align*}
where
\begin{align*}
\beta
&=\alpha - \frac{r_{k_{\mu}}}{p'_1}
   -\frac{ r_{k_{\mu}} - r_{k_{\mu+1}}}{p'_{\tau_d}}
   +\frac{ r_{k_{\mu}} - r_{k_{\mu+1}}}{u}
   -\frac{ n-(r_{k_{\mu}} - r_{k_{\mu+1}})}{p'_{\tau_d}}\\
&\qquad
   -\sum_{i=d+1}^m \frac{n-(r_{\tau_i} - r_{\tau_i+1})}{p'_{\tau_i}}
   -\sum_{i=2}^{\iota_0}  \frac{n }{p'_{\tau_i}}
   -\frac{n-r_{k_{\nu}} - (r_{k_{\mu}} - r_{k_{\mu+1}})}{p'_{m+1}}
   \\
&= \frac{r_{k_{\nu}}+r_{k_{\mu}} - r_{k_{\mu+1}}}{p'_{m+1}}
    + \frac{ r_{k_{\mu}} - r_{k_{\mu+1}}}{u}
    -\frac{r_{k_{\mu}}}{q'} + \sum_{i=d+1}^m \frac{r_{\tau_i} - r_{\tau_i+1}}{p'_{\tau_i}}\\
&=
      \frac{ r_{k_{\mu}} - r_{k_{\mu+1}}}{u},
\end{align*}
and we use the fact that
$p_{m+1} = p_{k_{\nu}}=q$,
$\tau_d = k_{\mu}$,
$\{k_{\mu+1}, \ldots, k_{\nu-1}\} \!\subset \!\{\tau_{d+1}, \ldots, \tau_{\iota_0}\}$,
and $r_{\tau_i} = r_{\tau_i+1}$ provided $\tau_i\not\in \{k_{\mu+1}, \ldots, k_{\nu-1}\}$.
Moreover, if $\tau_i = k_l$ for some $\mu+1\le l\le \nu-1$, then
$p_{\tau_i} = q$
and $r_{\tau_i} - r_{\tau_i+1}
 = r_{k_l} - r_{k_{l+1}}$.

It follows that $S$ is bounded from
$L^{q'}(L^u)$ to $L_{\bar x_{m+1}}^{u,\infty}(L_{(\bar x_1,  x_{\tau_d}, \ldots, x_{\tau_m},
   \tilde  x_{m+1})}^{(p'_1,  p'_{\tau_d},\ldots, p'_{\tau_m},
   p'_{m+1})})$
for $u\in [u_1,u_2]$.
Since  $u_1 < q'=p'_{m+1}<u_2$,
  we see from the interpolation theorem (Lemma~\ref{Lm:interpolation})
that
$S$ is bounded from
$L^{q'}(L^{q'}) $ to $ L_{(\bar x_1,  x_{\tau_d}, \ldots, x_{\tau_m},
     x_{m+1})}^{(p'_1,  p'_{\tau_d},\ldots, p'_{\tau_m},
   p'_{m+1})}$.

Take $g(\bar x_1,  y)
=(M_2 h_1(\bar x_1, y))^{1/p'_1}$.
We see from (\ref{eq:s:e49}) that
(\ref{eq:s:e43}) is true.

(C2)\,\, $ \rank(A_{m+1,m+1})=0$.

In this case, $k_{\nu}\le m$ and $q\ge p_{m+1}$.
The proof is almost the same as that for Case (C1).
The main difference is that
$\{k_1,\ldots, k_{\nu}\}\subset \{\tau_2,\ldots, \tau_m\}$.
In this case,
$W_3$ is of the form
\begin{align*}
 W_3(x_2,\ldots,x_{m+1})&= \sum_{\substack{2\le i\le m, 1\le j\le n\\
   i\not\in\{\tau_2,\ldots,\tau_{d-1}\}\\
  (i,j)\ne (i_l,j_l), 1\le l\le n}} |x_i^{(j)}|
  + |x_{m+1}|.
\end{align*}
Consequently, we need not to split $x_{m+1}$ into two parts. We have
\begin{align*}
\|Sg\|_{L_{(\bar x_1, \bar x_{\tau_d},\tilde x_{\tau_d},x_{\tau_{d+1}},\ldots, x_{\tau_m} )}^{(p'_1, p'_{\tau_d},p'_{\tau_d},p'_{\tau_{d+1}},\ldots, p'_{\tau_m}  )}}
\lesssim
 \frac{\|g\|_{L^{q'}(L^u)}}
 {|  x_{m+1}|^{n/v(u)}},
\end{align*}
where $v(u)$ satisfies that
\begin{align*}
\frac{n}{v(u)}
&= \frac{n}{p'_{m+1}}
    + \frac{ r_{k_{\mu}} - r_{k_{\mu+1}}}{u}
    -\frac{r_{k_{\mu}}}{q'} + \sum_{i=d+1}^m \frac{r_{\tau_i} - r_{\tau_i+1}}{p'_{\tau_i}}\\
&= \frac{n }{p'_{m+1}}
    + \frac{ r_{k_{\mu}} - r_{k_{\mu+1}}}{u} -\frac{ r_{k_{\mu}} - r_{k_{\mu+1}}}{q'}.
\end{align*}
Choose two positive numbers $u_1$, $u_2$ such that
$p'_1< u_1<q'<u_2\le  p'_{k_{\mu}}$ and
$v(u_i)>1$ for $i=1,2$.
Again, we get the desired conclusion by the interpolation theorem.

Next we consider the case   $k_0<m+1$.
Since $p_k=1$ for $k>k_0$ and  $p_{k_{\nu}}=q>1$,
we have $k_l\le k_0$ for all $1\le l\le \nu$.
Consequently, the function
$W$ is of the following form,
\[
  W(x_1,\ldots,x_{m+1})
  =  \sum_{l=1}^n |x_1^{(l)} - x_{i_l}^{(j_l)}|
  + \sum_{\substack{2\le i\le k_0, 1\le j\le n\\
  (i,j)\ne (i_l,j_l), 1\le l\le n}} |x_i^{(j)}|,
\]
where
$ \{(i_l,j_l):\, 1\le l\le n\}
  =\bigcup_{1\le s\le \nu}\{(k_s, t):\, n+1-(r_{k_s}-r_{k_{s+1}})\le t\le n\}$,
thanks to Lemma~\ref{Lm:end points ii} together with Lemma~\ref{Lm:ADx-y}.
Therefore, the arguments in Cases (C1) and (C2) work if we replace $m+1$ by $k_0$.
This completes the proof.

\section{The Case   \texorpdfstring{$m=1$}{m=1}}

In this section, we show that the hypothesis that
the $(1,m+1)$ minor of $A$ is invertible is also necessary when $m=1$.
In other words, for the case   $m=1$, we get  necessary and sufficient conditions on $A$, $(p_1,p_2)$ and $q$ such that
  $T_{\lambda}$ is bounded from $L^{\vec p}$ to $L^q$.

\begin{Theorem}\label{thm:m=1}
Suppose that $1\le p_1, p_2\le \infty$,
 $0<\lambda<n$
 and $0<q\le \infty$  such that
\[
  \frac{1}{p_1} + \frac{1}{p_2} = \frac{1}{q} + \frac{n-\lambda}{n}.
\]
Let $T_{\lambda}f$ be defined by (\ref{eq:Tf}) with $m=1$,
where $A = \begin{pmatrix}
A_{11} & A_{12} \\
A_{21} & A_{22}
\end{pmatrix}$ 
and $A_{ij}$ are $n\times n$ matrices, $1\le i,j\le 2$.

Then $T_{\lambda}$ is bounded from $L^{\vec p}(\bbR^{2n})$ to $L^q(\bbR^n)$
if and only if $1<p_2<q<p_1\le \infty$ and $A$, $A_{21}$, $A_{22}$ are invertible matrices.
\end{Theorem}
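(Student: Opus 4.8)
The plan is to reduce Theorem~\ref{thm:m=1} to Theorem~\ref{thm:main} plus a single extra fact: when $m=1$, invertibility of the $(1,2)$-minor $A_{21}$ — a standing hypothesis of Theorem~\ref{thm:main} — is \emph{forced} once $T_\lambda$ is bounded. Granting this, both implications are short. For the "if" part, assume $A,A_{21},A_{22}$ invertible and $1<p_2<q<p_1\le\infty$. Since for $m=1$ the $(1,2)$-minor of $A$ is exactly $A_{21}$, the hypotheses of Theorem~\ref{thm:main} hold, and as $r_2=\rank A_{22}=n>0=r_3$ we have $\nu=1$ and $k_1=k_0=2$ in the notation there, so conditions (1)--(2) of Theorem~\ref{thm:main} reduce to "$A_{22}$ invertible" together with "$p_2<q$ and $p_2\le q<p_1$", all of which hold; hence $T_\lambda$ is bounded. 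For the "only if" part, assume $T_\lambda$ bounded; by Lemma~\ref{Lm:q}, $A$ is invertible and $q\ge1$, and \emph{once} we also know $A_{21}$ is invertible, Theorem~\ref{thm:main} applies and reading off (1)--(2) as above gives $A_{22}$ invertible and $1<p_2<q<p_1\le\infty$. Thus the only thing left to prove is
\[
  T_\lambda \text{ bounded}\ \Longrightarrow\ A_{21}\ \text{invertible}.
\]

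I would prove this by contradiction, following the first half of step (P3) in the proof of Theorem~\ref{thm:main} — the part using only duality and the substitution $x\mapsto A^{-1}x$, before invertibility of the minor is invoked. This shows that boundedness of $T_\lambda$ is equivalent to
\[
  \left\| \frac{h(B_{21}u+B_{22}v)}{|B_{11}u+B_{12}v|^{\lambda}} \right\|_{L^{\vec p'}_{(u,v)}} \lesssim \|h\|_{L^{q'}},\qquad \forall\,h\in L^{q'},
\]
where $A^{-1}=\begin{pmatrix}B_{11}&B_{12}\\B_{21}&B_{22}\end{pmatrix}$ has $n\times n$ blocks (for $p_i=\infty$ this remains the correct dual inequality, because the displayed function is nonnegative), and the case $p_1=1$ is handled throughout with suprema replacing the $u$-integrals and gives the same dichotomy below. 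By Lemma~\ref{Lm:La}(i), $A_{21}$ is invertible iff $B_{21}$ is, so it suffices to contradict the estimate when $B_{21}$ is singular. Put $r=\rank B_{21}<n$ and $K=\ker B_{21}$, so $\dim K=n-r\ge1$; invertibility of $A^{-1}$ gives that $B_{21}|_{K^\perp}$ is injective and $B_{11}|_{K}$ is injective (otherwise some $(z,0)$ with $0\ne z\in K$ lies in $\ker A^{-1}$). Writing $u=\bar u+\tilde u$ with $\bar u\in K^\perp$, $\tilde u\in K$, the numerator depends on $u$ only through $\bar u$, and carrying out the inner integral over $\tilde u\in K$ — a Riesz-type integral $\int_K|B_{11}\tilde u+c|^{-\lambda p_1'}\,d\tilde u$ with $c=B_{11}\bar u+B_{12}v$ — leaves two cases.

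If $\lambda p_1'\le n-r$, that inner integral is $+\infty$ for every $c$, and testing the estimate on $h=\chi_{B(0,1)}$ makes the left-hand side infinite for all $v$ in a neighbourhood of the origin, a positive-measure set: contradiction. If $\lambda p_1'>n-r$, the inner integral equals, up to constants, $\operatorname{dist}\bigl(B_{11}\bar u+B_{12}v,\,B_{11}(K)\bigr)^{-(\lambda p_1'-(n-r))}$, so the estimate reduces to a weighted inequality in $(\bar u,v)\in K^\perp\times\bbR^n$ whose weight has — by the homogeneity relation $\lambda=n/p_1'+n/p_2'-n/q'$ — exactly the critical order. When $p_1=p_2$ this fails immediately: a linear change of variables $(\bar u,v)\mapsto(w,\zeta)$ turns the left side into a constant multiple of $\|h\|_{L^{p_1'}}\bigl(\int_{\bbR^r}|\zeta|^{-(\lambda p_1'-(n-r))}\,d\zeta\bigr)^{1/p_1'}=\infty$; in general it is contradicted by the same explicit test functions as in steps (P4)--(P8), namely a power weight $|x|^{-\alpha}\chi_{\{|x|<1\}}$ with $\alpha$ tuned to the critical exponent, or, in the borderline case, the logarithmic variant $|x|^{-n/q'}(\log 1/|x|)^{-(1+\varepsilon)/q'}\chi_{\{|x|<1\}}$. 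The main obstacle is exactly this last step: with $B_{21}$ singular one can no longer perform the clean reduction of (P3), and in particular the projected map $\bar u\mapsto\operatorname{proj}_{B_{11}(K)^\perp}(B_{11}\bar u)$ on $K^\perp$ need not be invertible; one therefore has to keep track of precisely which linear combination of $\bar u$ and $v$ the weight sees and split into a few sub-cases according to whether the resulting exponent forces divergence at the origin or only a logarithmic loss — routine but intricate bookkeeping.
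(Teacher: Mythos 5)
Your overall architecture is the same as the paper's: the ``if'' direction and most of the ``only if'' direction are read off from Theorem~\ref{thm:main} (your identification of $r_2=\rank A_{22}$, $\nu=1$, $k_0=k_1=2$ is correct), and everything reduces to showing that boundedness forces $A_{21}$ to be invertible. Your setup for that reduction is also the paper's: pass to the dual inequality of (P3), use Lemma~\ref{Lm:La} to replace $A_{21}$ by the block $\tilde A_{21}$ of $A^{-1}$, decompose $u$ along $K=\ker \tilde A_{21}$, note $\tilde A_{11}|_K$ is injective, integrate out the kernel directions, and split according to whether $\lambda p_1'\le n-r$ (immediate divergence --- correct) or $\lambda p_1'>n-r$.

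The gap is in the second branch, which is the heart of the matter, and your closing paragraph essentially concedes it. You assert that the resulting critical-exponent inequality ``is contradicted by the same explicit test functions as in steps (P4)--(P8), namely a power weight $|x|^{-\alpha}\chi_{\{|x|<1\}}$\dots or the logarithmic variant.'' That is not how the contradiction is obtained, and radial test functions of this type do not suffice in general: after integrating out $K$, the function $h$ is sampled only on an $r$-dimensional affine slice $\{B_{21}\bar u+B_{22}v\}$, and whether varying $v$ sweeps that slice through all of $\bbR^n$ depends on the invertibility of the Schur-type complement $D=\tilde A_{22}-\tilde A_{21}\tilde A_{11}^{-1}\tilde A_{12}$ --- precisely the dichotomy ($\tilde A_{11}$ invertible versus singular) that organizes the paper's proof. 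In the first case one must first prove $q\ge p_2$ via a translation-commutation argument for the reduced operator $S$ and then use an $h$ that is singular only in the $n-r$ degenerate directions (anisotropic, not radial); in the second case one needs rank identities to justify the change of variables $x_2\mapsto\binom{D_2}{D_3-BD_1}^{-1}\binom{z}{z'}$, tensor-product test functions $h=h_1\otimes h_2$ with $h_2\in L^{q'}\setminus L^{p_2'}$, and a further sub-case analysis on $\rank B$ with a second tensor splitting $h_1=h_{11}\otimes h_{12}$. None of this is ``routine bookkeeping'' recoverable from (P4)--(P8); it is the substantive content of the proof, and as written your argument does not produce the contradiction in the main case. (Separately, you also need to rule out $p_1=1$ before writing $\int|\cdot|^{p_1'}$; the paper does this at the start of each case, and your parenthetical about ``suprema replacing the $u$-integrals'' does not by itself yield the same dichotomy.)
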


\begin{proof}
Denote $A^{-1} = \begin{pmatrix}
\tilde A_{11} & \tilde A_{12}\\
\tilde A_{21} & \tilde A_{22}
\end{pmatrix}$.
By (\ref{eq:h}), $T_{\lambda}$ is bounded if and only if for
any $h\in L^{q'}$,
\begin{equation}\label{eq:ss:e12a}
 \left\|  \int_{\bbR^n} \frac{ |h(\tilde A_{21}x_1 + \tilde A_{22} x_2)|^{p'_1} }
    { |\tilde A_{11}x_1 + \tilde A_{12} x_2|^{\lambda p'_1 }} dx_1\right\|_{L_{x_2}^{p'_2/p'_1}}
     \lesssim \|h\|_{L^{q'}}^{p'_1}.
\end{equation}

By Theorem~\ref{thm:main}, we only need to prove the necessity,
for which  it suffices to show that $A_{21}$ is invertible.
Assume on the contrary that $  A_{21}$ is singular.
By Lemma~\ref{Lm:La}, $ \tilde  A_{21}$ is singular.
There are two cases.

 (i)\,\, $\tilde A_{11}$ is invertible.

In this case, we conclude that  $p_1>1$.
Assume on the contrary that   $p_1=1$. Then we have
\[
  \frac{1}{p_2} = \frac{1}{q} - \frac{\lambda}{n}
  <\frac{1}{q}.
\]
Consequently, $q<p_2$.
If $A_{22}\ne 0$, we see from Theorem~\ref{thm:main} that $q\ge p_2$, which is a contradiction.
If $A_{22}=0$, then
$A_{12}$ and $A_{21}$ are invertible.
Consequently, $\tilde A_{21}$ is non-singular,
which is a contradiction.

Since $p_1>1$, we have $p'_1<\infty$ and
\begin{align*}
& \int_{\bbR^n} \frac{|h(\tilde A_{21} x_1 + \tilde A_{22} x_2)|^{p'_1}}
   {|\tilde A_{11} x_1 + \tilde A_{12} x_2|^{\lambda p'_1}} dx_1
 \approx \int_{\bbR^n} \frac{|h(\tilde A_{21} x_1 + \tilde A_{22} x_2)|^{p'_1}}
   {|x_1 + \tilde A_{11}^{-1} \tilde A_{12} x_2|^{\lambda p'_1}} dx_1   \\
&  \approx
\int_{\bbR^n} \frac{|h(\tilde A_{21} x_1 + (\tilde A_{22} - \tilde A_{21} \tilde A_{11}^{-1} \tilde A_{12}) x_2)|^{p'_1}}
   {|x_1  |^{\lambda p'_1} }dx_1 .
\end{align*}
Hence (\ref{eq:ss:e12a}) is equivalent to
\[
\left\|
  \int_{\bbR^n} \frac{|h(\tilde A_{21} x_1 + (\tilde A_{22} - \tilde A_{21} \tilde A_{11}^{-1} \tilde A_{12}) x_2)|^{p'_1}}
   {|x_1  |^{\lambda p'_1} }dx_1
\right\|_{L_{x_2}^{p'_2/p'_1}} \lesssim \| |h|^{p'_1} \|_{q'/p'_1}.
\]

Since $\tilde A_{21}$ is singular, there exists some integer $r<n$, positive numbers $\lambda_1$,
$\ldots$, $\lambda_r$ and orthonormal matrices $U$ and $V$ such that
$\tilde A_{21} = U \begin{pmatrix} \Lambda &  \\   & 0
\end{pmatrix} V$.
Replacing $h$ and $x_1$ by $h(U^{-1}\cdot)$ and $V^{-1}x_1$, respectively, we get
\begin{equation}\label{eq:sc:e1}
\left\|
    I_h(x_2)
\right\|_{L_{x_2}^{p'_2/p'_1}} \lesssim \| |h|^{p'_1} \|_{q'/p'_1},
\end{equation}
where
\[
  I_h(x_2) :=  \int_{\bbR^n} \frac{\Big|h\Big(\Big(\begin{array}{ll} \Lambda &  \\   & 0
\end{array} \Big)  x_1 + U^{-1} (\tilde A_{22} - \tilde A_{21} \tilde A_{11}^{-1} \tilde A_{12}) x_2\Big)\Big|^{p'_1}}
   {|x_1  |^{\lambda p'_1}} dx_1.
\]
It follows that $r>0$.

If $\lambda p'_1 \le n-r$, then
\begin{align*}
&\hskip -5mm  \int_{\bbR^{n-r}} \frac{dx_{1}^{(r+1)}\ldots dx_{1}^{(n)}}{|x_1|^{\lambda p'_1} } \\
&=
\int_{\bbR^{n-r}}  \frac{dx_{1}^{(r+1)}\ldots dx_{1}^{(n)}}{(|x_{1}^{(1)}| + \ldots + |x_{1}^{(r)}|+|x_{1}^{(r+1)}|+\ldots + |x_{1}^{(n)}|)^{\lambda p'_1}}
\\
&= \infty.
\end{align*}
Hence $I_h(x_2) = \infty$ for all $x_2\in \bbR^n$ and $h\ne 0$, which contradicts with (\ref{eq:sc:e1}).

If $\lambda p'_1 > n-r$, then
\begin{align*}
 \int_{\bbR^{n-r}} \frac{dx_{1}^{(r+1)}\ldots dx_{1}^{(n)}}{|x_1|^{\lambda p'_1} }
  \approx \frac{1}{(|x_{1}^{(1)}| + \ldots + |x_{1}^{(r)}|)^{\lambda p'_1 -n+r}}.
\end{align*}
Since
\[
  \begin{pmatrix}
  I & 0 \\
  -I & I
  \end{pmatrix}
  \begin{pmatrix}
  \tilde A_{21}\tilde A_{11}^{-1} &  0 \\
  0  & I
  \end{pmatrix}
   \begin{pmatrix}
   \tilde A_{11} & \tilde A_{12} \\
   \tilde A_{21} & \tilde A_{22}
   \end{pmatrix}
 = \begin{pmatrix}
   \tilde A_{21}  & \tilde A_{21} \tilde A_{11}^{-1} \tilde A_{12} \\
   0   &\tilde A_{22} -\tilde A_{21} \tilde A_{11}^{-1} \tilde A_{12}
 \end{pmatrix},
\]
we have
\[
  \rank (\tilde A_{21}) + n = \rank(\tilde A_{21}) + \rank(\tilde A_{22} -\tilde A_{21} \tilde A_{11}^{-1} \tilde A_{12} ).
\]
Hence $D:=U^{-1}(\tilde A_{22} -\tilde A_{21} \tilde A_{11}^{-1} \tilde A_{12}) $ is invertible.
Therefore, (\ref{eq:sc:e1}) is equivalent to
\begin{align*}
& \left\|
  \int_{\bbR^r}
  \frac{ |h ( (\lambda_1 x_{1}^{(1)} ,\ldots, \lambda_r x_{1}^{(r)}, 0, \ldots, 0 )^*
   + Dx_2)  |^{p'_1}}
     {(|x_{1}^{(1)}| + \ldots + |x_{1}^{(r)}|)^{\lambda p'_1 -n+r}}  dx_{1}^{(1)} \ldots dx_{1}^{(r)}\right\|_{L_{x_2}^{p'_2/p'_1}} \\
&  \hskip 10mm   \lesssim \| |h|^{p'_1} \|_{q'/p'_1}.
\end{align*}
Replacing $h$ and $x_2$ by $h( \diag [\Lambda^{-1},I] \cdot)$ and $D^{-1} \diag [\Lambda, I] x_2$, respectively, we get
\begin{align}
& \left\|
  \int_{\bbR^r}
  \frac{ |h ( (  x_{1}^{(1)} ,\ldots,   x_{1}^{(r)}, 0, \ldots, 0 )^* +  x_2)  |^{p'_1}}
     {(|x_{1}^{(1)}| + \ldots + |x_{1}^{(r)}|)^{\lambda p'_1 -n+r}}  dx_{1}^{(1)} \ldots dx_{1}^{(r)}\right\|_{L_{x_2}^{p'_2/p'_1}}
  \lesssim \| |h|^{p'_1} \|_{q'/p'_1}. \label{eq:sc:e3}
\end{align}
Define the operator $S$ by
\[
  Sh(x_2) = \int_{\bbR^r}
  \frac{ |h ( (  x_{1}^{(1)} ,\ldots,   x_{1}^{(r)}, 0, \ldots, 0 )^* +  x_2)  |^{p'_1}}
     {(|x_{1}^{(1)}| + \ldots + |x_{1}^{(r)}|)^{\lambda p'_1 -n+r}}  dx_{1}^{(1)} \ldots dx_{1}^{(r)}.
\]
Then for any $z\in\bbR^n$,
$S(h(\cdot-z))(x_2)=(Sh)(x_2-z)$, i.e., $S$ commutes with translations.
Therefore,
(\ref{eq:sc:e3}) is true only if $p'_2/p'_1 \ge q'/p'_1$,
which is equivalent to $q\ge p_2$.

For the case  $q> p_2$, set
\[
  h(x_1)  = \frac{1}{ (|x_{1}^{(r+1)}|+\ldots + |x_{1}^{(n)}|)^{\alpha} }
    \chi^{}_{\{x_1:\,  |x_{1}^{(i)}| <1, 1\le i\le n\}} (x_1),
\]
where $\alpha$ is a number such that $\alpha q' < n-r < \alpha p'_2$.
Then we have
$h\in L^{q'}$ and
\begin{align*}
&  \left\|
  \int_{\bbR^r}
  \frac{ |h ( (  x_{1}^{(1)} ,\ldots,   x_{1}^{(r)}, 0, \ldots, 0 )^* +  x_2)  |^{p'_1}}
     {(|x_{1}^{(1)}| + \ldots + |x_{1}^{(r)}|)^{\lambda p'_1 -n+r}}  dx_{1}^{(1)} \ldots dx_{1}^{(r)}\right\|_{L_{x_2}^{p'_2/p'_1}}
\\
&= \left\|
\frac{1}{ (|x_{2}^{(r+1)}|+\ldots + |x_{2}^{(n)}|)^{\alpha p'_1} }
    \chi^{}_{\{x_2:\,  |x_{2}^{(i)}| <1, r+1\le i\le n\}} (x_2) \right. \\
&\quad \times     \left.
  \int_{ |x_{1}^{(i)} + x_{2}^{(i)}| <1, 1\le i\le r}
  \frac{dx_{1}^{(1)} \ldots dx_{1}^{(r)}  }
     {(|x_{1}^{(1)}| + \ldots + |x_{1}^{(r)}|)^{\lambda p'_1 -n+r}}  \right\|_{L_{x_2}^{p'_2/p'_1}}
     \\
&=\infty.
\end{align*}
And for the case
 $q=p_2$, we have $\lambda /n = 1/p'_1$ and therefore $\lambda p'_1 = n$.
Since $\lambda p'_1 > n-r$, we have $r\ge 1$.
Let $\alpha$ be such that
$\alpha q'<n-r$. Then  the above equalities are also valid,
which contradicts with (\ref{eq:sc:e3}).

(ii)\,\,  $\tilde A_{11}$  is singular.

As in Case (i) we have $p_1>1$ and  $A_{22}\ne 0$. Hence $q\ge p_2\ge 1$. Therefore,
(\ref{eq:ss:e12a}) is true.

Suppose that $\tilde A_{21} = U \diag [\lambda_1, \ldots, \lambda_r, 0,\ldots, 0] V$,
where $r<n$ and $U$ and $V$ are orthonormal matrices.
If $r=0$, then $\tilde A_{21}=0$ and therefore, $\tilde A_{11}$ is invertible,
which contradicts with the assumption. Hence $r>0$.
By a change of variable of the form $x_1 \rightarrow  V^{-1}\tilde \Lambda x_1$, where
$\tilde \Lambda = \diag [1/\lambda_1, \ldots, 1/\lambda_r$, $1$, $\ldots$, $1]$,
and replacing $h$ by $h(U^{-1}\cdot)$, we see from (\ref{eq:ss:e12a}) that
\begin{equation}\label{eq:sc:e6}
  \left\|
  \int_{\bbR^n}
   \frac{ |h( ( x_1^{(1)}, \ldots,  x_1^{(r)}, 0,\ldots, 0)^* + U^{-1} \tilde A_{22} x_2)|^{p'_1}}
   {|\tilde A_{11} V^{-1}\tilde \Lambda  x_1 + \tilde A_{12}x_2|^{\lambda p'_1}} dx_1
  \right\|_{L_{x_2}^{p'_2/p'_1}}    \lesssim   \| |h|^{p'_1}\|_{q'/p'_1}.
\end{equation}

Denote $\tilde A_{11} V^{-1}\tilde \Lambda  = (v_1, \ldots, v_n)$, where $v_1$, $\ldots$, $v_n$ are $n$-dimensional column vectors.
If the last $n-r$ columns $v_{r+1}$, $\ldots$, $v_n$  are linearly dependent, then we can find some $(n-r)\times (n-r)$
invertible matrix $Q'$ such that the last column of $   ( v_{r+1}, \ldots, v_n) Q '$ consists of zeros.
By a change of variable  of the form $(x_1^{(r+1)}, \ldots, x_1^{(n)})^* \rightarrow  Q' (x_1^{(r+1)}, \ldots, x_1^{(n)})^* $, we get
\begin{align*}
& \int_{\bbR^n}
   \frac{ |h( ( x_1^{(1)}, \ldots,   x_1^{(r)}, 0,\ldots, 0)^* + U^{-1} \tilde A_{22} x_2)|^{p'_1}}
   {|\tilde A_{11} V^{-1} \tilde \Lambda x_1 + \tilde A_{12}x_2|^{\lambda p'_1}} dx_1 \\
&= \int_{\bbR^{n-1}}    \mbox{\{A function of $(x_1^{(1)}, \ldots, x_{1}^{(n-1)})$
and $x_2$\} \,\, } dx_1^{(1)} \ldots dx_{1}^{(n-1)}
   \int_{\bbR}
  dx_1^{(n)}\\
&=\infty.
\end{align*}

Next we assume that $\rank(  v_{r+1}, \ldots, v_n) = n-r$. Then there are
$n\times n$ orthonormal matrix  $P$, $(n-r)\times (n-r)$ orthonormal matrix $Q$  and positive numbers
$\lambda_{r+1}$, $\ldots$, $\lambda_n$ such that
\[
  ( v_{r+1}, \ldots, v_n) = P \begin{pmatrix}
  \lambda_{r+1} & & &\\
    & \cdots & & \\
  &&& \lambda_n \\
 0& & &\\
    & \cdots & & \\
  &&& 0 \\
  \end{pmatrix} Q.
\]
Note that
\[
  |\tilde A_{11} V^{-1} \tilde \Lambda x_1 + \tilde A_{12}x_2| \approx |P^{-1}(\tilde A_{11} V^{-1} \tilde \Lambda x_1 + \tilde A_{12}x_2)|.
\]
Let $B$ and $B'$ be the submatrices consisting  of the last $r$ rows and the first $n-r$ rows of $P^{-1} (v_1, \ldots, v_r)$, respectively.
Denote $P^{-1}\tilde A_{12} = \binom{D_4}{D_3}$, where $D_3$ and $D_4$ are $r\times n$ and $(n-r)\times n$ matrices, respectively.
By a change of variable of the form $(x_1^{(r+1)}, \ldots, x_1^{(n)})^* \rightarrow Q^{-1}  \diag [\lambda_{r+1}^{-1},\ldots, \lambda_n^{-1}](x_1^{(r+1)}, \ldots, x_1^{(n)})^*$, we get
\begin{align*}
& \int_{\bbR^{n-r}}
   \frac{1}
   {|\tilde A_{11} V^{-1} \tilde \Lambda x_1 + \tilde A_{12}x_2|^{\lambda p'_1}} dx_1^{(r+1)}\ldots dx_1^{(n)} \\
&\approx
\int_{\bbR^{n-r}}
   \frac{1}
   {|P^{-1}(\tilde A_{11} V^{-1} \tilde \Lambda x_1 + \tilde A_{12}x_2)|^{\lambda p'_1}} dx_1^{(r+1)}\ldots dx_1^{(n)} \\
&\approx
  \int_{\bbR^{n-r}}
   \frac{dx_1^{(r+1)}\ldots dx_1^{(n)}}
   {(|(x_1^{(r+1)}, \ldots, x_1^{(n)})^* + B'  y + D_4 x_2 | + |B  y + D_3 x_2|)^{\lambda p'_1}}  \\
&\approx
   \frac{1}
   {|B y + D_3 x_2|^{\lambda p'_1-n+r}},
\end{align*}
where $y= (x_1^{(1)}, \ldots, x_1^{(r)})^*$.

Denote $U^{-1}\tilde A_{22} = \binom{D_1}{D_2}$,
where $D_1$ and $D_2$ are $r\times n$ and $(n-r)\times n$ matrices, respectively.
Then (\ref{eq:sc:e6}) turns out to be
\[
  \left\|
  \int_{\bbR^r}
   \frac{ |h( y + D_1 x_2, D_2 x_2)|^{p'_1}}
   {|B y + D_3 x_2|^{\lambda p'_1}} dy
  \right\|_{L_{x_2}^{p'_2/p'_1}}    \lesssim   \| |h|^{p'_1}\|_{q'/p'_1}.
\]
That is,
\begin{equation}\label{eq:sc:e15}
  \left\|
  \int_{\bbR^r}
   \frac{ |h( y , D_2 x_2)|^{p'_1}}
   {|B y + (D_3 - BD_1) x_2|^{\lambda p'_1}} dy
  \right\|_{L_{x_2}^{p'_2/p'_1}}    \lesssim   \| |h|^{p'_1}\|_{q'/p'_1}.
\end{equation}
Note that
\begin{align*}
\begin{pmatrix}
P^{-1} & \\
& U^{-1}
\end{pmatrix}\!
\begin{pmatrix}
\tilde A_{11} & \tilde A_{12}\\
\tilde A_{21}  & \tilde A_{22}
\end{pmatrix}\!
\begin{pmatrix}
V^{-1}\tilde \Lambda & \\
 & I_n
\end{pmatrix}\!
\begin{pmatrix}
I_r &  & \\
& Q^{-1} &  \\
& & I_n
\end{pmatrix}
\!= \begin{pmatrix}
 B' &   \Lambda & D_4\\
 B & 0 & D_3 \\
I_r & 0  & D_1\\
0 & 0 & D_2
\end{pmatrix},
\end{align*}
where $\Lambda = \diag[\lambda_{r+1},
\ldots, \lambda_n]$.
We have
\[
 \rank \begin{pmatrix}
 B &  D_3 \\
I_r &   D_1\\
0 &   D_2
\end{pmatrix}
=
\rank \begin{pmatrix}
 B &   D_3 -BD_1\\
I_r &   0\\
0 &   D_2
\end{pmatrix}
=n+r.
\]
Hence  $  \rank \binom{D_2}{D_3- BD_1}=n$.
By a change of variable of the form
\[
  x_2 \rightarrow \binom{D_2}{D_3- BD_1}^{-1} \binom{z}{z'},
\]
where $z\in\bbR^{n-r}$ and $z'\in\bbR^r$, we get
\[
    \left\|
  \int_{\bbR^r}
   \frac{ |h( y , z)|^{p'_1}}
   {|B y + z'|^{\lambda p'_1}} dy
  \right\|_{L_{(z,z')}^{p'_2/p'_1}}    \lesssim   \| |h|^{p'_1}\|_{q'/p'_1}.
\]
Set $h(y,z)=h_1(y) h_2(z)$.  We have
\[
    \left\|
  \int_{\bbR^r}
   \frac{ |h_1( y)|^{p'_1}}
   {|B y + z'|^{\lambda p'_1}} dy
  \right\|_{L_{z'}^{p'_2/p'_1}}   \|h_2\|_{p'_2}^{p'_1}  \lesssim   \| |h_1|^{p'_1}\|_{q'/p'_1} \|h_2\|_{q'}^{p'_1} .
\]

Recall that $q\ge p_2$.
If $q>p_2$, by taking some $h_2\in L^{q'}\setminus L^{p'_2}$,
 we get a contradiction.

If $q=p_2$, then $\lambda p'_1=n$. It follows that
\begin{equation}\label{eq:a:e61}
    \left\|
  \int_{\bbR^r}
   \frac{ |h_1( y)|^{p'_1}}
   {|B y + z'|^{n}} dy
  \right\|_{L_{z'}^{p'_2/p'_1}}   \lesssim   \| |h_1|^{p'_1}\|_{q'/p'_1}.
\end{equation}
Note that $n>r>0$. If $B$ is invertible or $0$, by setting $h_1=\chi_{\{|y|\le 1\}}$ we get a contradiction.

Next we assume that $B$ is singular.
Suppose that $\rank(B) = r_1$.  Then there are  orthonormal matrices
$P_1$, $Q_1$ and diagonal matrix $\Lambda_1 = \diag[b_1$, $\ldots$, $b_{r_1}]$ such that
\[
  B = P_1 \begin{pmatrix}
   \Lambda_1 & 0 \\
   0   & 0
  \end{pmatrix} Q_1.
\]
Denote $y = (\tilde y, \bar y)$
and $P_1^{-1}z' = (\tilde z, \bar z )$,
where $\tilde y, \tilde z \in \bbR^{r_1}$
and $\bar y, \bar z\in\bbR^{r-r_1}$.
Set $h_1(y) = h_{11}(\tilde y) h_{12}(\bar  y )$.
By a change of variable of the form
$y\rightarrow Q_1^{-1}y$
and replacing $h_1$ by $h_1(Q_1\cdot)$, we see from (\ref{eq:a:e61}) that
\[
    \left\|
  \int_{\bbR^r}
   \frac{ |h_{11}(\tilde y) h_{12}(\bar y)|^{p'_1}}
   {(|\Lambda_1 \tilde y + \tilde z| +|\bar z|)^n} dy
  \right\|_{L_{(\tilde z, \bar z)}^{p'_2/p'_1}}   \lesssim   \|  |h_{11}|^{p'_1}\|_{L^{q'/p'_1}}
  \|  |h_{12}|^{p'_1}\|_{L^{q'/p'_1}}.
\]
Note that the above inequality is true for all
$h_{12}\in\bbR^{r-r_1}$.
Hence $p'_1=q'=p'_2$. Therefore,
\[
  \int_{\bbR^{r_1+r}}
   \frac{ |h_{11}(\tilde y)|^{p'_1}  }
   {(|\Lambda_1 \tilde y + \tilde z| +|\bar z|)^n} d\tilde y d\tilde z d\bar z
     \lesssim   \|  |h_{11}|^{p'_1}\|_{L^1},
\]
which is impossible since
\[
  \int_{\bbR^{r}}
   \frac{ 1 }
   {(|\Lambda_1 \tilde y + \tilde z| +|\bar z|)^n}   d\tilde z d\bar z=\infty.
\]
This completes the proof.
\end{proof}

Kenig and Stein \cite{KenigStein1999}, Grafakos and  Kalton
\cite{GrafakosKalton2001}
and  Grafakos and  Lynch
\cite{GrafakosLynch2015}
studied the bi-linear fractional integral
of the following form,
\[
  \int_{\bbR^n} \frac{f_1(x-t) f_2(x+t)}{|t|^{\lambda}} dt.
\]
They showed that for $1<p_1,p_2\le \infty$, $0<q<\infty$ and $0<\lambda<n$ which satisfy
\begin{equation}\label{eq:ss:homogeneity}
   \frac{1}{p_1} + \frac{1}{p_2}  = \frac{1}{q} + \frac{n-\lambda}{n},
\end{equation}
the above bi-linear fractional integral
is bounded from $L^{p_1}\times L^{p_2}$
to $L^q$.

As a consequence of Theorem~\ref{thm:m=1}, we get that the above bi-linear operator  extends
to a linear operator defined on $L^{\vec p}$.

Let $f$ be a measurable function defined on $\bbR^{2n}$. Define
\[
  L_{\lambda} f(x) = \int_{\bbR^n} \frac{f(x-t,x+t)}{|t|^{\lambda}} dt.
\]
\begin{Corollary}
Let $1\le p_1, p_2\le \infty$, $0<q\le\infty$   and $0<\lambda< n$ be constants which meet (\ref{eq:ss:homogeneity}).
Denote $\vec p = (p_1, p_2)$.
Then  the inequality
\begin{equation}\label{eq:ss:e1}
  \|L_{ \lambda}f\|_q \lesssim \|f\|_{L^{\vec p}}
\end{equation}
holds for any $f\in L^{\vec p}(\bbR^{2n})$ if and only if $1<p_2<q<p_1\le \infty$.
\end{Corollary}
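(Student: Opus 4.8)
The plan is to obtain this corollary as an immediate specialization of Theorem~\ref{thm:m=1}, by recognizing $L_\lambda$ as the operator $T_\lambda$ attached to one particular matrix. Writing $x=(x_1^*,x_2^*)^*\in\bbR^{2n}$ and taking
\[
  A=\begin{pmatrix} -I_n & I_n \\ I_n & I_n \end{pmatrix},
\]
one checks directly that $Ax=(x_2-x_1,\,x_2+x_1)^*$, so that
\[
  T_\lambda f(x_2)=\int_{\bbR^n}\frac{f(Ax)}{|x_1|^{\lambda}}\,dx_1
  =\int_{\bbR^n}\frac{f(x_2-x_1,\,x_2+x_1)}{|x_1|^{\lambda}}\,dx_1=L_\lambda f(x_2)
\]
after the harmless renaming $x_1\mapsto t$, $x_2\mapsto x$. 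Thus $L_\lambda=T_\lambda$ in the case $m=1$ for this choice of $A$.

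Next I would verify the structural hypotheses of Theorem~\ref{thm:m=1} for this $A$. Here $A_{11}=-I_n$ and $A_{12}=A_{21}=A_{22}=I_n$; in particular $A_{21}$ and $A_{22}$ are invertible, and the block determinant computation
\[
  \det A=\det(-I_n)\,\det\!\big(I_n-I_n(-I_n)^{-1}I_n\big)=(-1)^n\det(2I_n)=(-2)^n\neq 0
\]
shows that $A$ is invertible. The homogeneity relation required in Theorem~\ref{thm:m=1} (with $m=1$) is $\tfrac{1}{p_1}+\tfrac{1}{p_2}=\tfrac{1}{q}+\tfrac{n-\lambda}{n}$, which is exactly (\ref{eq:ss:homogeneity}), and the restriction $0<\lambda<mn=n$ is assumed. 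Hence all the hypotheses of Theorem~\ref{thm:m=1} are met.

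Applying Theorem~\ref{thm:m=1} then gives at once that $T_\lambda$, i.e.\ $L_\lambda$, is bounded from $L^{\vec p}(\bbR^{2n})$ to $L^q(\bbR^n)$ if and only if $1<p_2<q<p_1\le\infty$, which is precisely the assertion (\ref{eq:ss:e1}). There is no genuine obstacle here: the only points that need a sentence of justification are the identification $L_\lambda=T_\lambda$ via the change of variables $x_1\mapsto t$, $x_2\mapsto x$, and the invertibility of $A$, $A_{21}$, $A_{22}$, after which the corollary follows directly from Theorem~\ref{thm:m=1}.
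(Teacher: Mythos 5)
Your proposal is correct and is exactly the route the paper takes: the corollary is stated as an immediate consequence of Theorem~\ref{thm:m=1}, obtained by identifying $L_{\lambda}$ with $T_{\lambda}$ for a matrix $A$ whose blocks $A$, $A_{21}$, $A_{22}$ are invertible (your choice $A_{11}=-I_n$, $A_{12}=A_{21}=A_{22}=I_n$ does the job). Nothing further is needed.
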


\section{The case   \texorpdfstring{$n=1$}{n=1}}

In this section, we study the case  $n=1$. In this case,  we give necessary and sufficient
conditions on $A$, $\vec p$, $q$ and $\lambda$
such that
$T_{\lambda}$ is bounded from $L^{\vec p}$ to $L^q$.

Let $A_{(i_1,\ldots,i_k)}^{(j_1,\ldots,j_k)}$
stand  for the submatrix consisting of
the $i_1$-th, $\ldots$, $i_k$-th rows
and the $j_1$-th, $\ldots$, $j_k$-th columns
of $A$.

\begin{Theorem}\label{thm:A:n=1}
Let $1\le p_i\le \infty$ for  $1\le i\le m+1$ and $q, \lambda>0$ be  constants which satisfy (\ref{eq:ss:e0}).
Set  $\vec p = (p_1, \ldots, p_{m+1})$.
Suppose that $A$ is an $(m+1)\times (m+1)$ matrix.
Then
$T_{\lambda}$ is bounded from $L^{\vec p}$ to $L^q$ if and only if the following four conditions are satisfied.

\begin{enumerate}
\item The matrix $A$ is invertible.

\item There exist some positive integer $\tilde m \le m$ and
$\{j_1,\ldots,j_{\tilde m}\}
\subset \{1,\ldots,m\}$ such that
$
   A_{(m-{\tilde m}+2,\ldots,m+1)}^{
  (j_1,\ldots,j_{\tilde m})} $ and
$
  A_{(m-{\tilde m}+1,\ldots,m+1)}^{(j_1,\ldots,j_{\tilde m},m+1)} $ are invertible matrices
and $( A_{ m-{\tilde m}+2,m+1},
\ldots, A_{m+1,m+1})\ne \vec 0$.
Let $m_0$ be the maximum of all such $\tilde m$.
Denote
$k_1 = \max\{i:\, A_{i,m+1}\ne 0, m-m_0+2\le i\le m+1\}$.

\item
There is some $k\ge m-m_0+2$ such that $p_k>1$. Let $k_0$ be the maximum of such $k$.

\item $0<\lambda - \sum_{i=1}^{m-m_0} 1/p'_i< m_0$ and the indices $\vec p$ and  $q$  satisfy
    \begin{equation}\label{eq:w:e62}
      p_{k_1}<q \mathrm{\   and\ } p_{k_0}\le q<p_{m-m_0+1}.
    \end{equation}
    \end{enumerate}
\end{Theorem}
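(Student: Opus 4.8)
The plan is to imitate the strategy used for Theorems~\ref{thm:main} and \ref{thm:factional:D}: dualize, change variables to put the kernel in a normal form, split off the coordinates that do not interact with $h$ through the potential, and reduce what remains to Theorem~\ref{thm:main}.

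First I would settle the necessity of item (i): by Lemma~\ref{Lm:q}, if $T_{\lambda}$ is bounded then $A$ is invertible and $q\ge1$. With $A$ invertible, duality together with the substitution $x\to A^{-1}x$ shows, exactly as in step (P3) of the proof of Theorem~\ref{thm:main} but keeping $B:=A^{-1}$ general, that $T_{\lambda}$ is bounded from $L^{\vec p}$ to $L^{q}$ if and only if
\[
  \Big\|\frac{h(B_{m+1}\cdot x)}{\big(\sum_{i=1}^{m}|B_{i}\cdot x|\big)^{\lambda}}\Big\|_{L^{\vec p'}(\bbR^{m+1})}\lesssim\|h\|_{L^{q'}(\bbR)},
\]
where $B_{1},\dots,B_{m+1}$ denote the rows of $B$. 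Using only the operations that respect the nested norm of $L^{\vec p'}$ — replacing $x_{j}$ by $x_{j}$ plus a linear combination of $x_{j+1},\dots,x_{m+1}$, rescaling the $x_{j}$, interchanging two coordinates with a common exponent, and Minkowski's inequality (Lemma~\ref{Lm:minkowski}) — one triangularizes the functionals $\{B_{i}\cdot x\}_{i\le m}$ and matches each surviving functional with a coordinate $x_{j}$. The largest number $m_{0}$ of \emph{late} coordinates $x_{j}$, $m-m_{0}+1\le j\le m+1$, that can be so matched while $B_{m+1}\cdot x$ still genuinely depends on that block is, I claim, exactly the integer $m_{0}$ of item (ii), and the submatrix--invertibility statements in (ii) are precisely what makes the triangularization go through. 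When no such block exists, the normal form leaves either a coordinate in which the kernel does not decay at all or a degenerate dependence on $h$, and an origin--concentrated or $\log$--weighted test function (as in steps (P4)--(P8)) makes the left side infinite; this gives the necessity of (ii).

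Once the normal form is in place, the coordinates $x_{1},\dots,x_{m-m_{0}}$ enter the potential only through a single scalar $\sum_{i=1}^{m-m_{0}}|x_{i}-(\cdots)|$, so Hölder's inequality in those variables integrates them out and produces the effective parameter $\lambda_{0}:=\lambda-\sum_{i=1}^{m-m_{0}}1/p_{i}'$; the integral converges iff $\lambda_{0}>0$, and the residual kernel lives in $m_{0}$ variables, which forces $\lambda_{0}<m_{0}$ — this is the first half of item (iv). What remains is a $T_{\lambda_{0}}$--type inequality on $\bbR^{m_{0}+1}$ whose matrix, by maximality of $m_{0}$, has invertible $(1,m_{0}+1)$--minor, so Theorem~\ref{thm:main} applies with $m$ replaced by $m_{0}$, $n=1$, and $\lambda$ by $\lambda_{0}$ (homogeneity $\sum_{i=m-m_{0}+1}^{m+1}1/p_{i}=1/q+(m_{0}-\lambda_{0})$ being preserved by the computation above); its rank hypothesis corresponds to $(A_{m-m_{0}+2,m+1},\dots,A_{m+1,m+1})\ne\vec0$ from (ii), and its index conditions correspond to item (iii) together with the inequalities $p_{k_{1}}<q$ and $p_{k_{0}}\le q<p_{m-m_{0}+1}$ in item (iv) (coordinates with $p_{i}=1$ among the last $m_{0}$ being absorbed via Lemma~\ref{Lm:end points ii}, and the endpoint $q=p_{k_{0}}$ handled by the maximal--function and interpolation arguments behind Lemmas~\ref{Lm:h} and \ref{Lm:interpolation}). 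Conversely, assuming (i)--(iv), the same reduction run forward, together with Theorem~\ref{thm:main} and Lemmas~\ref{Lm:end points ii}, \ref{Lm:minkowski}, \ref{Lm:h}, \ref{Lm:interpolation}, yields the boundedness.

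I expect the main obstacle to be the linear algebra underlying $m_{0}$: proving that the normal form reachable under the order--respecting changes of variable is governed precisely by the submatrix--invertibility conditions in (ii), and that whenever they fail one can genuinely exhibit an unbounded test function. The endpoint subcase $q=p_{k_{0}}$ (and, inside the reduced problem, the case $p_{k_{\nu}}=q$) is the other technically heavy point, requiring the same maximal--function and interpolation--space machinery as in the proof of Theorem~\ref{thm:main}.
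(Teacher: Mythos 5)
Your overall architecture coincides with the paper's: dualize as in step (P3) of the proof of Theorem~\ref{thm:main}, eliminate the first $m-m_0$ variables to produce the effective exponent $\lambda_0=\lambda-\sum_{i=1}^{m-m_0}1/p'_i$, and apply Theorem~\ref{thm:main} (with $n=1$ and $m$ replaced by $m_0$) to what remains; your translation of that theorem's rank and index conditions into items (ii)--(iv) is also the correct one. The gap is precisely the step you yourself flag as ``the main obstacle'': you assert, but do not prove, that the order-respecting normal form exists, that it is governed by the nested minor conditions of (ii), and that each elimination is an equivalence rather than a one-sided estimate. As listed, your toolkit cannot deliver an if-and-only-if reduction: Minkowski's inequality (Lemma~\ref{Lm:minkowski}) only gives $\lesssim$, and ``interchanging two coordinates with a common exponent'' is generally unavailable since the $p'_j$ differ. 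Moreover, after triangularization the eliminated coordinates do not enter through ``a single scalar'' $\sum_{i\le m-m_0}|x_i-(\cdots)|$; each is paired with its own affine functional, and it is this pairing that must be justified.

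The paper closes the gap by induction on $m$. If the $(1,m+1)$-minor of the current matrix is invertible, Theorem~\ref{thm:main} applies directly ($\tilde m=m$). Otherwise some column of $A_{(2,\ldots,m+1)}^{(1,\ldots,m)}$ is a linear combination of the others; after permuting it into the first position and left-multiplying $A^{-1}$ by a block matrix of the form $\mathrm{diag}(M,1)$ --- which leaves $h((A^{-1}x)_{m+1})$ untouched and changes $\sum_{i\le m}|(A^{-1}x)_i|$ only up to constants, by (\ref{eq:PQA}) --- exactly one denominator functional depends on $x_1$, with unit coefficient, while the numerator does not. The innermost $L^{p'_1}_{x_1}$ norm is then computed \emph{exactly}, not merely bounded, yielding an equivalent $m$-linear problem in one fewer variable with exponent $\lambda-1/p'_1$. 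Iterating produces the chain of operators $T_{\lambda,l}$ and the nested rank relations (\ref{eq:k00})--(\ref{eq:k02}), and shows the process terminates precisely at the maximal $\tilde m=m_0$ of item (ii). This exactness --- available only because the eliminated variable can be removed from the argument of $h$ --- is what your sketch needs in order for the necessity direction to go through; the sufficiency direction then follows, as you say, from Theorem~\ref{thm:main} applied to the reduced operator $T_{\lambda,0}$.
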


\begin{proof}
\textit{Necessity}.\,\,
(i) By Lemma~\ref{Lm:q}, $A$ is invertible and $q\ge 1$.

(ii)
We prove the conclusion by induction on $m$.

For $m=1$, we see from Theorem~\ref{thm:m=1} that
(ii) is true for $\tilde m=1$ and $j_1=1$.

Now assume that the conclusion is true whenever $m$ is replaced by $m-1$. Let us consider the case   $m$.
We see from the proof of Theorem~\ref{thm:main} that
$\|T_{\lambda}\|_{L^{\vec p}\rightarrow L^q}<\infty$
if and only if
\begin{equation}\label{eq:hh}
  \left\|\frac{ h((A^{-1}x)_{m+1})}
    {(|(A^{-1}x)_1|+\ldots+|(A^{-1}x)_m|)^{\lambda}} \right\|_{L^{\vec p'}} \lesssim \|h\|_{L^{q'}},
    \qquad \forall h\in L^{q'}.
\end{equation}

If $\rank(A_{(2,\ldots,m+1)}^{(1,\ldots,m)}) = m$,
we see from Theorem~\ref{thm:main} that
(ii) is true for $\tilde m=m$ and $j_i=i$, $1\le i\le m$.

Next we suppose that
$\rank(A_{(2,\ldots,m+1)}^{(1,\ldots,m)}) < m$.
Then there is some column, say the $i$-th column of $A_{(2,\ldots,m+1)}^{(1,\ldots,m)}$,
which is a linear combination of other columns of
$A_{(2,\ldots,m+1)}^{(1,\ldots,m)}$.
Set $\{j_1,\ldots,j_{m-1}\}
=\{1,\ldots,m\}\setminus\{i\}$
and
\[
  B
=  A_{(2,\ldots,m+1)}^{(j_1,\ldots,j_{m-1},m+1)}.
\]
Since the submatrix consisting of the last $m$ rows of $A$ is of rank $m$, we have
\[
  \rank(B) = m.
\]
Let $Q$ be  obtained by swapping the first and the $i$-th rows of the $m\times m$ identity matrix.
Then the first column of $A_{(2,\ldots,m+1)}^{(1,\ldots,m)} Q$ is a linear combination of the second, $\ldots$, the $(m-1)$-th columns.
Hence there is an $m\times m$ matrices $P$,
which is the product of elementary matrices and is
of the form
\[
  P = \begin{pmatrix}
  a_1 &  &   &  \\
  a_2 & 1&   &   \\
      & \cdots &  & \\
  a_m &    &     &  1
  \end{pmatrix},
\]
such that
\[
  A \begin{pmatrix}
  Q & \\
    & 1
  \end{pmatrix}
  \begin{pmatrix}
  P &\\
    & 1
  \end{pmatrix}      = \begin{pmatrix}
    1 & * \\
    0 & B
  \end{pmatrix}.
\]
Hence
\[
  \begin{pmatrix}
  P^{-1} &\\
    & 1
  \end{pmatrix}
 \begin{pmatrix}
  Q^{-1} & \\
    & 1
  \end{pmatrix} A^{-1}       = \begin{pmatrix}
    1 & \alpha  \\
    0 & B^{-1}
  \end{pmatrix},
\]
where $\alpha$ is a $1\times m$ vector.

Denote $U=  \begin{pmatrix}
  P^{-1} &\\
    & 1
  \end{pmatrix}
 \begin{pmatrix}
  Q^{-1} & \\
    & 1
  \end{pmatrix}$. Then  we have
\begin{equation}\label{eq:PQA}
  \sum_{i=1}^m|(A^{-1}x)_i|
\approx \sum_{i=1}^m|(U A^{-1}x)_i|.
\end{equation}
Set $y=(x_2,\ldots,x_{m+1})^*$. We have
\begin{align*}
&\int_{\bbR}
\frac{ |h((A^{-1}x)_{m+1})|^{p'_1}}
    {(|(A^{-1}x)_1|+\ldots+|(A^{-1}x)_m|)^{\lambda p'_1}} dx_1 \\
&\approx
\int_{\bbR}
\frac{ |h((A^{-1}x)_{m+1})|^{p'_1}}
    {(|(UA^{-1}x)_1|+\ldots+|(UA^{-1}x)_m|)^{\lambda p'_1}} dx_1 \\
&=
\int_{\bbR}
\frac{ |h((B^{-1}y)_m)|^{p'_1}}
    {(|x_1+\alpha y|+|(B^{-1}y)_1|\ldots+|(B^{-1}y)_{m-1}|)^{\lambda p'_1}} dx_1\\
&=  \frac{ |h((B^{-1}y)_m)|^{p'_1}}
    {( |(B^{-1}y)_1|\ldots+|(B^{-1}y)_{m-1}|)^{\lambda p'_1-1}}.
\end{align*}
Hence (\ref{eq:hh}) is equivalent to
\begin{equation}\label{eq:hhh}
    \left\|\frac{ |h((B^{-1}y)_m)| }
    {( |(B^{-1}y)_1|\ldots+|(B^{-1}y)_{m-1}|)^{\lambda  -1/p'_1}} \right\|_{L^{\vec {\tilde p}    '}} \lesssim \|h\|_{L^{q'}},
    \qquad \forall h\in L^{q'},
\end{equation}
where $\vec{\tilde p}= (p_2, \ldots, p_{m+1})$.

Define the operator $\tilde T_{\lambda}$ by
\[
  \tilde T_{\lambda} g(y_m)
  = \int_{\bbR^{m-1}}
  \frac{g(B y)}{(|y_1|+\ldots +|y_{m-1}|)^{\lambda-1/p'_1}} dy_1\ldots dy_{m-1}.
\]
Then $\tilde T_{\lambda}$ is bounded from
$L^{\vec{\tilde p}}$ to $L^q$ if and only if
(\ref{eq:hhh}) is true.

Since (\ref{eq:hh}) and (\ref{eq:hhh}) are equivalent,
 $\| T_{\lambda}\|_{L^{\vec  p }\rightarrow L^{q'}} < \infty$
if and only if
$ \|\tilde T_{\lambda}\|_{L^{\vec{\tilde p}}\rightarrow L^{q'}} < \infty$.
By the inductive assumption, we get the conclusion as desired.

(iii)\,\,
If $m_0=m$, then we see from Theorem~\ref{thm:main} that there is some $2\le k\le m+1$ such that
$p_k>1$.

Next we assume that $m_0<m$.
Since the submatrix consisting of the last $m_0+2$ rows of $A$ is of rank $m_0+2$
and $\rank(A_{(m-m_0+1,\ldots,m+1)}^{(j_1,\ldots,j_{m_0},m+1)})
=m_0+1$,
there exists some $j_{m_0+1}\not\in
\{j_i:\, 1\le i\le m_0\}$ such that
$\rank(A_{(m-m_0,\ldots,m+1)}^{(j_1,\ldots,j_{m_0+1},m+1)})
=m_0+2$.
Note that $m_0$ is the maximum of $\tilde m$ which meets (ii). So  we have
$\rank(A_{(m-m_0+1,\ldots,m+1)}^{(j_1,\ldots,j_{m_0+1})})
<m_0+1$.

If $m_0+1<m$, then we consider
 submatrix consisting of the last $m_0+3$ rows of $A$, which is of rank $m_0+3$
and has a submatrix
$A_{(m-m_0,\ldots,m+1)}^{(j_1,\ldots,j_{m_0+1},m+1)}$
whose rank is equal to $m_0+2$.
There exists some $j_{m_0+2}\not\in
\{j_i:\, 1\le i\le m_0+1\}$ such that
$\rank(A_{(m-m_0-1,\ldots,m+1)}^{(j_1,\ldots,j_{m_0+2},m+1)})
=m_0+3$
and
$\rank(A_{(m-m_0,\ldots,m+1)}^{(j_1,\ldots,j_{m_0+2})})
<m_0+2$.

Repeating the above procedure many times, we get a rearrangement
$\{j_l:\, 1\le l\le m\}$ of $\{1,\ldots, m\}$,
such that
\begin{align}
 \rank\Big(A_{(m-m_0+2,\ldots,m+1)}^{(j_1,\ldots,
j_{m_0})}\Big)&=m_0 ,
    \label{eq:k00}\\
 \rank\Big(A_{(m-m_0+2-l,\ldots,m+1)}^{(j_1,\ldots,
j_{m_0+l})}\Big)&<m_0+l ,\qquad 1\le l\le m- m_0,
 \label{eq:k01}\\
\rank\Big(A_{(m-m_0+1-l,\ldots,m+1)}^{(j_1,\ldots,
j_{m_0+l},m+1)}\Big)
&=m_0+l+1,
\qquad 0\le l\le m- m_0.
 \label{eq:k02}
\end{align}

For $0\le l\le m- m_0$, define the operator $T_{\lambda,l}$ on $L^{(p_{m-m_0-l+1},\ldots,p_{m+1})}$ by
\[
  T_{\lambda,l}f(y_{m_0+l+1})
   = \int_{\bbR^{m_0+l}}
     \frac{f\Big(A_{(m-m_0+1-l,\ldots,m+1)}^{(j_1,\ldots,
j_{m_0+l},m+1)} y\Big) dy_1\ldots dy_{m_0+l} }
{(|y_1|+\ldots+|y_{m_0+l}|)^{\lambda - \sum_{i=1}^{m-m_0-l}1/p'_i}}.
\]
Then we have $T_{\lambda,m-m_0} = T_{\lambda}$.
Moreover, we see from arguments in (ii) that
for $1\le l\le m-m_0$,
\[
  \|T_{\lambda,l}\|_{L^{(p_{m-m_0-l+1},\ldots,p_{m+1})}
\rightarrow L^q}<\infty
\]
if and only if
\[
  \|T_{\lambda,l-1}\|_{L^{(p_{m-m_0-l+2},\ldots,p_{m+1})}
\rightarrow L^q}<\infty.
\]
Since $\|T_{\lambda}\|_{L^{\vec p}\rightarrow L^q}\!<\infty$,
we have
$\|T_{\lambda,0}\|_{L^{(p_{m-m_0+1},\ldots,p_{m+1})}
\rightarrow L^q}\!<\!\infty$.
Applying  Theorem~\ref{thm:main}(ii) for $T_{\lambda,0}$, we get that there is some $k\ge m-m_0+2$ such that $p_k>1$.

(iv)\,\,
Use notations introduced in (iii).
Since $\|T_{\lambda,0}\|_{L^{(p_{m-m_0+1},\ldots,p_{m+1})}
\rightarrow L^q}$ $<\infty$, by applying Theorem~\ref{thm:main} with $T_{\lambda}$ being replaced
by $T_{\lambda,0}$, we get (\ref{eq:w:e62}).

\textit{Sufficiency}.\,\,
Suppose that (i) - (iv) are true.
Use notations introduced in the proof of the necessity. Then we have $T_{\lambda}$ is bounded from $L^{\vec p}$ to $L^q$
if and only if
$T_{\lambda,0}$ is bounded from $ L^{(p_{m-m_0+1},\ldots,p_{m+1})}$
to $L^q$.

On the other hand, by applying Theorem~\ref{thm:main} for $T_{\lambda,0}$,
we see from the hypotheses that
$T_{\lambda,0}$ is bounded from $ L^{(p_{m-m_0+1},\ldots,p_{m+1})}$
to $L^q$. Hence $T_{\lambda}$ is bounded from $L^{\vec p}$ to $L^q$.
\end{proof}

The following is an immediate consequence.

\begin{Corollary}
Suppose that $A$ is an $(m+1)\times (m+1)$ matrix
such that
$A_{m+1,m}\ne 0$,
$A_{m+1,m+1}\ne 0$,
and
$\rank(A_{(m,m+1)}^{(m,m+1)}) = 2$.
Then there exist some $\vec p=(p_1,\ldots,p_{m+1})$
and $q,\lambda>0$ such that
$T_{\lambda}$ is bounded from $L^{\vec p}$ to $L^q$.
\end{Corollary}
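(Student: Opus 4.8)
The plan is to verify the four conditions of Theorem~\ref{thm:A:n=1} for a suitable choice of $\vec p$, $q$ and $\lambda$, and then invoke that theorem. First I would establish condition (i): since $A_{m+1,m}\ne 0$, the last row of $A$ is non-zero, and the hypothesis $\rank(A_{(m,m+1)}^{(m,m+1)})=2$ forces the last two rows to be linearly independent; more care is needed to ensure $A$ itself is invertible, so in fact the cleanest route is to \emph{choose} $A$ more explicitly — but since $A$ is given, I would instead argue that we may perturb only the entries outside the last two rows and the last two columns without affecting any of the hypotheses, and a generic such perturbation makes $A$ invertible. (If one wants $A$ fixed, the statement should be read as: among matrices satisfying the stated rank conditions, those that are also invertible admit such indices; I would add the invertibility of $A$ to the running hypotheses or note it is automatic once one also perturbs freely.)

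Next I would verify condition (ii) with the minimal choice $\tilde m = 1$. Take $j_1 = m$. Then $A_{(m+1)}^{(m)} = A_{m+1,m}\ne 0$ is an invertible $1\times1$ matrix, and $A_{(m,m+1)}^{(m,m+1)}$ is invertible by hypothesis, so the pair $(A_{(m+1)}^{(m)}, A_{(m,m+1)}^{(m,m+1)})$ witnesses (ii) with $\tilde m=1$; moreover $(A_{m+1,m+1})\ne 0$ by hypothesis, so the vector condition $(A_{m-\tilde m+2,m+1},\ldots,A_{m+1,m+1}) = (A_{m+1,m+1})\ne \vec 0$ holds. Thus $m_0\ge 1$. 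Let $m_0$ be the actual maximum; it suffices to work with the operator $T_{\lambda,0}$ built from the last $m_0+1$ rows as in the proof of Theorem~\ref{thm:A:n=1}, for which Theorem~\ref{thm:main} applies directly. Since $A_{k_1,m+1}\ne 0$ for the largest relevant index $k_1$, and (by maximality of $m_0$) the rank pattern of $(A_{i,m+1})_{m-m_0+1\le i\le m+1}$ is well-defined, I get a concrete $k_1$.

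For conditions (iii) and (iv) I have genuine freedom: I simply \emph{pick} the exponents. Choose $p_{m-m_0+1}=\infty$, pick $p_{k_1}=q_0$ for some $q_0$ with $1<q_0<\infty$ to be fixed, set the remaining $p_i$ ($m-m_0+2\le i\le m+1$, $i\ne k_1$, $i\ne m-m_0+1$) equal to $1$, and choose $p_1,\ldots,p_{m-m_0}$ freely in $[1,\infty]$ subject only to $\sum_{i=1}^{m-m_0}1/p_i'$ being small enough that $0<\lambda-\sum_{i=1}^{m-m_0}1/p_i'<m_0$ can be arranged by choosing $\lambda$; concretely take all of $p_1,\ldots,p_{m-m_0}$ equal to $\infty$ so that this sum is $0$ and the constraint becomes $0<\lambda<m_0$. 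Then $k_0 = k_1$ (it is the largest index with $p_k>1$ among $k\ge m-m_0+2$), and \eqref{eq:w:e62} reads $p_{k_1}<q$ and $p_{k_1}\le q<p_{m-m_0+1}=\infty$; both hold as soon as $q>q_0$. Finally fix $q$ with $q_0<q<\infty$ and solve \eqref{eq:ss:e0} for $\lambda$: the homogeneity relation determines $\lambda$ uniquely from $\vec p$ and $q$, and by shrinking $q_0$ and adjusting we can keep $0<\lambda-\sum_{i=1}^{m-m_0}1/p_i' = \lambda<m_0$ (note $mn-\lambda=\lambda$-side arithmetic: with $n=1$ the condition $0<\lambda<m$ from $T_\lambda$'s definition must also be respected, which is compatible since $m_0\le m$). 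With all four conditions of Theorem~\ref{thm:A:n=1} verified, the boundedness of $T_\lambda: L^{\vec p}\to L^q$ follows.

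The main obstacle I anticipate is the bookkeeping in the previous paragraph: one must choose $q_0$, then $q$, then $\lambda$ so that \emph{simultaneously} (a) the homogeneity identity \eqref{eq:ss:e0} holds, (b) $0<\lambda<m$ (needed for $T_\lambda$ to be defined, since $n=1$), (c) $0<\lambda-\sum 1/p_i'<m_0$, and (d) the strict/non-strict inequalities in \eqref{eq:w:e62} hold. Taking $p_1=\cdots=p_{m-m_0}=\infty$ collapses (c) to $0<\lambda<m_0$, and with $p_{m-m_0+1}=\infty$ and exactly one index $k_1$ carrying $p_{k_1}=q_0\in(1,\infty)$ and the rest equal to $1$, the left side of \eqref{eq:ss:e0} is $1/q_0$ (all other reciprocals being $0$ or... wait: $1/p_i=1$ for the $p_i=1$ indices) — so in fact the arithmetic is: $\sum 1/p_i = 1/q_0 + (\#\{i:p_i=1\})$, and one must check this can equal $1/q + (m-\lambda)$ with $\lambda\in(0,m_0)$ and $q\in(q_0,\infty)$; choosing the number of $p_i=1$ indices and $q_0$ small resolves it. I expect this to require one short explicit computation rather than any conceptual difficulty; everything else is a direct citation of Theorem~\ref{thm:A:n=1} (itself reduced to Theorem~\ref{thm:main}).
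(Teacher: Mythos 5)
Your overall strategy is exactly what the paper intends (it offers no proof beyond calling the corollary an immediate consequence of Theorem~\ref{thm:A:n=1}): verify condition (ii) with $\tilde m=1$, $j_1=m$, which the three hypotheses on the bottom-right $2\times 2$ block supply directly, so $m_0\ge 1$, and then exhibit admissible indices for the resulting $m_0$, $k_1$. Your remark about condition (i) is also fair: invertibility of $A$ is not implied by the stated hypotheses and, by Lemma~\ref{Lm:q}, the conclusion is false for singular $A$, so invertibility must be read as a standing assumption; the perturbation idea does not rescue the literal statement, but your fallback reading is the right one.

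There is, however, a genuine arithmetic error in your verification of condition (iv). If $p_i=\infty$ then $p_i'=1$, so $1/p_i'=1$, not $0$. With $p_1=\cdots=p_{m-m_0}=\infty$ you therefore get $\sum_{i=1}^{m-m_0}1/p_i'=m-m_0$, and the constraint in (iv) reads $m-m_0<\lambda<m$, not $0<\lambda<m_0$. Your homogeneity computation (with exactly $m_0-1$ indices equal to $1$) gives $\lambda=m-m_0+1+1/q-1/q_0$; for instance when $m_0=1$ and $m\ge 2$ this is close to $m$, nowhere near $(0,m_0)$, so the check as you wrote it would fail and "shrinking $q_0$" cannot fix it. Fortunately your parameter choice does satisfy the \emph{correct} constraint: $\lambda-\sum_{i=1}^{m-m_0}1/p_i'=1+1/q-1/q_0$, which lies in $(0,1)$ because $1<q_0<q$, hence is strictly between $0$ and $m_0$; and $m-m_0<\lambda<m-m_0+1\le m$, so $0<\lambda<mn$ as required, while (\ref{eq:w:e62}) holds since $k_0=k_1$, $p_{k_1}=q_0<q<\infty=p_{m-m_0+1}$. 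So the proof is salvageable by a one-line correction of the sum $\sum 1/p_i'$, after which all four conditions of Theorem~\ref{thm:A:n=1} are verified.
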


\small


\begin{thebibliography}{10}

\bibitem{BandaliyevSerbetci2018}
R.~A. Bandaliyev, A.~Serbetci, and S.~G. Hasanov.
\newblock On {H}ardy inequality in variable {L}ebesgue spaces with mixed norm.
\newblock {\em Indian J. Pure Appl. Math.}, 49(4):765--782, 2018.

\bibitem{Benedek1962}
A.~{Benedek}, A.~P. {Calder\'on}, and R.~{Panzone}.
\newblock {Convolution operators on Banach space valued functions.}
\newblock {\em {Proc. Natl. Acad. Sci. USA}}, 48:356--365, 1962.

\bibitem{Benedek1961}
A.~Benedek and R.~Panzone.
\newblock The spaces {$L^P$}, with mixed norm.
\newblock {\em Duke Math. J.}, 28:301--309, 1961.

\bibitem{Bergh1976}
J.~Bergh and J.~L\"{o}fstr\"{o}m.
\newblock {\em Interpolation spaces. {A}n introduction}.
\newblock Springer-Verlag, Berlin-New York, 1976.
\newblock Grundlehren der Mathematischen Wissenschaften, No. 223.

\bibitem{Boggarapu2017}
P.~Boggarapu, L.~Roncal, and S.~Thangavelu.
\newblock Mixed norm estimates for the {C}es\`aro means associated with
  {D}unkl-{H}ermite expansions.
\newblock {\em Trans. Amer. Math. Soc.}, 369(10):7021--7047, 2017.

\bibitem{CarneiroOliveiraSousa2019}
E.~Carneiro, D.~Oliveira~e Silva, and M.~Sousa.
\newblock Sharp mixed norm spherical restriction.
\newblock {\em Adv. Math.}, 341:583--608, 2019.

\bibitem{ChenSun2017}
T.~Chen and W.~Sun.
\newblock Iterated weak and weak mixed-norm spaces with applications to
  geometric inequalities.
\newblock {\em J. Geom. Anal.}, In Press, 2019.

\bibitem{Ciaurri2017}
O.~Ciaurri, A.~Nowak, and L.~Roncal.
\newblock Two-weight mixed norm estimates for a generalized spherical mean
  {R}adon transform acting on radial functions.
\newblock {\em SIAM J. Math. Anal.}, 49(6):4402--4439, 2017.

\bibitem{Cleanthous2017}
G.~Cleanthous, A.~G. Georgiadis, and M.~Nielsen.
\newblock Anisotropic mixed-norm {H}ardy spaces.
\newblock {\em J. Geom. Anal.}, 27(4):2758--2787, 2017.

\bibitem{Cleanthous2017c}
G.~Cleanthous, A.~G. Georgiadis, and M.~Nielsen.
\newblock Molecular decomposition of anisotropic homogeneous mixed-norm spaces
  with applications to the boundedness of operators.
\newblock {\em Appl. Comput. Harmon. Anal.}, 47(2):447--480, 2019.

\bibitem{Cordoba2017}
A.~C\'{o}rdoba and E.~Latorre~Crespo.
\newblock Radial multipliers and restriction to surfaces of the {F}ourier
  transform in mixed-norm spaces.
\newblock {\em Math. Z.}, 286(3-4):1479--1493, 2017.

\bibitem{Cruz-UribeMoen2013}
D.~Cruz-Uribe and K.~Moen.
\newblock One and two weight norm inequalities for {R}iesz potentials.
\newblock {\em Illinois J. Math.}, 57(1):295--323, 2013.

\bibitem{Cruz-UribePerez1999}
D.~Cruz-Uribe and C.~P\'{e}rez.
\newblock Sharp two-weight, weak-type norm inequalities for singular integral
  operators.
\newblock {\em Math. Res. Lett.}, 6(3-4):417--427, 1999.

\bibitem{Cruz-UribePerez2000}
D.~Cruz-Uribe and C.~P\'{e}rez.
\newblock Two-weight, weak-type norm inequalities for fractional integrals,
  {C}alder\'{o}n-{Z}ygmund operators and commutators.
\newblock {\em Indiana Univ. Math. J.}, 49(2):697--721, 2000.

\bibitem{Cwiel1974}
M.~Cwikel.
\newblock On {$(L^{p_0}(A_{0}),\,\ L^{p_{1}}(A_{1}))_{\theta , q}$}.
\newblock {\em Proc. Amer. Math. Soc.}, 44(2):286--292, 1974.

\bibitem{Fernandez1987}
D.~L. Fernandez.
\newblock {Vector-valued singular integral operators on $L\sp p$-spaces with
  mixed norms and applications.}
\newblock {\em {Pac. J. Math.}}, 129(2):257--275, 1987.

\bibitem{FontanaMorpurgo}
L.~Fontana and C.~Morpurgo.
\newblock Sharp exponential integrability for critical {R}iesz potentials and
  fractional {L}aplacians on {$\mathbb R^n$}.
\newblock {\em Nonlinear Anal.}, 167:85--122, 2018.

\bibitem{Georgiadis2017}
A.~G. Georgiadis, J.~Johnsen, and M.~Nielsen.
\newblock Wavelet transforms for homogeneous mixed-norm {T}riebel-{L}izorkin
  spaces.
\newblock {\em Monatsh. Math.}, 183(4):587--624, 2017.

\bibitem{Grafakos1992}
L.~Grafakos.
\newblock On multilinear fractional integrals.
\newblock {\em Studia Math.}, 102(1):49--56, 1992.

\bibitem{Grafakos2008}
L.~Grafakos.
\newblock {\em Classical {F}ourier analysis}, volume 249 of {\em Graduate Texts
  in Mathematics}.
\newblock Springer, New York, second edition, 2008.

\bibitem{Grafakos2008m}
L.~Grafakos.
\newblock {\em Modern {F}ourier analysis}, volume 250 of {\em Graduate Texts in
  Mathematics}.
\newblock Springer, New York, second edition, 2008.

\bibitem{GrafakosKalton2001}
L.~Grafakos and N.~Kalton.
\newblock Some remarks on multilinear maps and interpolation.
\newblock {\em Math. Ann.}, 319(1):151--180, 2001.

\bibitem{GrafakosLynch2015}
L.~Grafakos and R.~G. Lynch.
\newblock Off-diagonal multilinear interpolation between adjoint operators.
\newblock {\em Comment. Math.}, 55(1):17--22, 2015.

\bibitem{HanLuZhu2012}
X.~Han, G.~Lu, and J.~Zhu.
\newblock Hardy-{L}ittlewood-{S}obolev and {S}tein-{W}eiss inequalities and
  integral systems on the {H}eisenberg group.
\newblock {\em Nonlinear Anal.}, 75(11):4296--4314, 2012.

\bibitem{Hart2018}
J.~Hart, R.~H. Torres, and X.~Wu.
\newblock Smoothing properties of bilinear operators and {L}eibniz-type rules
  in {L}ebesgue and mixed {L}ebesgue spaces.
\newblock {\em Trans. Amer. Math. Soc.}, 370(12):8581--8612, 2018.

\bibitem{Ho2018}
K.-P. Ho.
\newblock Mixed norm {L}ebesgue spaces with variable exponents and
  applications.
\newblock {\em Riv. Math. Univ. Parma (N.S.)}, 9(1):21--44, 2018.

\bibitem{Hormander1960}
L.~H\"{o}rmander.
\newblock Estimates for translation invariant operators in {$L^{p}$}\ spaces.
\newblock {\em Acta Math.}, 104:93--140, 1960.

\bibitem{HuangLiuYangYuan2018}
L.~Huang, J.~Liu, D.~Yang, and W.~Yuan.
\newblock Atomic and littlewood-paley characterizations of anisotropic
  mixed-norm hardy spaces and their applications.
\newblock {\em J. Geom. Anal.}, 29(3):1991--2067, 2019.

\bibitem{HuangLiuYangYuan2019}
L.~Huang, J.~Liu, D.~Yang, and W.~Yuan.
\newblock Dual spaces of anisotropic mixed-norm {H}ardy spaces.
\newblock {\em Proc. Amer. Math. Soc.}, 147(3):1201--1215, 2019.

\bibitem{HuangYang2019}
L.~Huang and D.~Yang.
\newblock On function spaces with mixed norms --- a survey.
\newblock {\em arXiv preprint arXiv:1908.03291}, 2019.

\bibitem{Hytonen2016}
T.~Hyt\"{o}nen, J.~van Neerven, M.~Veraar, and L.~Weis.
\newblock {\em Analysis in {B}anach spaces. {V}ol. {I}. {M}artingales and
  {L}ittlewood-{P}aley theory}, volume~63 of {\em Ergebnisse der Mathematik und
  ihrer Grenzgebiete. 3. Folge. A Series of Modern Surveys in Mathematics
  [Results in Mathematics and Related Areas. 3rd Series. A Series of Modern
  Surveys in Mathematics]}.
\newblock Springer, Cham, 2016.

\bibitem{Hytonen2018}
T.~P. Hyt\"{o}nen.
\newblock The two-weight inequality for the {H}ilbert transform with general
  measures.
\newblock {\em Proc. Lond. Math. Soc. (3)}, 117(3):483--526, 2018.

\bibitem{Janson1988}
S.~Janson.
\newblock On interpolation of multilinear operators.
\newblock In {\em Function spaces and applications ({L}und, 1986)}, volume 1302
  of {\em Lecture Notes in Math.}, pages 290--302. Springer, Berlin, 1988.

\bibitem{Johnsen2015}
J.~Johnsen, S.~Munch~Hansen, and W.~Sickel.
\newblock Anisotropic {L}izorkin-{T}riebel spaces with mixed norms---traces on
  smooth boundaries.
\newblock {\em Math. Nachr.}, 288(11-12):1327--1359, 2015.

\bibitem{KarapetyantsSamko2018}
A.~N. Karapetyants and S.~G. Samko.
\newblock On mixed norm {B}ergman-{O}rlicz-{M}orrey spaces.
\newblock {\em Georgian Math. J.}, 25(2):271--282, 2018.

\bibitem{KenigStein1999}
C.~E. Kenig and E.~M. Stein.
\newblock Multilinear estimates and fractional integration.
\newblock {\em Math. Res. Lett.}, 6(1):1--15, 1999.

\bibitem{Kurtz2007}
D.~S. {Kurtz}.
\newblock {Classical operators on mixed-normed spaces with product weights.}
\newblock {\em {Rocky Mt. J. Math.}}, 37(1):269--283, 2007.

\bibitem{LaceyLi2006}
M.~T. Lacey and X.~Li.
\newblock Maximal theorems for the directional {H}ilbert transform on the
  plane.
\newblock {\em Trans. Amer. Math. Soc.}, 358(9):4099--4117, 2006.

\bibitem{LaceyMoenPerezTorres2010}
M.~T. Lacey, K.~Moen, C.~P\'{e}rez, and R.~H. Torres.
\newblock Sharp weighted bounds for fractional integral operators.
\newblock {\em J. Funct. Anal.}, 259(5):1073--1097, 2010.

\bibitem{LaceyThiele1997}
M.~T. Lacey and C.~Thiele.
\newblock {$L^p$} estimates on the bilinear {H}ilbert transform for
  {$2<p<\infty$}.
\newblock {\em Ann. of Math. (2)}, 146(3):693--724, 1997.

\bibitem{Lechner2018}
R.~Lechner.
\newblock Factorization in mixed norm {H}ardy and {BMO} spaces.
\newblock {\em Studia Math.}, 242(3):231--265, 2018.

\bibitem{LiMoenSun2015}
K.~Li, K.~Moen, and W.~Sun.
\newblock Sharp weighted inequalities for multilinear fractional maximal
  operators and fractional integrals.
\newblock {\em Math. Nachr.}, 288(5-6):619--632, 2015.

\bibitem{LiSun2016}
K.~Li and W.~Sun.
\newblock Two weight norm inequalities for the bilinear fractional integrals.
\newblock {\em Manuscripta Math.}, 150(1-2):159--175, 2016.

\bibitem{LiStinga2017}
P.~Li, P.~R. Stinga, and J.~L. Torrea.
\newblock On weighted mixed-norm {S}obolev estimates for some basic parabolic
  equations.
\newblock {\em Commun. Pure Appl. Anal.}, 16(3):855--882, 2017.

\bibitem{Lions1964}
J.-L. Lions and J.~Peetre.
\newblock Sur une classe d'espaces d'interpolation.
\newblock {\em Inst. Hautes \'{E}tudes Sci. Publ. Math.}, 19:5--68, 1964.

\bibitem{Milman1981}
M.~Milman.
\newblock On interpolation of {$2^{n}$} {B}anach spaces and {L}orentz spaces
  with mixed norms.
\newblock {\em J. Functional Analysis}, 41(1):1--7, 1981.

\bibitem{Moen2014}
K.~Moen.
\newblock New weighted estimates for bilinear fractional integral operators.
\newblock {\em Trans. Amer. Math. Soc.}, 366(2):627--646, 2014.

\bibitem{Muscalu2013i}
C.~Muscalu and W.~Schlag.
\newblock {\em Classical and multilinear harmonic analysis. {V}ol. {I}}, volume
  137 of {\em Cambridge Studies in Advanced Mathematics}.
\newblock Cambridge University Press, Cambridge, 2013.

\bibitem{Rubio1986}
J.~L. {Rubio de Francia}, F.~J. {Ruiz}, and J.~L. {Torrea}.
\newblock {Calder\'on-Zygmund theory for operator-valued kernels.}
\newblock {\em {Adv. Math.}}, 62:7--48, 1986.

\bibitem{Sandik2018}
A.~Sandik\c{c}i.
\newblock On the inclusions of some {L}orentz mixed normed spaces and
  {W}iener-{D}itkin sets.
\newblock {\em J. Math. Anal.}, 9(2):1--9, 2018.

\bibitem{Stefanov2004}
A.~Stefanov and R.~H. Torres.
\newblock Calder\'{o}n-{Z}ygmund operators on mixed {L}ebesgue spaces and
  applications to null forms.
\newblock {\em J. London Math. Soc. (2)}, 70(2):447--462, 2004.

\bibitem{Stein1970}
E.~M. Stein.
\newblock {\em Singular integrals and differentiability properties of
  functions}.
\newblock Princeton Mathematical Series, No. 30. Princeton University Press,
  Princeton, N.J., 1970.

\bibitem{Torres2015}
R.~H. {Torres} and E.~L. {Ward}.
\newblock {Leibniz's rule, sampling and wavelets on mixed Lebesgue spaces.}
\newblock {\em {J. Fourier Anal. Appl.}}, 21(5):1053--1076, 2015.

\bibitem{WeiYan2018}
M.~Wei and D.~Yan.
\newblock The boundedness of two classes of oscillator integral operators on
  mixed norm space.
\newblock {\em Adv. Math. (China)}, 47(1):71--80, 2018.

\end{thebibliography}

\end{document}